\newtheorem{thm}{Theorem}[section]
\newtheorem{cor}[thm]{Corollary}
\newtheorem{prop}[thm]{Proposition}
\newtheorem{lem}[thm]{Lemma}
\newtheorem{conj}[thm]{Conjecture}
\newtheorem{quest}[thm]{Question}
\newtheorem*{claim}{Claim}
\theoremstyle{definition}
\newtheorem{defn}[thm]{Definition}
\newtheorem{exmp}[thm]{Example}
\theoremstyle{remark}
\newtheorem{rem}[thm]{Remark}
\newtheorem{rems}[thm]{Remarks}
\newcommand{\CAlg}{\mathsf{CAlg}}
\newcommand{\Mod}{\mathsf{Mod}}
\DeclareMathOperator{\Hom}{Hom}
\DeclareMathOperator{\Der}{Der}
\DeclareMathOperator{\Ext}{Ext}
\DeclareMathOperator{\SP}{SP}
\DeclareMathOperator{\Map}{Map}
\DeclareMathOperator{\Sq}{Sq}
\DeclareMathOperator{\Tor}{Tor}
\newcommand{\abs}[1]{\left\lvert #1 \right\rvert}
\DeclareMathOperator{\Ann}{Ann}
\DeclareMathOperator{\im}{im}
\DeclareMathOperator{\colim}{colim}
\newcommand{\tr}{\mathrm{tr}}
\newcommand{\gen}[1]{\left\langle #1 \right\rangle}
\DeclareMathOperator{\Stab}{Stab}
\DeclareMathOperator{\Ind}{Ind}
\DeclareMathOperator{\Cl}{Cl}
\DeclareMathOperator{\Gr}{Gr}
\DeclareMathOperator{\fib}{fib}
\DeclareMathOperator{\THH}{THH}
\let\c@equation\c@thm
\numberwithin{equation}{section}
\title{Strict units of commutative ring spectra}
\author{Jun Hou Fung}
\address{Department of Mathematics, Harvard University, Cambridge, MA 02138}
\email{jhfung@math.harvard.edu}
\date{\today}
\begin{document}

\begin{abstract}
	We provide computational tools to calculate the strict units of commutative ring spectra.  We describe the Goerss-Hopkins-Miller spectral sequence for computing strict units of $E_\infty$-$H\mathbb{F}_p$-algebras, and use it to compute the strict units of polynomial and truncated polynomial rings, whose Postnikov towers we also analyze.  We then sketch the calculation of the strict units of other rings, such as $S^0$, using the symmetric product of spheres filtration and transfers.  
\end{abstract}

\maketitle

\tableofcontents

\section{Introduction}
	In classical algebra, we may isolate the multiplicative operation in a ring $R$ by passing to its group of units $R^\times$ consisting of those elements in $R$ which are invertible.  In stable homotopy theory, we have analogues of rings -- ring spectra -- which have a beautiful but much more complicated theory of multiplication.   This paper explores this theory for structured ring spectra from the perspective of its units.  
	
	Let $R$ be an $E_\infty$-ring spectrum.  Its zeroth space $\Omega^\infty R$ is an $E_\infty$-ring space, and we define:
\begin{defn}[\cite{MQRT, ABGHR}]
	The space $GL_1 R$ of units of $R$ is the pullback in the square
	\begin{center}
		\begin{tikzcd}
			GL_1 R \ar[r] \ar[d] \ar[rd, phantom, very near start, "\lrcorner"] & \Omega^\infty R \ar[d, "\pi_0"] \\
			(\pi_0 R)^\times \ar[r] & \pi_0 R
		\end{tikzcd}
	\end{center}
	
	That is, $GL_1 R$ is the union of those path components of $\Omega^\infty R$ corresponding to invertible elements in $\pi_0 R$. 
\end{defn}

	The space $GL_1 R$ has the structure of a grouplike $E_\infty$-space coming from the multiplication in $\Omega^\infty R$, and consequently gives rise to a connective spectrum denoted $gl_1 R$.  
	
	If we think of $R$ as describing a multiplicative cohomology theory, then $gl_1 R$ represents its units in the sense that if $X$ is a space, then we have \[[X, gl_1 R] \cong (R^0 X)^\times.\]  Another reason $gl_1 R$ deserves to be called the units of $R$ is that there is an equivalence 
	\begin{equation}
		\label{eqn:gl1Defn}
		gl_1 R \simeq \CAlg(S^0 \{t^\pm\}, R),
	\end{equation} 
	\noindent where $S^0 \{t^\pm\} := \Sigma^\infty_+ QS^0$ is the free $E_\infty$-ring with an invertible generator $t$ in degree $0$.  
	
	These units of $E_\infty$-ring spectra play prominent roles in various areas of homotopy theory, such as orientation theory and the theory of Thom spectra \cite{MQRT, ABGHR, ABG18}.

	There is another variant of these units introduced by Hopkins and Lurie in the setting of spectral algebraic geometry \cite{LurSAG} by replacing $S^0\{t^\pm\}$ with $S^0[t^\pm] := \Sigma^\infty_+ \mathbb{Z}$ in \eqref{eqn:gl1Defn}.  
	
\begin{defn}
	The \emph{strictly commutative units} of $R$, or \emph{strict units} for short, is the mapping space \[\mathbb{G}_m(R) := \CAlg(S^0\{t^\pm\}, R).\]
\end{defn}

	By adjunction, we also have \[\mathbb{G}_m(R) \simeq \Map(H\mathbb{Z}, gl_1 R).\]  
	
	One reason we call these elements strict units is because they arise as the units of a strictly commutative (i.e., simplicial commutative) ring.  Another reason is because we have replaced $QS^0$ with the discrete $E_\infty$-monoid $\mathbb{Z} \cong \pi_0 QS^0$, which doesn't support much higher homotopical structure.  
	
	Intuitively, one can imagine these strictly commutative elements coming from the rigidity that is imposed by ``geometry''.  One starting point of our exploration of strict units is the calculation of the strict units of complex $K$-theory.  Note that while $\pi_* gl_1 R$ only depends on the homotopy groups of $R$, the same is not true for $\mathbb{G}_m(R)$.  In any case, the units of $K$-theory ought to have to do with tensor-invertible vector bundles on a space $X$.  Geometrically, there is a natural source for such bundles, namely the group of virtual line bundles on $X$.  How could we locate this particular collection of bundles within $gl_1 K$, whose homotopy groups are supported in infinitely many degrees?  More precisely, we have \[GL_1 K \simeq \mathbb{Z}/2 \times BU(1) \times \tau_{\geq 4} GL_1 K,\] and we want a natural way to pick out the first two factors.  It turns out that the strict units pick out this collection of line bundles. Let $p$ be a prime and write $K_{p^\infty}$ for $p$-adic $K$-theory.  Then, \[\pi_* \mathbb{G}_m(K_{p^\infty}) \cong \begin{cases} \overline{\mathbb{F}_p}^\times, & * = 0 \\ \mathbb{Z}_p, & * = 2 \\ 0, & \text{otherwise}. \end{cases}\]
	
	More generally, we have
\begin{thm}[Hopkins-Lurie, Sati-Westerland \cite{SW15}]
	Let $E$ be a Lubin-Tate theory of height $h$ with residue field $\overline{\mathbb{F}_p}$.  Then, \[\pi_* \mathbb{G}_m(E) \cong \begin{cases} \overline{\mathbb{F}_p}^\times, & * = 0 \\ \mathbb{Z}_p, & * = h + 1 \\ 0, & \text{otherwise}. \end{cases}\]
\end{thm}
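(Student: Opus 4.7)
The plan is to analyze $\mathbb{G}_m(E) \simeq \Map(H\mathbb{Z}, gl_1 E)$ via the Postnikov tower of $gl_1 E$. First, I would record the homotopy groups of $gl_1 E$: $\pi_0 gl_1 E = (\pi_0 E)^\times$ splits via the Teichm\"uller lift as $\overline{\mathbb{F}_p}^\times \times (1 + \mathfrak{m})$, where $\mathfrak{m} = (p, u_1, \ldots, u_{h-1})$ is the maximal ideal of $\pi_0 E = W(\overline{\mathbb{F}_p})[[u_1, \ldots, u_{h-1}]]$; and $\pi_n gl_1 E \cong \pi_n E$ for all $n \geq 1$, vanishing in odd degrees and isomorphic to $\pi_0 E$ as a rank-one free module in each positive even degree.

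Applying $\Map(H\mathbb{Z}, -)$ to this tower produces a spectral sequence converging to $\pi_* \mathbb{G}_m(E)$, with input at each level given by $\pi_* \Map(H\mathbb{Z}, H\pi_n gl_1 E)$ and with differentials encoded by the $k$-invariants of $gl_1 E$. The prime-to-$p$ factor $\overline{\mathbb{F}_p}^\times$ of $\pi_0$ is expected to contribute cleanly to $\pi_0 \mathbb{G}_m(E)$, detecting the Teichm\"uller classes. The remaining pro-$p$ part of $gl_1 E$ -- the factor $(1 + \mathfrak{m})$ in degree $0$ together with a copy of $\pi_0 E$ in each positive even degree -- must collapse under the $k$-invariants to leave behind only the single $\mathbb{Z}_p$ in degree $h+1$.

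Controlling these $k$-invariants is the main obstacle. The natural input is Rezk's logarithmic cohomology operation $\ell : gl_1 L_{K(h)} E \to L_{K(h)} E$, whose fiber and cofiber are described by the Ando-Hopkins-Strickland theory of power operations for Morava $E$-theory. This reduces the problem to concrete computations with the Lubin-Tate formal group law, and the specific degree $h+1$ at which the surviving $\mathbb{Z}_p$ appears can be traced back to the height of this formal group -- as a sanity check, the case $h = 1$ recovers the familiar $\mathbb{Z}_p$ in degree $2$ coming from the $BU(1) = K(\mathbb{Z}, 2)$ factor of $GL_1 K_{p^\infty}$.

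Alternatively, following Hopkins-Lurie's framework in spectral algebraic geometry, one may interpret $\mathbb{G}_m$ as the functor of points of the multiplicative formal group on $\mathrm{Spec}(E)$; the description of $\mathrm{Spf}(E^* BU(1))$ as the formal group of $E$ then allows one to read off $\pi_* \mathbb{G}_m(E)$ directly. Either approach ultimately hinges on the height-$h$ Lubin-Tate formal group law, which is where the bulk of the work lies.
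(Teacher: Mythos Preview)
The paper does not supply its own proof of this theorem. It is quoted in the introduction with attribution to Hopkins--Lurie and Sati--Westerland \cite{SW15} as a motivating known result, and no argument for it appears anywhere in the body of the paper. So there is nothing here to compare your proposal against.

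As a standalone sketch, your outline points in roughly the right direction---Sati--Westerland's argument does go through Rezk's logarithm and the associated discrepancy spectrum---but what you have written is more a list of ingredients than a proof. The Postnikov-tower paragraph does not explain why the pro-$p$ part collapses to a single $\mathbb{Z}_p$ in degree $h+1$; this is exactly the content of the theorem, and saying the $k$-invariants ``must'' arrange this is circular. Your alternative paragraph is also somewhat garbled: $\mathbb{G}_m(E)$ is not the functor of points of the multiplicative formal group, and $\mathrm{Spf}(E^* BU(1))$ describes the Lubin--Tate formal group rather than $\mathbb{G}_m$. The class in degree $h+1$ is tied to the determinant of the formal group (equivalently, to the dualizing line in Gross--Hopkins duality), not to $BU(1)$ directly. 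If you want to flesh this out, the cited reference \cite{SW15} and Rezk's paper on the logarithm are the places to look.
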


	Observe this answer is much simpler than the homotopy groups of $gl_1 E$.  
	
	Our goal in this paper is to provide calculations of strict units for other $E_\infty$-rings, of a different flavor.  Before describing our results, let us first give some additional motivation for studying strict units, in addition to picking out geometrically interesting units.
	
\begin{enumerate}[(1)]
	\item Strict units parametrize $E_\infty$-twists of multiplicative cohomology theories by ordinary cohomology.  
	
	As a concrete example, let us recall the construction of twisted $K$-theory, following Atiyah-Segal \cite{AS04}.  Atiyah \cite{Ati69} showed that $K$-theory is represented by the space $\mathcal{F}$ of Fredholm operators on a complex infinite-dimensional Hilbert space $\mathcal{H}$, i.e., by sections of the trivial $\mathcal{F}$-bundle over a space $X$.  We get twisted $K$-theory by looking at sections of twisted bundles.  More precisely, if $P$ is a principal $PU(\mathcal{H})$-bundle, then we can construct the $\mathcal{F}$-bundle $P \times_{PU(\mathcal{H})} \mathcal{F} \to X$, and define the $P$-twisted $K$-theory of $X$ by \[K(X)_P = \Gamma(P \times_{PU(\mathcal{H})} \mathcal{F} \to X).\]  The twists of $K$-theory arising in this way are classified by $[X, BPU(\mathcal{H})]$, i.e., by a class in $H^3(X;\mathbb{Z})$ since $BPU(\mathcal{H})$ is a $K(\mathbb{Z},3)$.  
	
	Later on, May-Sigurdsson \cite{MS06} and Ando-Blumberg-Gepner \cite{ABG10} generalized this construction to other cohomology theories.  From this perspective, the main ingredient in twisted $K$-theory is a map $K(\mathbb{Z},3) \to BGL_1 K$ induced by the Atiyah-Bott-Shapiro orientation of $\operatorname{Spin}^c$.  
	
	In general, twistings of cohomology theories by ordinary cohomology are parametrized by maps $K(\mathbb{Z},n) \to GL_1 R$, and $E_\infty$-twistings correspond to strict units of $R$.  See \cite{SW15} for examples of twisted Morava $K$-theory and $E$-theory.  
	
	
	\item Strict units gives rise to extensions of commutative ring spectra by formally adjoining roots \cite{Law19}.  Given a map $\alpha: H\mathbb{Z} \to gl_1(R)$, i.e., an $E_\infty$-ring map $S^0[t^\pm] \to R$, we can construct \[R[\sqrt[n]{\alpha}] := S^0[t^{\pm 1/n}] \wedge_{S^0[t^\pm]} R\] which, as its name suggests, is the ring $R$ with a formal $n$-th root of $\alpha$ adjoined.  	
\end{enumerate}
	
\subsection*{Outline}
	We begin in the section \ref{sec:GHM} by describing the Goerss-Hopkins-Miller spectral sequence for computing the strict units of an $E_\infty$-$H\mathbb{F}_p$-algebra.  The Goerss-Hopkins spectral sequence works extremely well when computing for example the strict units of Morava $E$-theory since it degenerates, but is rather more complicated even for $H\mathbb{F}_p$-algebras.  Nonetheless, we work through the relevant homological algebra along the lines of Priddy and Miller and give a description of the $E_2$-page of this spectral sequence in theorems \ref{thm:UnstableCokoszul} and \ref{thm:OddPCokoszul}.  We then quickly specialize to the case $p = 2$ and the ring $H\mathbb{F}_2[u]$ with $\deg u = 2$, and compute parts of the spectral sequence as an example.  
	
	In section \ref{sec:Postnikov}, we continue analyzing the strict units of $H\mathbb{F}_2[u]$ through its Postnikov tower.  Using a splitting of $gl_1 H\mathbb{F}_2[u]$ due to Steiner, and an observation of Lawson and Mathew-Stojanoska on the bottom $k$-invariant of $gl_1 H\mathbb{F}_2[u]$, we can describe the Postnikov tower of $gl_1 H\mathbb{F}_2[u]$ (\ref{prop:g1Postnikov}) and the associated (modified) Atiyah-Hirzebruch spectral sequence (\ref{prop:g1AHSS}).  We conclude this section with a description of the homotopy groups of the strict units of truncated polynomial rings $H\mathbb{F}_2[u]/u^{n+1}$ in theorem \ref{thm:table} and an elementary conjecture (\ref{conj:AdemHomology}) about the relations in Steenrod algebra that would give a computation of the entire structure of the strict units of $H\mathbb{F}_2[u]$.  We also generalize slightly the result on $k$-invariants in the odd primary case in (theorem \ref{thm:OddKInvt}).  
	
	In section \ref{sec:SymProd}, we outline a method to compute strict units in the cases when the homological algebra necessary for the Goerss-Hopkins-Miller spectral sequence is not available.  This includes the case of computing the strict units of the sphere spectrum, for example.  The key idea is to use the symmetric product of spheres filtration to provide a resolution of $H\mathbb{Z}$ by ``spacelike'' spectra.  This is the resolution that is featured in the Whitehead conjecture, and so the $E_2$ page of our spectral sequence is essentially governed by the solution of the Whitehead conjecture in other generalized cohomology theories.  We demonstrate the utility of this method by re-computing the $K$-theory and $j$-theory of $H\mathbb{Z}$ (theorem \ref{thm:K0MsLs} and corollary \ref{cor:HZj}) by reducing it to a problem in representation theory -- in these cases, verification of the appropriate form of Whitehead conjecture is sufficient to establish the collapse of the spectral sequence.  Similarly, we analyze the symmetric product of spheres spectral sequence in the case of $gl_1 S^0$ by recasting it in terms of the Burnside rings in theorem \ref{thm:LsGL1S0}.  We also indicate some possible approaches to understanding the rest of the spectral sequence by comparing the additive and multiplicative versions of the spectral sequence.  

	Finally, we provide an appendix to prove a result (theorem \ref{thm:DLStrs}) about the possible Dyer-Lashof algebra structures on the polynomial ring $\mathbb{F}_2[u]$, motivated by a question of Totaro's.  
	
\subsection*{Acknowledgements}
	I thank my advisor Mike Hopkins for suggesting this topic, for many illuminating conversations, and for his encouragement throughout this project.  I also thank Jeremy Hahn and Haynes Miller for useful discussions, and Daniel Hess for directing me to his thesis \cite{Hes19}.  
	
\section{The Goerss-Hopkins-Miller spectral sequence for \texorpdfstring{$E_\infty$}{E-infinity}-mapping spaces}
\label{sec:GHM}
	
\subsection{The homotopy spectral sequence}
	The strict units of an $E_\infty$-ring $R$ is the mapping space \[\mathbb{G}_m(R) = \CAlg(\Sigma^\infty_+ \mathbb{Z}, R).\]  As such, we can use the structured version of the homotopy spectral sequence discussed in \cite{GH04} to compute it.  For tractibility, we write $H = H\mathbb{F}_p$ and assume that $A$ is an $E_\infty$-$H$-algebra.  In this case, we have \[\CAlg(\Sigma^\infty_+ \mathbb{Z}, A) \simeq \CAlg_H(H \wedge \Sigma^\infty_+ \mathbb{Z}, A).\]   
	
	We begin by briefly sketching the form of the spectral sequence we use.  The forgetful functor from $E_\infty$-$H$-algebras to $H$-modules has a left adjoint $\mathbb{P} = \mathbb{P}_H: \Mod_H \to \CAlg_H$ given by \[\mathbb{P}(M) = \bigoplus_{n \geq 0} E\Sigma_{n+} \wedge_{\Sigma_n} M^{\otimes n}.\]  We shall write $\mathbb{P}: \CAlg_H \to \CAlg_H$ also for the comonad associated to this adjunction.  
	
	We resolve $H \wedge \Sigma^\infty_+ \mathbb{Z}$ using $\mathbb{P}$, and apply $\CAlg_H(-, A)$ to the resolution.  This begets a cosimplicial space $\CAlg_H(\mathbb{P}^{\bullet + 1}(H \wedge \Sigma^\infty_+ \mathbb{Z}), A)$.  In good cases, its totalization can be identified with $\CAlg_H(H \wedge \Sigma^\infty_+ \mathbb{Z}, A)$, and we have a Bousfield-Kan spectral sequence \[E_1^{s,t} = \pi^s \pi_t \CAlg_H(\mathbb{P}^{\bullet + 1}(H \wedge \Sigma^\infty_+ \mathbb{Z}), A) \Rightarrow \pi_{t-s} \CAlg_H(H \wedge \Sigma^\infty_+ \mathbb{Z}, A).\]  
	
	The $E_2$-page has an admittedly not very useful name in terms of cotriple homology: \[E_2^{s,t} \cong \mathscr{R}_\mathbb{P}^s \pi_t \CAlg_H(-,A)(H \wedge \Sigma^\infty_+ \mathbb{Z}).\]  However, as usual there is an identification of this $E_2$-term purely in terms of algebra.  
	
	We study $\CAlg_H(\mathbb{P}M, A)$ for an $H$-module $M$.  Let $\mathcal{R}$ be the (mod $p$) Dyer-Lashof algebra.  First, if $A$ is an $E_\infty$-$H$-algebra, then $\pi_* A$ is a commutative algebra with an allowable action by $\mathcal{R}$.  Furthermore, we have the classical computation:
\begin{prop}[{\cite[I.4]{CLM76}}]
	$\pi_* \mathbb{P}M$ is the free allowable algebra over $\mathcal{R}$ generated by $\pi_* M$.  
\end{prop}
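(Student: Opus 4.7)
The plan is to decompose $\mathbb{P}M = \bigoplus_{n \geq 0} D_n M$, with $D_n M := E\Sigma_{n+} \wedge_{\Sigma_n} M^{\otimes n}$ the $n$-th extended power, and identify $\pi_* D_n M$ summand by summand. The exponential splitting $D_n(M_1 \vee M_2) \simeq \bigvee_{i+j=n} D_i M_1 \wedge D_j M_2$ reduces the computation to the case $M \simeq \Sigma^d H$ generated by a single class $x$, with general $M$ reassembled by additivity and multiplicativity.

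For such $M$, I would analyze $D_n(\Sigma^d H)$ via the homotopy orbit spectral sequence
\[E^2_{s,t} = H_s\bigl(\Sigma_n;\, \pi_t(M^{\otimes n})\bigr) \Longrightarrow \pi_{s+t} D_n M,\]
with the $\Sigma_n$-action appropriately signed for odd-degree generators. The essential case is $n = p$: one computes $H_*(B\Sigma_p; \mathbb{F}_p)$ via Nakaoka's transfer argument from the Sylow $C_p \subset \Sigma_p$, and reads off, for each $i$, a Dyer-Lashof class $Q^i x \in \pi_* D_p M$ (with an associated Bockstein $\beta Q^i x$ at odd primes). Iterating along the wreath-product embeddings $\Sigma_p \wr \Sigma_p \hookrightarrow \Sigma_{p^2}$ produces all admissible iterated monomials $Q^I x$ inside the higher $D_{p^k} M$, while multiplicativity of $\mathbb{P}M$ supplies the algebra structure.

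Next I would verify that the resulting $\mathcal{R}$-action on $\pi_* \mathbb{P}M$ satisfies the allowability conditions and the expected relations. The instability bound $Q^i x = 0$ for $i$ below $|x|$ together with the Frobenius identity $Q^{|x|} x = x^2$ at $p = 2$ fall out of the low-dimensional cells of $E\Sigma_{p+} \wedge_{\Sigma_p} M^{\otimes p}$; the Adem and Nishida relations follow by running the same analysis on $D_p D_p M \to D_{p^2} M$ and comparing the Steenrod action on $B\Sigma_{p^2}$. A Poincar\'e series count against the free allowable $\mathcal{R}$-algebra on $\pi_* M$ then shows that admissible monomials $Q^I x$ form an $\mathbb{F}_p$-basis for $\pi_* \mathbb{P}M$, with no additional relations beyond those imposed.

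Freeness is then immediate: any $\mathbb{F}_p$-linear map $f: \pi_* M \to B$ into an allowable $\mathcal{R}$-algebra $B$ extends uniquely by $f(Q^I x) := Q^I f(x)$, well-defined precisely because the defining relations of $\pi_* \mathbb{P}M$ are exactly those (instability, Adem, Nishida) that hold in $B$ by hypothesis. The main obstacle is the combinatorial verification in the previous paragraph --- confirming that nothing unexpected appears in the homology of higher extended powers --- which is the technical heart of the Cohen-Lada-May computation and is genuinely intricate at odd primes because of the Bockstein contributions.
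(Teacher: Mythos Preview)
The paper does not prove this proposition at all: it is stated with a citation to Cohen--Lada--May \cite[I.4]{CLM76} and used as a black box. There is therefore no proof in the paper to compare your proposal against.

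That said, your sketch is a faithful outline of the Cohen--Lada--May argument itself. The reduction to $M \simeq \Sigma^d H$ is legitimate here because every $H\mathbb{F}_p$-module splits as a wedge of shifts of $H$, and the exponential law for extended powers behaves well under such wedges. The homotopy-orbit spectral sequence you invoke collapses in this setting (since $M^{\otimes n}$ is again a wedge of Eilenberg--Mac~Lane spectra in $\Mod_H$), so one really is computing $H_*(\Sigma_n; (\pi_* M)^{\otimes n})$; the identification of this with the free allowable $\mathcal{R}$-algebra is indeed the technical heart of \cite{CLM76}. One minor point: the Nishida relations are not part of the ``allowable $\mathcal{R}$-algebra'' structure in the statement --- they describe how Steenrod operations interact with the $Q^i$, which is extra data not needed here --- so you can omit that step from the argument.
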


	Write $F$ for the free allowable algebra over $\mathcal{R}$ functor on graded $\mathbb{F}_p$-vector spaces, so that we have $\pi_* \mathbb{P}M \cong F(\pi_* M)$.  Using this we deduce that
	
\begin{prop}
	The map $\pi_0 \CAlg_H(\mathbb{P}M, A) \to \Hom_{\CAlg_{\mathbb{F}_p, \mathcal{R}}}(F(\pi_* M), \pi_* A)$ is an isomorphism.  
\end{prop}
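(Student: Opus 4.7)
The plan is to show that both sides of the asserted isomorphism are naturally identified with the set $\Hom_{\mathbb{F}_p}(\pi_* M, \pi_* A)$ of degree-zero graded $\mathbb{F}_p$-linear maps, and that the map in the proposition corresponds to the identity under these identifications.

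On the left, I would invoke the adjunction $\mathbb{P} \dashv (\text{forget})$ between $\CAlg_H$ and $\Mod_H$ to get a natural equivalence
\[
	\CAlg_H(\mathbb{P}M, A) \simeq \Mod_H(M, A).
\]
Since $H = H\mathbb{F}_p$, every $H$-module is equivalent to a wedge of (de)suspensions of $H$, so homotopy classes of $H$-module maps are detected on $\pi_*$ as graded $\mathbb{F}_p$-linear maps. On the right, the free/forgetful adjunction between $\CAlg_{\mathbb{F}_p, \mathcal{R}}$ and graded $\mathbb{F}_p$-vector spaces yields
\[
	\Hom_{\CAlg_{\mathbb{F}_p, \mathcal{R}}}(F(\pi_* M), \pi_* A) \cong \Hom_{\mathbb{F}_p}(\pi_* M, \pi_* A).
\]

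To check compatibility, I would appeal to the preceding proposition: the unit $\eta \colon M \to \mathbb{P}M$ of the topological adjunction induces on $\pi_*$ the unit $\pi_* M \hookrightarrow F(\pi_* M)$ of the algebraic adjunction, via the natural isomorphism $\pi_* \mathbb{P}M \cong F(\pi_* M)$. Since precomposition with $\eta$ (respectively, with the algebraic unit) realizes the topological (respectively, algebraic) adjunction, and $\pi_*$ intertwines these two units, the two ways of sending an element of $\pi_0 \CAlg_H(\mathbb{P}M, A)$ to $\Hom_{\mathbb{F}_p}(\pi_* M, \pi_* A)$ --- through the map in the proposition followed by restriction to generators, or directly through the topological adjunction and $\pi_*$ --- yield the same function.

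No serious obstacle is anticipated; the proof is essentially formal once the previous proposition is in hand. The only point requiring care is the naturality of the isomorphism $\pi_* \mathbb{P}M \cong F(\pi_* M)$ in $M$, which is what makes the two adjunction triangles commute.
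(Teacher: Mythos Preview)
Your proposal is correct and is precisely the formal argument the paper is implicitly invoking: the paper states this proposition immediately after identifying $\pi_* \mathbb{P}M \cong F(\pi_* M)$ with only the phrase ``Using this we deduce that'' as justification, so your unpacking via the two free/forgetful adjunctions and the compatibility of units is exactly what is intended.
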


	Assume we are given a base point of $\CAlg_H(\mathbb{P}M, A)$, i.e., a $E_\infty$-map $\phi: \mathbb{P}M \to A$.  Recall that if $A$ is an $E_\infty$-$H$-algebra, then $A^{S^t}$ also acquires the structure of an $E_\infty$-$H$-algebra.  Suppose $t > 0$.  Then as a corollary of the previous proposition and the fact that $\pi_* A^{S^t} \cong \pi_* A[\epsilon_t]/\epsilon_t^2$ is a degree $t$ square-zero extension of $\pi_* A$, we have \[\pi_t(\CAlg_H(\mathbb{P}M, A), \phi) \cong \Der_\mathcal{R}(F(\pi_* M), \Omega^t \pi_* A),\] where $\Der_\mathcal{R}$ is the module of commutative $\mathbb{F}_p$-algebra derivations that commute with the Dyer-Lashof operations, and $\Omega^t \pi_* A$ is an $\pi_* \mathbb{P}M$-module via $\pi_* \phi$.  From now on we suppress the choice of basepoint.  
	
	Note that $\pi_*(H \wedge \Sigma^\infty_+ \mathbb{Z}) \cong \mathbb{F}_p[u^\pm]$.  We have \[\pi_t \CAlg(\mathbb{P}^{\bullet + 1}(H \wedge \Sigma^\infty_+ \mathbb{Z}), A) \cong \Der_\mathcal{R}(F^{\bullet + 1}(\mathbb{F}_p[u^\pm]), \Omega^t \pi_* A).\]  Thus the $E_2$-term of the homotopy spectral sequence is given by the right-derived functors of $\Der_\mathcal{R}$: \[E_2^{s,t} \cong \mathscr{R}_F^s(\Der_\mathcal{R}(-, \Omega^t \pi_* A))(\mathbb{F}_p[u^\pm]).\]  We set \[D_\mathcal{R}^s(\Gamma, A) := \mathscr{R}_F^s(\Der_\mathcal{R}(-,A))(\Gamma).\]  Of course, if $t = 0$, then suitable adjustments need to be made, e.g., \[E_2^{0,0} \cong \Hom_{\CAlg_{\mathbb{F}_p, \mathcal{R}}}(\mathbb{F}_p[u^\pm], \pi_* A).\]  
	
\begin{rem}
	Compared with \cite{GH04}, our task here is substantially simplified since we don't have to resolve any operads; we have a complete understanding of $\pi_* \mathbb{P}M$ solely in terms of $\pi_* M$.  
\end{rem}

	Following a well-trodden path, the next step is to give a composite functor spectral sequence computing these right-derived functors.  
	
\begin{lem}[{\cite[Lem.~6.3]{GH04}}]
	Let $\Gamma$ be an allowable algebra over the Dyer-Lashof algebra.  Then the graded module $\Omega_{\Gamma/\mathbb{F}_p}$ of commutative algebra derivations is an object in the category $a_1 \Mod_{\mathcal{R}, \Gamma}$ of unstable $\Gamma$-modules over the Dyer-Lashof algebra, and there is a natural isomorphism \[\Der_\mathcal{R}(\Gamma, M) \cong \Hom_{a_1 \Mod_{\mathcal{R}, \Gamma}}(\Omega_{\Gamma/\mathbb{F}_p}, M).\]  Furthermore, if we are given an augmentation $\Gamma \to \mathbb{F}_p$ and $M \in a_1 \Mod_{\mathcal{R}, \Gamma}$ is obtained from $\mathcal{U} := a_1 \Mod_{\mathcal{R}, \mathbb{F}_p}$ by restriction of scalars, then \[\Der_\mathcal{R}(\Gamma, M) \cong \Hom_\mathcal{U}(\mathbb{F}_p \otimes_\Gamma \Omega_{\Gamma/\mathbb{F}_p}, M).\]  
\end{lem}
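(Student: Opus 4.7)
The plan is to mimic the classical construction of Kähler differentials, equipping $\Omega_{\Gamma/\mathbb{F}_p}$ with a Dyer-Lashof action compatible with the universal derivation. Recall that classically $\Omega_{\Gamma/\mathbb{F}_p}$ carries a universal $\mathbb{F}_p$-linear derivation $d\colon \Gamma \to \Omega_{\Gamma/\mathbb{F}_p}$ representing $\Der_{\mathbb{F}_p}(\Gamma, M) \cong \Hom_\Gamma(\Omega_{\Gamma/\mathbb{F}_p}, M)$ as $M$ varies over $\Gamma$-modules; the goal is to upgrade this isomorphism to the $\mathcal{R}$-equivariant setting.

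First I would define an $\mathcal{R}$-action on $\Omega_{\Gamma/\mathbb{F}_p}$ by forcing $d$ to commute with Dyer-Lashof operations, i.e., $Q^s(d\gamma) := d(Q^s \gamma)$, and then extend to all of $\Omega_{\Gamma/\mathbb{F}_p}$ using the Cartan formula coming from the $\Gamma$-module structure. The hard part is checking that this is well-defined. Three compatibilities must be verified: (i) the action respects the Leibniz relation $d(\gamma_1 \gamma_2) = \gamma_1 d\gamma_2 + \gamma_2 d\gamma_1$, which follows by applying $d$ to the algebra Cartan formula $Q^s(\gamma_1\gamma_2) = \sum Q^j \gamma_1 \cdot Q^{s-j} \gamma_2$ and reindexing; (ii) the Adem relations, which transfer automatically along $d$ from the relations holding in $\Gamma$; and (iii) the unstable-module condition, which is stricter than the algebra version. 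The point for (iii) is that the extra top-degree case is precisely killed by $d$: in characteristic $p$, $d(Q^s \gamma) = d(\gamma^p) = p\gamma^{p-1} d\gamma = 0$ exactly where the module instability demands vanishing, while lower operations already vanish on $\gamma$ by algebra instability. At odd primes the corresponding check for the Bockstein-decorated operations $\beta Q^s$ is analogous, using that $\beta$ behaves as a derivation on the squarezero extension. Once this action is established, $\Omega_{\Gamma/\mathbb{F}_p}$ lives in $a_1 \Mod_{\mathcal{R}, \Gamma}$.

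For the first natural isomorphism, the classical universal property gives that any derivation $\delta\colon \Gamma \to M$ factors uniquely as $\tilde\delta \circ d$ for a $\Gamma$-linear $\tilde\delta\colon \Omega_{\Gamma/\mathbb{F}_p} \to M$, and by construction $\delta$ commutes with each $Q^s$ if and only if $\tilde\delta$ does, since $\Omega_{\Gamma/\mathbb{F}_p}$ is generated as a $\Gamma$-module by $d(\Gamma)$ and the $\mathcal{R}$-action on $\Omega_{\Gamma/\mathbb{F}_p}$ is then determined by the Cartan formula. Naturality in $M$ is immediate. For the second isomorphism, if $M$ is obtained from $\mathcal{U}$ by restriction of scalars along an augmentation $\Gamma \to \mathbb{F}_p$, then the augmentation ideal acts trivially on $M$, so by the standard tensor-hom adjunction a $\Gamma$-linear map $\Omega_{\Gamma/\mathbb{F}_p} \to M$ factors uniquely through $\mathbb{F}_p \otimes_\Gamma \Omega_{\Gamma/\mathbb{F}_p}$; since the quotient map is itself $\mathcal{R}$-equivariant, this identifies $\mathcal{R}$-equivariant maps on each side and yields the stated formula.
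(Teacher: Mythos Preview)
The paper does not actually prove this lemma; it is quoted verbatim from \cite[Lem.~6.3]{GH04} and used as a black box. So there is no ``paper's own proof'' to compare against. That said, your sketch is a faithful reconstruction of the standard argument and is essentially correct: defining $Q^s(d\gamma) = d(Q^s\gamma)$, extending via the Cartan formula, and checking that the Leibniz relation, Adem relations, and the strengthened instability condition all survive is exactly how one shows $\Omega_{\Gamma/\mathbb{F}_p}$ lies in $a_1\Mod_{\mathcal{R},\Gamma}$. Your observation that the critical case $s = |\gamma|$ is handled by $d(\gamma^p) = p\gamma^{p-1}\,d\gamma = 0$ is precisely the mechanism that upgrades algebra-allowability of $\Gamma$ to the sharper module-allowability of $\Omega_{\Gamma/\mathbb{F}_p}$. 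The two displayed isomorphisms then follow formally from the universal property of K\"ahler differentials together with the fact that the $\mathcal{R}$-action on $\Omega_{\Gamma/\mathbb{F}_p}$ is determined on the generators $d\gamma$, as you say.

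One small point you could make more explicit: when you extend from $d\gamma$ to all of $\Omega_{\Gamma/\mathbb{F}_p}$ via the Cartan formula, the $1$-allowability condition for a general element $\gamma'\,d\gamma$ needs a short pigeonhole check (in $Q^s(\gamma'\,d\gamma) = \sum_j Q^j\gamma' \cdot Q^{s-j}(d\gamma)$ with $s \le |\gamma'| + |\gamma|$, each summand vanishes because either $j < |\gamma'|$ or $s-j \le |\gamma|$). This is routine but worth a line. The odd-prime Bockstein case is indeed analogous; the only extra content is that $\beta Q^s$ satisfies its own Cartan formula and the relevant top-degree vanishing again comes down to differentiating a $p$-th power.
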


\begin{rem}
	If $\pi_* A$ is connected in the sense $\pi_0 A \cong \mathbb{F}_p$, then $\pi_0 \phi$ furnishes an augmentation of $\mathbb{F}_p[u^\pm]$ such that the $\mathbb{F}_p[u^\pm]$-module structure on $\pi_* A$ is obtained by restriction of scalars.  The same is true also for $\Omega^t \pi_* A$.  
\end{rem}

	In this case we have a composition of functors \[a_0 \CAlg_\mathcal{R} \xrightarrow{\mathbb{F}_p \otimes_{(-)} \Omega_{(-)/\mathbb{F}_p}} \mathcal{U} \xrightarrow{\Hom_\mathcal{U}(-, M)} \Mod_{\mathbb{F}_p},\] which gives rise to the following Grothendieck spectral sequence

\begin{prop}[{\cite[Prop.~6.4]{GH04}}]
	There is a composite functor spectral sequence \[\Ext_\mathcal{U}^p(D_q(\Gamma / \mathbb{F}_p, \mathbb{F}_p), M) \Rightarrow D_\mathcal{R}^{p+q}(\Gamma, M).\]  
\end{prop}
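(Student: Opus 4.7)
The plan is to invoke Grothendieck's composite functor spectral sequence applied to the factorization
$$\Der_\mathcal{R}(-, M) \;\cong\; \Hom_\mathcal{U}(-, M) \circ \bigl(\mathbb{F}_p \otimes_{(-)} \Omega_{(-)/\mathbb{F}_p}\bigr)$$
supplied by the preceding lemma. On the source side, $a_0 \CAlg_\mathcal{R}$ carries a projective class given by the $F$-free algebras, and $D_\mathcal{R}^{\bullet}$ is by construction the associated cotriple-derived functor. On the target side, $\mathcal{U}$ has enough projectives — the free unstable $\mathcal{R}$-modules on graded $\mathbb{F}_p$-vector spaces — and $\Ext_\mathcal{U}^{\bullet}$ is the usual right-derived functor of $\Hom_\mathcal{U}$. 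It therefore suffices to verify Grothendieck's condition, that the left functor carries $F$-free algebras to $\mathcal{U}$-projectives.

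The main technical step, which I expect to be the crux, is to establish a natural isomorphism
$$\mathbb{F}_p \otimes_{F(V)} \Omega_{F(V)/\mathbb{F}_p} \;\cong\; \text{the free unstable } \mathcal{R}\text{-module on } V$$
for every graded $\mathbb{F}_p$-vector space $V$. The strategy is Yoneda: the preceding lemma identifies
$$\Hom_\mathcal{U}\bigl(\mathbb{F}_p \otimes_{F(V)} \Omega_{F(V)/\mathbb{F}_p},\, M\bigr) \;\cong\; \Der_\mathcal{R}(F(V), M),$$
and a Dyer--Lashof-compatible derivation out of $F(V)$ is freely determined by its restriction to the generators $V$ — any such derivation must annihilate both products and $p$-th powers, and these are precisely the extra pieces of structure that $F$ adds to $V$ (the unstability forcing the top operation to be $p$-th power). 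Both sides therefore corepresent $M \mapsto \Hom_{\mathbb{F}_p}(V, M)$ on $\mathcal{U}$, so they agree naturally in $V$. In particular each value is $\mathcal{U}$-projective.

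With the projectivity established, the spectral sequence drops out in the standard way. Pick a simplicial $F$-free resolution $P_\bullet \to \Gamma$ in $a_0 \CAlg_\mathcal{R}$ and form $C_\bullet := \mathbb{F}_p \otimes_{P_\bullet} \Omega_{P_\bullet/\mathbb{F}_p}$, a simplicial object in $\mathcal{U}$ whose homotopy groups are, by definition, the André--Quillen-type invariants $D_q(\Gamma/\mathbb{F}_p, \mathbb{F}_p)$ and whose terms are $\mathcal{U}$-projective. Choosing an injective resolution $M \to J^\bullet$ in $\mathcal{U}$ and running the usual two spectral sequences of the double complex $\Hom_\mathcal{U}(C_\bullet, J^\bullet)$: one filtration collapses at $E_1$ by projectivity of the $C_n$, yielding the diagonal
$$\pi^{p+q}\Hom_\mathcal{U}(C_\bullet, M) \;=\; \pi^{p+q}\Der_\mathcal{R}(P_\bullet, M) \;=\; D_\mathcal{R}^{p+q}(\Gamma, M);$$
the other filtration has $E_2$-page $\Ext_\mathcal{U}^p(\pi_q C_\bullet, M) = \Ext_\mathcal{U}^p(D_q(\Gamma/\mathbb{F}_p, \mathbb{F}_p), M)$, as claimed.

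The main obstacle is the projectivity computation, where one must carefully track the interplay of commutative-algebra Kähler differentials with the unstability constraints on the Dyer--Lashof action on $F(V)$; getting the correct free object in $\mathcal{U}$ rather than a quotient depends on knowing that decomposables in $F(V)$ are generated by products and top-operation $p$-th powers, each of which is killed by any $\mathcal{R}$-linear derivation. Beyond this check the rest is entirely formal and essentially identical to \cite[Prop.~6.4]{GH04}.
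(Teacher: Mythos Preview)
Your proposal is correct and follows exactly the approach the paper indicates: the paper does not give a proof of its own but simply sets up the factorization $\Der_\mathcal{R}(-,M) \cong \Hom_\mathcal{U}(-,M) \circ (\mathbb{F}_p \otimes_{(-)} \Omega_{(-)/\mathbb{F}_p})$ in the preceding lemma and then cites \cite[Prop.~6.4]{GH04} for the resulting Grothendieck spectral sequence. Your elaboration of the projectivity check and the double-complex argument is more detailed than what either the paper or the cited reference spells out, but it is the same argument.
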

	
	\noindent Here, $D_*(\Gamma /\mathbb{F}_p, \mathbb{F}_p)$ is the ordinary Andr\'{e}-Quillen homology of $\Gamma$ with coefficients in $\mathbb{F}_p$.  
	
	In our case of interest, we want to compute $D_*(\mathbb{F}_p[u^\pm] / \mathbb{F}_p, \mathbb{F}_p)$.  
	
\begin{prop}
	$D_q(\mathbb{F}_p[u^\pm]/\mathbb{F}_p, \mathbb{F}_p) \cong \mathbb{F}_p$ is $q = 0$, and is $0$ otherwise.  
\end{prop}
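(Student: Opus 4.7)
The plan is to exploit that $\mathbb{F}_p[u^\pm]$ is étale over $\mathbb{F}_p$ in the sense that its cotangent complex is free of rank one concentrated in homological degree zero; the Andre-Quillen computation is then immediate.

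More precisely, I would proceed in three short steps. First, since $\mathbb{F}_p[u]$ is a polynomial algebra, its cotangent complex is concentrated in degree zero with $L_{\mathbb{F}_p[u]/\mathbb{F}_p} \simeq \mathbb{F}_p[u]\,du$, a free module of rank one. Second, since $\mathbb{F}_p[u^\pm]$ is the localization of $\mathbb{F}_p[u]$ at $u$, and localizations are formally étale, the relative cotangent complex $L_{\mathbb{F}_p[u^\pm]/\mathbb{F}_p[u]}$ vanishes. The transitivity cofiber sequence
\[
L_{\mathbb{F}_p[u]/\mathbb{F}_p} \otimes^L_{\mathbb{F}_p[u]} \mathbb{F}_p[u^\pm] \to L_{\mathbb{F}_p[u^\pm]/\mathbb{F}_p} \to L_{\mathbb{F}_p[u^\pm]/\mathbb{F}_p[u]}
\]
then produces $L_{\mathbb{F}_p[u^\pm]/\mathbb{F}_p} \simeq \mathbb{F}_p[u^\pm]\,du$. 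Finally, tensoring along the augmentation $u \mapsto 1$ yields
\[
D_*(\mathbb{F}_p[u^\pm]/\mathbb{F}_p, \mathbb{F}_p) \cong \pi_*\bigl(\mathbb{F}_p \otimes^L_{\mathbb{F}_p[u^\pm]} \mathbb{F}_p[u^\pm]\,du\bigr) \cong \mathbb{F}_p,
\]
concentrated in degree zero as claimed.

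I do not expect a substantive obstacle: the proof is a matter of assembling standard commutative-algebra inputs, and the only delicate point is the vanishing of the relative cotangent complex for a localization. If one prefers a self-contained argument, one can instead use the presentation $\mathbb{F}_p[u^\pm] \cong \mathbb{F}_p[u,v]/(uv-1)$ and exploit that $uv-1$ is a nonzerodivisor in $\mathbb{F}_p[u,v]$ to exhibit $L_{\mathbb{F}_p[u^\pm]/\mathbb{F}_p}$ as the two-term Koszul complex $\mathbb{F}_p[u^\pm]\cdot d(uv-1) \to \mathbb{F}_p[u^\pm]\{du,dv\}$, whose homology is the free rank-one module $\mathbb{F}_p[u^\pm]\,du$ in degree zero and zero elsewhere. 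Either way, the conclusion is the same.
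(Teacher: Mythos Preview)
Your proof is correct and follows essentially the same route as the paper: use the transitivity sequence for $\mathbb{F}_p \to \mathbb{F}_p[u] \to \mathbb{F}_p[u^\pm]$, note that the localization is \'etale so its relative cotangent complex vanishes, conclude that $L_{\mathbb{F}_p[u^\pm]/\mathbb{F}_p}$ is free of rank one, and then base-change to $\mathbb{F}_p$. One small terminological slip: in your opening sentence you call $\mathbb{F}_p[u^\pm]$ ``\'etale over $\mathbb{F}_p$,'' but the property you actually describe---cotangent complex free of rank one in degree zero---is \emph{smoothness}, not \'etaleness; the argument itself is unaffected.
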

\begin{proof}
	Since $\mathbb{F}_p[u^\pm]$ is smooth over $\mathbb{F}_p$, the cotangent complex $\mathbb{L}_{\mathbb{F}_p[u^\pm]/\mathbb{F}_p} \simeq \Omega_{\mathbb{F}_p[u^\pm] /\mathbb{F}_p}$ is concentrated in degree zero.  We have a transitivity sequence \[\mathbb{L}_{\mathbb{F}_p[u]/\mathbb{F}_p} \otimes_{\mathbb{F}_p[u]} \mathbb{F}_p[u^\pm] \to \mathbb{L}_{\mathbb{F}_p[u^\pm]/\mathbb{F}_p} \to \mathbb{L}_{\mathbb{F}_p[u^\pm]/\mathbb{F}_p[u]}.\]  The last term vanishes because $\mathbb{F}_p[u^\pm]$ is \'{e}tale over $\mathbb{F}_p[u]$.  Furthermore, $\mathbb{L}_{\mathbb{F}_p[u]/\mathbb{F}_p}$ is free of rank one.  So $\mathbb{L}_{\mathbb{F}_p[u^\pm]/\mathbb{F}_p} \simeq \mathbb{F}_p[u^\pm]$.  
	
	Hence, \[D_q(\mathbb{F}_p[u^\pm]/\mathbb{F}_p, \mathbb{F}_p) \cong H_q(\mathbb{L}_{\mathbb{F}_p[u^\pm]/\mathbb{F}_p} \otimes_{\mathbb{F}_p[u^\pm]} \mathbb{F}_p) \cong \begin{cases} \mathbb{F}_p, & q = 0 \\ 0, & \text{otherwise} \end{cases}\] as wanted.  
\end{proof}

	As a consequence, the composite functor spectral sequence degenerates, and the $E_2$-page of the homotopy spectral sequence is \[D_\mathcal{R}^s(\mathbb{F}_p[u^\pm], \Omega^t \pi_* A) \cong \Ext_\mathcal{U}^s(\mathbb{F}_p, \Omega^t \pi_* A).\]  
	
\subsection{Koszul resolutions}
	Following Priddy \cite{Pri70} and Miller \cite{Mil78}, we construct a complex computing $\Ext_\mathcal{U}$.  We shall first sketch the general theory of co-Koszul complexes for computing $\Ext$ over a homogeneous Koszul algebra, then recall the cohomology of the Dyer-Lashof algebra, and finally incorporate the unstability conditions to obtain a complex computing $\Ext_\mathcal{U}$.  Our main result in this section is theorem \ref{thm:UnstableCokoszul}, but we will first have to introduce some notation before stating it.  
	
\subsubsection{Ext over homogeneous Koszul algebras}
	Let $A$ be a homogeneous Koszul algebra over a field $k$, and $M$ a (left) $A$-module.  Let $\{a_i\}_{i \in I}$ be a set of Koszul generators for $A$, and let the admissible relations for $A$ be given by \[a_i a_j = \sum_{(k,l) \in S} f(i,j;k,l) a_k a_l.\]  
	
\begin{defn}[{\cite[Thm.~4.6]{Pri70}}]
\mbox{}
\begin{enumerate}[(a)]
	\item The \emph{co-Koszul complex} $\bar{K}^*(A)$ computing $\Ext_A(k,k)$ is the differential algebra 
	\mbox{}
	\begin{itemize}
		\item generated by $\{\beta_i\}_{i \in I}$ with $\deg \beta_i = (1,i)$,
		\item for each $(i,j) \in S$, a relation \[(-1)^{\nu_{i,j}} \beta_i \beta_j + \sum_{k,l \notin S} (-1)^{\nu_{k,l}} f(k,l;i,j) \beta_k \beta_l,\] where $\nu_{u,v} = \deg \beta_u + (\deg \beta_u - 1)(\deg \beta_v - 1)$,
		\item and zero differential (since $A$ is homogeneous).  
	\end{itemize}

	\item The co-Koszul complex computing $\Ext_A(k,M)$ is the differential right $\bar{K}^*(A)$-module $M \widehat{\otimes} \bar{K}^*(A)$, with differential determined by \[d(m \otimes 1) = \sum_{i \in I} (-1)^{\deg \beta_i} a_i m \otimes \beta_i.\]
\end{enumerate}
\end{defn}

\begin{rem}
	Because of the linear dualization and the fact that $A$ and $M$ need not be finite-dimensional, the tensor product in (b) needs to be completed, i.e., we allow infinite linear combinations of the basic tensors.  
	
	This is an artifact of expressing $\Hom$ in terms of a tensor product.  A better description of the co-Koszul complex would be $\Hom_F(\bar{K}_*(A), M)$.
\end{rem}

\begin{rem}
	The differential in (b) is obtained as follows.  Given a left $A$-module $M$, we have $\Hom_A(k,M) \cong (M^\vee \otimes_A F)^\vee$.  Form the cobar construction $(M^\vee \otimes T(I(A)))^\vee$.  According to \cite[Eqn.~4.2]{Pri70}, the differential is obtained from the coaction on $M$ by $A^\vee$.  In terms of $M$, this is obtained by \[M \cong M \otimes k \xrightarrow{1 \otimes \operatorname{cotr}} M \otimes A \otimes A^\vee \cong A \otimes M \otimes A^\vee \to M \otimes A^\vee.\]  We project to the length 1 words in $A^\vee$ to get the formula for the differential.  
\end{rem}

\subsubsection{The cohomology of the Dyer-Lashof algebra}
\label{sec:DL}

	The mod 2 Dyer-Lashof algebra $\mathcal{R}$ is the graded $\mathbb{F}_2$-algebra with generators $\{Q^i\}_{i \geq 0}$, with $\deg Q^i = i$ and Adem relations \[Q^i Q^j = \sum_k \binom{k-j-1}{2k-i} Q^{i+j-k} Q^k, \quad i > 2j.\]  Given a sequence $I = (i_1, \ldots, i_s)$ of non-negative integers, we abbreviate $Q^I := Q^{i_1} \cdots Q^{i_s}$.  Call a sequence $I$ \emph{allowable} if $i_j \leq 2i_{j+1}$.  Then $\{Q^I \mid I \text{ allowable}\}$ forms a basis for $\mathcal{R}$.  Moreover, $\mathcal{R}$ is a pre-Koszul algebra with generators $\{Q^i\}_{i \geq 0}$ and admissible relations as indicated above.  In fact, the basis of allowable monomials is a PBW basis for $\mathcal{R}$; hence $\mathcal{R}$ is a homogeneous Koszul algebra.  
	
	Following the prescription in the previous section, the cohomology \[H^*(\mathcal{R}) := \Ext_\mathcal{R}^*(\mathbb{F}_2, \mathbb{F}_2) \cong \bar{K}^*(\mathcal{R})\] of the Dyer-Lashof algebra can be computed.  
	
\begin{prop}[\cite{KL83}]
	The cohomology of $\mathcal{R}$ is the algebra 
	\begin{itemize}
		\item generated by $\{\beta_i\}_{i \geq 0}$ with $\deg \beta_i = i+1$, 
		\item and for each $i \leq 2j$, a relation \[\beta_i \beta_j = \sum_k \binom{j-k-1}{i-2k-1} \beta_{i+j-k} \beta_k.\]  
	\end{itemize}
\end{prop}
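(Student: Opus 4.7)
The plan is to apply the co-Koszul construction from Priddy's theorem directly to $\mathcal{R}$, using that it is a homogeneous Koszul algebra (just established via the PBW basis of allowable monomials) with Koszul generators $\{Q^i\}_{i \geq 0}$ and admissible relations the Adem relations. Because $\mathcal{R}$ is homogeneous, the differential on $\bar{K}^*(\mathcal{R})$ vanishes, so the theorem provides a direct presentation of $H^*(\mathcal{R}) = \Ext^*_\mathcal{R}(\mathbb{F}_2, \mathbb{F}_2)$ as a quotient of the free $\mathbb{F}_2$-algebra on generators $\beta_i$ of degree $i+1$ dual to $Q^i$.

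The main computation is to unwind the dualization. In Priddy's formula, for each admissible pair $(i,j) \in S$ (i.e., $i \leq 2j$), the relation reads
\[ (-1)^{\nu_{i,j}} \beta_i \beta_j + \sum_{(I,J) \notin S} (-1)^{\nu_{I,J}} f(I,J;i,j)\, \beta_I \beta_J = 0, \]
where $f(I,J;i,j)$ is the coefficient of $Q^i Q^j$ in the Adem expansion of $Q^I Q^J$ for $(I,J)$ non-admissible. From $Q^I Q^J = \sum_k \binom{k-J-1}{2k-I} Q^{I+J-k} Q^k$, the pair $(i,j) = (I+J-k,k)$ appears precisely when $k = j$ and $I+J = i+j$; reparameterizing $J = r$ so $I = i+j-r$, the non-admissibility $I > 2J$ becomes $r < (i+j)/3$, and the structure constant specializes to $\binom{j-r-1}{j-i+r}$.

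Combining these, and using that signs are trivial in characteristic $2$, Priddy's relation for admissible $(i,j)$ takes the form $\beta_i \beta_j = \sum_r \binom{j-r-1}{j-i+r} \beta_{i+j-r} \beta_r$. To match the presentation claimed in the proposition, apply the characteristic-$2$ symmetry $\binom{n}{s} = \binom{n}{n-s}$ with $n = j-r-1$ and $s = j-i+r$: then $n-s = i-2r-1$, yielding $\binom{j-r-1}{j-i+r} = \binom{j-r-1}{i-2r-1}$, which is precisely the stated relation (the support of $r$ being implicit in the vanishing of the binomial coefficient outside $0 \leq i-2r-1 \leq j-r-1$). The only real obstacle is careful bookkeeping through the re-indexing of the Adem expansion and verifying this binomial identity; the homological input — that the co-Koszul complex of a homogeneous Koszul algebra has zero differential and computes $\Ext$ — is supplied by Priddy's theorem.
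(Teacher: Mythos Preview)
Your proposal is correct and follows precisely the approach the paper indicates: the paper does not give a detailed proof but simply states that the result follows ``following the prescription in the previous section'' (i.e., Priddy's co-Koszul construction for a homogeneous Koszul algebra) and cites \cite{KL83}. Your argument spells out exactly that computation --- dualizing the Adem relations via Priddy's Theorem~4.6, re-indexing, and applying the binomial symmetry $\binom{n}{s}=\binom{n}{n-s}$ --- so there is nothing to compare.
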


	This cohomology algebra becomes more recognizable if we write the formal symbol $\Sq^{i+1}$ for $\beta_i$.  Then $H^* \mathcal{R}$ is the algebra with generators $\{\Sq^i\}_{i \geq 1}$ and relations \[\Sq^i \Sq^j = \sum_k \binom{j-k-1}{i-2k} \Sq^{i+j-k} \Sq^k\] for $i < 2j$.  In other words, we recover the usual Adem relations for the mod 2 Steenrod algebra, except we have $\Sq^0 = 0$ instead.  
	
	When $p$ is odd, the mod $p$ Dyer-Lashof algebra is the graded $\mathbb{F}_p$-algebra generated by symbols $\{Q^i\}_{i \geq 0} \cup \{\beta Q^i\}_{i \geq 1}$ where $\deg Q^i = 2i(p-1)$ and $\deg \beta Q^i = 2i(p-1) - 1$, subject to the Adem relations \[\beta^\epsilon Q^i Q^j = \sum_k (-1)^{i+k} \binom{(p-1)(k-j) - 1}{pk-i} \beta^\epsilon Q^{i+j-k} Q^k\] for $i > pj$, and \[\beta^\epsilon Q^i \beta Q^j = \sum_k (-1)^{i+k} \binom{(p-1)(k-j)}{pk-i} \beta^\epsilon \beta Q^{i+j-k} Q^k - \sum_k (-1)^{i+k} \binom{(p-1)(k-j)-1}{pk-i-1} \beta^\epsilon Q^{i+j-k} \beta Q^k\] for $i \geq pj$.
	
	Monomials in the generators $\{Q^i\}_{i \geq 0} \cup \{\beta Q^i\}_{i \geq 1}$ also form a PBW basis for $\mathcal{R}$, so $\mathcal{R}$ is again a homogeneous Koszul algebra.  We have the following analogue of the proposition above.
	
\begin{prop}[\cite{KL83}]
	For $p$ an odd prime, $H^*(\mathcal{R})$ is isomorphic to the mod $p$ Steenrod algebra for restricted Lie algebras.  
\end{prop}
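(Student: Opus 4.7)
The plan is to apply the general machinery of co-Koszul complexes developed in the previous subsection to the presentation of the odd primary Dyer-Lashof algebra, and then match the resulting generators and quadratic relations with those of the mod $p$ Steenrod algebra for restricted Lie algebras. Since $\mathcal{R}$ is homogeneous Koszul with PBW basis the allowable monomials in $\{Q^i, \beta Q^i\}$, the co-Koszul differential vanishes, so $\bar{K}^*(\mathcal{R}) = H^*(\mathcal{R})$ is determined as an associative algebra by its dual generators and quadratic relations alone.

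First I would introduce dual generators $\xi_i$ and $\eta_i$ in bidegrees $(1, 2i(p-1))$ and $(1, 2i(p-1)-1)$, dual respectively to $Q^i$ (for $i \geq 0$) and $\beta Q^i$ (for $i \geq 1$). Applying the co-Koszul relation formula to each of the two families of Adem relations in $\mathcal{R}$ produces a pair of families of quadratic relations among the $\xi$'s and $\eta$'s. The sign convention $\nu_{u,v} = \deg \beta_u + (\deg \beta_u - 1)(\deg \beta_v - 1)$ enters here non-trivially, since at odd primes our generators have varying internal parity, and the Bockstein-twisted Adem relations mix the two types of generators.

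Next I would reindex by renaming $\xi_i$ as a formal symbol $P^{i+1}$ and $\eta_i$ as $\beta P^{i+1}$ (up to sign conventions), in analogy with the $p=2$ case of the previous proposition where $\beta_i$ was renamed $\Sq^{i+1}$. After this substitution the quadratic relations obtained from the co-Koszul complex should reproduce the classical Adem relations for $P^i P^j$ and $P^i \beta P^j$ at odd $p$, with the single caveat that $P^0$ never appears as a generator. This is precisely the defining presentation of the Steenrod algebra for restricted Lie algebras, so once the relations are matched we obtain the desired isomorphism of graded algebras.

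The main obstacle is the combinatorial identity matching: one must verify that the binomial coefficients $\binom{(p-1)(k-j)-1}{pk-i}$ and $\binom{(p-1)(k-j)}{pk-i}$ appearing in the Adem relations of $\mathcal{R}$ become, after the index shift $i \mapsto i+1$ and the insertion of the $\nu_{u,v}$ signs, precisely the binomial coefficients in the classical odd primary Steenrod algebra relations. This is the same kind of manipulation Priddy carried out for the Steenrod algebra itself and which is performed in \cite{KL83} for $\mathcal{R}$; it involves no conceptual difficulty beyond careful sign and index tracking, but is considerably more intricate at odd $p$ than at $p=2$ because one must handle both the purely $Q$-type and the Bockstein-twisted Adem relations simultaneously.
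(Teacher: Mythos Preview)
The paper does not give a proof of this proposition at all; it simply cites \cite{KL83} and records the pairing $\langle \beta^\epsilon P^i, \beta^\delta Q^j\rangle = 1$ iff $\delta+\epsilon=1$ and $i=j$. Your overall strategy---use that $\mathcal{R}$ is homogeneous Koszul, so $H^*(\mathcal{R})=\bar K^*(\mathcal{R})$ with zero differential, and then identify the dual quadratic presentation---is exactly the method of \cite{KL83} and is correct in outline.

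However, your proposed reindexing is wrong, and this is not merely a sign convention. You suggest renaming the dual of $Q^i$ as $P^{i+1}$ and the dual of $\beta Q^i$ as $\beta P^{i+1}$, by analogy with the shift $\beta_i\mapsto \Sq^{i+1}$ at $p=2$. But the pairing recorded in the paper shows that at odd primes there is \emph{no index shift}; instead there is a Bockstein swap: the generator dual to $Q^i$ is $\beta P^i$, and the generator dual to $\beta Q^i$ is $P^i$. You can already see this on degrees: $Q^i$ has internal degree $2i(p-1)$, and in the restricted-Lie Steenrod algebra it is $\beta P^i$ (of degree $2i(p-1)+1$, matching after adding the homological degree $1$) that sits in the right spot, not $P^{i+1}$ (degree $2(i+1)(p-1)$). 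With the wrong identification the binomial coefficients will not line up with the classical Adem relations, so the ``combinatorial identity matching'' step you describe would fail as written. Once you replace the index shift by the Bockstein swap, the rest of your plan goes through as in \cite{KL83}.
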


	We'll use the notation from the mod $p$ Steenrod algebra to refer to classes in the cohomology of $\mathcal{R}$.  In particular, there is a nonsingular pairing between $\mathcal{R}$ and $\mathcal{A}_L$: \[\langle \beta^\epsilon P^i, \beta^\delta Q^j \rangle = \begin{cases} 1, & \delta + \epsilon = 1 \text{ and } i = j \\ 0, & \text{otherwise}. \end{cases}\]

\subsubsection{Unstability conditions}
	We now specialize to the prime $p = 2$ for expositional convenience; we sketch the (mostly cosmetic) modifications for odd primes in theorem \ref{thm:OddPCokoszul}.  

	Recall that $\mathcal{U}$ is the full subcategory of $\Mod_\mathcal{R}$ consisting of non-negatively graded 1-allowable modules over the Dyer-Lashof algebra.  It is an abelian category on which a projective class is defined by the left adjoint $U: \Mod_{\mathbb{F}_2} \to \mathcal{U}$ to the forgetful functor.  Concretely, $U$ is given by the formula \[U(V) = \mathcal{R} \otimes V / \{Q^i x \mid i \leq \deg x\}.\]  Also, set $R(V)$ to be the free $\mathcal{R}$-module $\mathcal{R} \otimes V$.  
	
	Let $M \in \mathcal{U}$.  Our goal is to compute $\Ext_\mathcal{U}^*(\mathbb{F}_2, M)$, defined as the homology of the standard complex $\Hom_\mathcal{R}(U^{\bullet + 1} \mathbb{F}_2, M)$.  Miller \cite[Prop.~3.1.2]{Mil78} observed that we can use the Koszul technology to compute $\Tor_\mathcal{U}$; in this section we explain how to adapt this for $\Ext_\mathcal{U}$.   
	
	First, $\Hom_\mathcal{R}(U^{\bullet + 1} \mathbb{F}_2, M)$ is a subcomplex of the cobar construction for $R$.  So, $\Ext_\mathcal{U}^*(\mathbb{F}_2, M)$ is the homology of a certain subcomplex of the co-Koszul complex $M \widehat{\otimes} \bar{K}^*(\mathbb{R})$.  
	
	To give an indication of what this subcomplex looks like, recall that the usual bar construction \[M^\vee \widehat{\otimes}_\mathcal{R} R^{\bullet + 1} \mathbb{F}_2\] has a basis consisting of $\{f \otimes Q^{I_0} \otimes Q^{I_1} \otimes \cdots \otimes Q^{I_s}\}$ where each $I_j$ is admissible.  On the other hand, the unstable bar construction $M^\vee \widehat{\otimes}_\mathcal{R} U^{\bullet + 1} \mathbb{F}_2$ has a basis given by $\{f \otimes Q^{I_0} \otimes Q^{I_1} \otimes \cdots Q^{I_s}\}$ where not only is each $I_j$ admissible, but also the following excess conditions are satisfied \[e(I_j) > \abs{I_{j+1}} + \cdots + \abs{I_s}, \quad 0 \leq j < s.\]  Moreover, we have \[M^\vee \otimes_\mathcal{R} U^{\bullet + 1} \mathbb{F}_2 \cong \prod_{d \geq 0} (M^\vee \otimes_\mathcal{R} \mathcal{R}/B(d+1)) \otimes (U^\bullet \mathbb{F}_2)_d,\] where $B(d+1) \subset \mathcal{R}$ is the subspace spanned by those $Q^I$ with $I$ allowable of excess $\leq d$.  This makes sense because
	$B(d+1) \subset \mathcal{R}$ is a two-sided ideal.

	Now we apply the Koszul technology to $U^\bullet \mathbb{F}_2$, or rather to its dual.  Let $L(0) \subset \bar{K}^*(\mathcal{R})$ be the subspace spanned by those $\beta_I := \beta_{i_1} \cdots \beta_{i_s}$ for which $i_j \geq i_{j+1} + \cdots + i_s$ for $1 \leq j \leq s$.  Then $L(0)$ is the image of $(U^\bullet \mathbb{F}_2)^\vee$ under the quotient map $(R^\bullet \mathbb{F}_2)^\vee \to \bar{K}^*(\mathcal{R})$.  Note that $L(0)$ is isomorphic to $\mathcal{A}/\mathcal{A} \Sq^1 \cong H\mathbb{F}_2^*(H\mathbb{Z})$, and we shall use the symbols $\Sq^{i+1}$ in place of the $\beta_i$ from now on.  
	
	We also want to identify $(M^\vee \otimes_\mathcal{R} \mathcal{R}/B(d+1))^\vee$.  We have that \[(M^\vee \otimes_\mathcal{R} \mathcal{R}/B(d+1))^\vee \cong \Hom_\mathcal{R}(\mathcal{R}/B(d+1), M) \cong \Ann_{B(d+1)} M\] is the $\mathcal{R}$-submodule of $M$ consisting of elements that are annihilated by $B(d+1)$.  
	
	To summarize, consider the subspace \[K^*(M^\vee, \mathcal{U}, \mathbb{F}_2) := \prod_{d \geq 0} \Ann_{B(d+1)} M \otimes L(0)_d \subseteq M \widehat{\otimes} \bar{K}^*(\mathcal{R}).\]  
\begin{thm}
	\label{thm:UnstableCokoszul}
	$\Ext_\mathcal{U}^*(\mathbb{F}_2, M)$ can be computed as the homology of the complex $K^*(M^\vee, \mathcal{U}, \mathbb{F}_2)$ with differential \[d(x \otimes 1) = \sum_r Q^r x \otimes \Sq^{r+1}.\] 
\end{thm}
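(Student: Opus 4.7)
The plan is to identify the cobar complex $\Hom_\mathcal{R}(U^{\bullet+1}\mathbb{F}_2, M)$ computing $\Ext_\mathcal{U}^*(\mathbb{F}_2, M)$ with the complex $K^*(M^\vee, \mathcal{U}, \mathbb{F}_2)$ by assembling the three ingredients laid out in the discussion preceding the theorem: the excess-degree splitting of the unstable bar construction, Priddy's Koszul retraction applied to $(U^\bullet \mathbb{F}_2)^\vee$, and a Hom-tensor adjunction identifying the coefficient factor as the annihilator submodule.

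First, since $U$ is left adjoint to the forgetful functor from $\mathcal{U}$ to $\Mod_{\mathbb{F}_2}$, this adjunction defines a projective class on $\mathcal{U}$, so $\Ext_\mathcal{U}^*(\mathbb{F}_2, M)$ is the cohomology of $\Hom_\mathcal{R}(U^{\bullet+1}\mathbb{F}_2, M)$. This sits as a subcomplex of the classical cobar complex $\Hom_\mathcal{R}(R^{\bullet+1}\mathbb{F}_2, M)$ via the surjection $R^{\bullet+1}\mathbb{F}_2 \twoheadrightarrow U^{\bullet+1}\mathbb{F}_2$.

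Second, I dualize to the unstable bar construction $M^\vee \widehat{\otimes}_\mathcal{R} U^{\bullet+1}\mathbb{F}_2$. The excess-degree splitting
\[M^\vee \widehat{\otimes}_\mathcal{R} U^{\bullet+1}\mathbb{F}_2 \cong \prod_{d\geq 0} (M^\vee \otimes_\mathcal{R} \mathcal{R}/B(d+1)) \otimes (U^\bullet \mathbb{F}_2)_d\]
combined with the Hom-tensor adjunction $(M^\vee \otimes_\mathcal{R} \mathcal{R}/B(d+1))^\vee \cong \Hom_\mathcal{R}(\mathcal{R}/B(d+1), M) = \Ann_{B(d+1)} M$ yields, upon re-dualizing the first factor,
\[\Hom_\mathcal{R}(U^{\bullet+1}\mathbb{F}_2, M) \cong \prod_{d\geq 0} \Ann_{B(d+1)} M \,\widehat{\otimes}\, ((U^\bullet \mathbb{F}_2)_d)^\vee.\]

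Third, I resolve the second factor via Priddy's theorem: the surjection $(R^{\bullet}\mathbb{F}_2)^\vee \twoheadrightarrow \bar{K}^*(\mathcal{R})$ is a quasi-isomorphism for the Koszul algebra $\mathcal{R}$, and as recorded in the preamble its restriction to $(U^\bullet \mathbb{F}_2)^\vee$ has image $L(0) \subset \bar{K}^*(\mathcal{R})$. Combining with the previous step presents $\Ext_\mathcal{U}^*(\mathbb{F}_2, M)$ as the cohomology of $K^*(M^\vee, \mathcal{U}, \mathbb{F}_2)$. The differential formula is then read off from Priddy's co-Koszul differential (recalled in the remark after the definition of $\bar{K}^*$): the coaction of $\mathcal{R}^\vee$ on $M$, using the pairing $\langle \Sq^{r+1}, Q^s \rangle = \delta_{r,s}$, gives $d(x \otimes 1) = \sum_r Q^r x \otimes \Sq^{r+1}$, with signs trivial at $p=2$.

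The main obstacle will be the bookkeeping in the third step: one must verify that Priddy's retraction is compatible with the excess filtration, so that the defining inequality $i_j \geq i_{j+1} + \cdots + i_s$ on $L(0)$ corresponds under duality to the unstable excess condition $e(I_j) > |I_{j+1}| + \cdots + |I_s|$ on admissible tensors, and that $K^*(M^\vee, \mathcal{U}, \mathbb{F}_2)$ is in fact closed under the co-Koszul differential (equivalently, that applying $Q^r$ moves an element of $\Ann_{B(d+1)} M$ into the annihilator appropriate to its new pair in $L(0)$).
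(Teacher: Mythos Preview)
Your proposal is correct and follows essentially the same approach as the paper: the paper does not give a separate proof block for this theorem but rather presents it as a summary of the preceding discussion, which is precisely the three-step argument you outline (embed the unstable cobar complex into the stable one, split by excess degree and identify the coefficient factor as an annihilator, then apply Priddy's Koszul retraction to land in $L(0)$). Your closing paragraph on the compatibility of the retraction with the excess filtration and the closure of $K^*(M^\vee,\mathcal{U},\mathbb{F}_2)$ under the differential is more explicit than what the paper records, and is exactly the bookkeeping Miller carries out in the dual $\Tor$ setting that the paper cites.
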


	If $p$ is odd, we let $L(0) \subset \bar{K}^*(\mathcal{R})$ be the subspace spanned by those $\beta^{\epsilon_1} P^{i_1} \beta^{\epsilon_2} P^{i_2} \cdots \beta^{\epsilon_s} P^{i_s}$ for which \[2i_j - \epsilon_j \geq 2(p-1)(i_{j+1} + \cdots + i_s) - (\epsilon_{j+1} + \cdots + \epsilon_s)\] for $1 \leq j \leq s$.  Again, let $B(d+1) \subset \mathcal{R}$ is the subspace spanned by those $\beta^\epsilon Q^I$ with $I$ allowable of excess $<d$.  
	
\begin{thm}
	\label{thm:OddPCokoszul}
	For $p$ odd, $\Ext_\mathcal{U}^*(\mathbb{F}_p, M)$ can be computed as the homology of the complex \[K^*(M^\vee, \mathcal{U}, \mathbb{F}_p) := \prod_{d \geq 0} \Ann_{B(d+1)} M \otimes L(0)_d \subseteq M \widehat{\otimes} \bar{K}^*(\mathcal{R}),\] equipped with the differential \[d(x \otimes 1) = \sum_{r \geq 0} \beta Q^r x \otimes P^r - \sum_{r \geq 0} Q^r x \otimes \beta P^r.\]
\end{thm}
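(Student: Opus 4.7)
The plan is to run the proof of Theorem~\ref{thm:UnstableCokoszul} essentially verbatim, tracking the two odd-prime modifications: incorporating the Bockstein generators $\beta Q^i$ into the Dyer-Lashof algebra, and adjusting the excess/admissibility bookkeeping to reflect the odd-prime Adem relations. As before, I would begin by observing that $\Ext_\mathcal{U}^*(\mathbb{F}_p, M)$ is the cohomology of $\Hom_\mathcal{R}(U^{\bullet+1}\mathbb{F}_p, M)$, and that this sits as a subcomplex of the full cobar construction $\Hom_\mathcal{R}(R^{\bullet+1}\mathbb{F}_p, M)$ whose cohomology is $\Ext_\mathcal{R}^*(\mathbb{F}_p, M) = H^*(\mathcal{R}; M)$.

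Next, by dualizing, I identify this subcomplex as a subspace of the co-Koszul complex $M \widehat{\otimes} \bar{K}^*(\mathcal{R})$ supplied by \cite{KL83}. A PBW basis for the unstable bar construction $M^\vee \widehat{\otimes}_\mathcal{R} U^{\bullet+1}\mathbb{F}_p$ consists of strings $f \otimes \beta^{\delta_0}Q^{I_0} \otimes \cdots \otimes \beta^{\delta_s}Q^{I_s}$ where each $\beta^{\delta_j}Q^{I_j}$ is admissible and the appropriate odd-prime excess condition is satisfied internally and between adjacent factors. Under the projection $(R^{\bullet+1}\mathbb{F}_p)^\vee \to \bar{K}^*(\mathcal{R})$, the image of $(U^{\bullet+1}\mathbb{F}_p)^\vee$ is precisely the subspace $L(0)$ described in the statement, where the inequality $2i_j - \epsilon_j \geq 2(p-1)(i_{j+1}+\cdots+i_s)-(\epsilon_{j+1}+\cdots+\epsilon_s)$ is the dualized excess condition on an admissible monomial $\beta^{\epsilon_1}P^{i_1}\cdots\beta^{\epsilon_s}P^{i_s}$. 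The ideal $B(d+1) \subset \mathcal{R}$ is two-sided (this needs checking at odd primes but follows from the Adem relations above), so the factorization \[M^\vee \widehat{\otimes}_\mathcal{R} U^{\bullet+1}\mathbb{F}_p \cong \prod_{d \geq 0}(M^\vee \otimes_\mathcal{R} \mathcal{R}/B(d+1)) \otimes (U^\bullet \mathbb{F}_p)_d\] goes through, and dualizing the first tensor factor gives $\Ann_{B(d+1)} M$.

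Finally, I compute the differential from the Koszul prescription $d(x \otimes 1) = \sum_i (-1)^{\deg \beta_i} a_i x \otimes \beta_i$. At odd primes the Koszul generators of $\mathcal{R}$ are $\{Q^r\}_{r \geq 0} \cup \{\beta Q^r\}_{r \geq 1}$, dual under the pairing of section~\ref{sec:DL} to $\beta P^r$ and $P^r$ respectively. The sign $(-1)^{\deg \beta_i}$ reduces to $+1$ on the generator $\beta Q^r$ (whose cohomological partner $P^r$ has even internal degree shift) and to $-1$ on $Q^r$ (whose partner $\beta P^r$ carries an extra Bockstein), yielding exactly \[d(x \otimes 1) = \sum_{r \geq 0} \beta Q^r x \otimes P^r - \sum_{r \geq 0} Q^r x \otimes \beta P^r.\]

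The step I expect to be the main obstacle is the excess bookkeeping in the middle step. At the prime $2$ the single inequality $e(I_j) > |I_{j+1}|+\cdots+|I_s|$ neatly dualizes to the $L(0)$ condition; at odd primes one has to weave together the two generator types and their different degrees, and one must verify that the projection to the co-Koszul complex really does land in (and surject onto) $L(0)_d$ after quotienting by $B(d+1)$. The rest of the argument, including the sign bookkeeping in the differential, is cosmetic and follows the $p=2$ template once the bases are correctly identified.
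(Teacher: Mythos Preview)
Your proposal is correct and matches the paper's approach exactly. The paper does not give an independent proof of this theorem; it states just before Theorem~\ref{thm:UnstableCokoszul} that the odd-prime modifications are ``mostly cosmetic'' and, in the remark immediately following Theorem~\ref{thm:OddPCokoszul}, points to linear dualization of Miller's unstable $\Tor$ results (worked out at odd primes in Hess' thesis) as the underlying argument---precisely the template you have written out.
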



\begin{rem}
	These theorems should be thought of as the ``$\Ext$'' analogues of Miller's results on unstable $\Tor$, and are rather straightforwardly related by linear dualization.  At odd primes, the computation of unstable $\Tor$ is worked out in Hess' thesis \cite{Hes19}.  
\end{rem}

\subsection{Examples}
	In this section, we apply theorem \ref{thm:UnstableCokoszul} to compute some $\Ext_\mathcal{U}$ groups, and hence shed light on strict units of some $E_\infty$-$H$-algebras.  
	
	Our main example is $A = H\mathbb{F}_2[u]$ where $\deg u = 2$; here $H\mathbb{F}_2[u]$ is given the $E_\infty$-structure coming from the multiplication of the ordinary graded ring $\mathbb{F}_2[u]$.  The choice of the degree of $u$ only changes things superficially.
	
\begin{rem}
	The units of the ring spectrum $H\mathbb{F}_2[u]$ was first constructed by Segal \cite{Seg75}.
\end{rem}
	
	The $\mathcal{R}$-module structure on $\mathbb{F}_2[u]$ is determined by the equation \[Q^r u = \begin{cases} u^2, & r = 2 \\ 0, & \text{otherwise}.\end{cases}\]  
	
	Application of the Cartan formula shows that for $n > 0$, \[Q^r u^n = \begin{cases} u^{2n}, & r= 2n \\ 0, & \text{otherwise}, \end{cases}\] so for $I$ allowable we have $Q^I u^n = 0$ unless $I = (2^s n, \ldots, 4n, 2n)$.  In particular, $e(I) = 2n$.  It follows that \[\Ann_{B(d+1)} \mathbb{F}_2[u] \cong \mathbb{F}_2 \{u^n \mid 2n > d\},\] and thus \[K^*(\mathbb{F}_2[u]^\vee, \mathcal{U}, \mathbb{F}_2) \cong \prod_{n \geq 1} u^n \otimes L(0)_{<2n}.\]  The differential in this unstable co-Koszul complex is given by \[d(u^n \Sq^{I+1}) = \sum_{r \geq 0} (Q^r u^n) \Sq^{r+1, I+1} = u^{2n} \Sq^{2n+1, I+1}\] for $I + 1$ admissible and $2n > \abs{I}$.   The verification of the fact $d^2 = 0$ uses the easy identity $\Sq^{2r+1} \Sq^{r+1} = 0$.  Therefore, we have
	
\begin{prop}
	\label{prop:E2GHM}
	The $E_2$-page of the Goerss-Hopkins-Miller spectral sequence computing $\mathbb{G}_m(H\mathbb{F}_2[u])$ is \[\prod_{n \text{ odd}} u^n \otimes \ker(\Sq^{2n+1}) \times \prod_{n \text{ even}} u^n \otimes \ker(\Sq^{2n+1})/\im(\Sq^{n+1}).\]  
\end{prop}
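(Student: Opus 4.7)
The plan is to compute the homology of the co-Koszul complex identified in the discussion preceding the proposition, using Theorem \ref{thm:UnstableCokoszul}. The crucial feature of the differential $d(u^n \otimes \Sq^{I+1}) = u^{2n} \otimes \Sq^{2n+1,\,I+1}$ is that it shifts the $u$-index by a factor of two, so as a graded module the complex decomposes into subcomplexes indexed by the orbits $\{m, 2m, 4m, \ldots\}$ of the doubling map, one for each odd $m \geq 1$. Within each orbit, the $u^n$-position is the target of a differential precisely when $n$ is even, and the source of a differential for every $n \geq 1$.

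First I would handle the case $n$ odd: since no element maps into the $u^n$-summand, the homology at that position is simply the kernel of $d$, which is
\[
u^n \otimes \ker\bigl(\Sq^{2n+1}\colon L(0)_{<2n} \to L(0)\bigr),
\]
matching the first factor in the proposition. Next, for $n$ even, I would identify the incoming image from $u^{n/2}$; the differential formula shows this image equals $u^n \otimes \im(\Sq^{n+1}\colon L(0)_{<n} \to L(0)_{<2n})$, and a short check of the excess condition defining $L(0)$ confirms that left multiplication by $\Sq^{n+1}$ carries $L(0)_{<n}$ into $L(0)_{<2n}$. The relation $\Sq^{2n+1}\Sq^{n+1} = 0$, already invoked in the text as the reason $d^2 = 0$, guarantees that this image is contained in the kernel, so the homology at $u^n$ is
\[
u^n \otimes \ker(\Sq^{2n+1})/\im(\Sq^{n+1}),
\]
as in the second factor. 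Taking the product over $n \geq 1$ and splitting by parity then yields the proposition.

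The proof is essentially bookkeeping once Theorem \ref{thm:UnstableCokoszul} and the Dyer-Lashof action on $\mathbb{F}_2[u]$ are in hand; the only mild obstacle is keeping track of the admissibility/excess conditions carefully enough to justify the decomposition and to confirm that left multiplication by $\Sq^{n+1}$ indeed preserves the relevant filtration pieces of $L(0)$.
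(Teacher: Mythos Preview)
Your proposal is correct and matches the paper's approach: the proposition is stated immediately after the complex $\prod_{n \geq 1} u^n \otimes L(0)_{<2n}$ and its differential $d(u^n \Sq^{I+1}) = u^{2n}\Sq^{2n+1,I+1}$ are written down, and the splitting by doubling-orbits you use is exactly what the paper records in Remark~\ref{rem:GHSS}(c). The only thing you make explicit that the paper leaves implicit is the case split on the parity of $n$ and the verification that $\Sq^{n+1}$ lands in the right filtration piece, both of which are straightforward.
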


	Before continuing, let us be more concrete and analyze the strict units of the truncated polynomial rings $H\mathbb{F}_2[u]/u^{n+1}$.  For instance, let $n = 5$.  The $E_1$-page of the spectral sequence computing $\mathbb{G}_m(H\mathbb{F}_2[u]/u^6)$ is shown in Figure \ref{fig:GHMSS}.  
	
\begin{sseqdata}[
	name=GHMSS1, Adams grading, 
	classes=fill,
	class pattern=linear, class placement transform={rotate=90},
	class labels={left}, label distance=1pt,
	axes type=center, left clip padding=10mm,
	font=\tiny,
	xscale=1.2,
	yscale=1.5
	]
	\class["u"](2,0)
	\class["u \Sq^2"](0,1)
	\class["u \Sq^3"](-1,1)

	\class["u^2"](4,0)
	\class["u^2 \Sq^2"](2,1)
	\class["u^2 \Sq^3"](1,1)
	\class["u^2 \Sq^4"](0,1)
	\class["u^2 \Sq^5"](-1,1)

	\class["u^3"](6,0)
	\class["u^3 \Sq^2"](4,1)
	\class["u^3 \Sq^3"](3,1)
	\class["u^3 \Sq^4"](2,1)
	\class["u^3 \Sq^5"](1,1)
	\class["u^3 \Sq^6"](0,1)
	\class["u^3 \Sq^7"](-1,1)
		
	\class["u^4"](8,0)
	\class["u^4 \Sq^2"](6,1)
	\class["u^4 \Sq^3"](5,1)
	\class["u^4 \Sq^4"](4,1)
	\class["u^4 \Sq^5"](3,1)
	\class["u^4 \Sq^6"](2,1)
	\class["u^4 \Sq^{4,2}"](2,2)
	\class["u^4 \Sq^7"](1,1)
	\class["u^4 \Sq^{5,2}"](1,2)
	\class["u^4 \Sq^8"](0,1)
	\class["u^4 \Sq^{6,2}"](0,2)
	\class["u^4 \Sq^9"](-1,1)
	\class["u^4 \Sq^{7,2}"](-1,2)
	\class["u^4 \Sq^{6,3}"](-1,2)
	
	\class["u^5"](10,0)
	\class["u^5 \Sq^2"](8,1)
	\class["u^5 \Sq^3"](7,1)
	\class["u^5 \Sq^4"](6,1)
	\class["u^5 \Sq^5"](5,1)
	\class["u^5 \Sq^6"](4,1)
	\class["u^5 \Sq^{4,2}"](4,2)
	\class["u^5 \Sq^7"](3,1)
	\class["u^5 \Sq^{5,2}"](3,2)
	\class["u^5 \Sq^8"](2,1)
	\class["u^5 \Sq^{6,2}"](2,2)
	\class["u^5 \Sq^9"](1,1)
	\class["u^5 \Sq^{7,2}"](1,2)
	\class["u^5 \Sq^{6,3}"](1,2)
	\class["u^5 \Sq^{10}"](0,1)
	\class["u^5 \Sq^{8,2}"](0,2)
	\class["u^5 \Sq^{7,3}"](0,2)
	\class["u^5 \Sq^{11}"](-1,1)
	\class["u^5 \Sq^{9,2}"](-1,2)
	\class["u^5 \Sq^{8,3}"](-1,2)
	
	\d1(2,0,1,1)
	\d1(4,0,1,2)
	\d1(2,1,1,1)
	\d1(0,1,2,2)
\end{sseqdata}

\begin{figure}
	\caption{The ``$E_1$-page'' of the Goerss-Hopkins spectral sequence using Koszul resolutions computing the strict units of the truncated polynomial ring $H\mathbb{F}_2[u]/u^6$.}
	\label{fig:GHMSS}
	\printpage[name=GHMSS1, page=1]
\end{figure}

	Here are some remarks.  
\begin{rems}
\label{rem:GHSS}
\mbox{}
\begin{enumerate}[(a)]
	\item In principle, we can compute the entire $E_2$-page using this method -- all that is required are manipulations with the Adem relations.  However, we also note that the co-Koszul complex is often still very, very large.  
	
	\item For smallish examples (in terms of $n$) such as the above, there are no more differentials possible on the $E_2$-page, so the spectral sequence collapses and we can deduce the homotopy groups of the space of strict units.  See theorem \ref{thm:table}.
	
	\item The $E_1$-page splits as a complex: the terms involving $\{u, u^2, u^4, u^8, \ldots\}$ form one complex, the terms involving $\{u^3, u^6, u^{12}, u^{24}, \ldots\}$ form a second complex, the terms involving $\{u^5, u^{10}, u^{20}, u^{40}, \ldots\}$ form another, and so on.  
	
	In fact, this can be promoted to a splitting of spectral sequences and in fact of spectra, by a result of Steiner's which we recall as theorem \ref{thm:Steiner}.  
	
\end{enumerate}
\end{rems}

	Let us return to the spectral sequence computing the strict units of the untruncated polynomial ring $H\mathbb{F}_2[u]$.  It seems difficult to give a compact explicit description of the $E_2$-page, but it appears to be relatively sparse, despite the impression from the example above.  
	
	First let us compute the $1$-line of the spectral sequence.  
\begin{prop}
	$\Ext_\mathcal{U}^{1,*}(\mathbb{F}_2, \mathbb{F}_2[u])$ is spanned by $\{u^n \Sq^{n+1} \mid n \text{ odd}\}$.  
\end{prop}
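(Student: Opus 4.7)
The plan is to apply the unstable co-Koszul complex of Theorem \ref{thm:UnstableCokoszul} directly. Since $u^n \in \Ann_{B(d+1)}\mathbb{F}_2[u]$ precisely when $d < 2n$, the zeroth piece $K^0$ is spanned by $\{u^n : n \geq 1\}$, and the length-one part of $K^1$ is spanned by $u^n \otimes \Sq^{k+1}$ with $1 \leq k \leq 2n-1$. The differential restricts to $d(u^n \otimes 1) = u^{2n}\otimes \Sq^{2n+1}$, so $\im d_0 = \mathbb{F}_2\langle u^{2m}\Sq^{2m+1} : m \geq 1\rangle$, i.e., the classes $u^n\Sq^{n+1}$ with $n$ even.

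For the cycles, $d(u^n \otimes \Sq^{k+1}) = u^{2n}\otimes \Sq^{2n+1}\Sq^{k+1}$. The key algebraic input is the identity $\Sq^{2n+1}\Sq^{n+1} = 0$ in $H^*\mathcal{R}$ — the same fact used earlier to verify $d^2 = 0$ — which holds because the inequalities $0 \leq 2n+1-2t \leq n-t$ controlling the Adem expansion cannot be simultaneously satisfied. This shows $u^n\Sq^{n+1}$ is a cycle for every $n \geq 1$, and since the differential preserves the decomposition $K^1 = \bigoplus_n u^n \otimes L(0)_{<2n}^{(1)}$, it suffices to analyze cycles one $n$ at a time.

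The main task is to show $d(u^n\otimes \Sq^{k+1}) \neq 0$ whenever $k \neq n$ and $1 \leq k \leq 2n-1$. For $k < n$, the monomial $\Sq^{2n+1,k+1}$ is already admissible (since $2n+1 \geq 2(k+1)$), satisfies the $L(0)$ excess condition $2n \geq k$, and fits within the cutoff of $K^2$ for $u^{2n}$. For $n < k \leq 2n-1$, I apply the Adem relation in $H^*\mathcal{R}$ and focus on the summand indexed by $t = 2n+1-k$, which produces $\Sq^{2k+1}\Sq^{2n+1-k}$ with binomial coefficient $\binom{2k-2n-1}{2k-2n-1} = 1$. One checks that this length-two monomial is admissible ($2k+1 \geq 2(2n+1-k)$ since $k \geq n+1$), has both entries $\geq 1$, satisfies the $L(0)$ condition $2k \geq 2n-k$, and fits the $K^2$ cutoff $|I'| = k+2n < 4n$. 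Since distinct admissible monomials are linearly independent in $H^*\mathcal{R}$, this surviving coefficient-$1$ term cannot cancel against others in the Adem sum. The main obstacle is really just this Adem bookkeeping; once it is done, we conclude $\ker d_1 = \mathbb{F}_2\langle u^n\Sq^{n+1} : n \geq 1\rangle$, and quotienting by $\im d_0$ yields exactly the span of $\{u^n\Sq^{n+1} : n \text{ odd}\}$.
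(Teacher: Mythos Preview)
Your argument is correct and follows essentially the same route as the paper's proof: both identify the image of $d_0$ as the even-$n$ classes $u^{2m}\Sq^{2m+1}$, both observe that $\Sq^{2n+1,k+1}$ is already admissible (hence nonzero) when $k<n$, both invoke $\Sq^{2n+1}\Sq^{n+1}=0$ for $k=n$, and for $n<k<2n$ both single out the Adem summand with $t=2n+1-k$ whose binomial coefficient is $\binom{2k-2n-1}{2k-2n-1}=1$. Your extra verification that this term lies in $L(0)$ and within the $u^{2n}$-cutoff is harmless but not strictly needed, since $d(K^1)\subseteq K^2$ automatically and nonvanishing can be tested in $\bar K^*(\mathcal{R})$; one small wording quibble is that the differential does not literally ``preserve'' the $\bigoplus_n$-decomposition but rather sends the $u^n$-summand injectively to the $u^{2n}$-summand, which is what justifies analyzing one $n$ at a time (the paper phrases this as ``by degree reasons'').
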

\begin{proof}
	A general element in the complex in filtration level $s = 1$ has the form $m = \sum_j u^{n_j} \Sq^{i_j + 1}$ with $2n_j > i_j$.  If $m$ is a cocycle, then by degree reasons we may assume without loss of generality that $m = u^n \Sq^{i+1}$ with $2n > i$.  
	
	We have \[d(u^n \Sq^{i+1}) = u^{2n} \Sq^{2n+1,i+1}.\]  If $i < n$, then $\Sq^{2n+1, i+1}$ is admissible, and in particular nonzero.  So $u^n \Sq^{i+1}$ is not a cocycle.  
	
	Now suppose $n \leq i < 2n$.  Then we can apply the Adem relation \[\Sq^{2n+1} \Sq^{i+1} = \sum_{t = 2n - i + 1}^n \binom{i-t}{2n-2t+1} \Sq^{2n _i + 2 - t} \Sq^t.\]  Note that all the terms $\Sq^{2n+i+2-t, t}$ are admissible, so no further Adem relations need to be applied.  Thus we see that $u^n \Sq^{i+1}$ is a cocycle iff $\binom{i-t}{2n+1-2t} \equiv 0 \pmod{2}$ for all $2n - i + 1 \leq t \leq n$.  If $i = n$, then the binomial coefficient $\binom{n-t}{2(n-t) + 1} = 0$ for all $t$, so we obtain some cocycles in this way.  However, if additionally $n$ is even, then $d(u^{n/2}) = u^n \otimes \Sq^{n+1}$, so these cocycles obtained in this way are also coboundaries.  
	
	We claim that there are no other cocycles for $s = 1$.  Given $n < i < 2n$, set $t = 2n - i + 1$.  The binomial coefficient for this value of $t$ is \[\binom{i - (2n-i+1)}{2n+1 - 2(2n - i + 1)} = \binom{2i-2n-1}{2i-2n-1} = 1.\]  Thus we conclude that $\Ext_\mathcal{U}^1(\mathbb{F}_2, \mathbb{F}_2[u])$ is spanned by the classes $\{u^n \Sq^{n+1} \mid n \text{ odd}\}$. 
\end{proof}
	
	Recall from (\ref{prop:E2GHM}) that the $E_2$-page of the spectral sequence is \[\prod_{n \text{ odd}} u^n \otimes \ker(\Sq^{2n+1}) \times \prod_{n \text{ even}} u^n \otimes \ker(\Sq^{2n+1})/\im(\Sq^{n+1}).\]  The previous proposition shows that there is \emph{no} contribution to the $s = 1$ line from the second factor $\ker(\Sq^{2n+1})/\im(\Sq^{n+1})$.  It thus behooves us to study this quotient.  
	
	It seems that this quotient is sparse at least in low degrees:
\begin{prop}
	For $n \leq 15$, the nonzero homology classes in $(\ker \Sq^{2n+1} / \im \Sq^{n+1})|_{L(0)_{<2n}}$ are:
	\begin{enumerate}[(i)]
		\item $\Sq^{8,4,2}$ in $\ker \Sq^{13}/\im \Sq^7$,
		\item $\Sq^{12,6,3}$ in $\ker \Sq^{21}/\im \Sq^{11}$, and
		\item $\langle \Sq^{16,8,4}, \Sq^{16,8,4,2}, \Sq^{17,8,4,2} \rangle$ in $\ker \Sq^{29}/\im \Sq^{15}$.  
	\end{enumerate}
\end{prop}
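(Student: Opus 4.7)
The plan is a direct, finite computation. For each $n \in \{1, \ldots, 15\}$, I would enumerate the admissible monomials $\Sq^{a_1, \ldots, a_s}$ lying in $L(0)$ (so satisfying $a_j \geq 2a_{j+1}$, $a_s \geq 2$, and the $L(0)$-excess condition) of internal degree less than $2n$; apply $\Sq^{2n+1}$ by iterated Adem reduction to find the kernel; apply $\Sq^{n+1}$ to the admissibles of internal degree $(d-n)$ in the lower strata to find the image; then take the quotient. Since the internal degrees never exceed $29$, each degree slot contains only a handful of monomials, so the enumeration is finite.

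First I would stratify by internal degree $d < 2n$ and list the admissibles in each $L(0)_d$. For example, in internal degree $11$ there are exactly five: $\Sq^{12}$, $\Sq^{11,2}$, $\Sq^{10,3}$, $\Sq^{9,4}$, and $\Sq^{8,4,2}$. For each $n$ and each listed admissible $x$, I would compute $\Sq^{2n+1}\cdot x$ by concatenation followed by Adem reduction, discarding any summand ending in $\Sq^1$ since we work modulo $\mathcal{A}\Sq^1$. For most inputs the result is a nonzero admissible, excluding $x$ from the kernel; the surviving cocycles are then compared against the image of $\Sq^{n+1}$ computed by the same method on the admissibles in $L(0)_{d-n}$.

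For the three exceptional classes, the cocycle condition follows from a short chain of Adem reductions that terminates in a vanishing relation such as $\Sq^{3}\Sq^{2} = \Sq^{5}\Sq^{3} = \Sq^{7}\Sq^{4} = 0$. For instance at $n = 6$ and $d = 11$,
\[
\Sq^{13}\Sq^{8,4,2} = \Sq^{15}\Sq^{6}\Sq^{4}\Sq^{2} = \Sq^{15}\Sq^{7}\Sq^{3}\Sq^{2} = \Sq^{15}\Sq^{7}\cdot(\Sq^{3}\Sq^{2}) = 0,
\]
using $\Sq^{13}\Sq^{8} = \Sq^{15}\Sq^{6}$, $\Sq^{6}\Sq^{4} = \Sq^{7}\Sq^{3}$, and $\Sq^{3}\Sq^{2} = 0$. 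To see that $[\Sq^{8,4,2}]$ is nonzero, one enumerates the admissibles in $L(0)$ of internal degree $4$---only $\Sq^{5}$ and $\Sq^{4,2}$---and computes $\Sq^{7}\Sq^{5} = \Sq^{9,3}$ and $\Sq^{7}\Sq^{4,2} = 0$, neither of which equals $\Sq^{8,4,2}$. Analogous but longer reductions handle $\Sq^{12,6,3}$ at $n = 10$ and the three-class subspace at $n = 14$; in each case the relevant admissible has a form whose reduction of $\Sq^{2n+1}$ propagates downward and dies at the bottom via one of the identities $\Sq^{2k-1}\Sq^{k} = 0$.

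The main obstacle is the bookkeeping across fifteen values of $n$, each with admissibles of internal degree up to $2n - 1$ and Adem reductions several levels deep. No individual step is conceptually difficult, but careful organization is needed to ensure that no cocycle is missed and that the coboundary comparison is exhaustive. A natural presentation would be a table stratified by $n$ and internal degree, listing the admissibles, the result of $\Sq^{2n+1}$, and the image of $\Sq^{n+1}$, with representative reductions (especially the exceptional classes) worked out in detail. A sanity check is that Figure \ref{fig:GHMSS} already displays the spectral sequence for $H\mathbb{F}_2[u]/u^6$ and confirms that no classes survive at filtration $s \geq 2$ for $n \leq 5$, matching the empty portion of the statement.
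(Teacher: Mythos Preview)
Your proposal is correct and takes essentially the same approach as the paper: the paper's entire proof is the two words ``Computer search,'' i.e., exactly the finite enumeration-and-Adem-reduction you describe, carried out by machine rather than by hand. Your sample reductions (e.g., $\Sq^{13}\Sq^{8,4,2}=0$) are valid, though there is a small indexing slip in your coboundary check for $\Sq^{8,4,2}$ --- the source elements for $\im\Sq^7$ should have $\beta$-degree $5$ (equivalently Steenrod degree $7$), not $4$ --- so the candidates are $\Sq^6$ and $\Sq^{5,2}$ (together with the out-of-range $\Sq^7$) rather than $\Sq^5$ and $\Sq^{4,2}$; this does not affect the conclusion or the method.
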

\begin{proof}
	Computer search.
\end{proof}

	From this data, one might conjecture that $\Sq^{4i,2i,i}$ is always a nontrivial class in the quotient.  This is indeed the case.  
\begin{prop}
	For $i \geq 1$, the element $\Sq^{4i,2i,i}$ is nonzero in the subquotient $\ker \Sq^{8i-3} / \im \Sq^{4i-1}$.   
\end{prop}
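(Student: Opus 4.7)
The plan is to verify two conditions: (i) $\Sq^{8i-3}\cdot\Sq^{4i,2i,i}=0$ in $L(0) = \mathcal{A}/\mathcal{A}\Sq^1$, and (ii) $\Sq^{4i,2i,i}$ is not a left multiple of $\Sq^{4i-1}$ in $L(0)$.

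For (i), I would apply the Adem relations iteratively.  The three key identities are
\[
	\Sq^{8i-3}\Sq^{4i} = \Sq^{8i-1}\Sq^{4i-2}, \quad \Sq^{4i-2}\Sq^{2i} = \Sq^{4i-1}\Sq^{2i-1}, \quad \Sq^{2i-1}\Sq^i = 0.
\]
In each case, the Adem sum for $\Sq^a\Sq^b$ (valid for $a < 2b$) either collapses to a single term or is empty, since the binomial coefficient $\binom{b-t-1}{a-2t}$ vanishes outside a narrow window of $t$; in particular, for the last identity the inequalities on $t$ are inconsistent, so the sum is identically zero.  Chaining these identities gives $\Sq^{8i-3,4i,2i,i} = 0$ already in $\mathcal{A}$, and hence in $L(0)$.

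For (ii), I would pair against the action on $\iota_i \in H^i(K(\mathbb{Z},i); \mathbb{F}_2)$.  The map $L(0) \to H^*(K(\mathbb{Z},i); \mathbb{F}_2)$, $\theta \mapsto \theta\iota_i$, is well-defined because $\iota_i$ is integral.  Iterated Cartan, together with the unstability relations that kill cross-terms, gives $\Sq^{4i,2i,i}\iota_i = \iota_i^8 \neq 0$.  If $\Sq^{4i,2i,i} = \Sq^{4i-1}\cdot y$ in $L(0)$, then $\iota_i^8$ would lie in the image of $\Sq^{4i-1}\colon H^{4i+1}(K(\mathbb{Z},i);\mathbb{F}_2) \to H^{8i}$, which I aim to rule out.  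When $i$ is even, the quickest route is to pull back along the map $(\mathbb{CP}^\infty)^{\times i/2} \to K(\mathbb{Z},i)$ classifying the integral class $\alpha = x_1\cdots x_{i/2}$: the same Cartan computation gives $\Sq^{4i,2i,i}\alpha = \alpha^8 \neq 0$, but $H^{4i+1}((\mathbb{CP}^\infty)^{\times i/2};\mathbb{F}_2)=0$ since the cohomology is concentrated in even degrees, so $\alpha^8$ cannot be in the image, a contradiction.  When $i$ is odd, I would work directly in $K(\mathbb{Z},i)$, using Serre's polynomial description $H^*(K(\mathbb{Z},i);\mathbb{F}_2) = \mathbb{F}_2[v_\alpha]$ (where $v_\alpha = \Sq^{I_\alpha}\iota_i$ for $I_\alpha$ admissible of excess less than $i$ and not ending in $\Sq^1$), enumerating monomials in degree $4i+1$ and computing the Cartan expansion of $\Sq^{4i-1}$ on each to verify that no pure term $v_0^8 = \iota_i^8$ appears.

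The principal obstacle is (ii) in the odd-$i$ case, since occasional Adem reductions such as $\Sq^1\Sq^2\iota_3 = \iota_3^2$ can promote generators of positive index back into pure powers of $\iota_i$, and so the verification requires careful bookkeeping of the Cartan expansions.  A more uniform treatment might combine the factorization $\Sq^{4i-1}=\Sq^1\Sq^{4i-2}$ with an analysis of the Bockstein spectral sequence of $K(\mathbb{Z},i)$, though the even-$i$ argument via $(\mathbb{CP}^\infty)^{\times i/2}$ already dispatches that case cleanly.
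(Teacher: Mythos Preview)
Your argument for (i) is exactly the paper's: the same three Adem identities chained together.

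For (ii) you diverge from the paper, and the divergence matters.  The paper exploits the fact that $L(0)$ sits inside $\bar{K}^*(\mathcal{R})$, where the \emph{length} (number of $\Sq$-factors) is an honest grading preserved by the Adem relations.  Hence, asking whether $\Sq^{4i,2i,i}$ (length~$3$) lies in $\Sq^{4i-1}\cdot L(0)$ reduces to a finite check over length-$2$ admissibles $\Sq^{3i+1-j,j}$.  One application of the Adem relation to $\Sq^{4i-1}\Sq^{3i+1-j}$ produces only admissible terms, and matching indices forces $j=i$, $t=2i$, where the relevant binomial coefficient vanishes.  This handles all $i\ge 1$ in one stroke.

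Your evaluation approach via $K(\mathbb{Z},i)$ is attractive, and the even-$i$ trick of pulling back to $(\mathbb{CP}^\infty)^{i/2}$ is genuinely clean---there the source degree $4i+1$ is odd, so the image of $\Sq^{4i-1}$ is visibly zero.  But the odd-$i$ case is left open: your proposed strategy of enumerating monomials in $H^{4i+1}(K(\mathbb{Z},i);\mathbb{F}_2)$ and tracking Cartan/Adem expansions is exactly the ``careful bookkeeping'' you flag, and you have not carried it out.  The factorization $\Sq^{4i-1}=\Sq^1\Sq^{4i-2}$ does not obviously help either, since for odd $i$ the class $\iota_i^8$ is rationally zero and can in principle lie in the image of $\Sq^1$.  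So as written, your proof is incomplete for odd $i$; the paper's length-grading argument avoids this bifurcation entirely.
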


\begin{rem}
	The degree of $u^{4i-2} \Sq^{4i,2i,i}$ is $i - 4$, so these elements on the $s = 3$ line don't contribute to the quadrant of interest in the spectral sequence until $i \geq 4$.  
\end{rem}

\begin{proof}[Proof of proposition]
	First we show that $\Sq^{4i,2i,i}$ is a cocycle, i.e., $\Sq^{8i-3} \Sq^{4i,2i,i} = 0$.  We shall need the following relations in the Steenrod algebra:
	\begin{itemize}
		\item $\Sq^{8i-3} \Sq^{4i} + \Sq^{8i-1} \Sq^{4i-2} = 0$
		\item $\Sq^{4i-2} \Sq^{2i} + \Sq^{4i-1} \Sq^{2i-1} = 0$
		\item $\Sq^{2i-1} \Sq^i = 0$.
	\end{itemize}
	
	These formulas are special cases of the more general equation \[\Sq^{2^k i - (2^{k-2} + 1)} \Sq^{2^{k-1} i} + \Sq^{2^k i - 1} \Sq^{2^{k-1} i - 2^{k-2}}\] for $k \geq 2$, which can be obtained by stripping $(2^{k-2})$ from the more obvious relation $\Sq^{2^k i - 1} \Sq^{2^{k-1} i} = 0$.  
	
	Now, we have \[\Sq^{8i-3} \Sq^{4i} \Sq^{2i} \Sq^i = \Sq^{8i-1} \Sq^{4i-2} \Sq^{2i} \Sq^i = \Sq^{8i-1} \Sq^{4i-1} \Sq^{2i-1} \Sq^i = 0.\] 
	
	It remains to show that $\Sq^{4i,2i,i}$ is not in the image of $\Sq^{4i-1}$.  By comparing the degree and length of the source and target, we are reduced to considering the image of $\Sq^{4i-1}$ applied to the subspace spanned by the elements $\Sq^{3i+1-j,j}$ where $1 \leq j < i$.  
	
	We claim that $\Sq^{4i,2i,i}$ does not even show up as one of the terms in any $\Sq^{4i-1} \Sq^{3i+1-j,j}$.  First observe that we can apply an Adem relation to the first two factors: \[\Sq^{4i-1} \Sq^{3i+1-j} = \sum_t \binom{3i-j-t}{4i-2t-1} \Sq^{7i-j-t} \Sq^t.\]  These terms are all admissible, so we compare the indices: $\Sq^{7i-j-t,t,j} = \Sq^{4i,2i,i}$.  We find that $j = i$ and $t = 2i$, and the binomial coefficient in this case is $0$.  So the term $\Sq^{4i,2i,i}$ does not appear in the image as claimed.  
\end{proof}

	It turns out that the subquotients $\ker \Sq^{2n+1} / \im \Sq^{n+1}$ control a large part of the homotopy groups of strict units of polynomial rings and truncated polynomial rings.  We will give a more precise conjecture about them later (\ref{conj:AdemHomology}). 

\section{The Postnikov tower of \texorpdfstring{$gl_1 H\mathbb{F}_2[u]$}{gl1(HF2[u])}}
\label{sec:Postnikov}
	Knowledge of the Postnikov $k$-invariants of $gl_1 R$ can be useful for computing maps into $gl_1 R$.  In this section, we describe the Postnikov tower of $gl_1 H\mathbb{F}_2[u]$ building on results in the literature, and use it to give another spectral sequence for computing $\mathbb{G}_m(H\mathbb{F}_2[u])$.  
	
	The first thing to observe that if $R$ is an Eilenberg-Mac~Lane spectrum of a ordinary graded commutative ring, then additively $R$ is a product of shifts of Eilenberg-Mac~Lane spectra, so all the $k$-invariants of $R$ as a spectrum are zero.  However, the $k$-invariants of $gl_1 R$ could still be very interesting as we shall see.  
	
	Let $H = H\mathbb{F}_2$ again.  
	
\subsection{The Postnikov tower}
	We first recall a result of Lawson and Mathew-Stojanoska on the $k$-invariant in $gl_1 H[u]/u^3$, where $\deg u = d$.  
	
\begin{prop}[{\cite[Prop.~5.2.2]{MS16}}]
\label{prop:BottomKInvt}
	The bottom $k$-invariant of $gl_1 H[u]/u^3$ is $\Sq^{d+1}$.  
\end{prop}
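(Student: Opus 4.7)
The plan is to realize $gl_1 H[u]/u^3$ as a two-stage Postnikov extension arising from a square-zero extension of $E_\infty$-algebras, then identify its classifying $k$-invariant using the $E_\infty$-structure.

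First, the ideal $(u^2) \subset H[u]/u^3$ satisfies $(u^2)^2 = 0$, so $H[u]/u^3$ is a square-zero extension of $E_\infty$-$H$-algebras with square-zero ideal $\Sigma^{2d} H$ and quotient $H[u]/u^2$; crucially, this extension does not admit an $E_\infty$-section, as witnessed by $(1+u)^2 = 1 + u^2 \neq 1$ in $H[u]/u^3$. Applying $gl_1$, which converts a square-zero extension $R \to S$ with square-zero ideal $M$ into a fiber sequence $M \to gl_1 R \to gl_1 S$ of connective spectra, and using the split extension $H[u]/u^2 = H \oplus \Sigma^d H$ to deduce $gl_1 H[u]/u^2 \simeq \Sigma^d H$ (with $gl_1 H \simeq \ast$), I obtain the fiber sequence
\[ \Sigma^{2d} H \to gl_1 H[u]/u^3 \to \Sigma^d H \]
whose classifying map $k \in [\Sigma^d H, \Sigma^{2d+1} H] \cong \mathcal{A}^{d+1}$ is the bottom $k$-invariant.

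Second, I would identify $k$ via the $E_\infty$-structure. The multiplication $(1 + bu)(1 + b'u) = 1 + (b + b')u + bb' u^2$ in $H[u]/u^3$ exhibits a symmetric bilinear cocycle $c(b, b') = bb'$ as a map $\Sigma^d H \otimes \Sigma^d H \to \Sigma^{2d} H$, namely the canonical multiplication. As an $E_\infty$-cocycle, this descends through the extended power $\Sigma_2$-quotient, and by the standard realization of the Steenrod squares as the top power operations on Eilenberg--Mac~Lane spectra (compatible with the Priddy--Miller picture recalled in Section \ref{sec:GHM}), the resulting map $\Sigma^d H \to \Sigma^{2d+1} H$ is $\Sq^{d+1}$.

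The hard part is making the second step rigorous---specifically, the translation from the symmetric bilinear cocycle to the specific element $\Sq^{d+1}$ of the Steenrod algebra. One rigorous route, taken by Mathew--Stojanoska \cite{MS16}, is via topological Andr\'e--Quillen cohomology: identify the $E_\infty$-square-zero extension with a class in $\mathrm{TAQ}(H[u]/u^2; \Sigma^{2d+1} H)$ and transport it through a comparison with $gl_1$. Alternatively, one can argue by elimination within the small group $\mathcal{A}^{d+1}$: first show $k \neq 0$, for instance by observing that the squaring map $[2]$ on $gl_1 H[u]/u^3$ factors non-trivially through the fiber with induced operation $\Sq^d$ (via $(1+bu+cu^2)^2 = 1 + b^2 u^2$), and then rule out other admissible monomials of degree $d+1$ by Adem considerations or a partial cohomology computation using the Goerss--Hopkins--Miller spectral sequence from Section \ref{sec:GHM}.
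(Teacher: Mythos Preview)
The paper does not give its own proof of this proposition; it simply cites \cite{MS16}. However, the remark following Theorem~\ref{thm:OddKInvt} sketches the Mathew--Stojanoska argument, so that is the natural point of comparison.

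Your setup of the fiber sequence $\Sigma^{2d} H \to gl_1 H[u]/u^3 \to \Sigma^d H$ is correct and matches the standard approach. The honest acknowledgement that the cocycle-to-$\Sq^{d+1}$ translation is the hard part is also appropriate. However, your two proposed completions both miss the clean idea that actually makes the argument work. The Mathew--Stojanoska proof, as the paper recounts it, proceeds by an \emph{excess} argument: one observes that $\Omega^\infty k = 0$ (the space $GL_1 H[u]/u^3$ is a product of Eilenberg--Mac~Lane spaces, since the multiplicative group scheme splits unstably), and hence $k$ must have excess exceeding $d$. In $\mathcal{A}^{d+1}$ the only admissible monomial of excess $> d$ is $\Sq^{d+1}$ itself, so once $k \neq 0$ is established (and your squaring observation $(1+bu)^2 = 1 + b^2 u^2$ does this), the identification is forced.

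Your ``elimination'' route gestures in this direction but appeals vaguely to ``Adem considerations or a partial cohomology computation''; neither of these is what pins down the answer, and the Goerss--Hopkins--Miller spectral sequence is overkill here. Your primary ``symmetric cocycle descends through the extended power'' route is morally correct but, as you concede, not a proof as written---the step identifying the descended class with $\Sq^{d+1}$ is exactly the content of the proposition. The excess argument is both the missing ingredient and the actual content of the cited proof.
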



\begin{quest}
	In general, how are the $k$-invariants for $gl_1 R$ related to the $k$-invariants of $R$?  \cite{Hes15} has some interesting speculation about this problem using topological Andr\'{e}-Quillen cohomology.  
\end{quest}

	The other result we need concerns a splitting of $gl_1 H[u]$.  From now on let $\deg u = 2$ again.  In ordinary algebra, the units of the power series ring $\mathbb{F}_2 \llbracket u \rrbracket$ (with constant term $1$) are the (big) Witt vectors $W(\mathbb{F}_2)$, and it is well-known that the Witt vectors decomposes as a product.  The same holds in topology:
	
\begin{thm}[Steiner \cite{Ste79}]
\label{thm:Steiner}
	The spectrum $gl_1 H[u]$ decomposes: \[gl_1 H[u] \simeq \prod_{k \text{ odd}} g_k,\] where \[\pi_i(g_k) \cong \begin{cases} \mathbb{F}_2, & \frac{i}{2^{\nu_2(i)}} = k \\ 0, & \text{otherwise}. \end{cases}\]
\end{thm}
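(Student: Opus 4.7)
The strategy is to lift the classical Witt-vector decomposition of the multiplicative group $(1 + u\mathbb{F}_2[u], \cdot)$ to a splitting of spectra. Classically, every $1$-unit factors uniquely through contributions indexed by the odd part of the leading exponent, yielding a group decomposition $(1 + u\mathbb{F}_2[u], \cdot) \cong \prod_{k \text{ odd}} W_k$, where $W_k$ is concentrated in degrees $\{k, 2k, 4k, \ldots\}$. The theorem asserts that this grouping is realized spectrum-level on $gl_1 H[u]$.

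First I would construct candidate summands using the multiplicative structure. The $E_\infty$-ring endomorphism $F \colon H[u] \to H[u]$, $u \mapsto u^2$, induces a self-map of $gl_1 H[u]$ doubling degrees, playing the role of a Frobenius/Verschiebung. For each odd $k$, I would construct a spectrum $g_k$ together with a map $g_k \to gl_1 H[u]$ by starting with a generator of $\pi_{2k} gl_1 H[u]$ corresponding to $1 + u^k$ (well-defined since the additive and multiplicative group structures agree in positive degrees), and iteratively extending through the remaining degrees $2k \cdot 2^j$ via $F$ and the Postnikov $k$-invariants of $gl_1 H[u]$.

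Next I would assemble these into the product map $\prod_{k \text{ odd}} g_k \to gl_1 H[u]$ and check that it is an equivalence. On homotopy it is an isomorphism by construction: each $u^i \in \pi_{2i} gl_1 H[u]$ is hit precisely by the factor indexed by the odd part of $i$. So the substantive content is promoting this bijection on $\pi_\ast$ to a genuine spectrum equivalence, which amounts to verifying that the Postnikov data of $gl_1 H[u]$ splits consistently across the decomposition.

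The hard part will be the $k$-invariant analysis: showing that no nonzero Postnikov $k$-invariant of $gl_1 H[u]$ links $\pi_{2i}$ and $\pi_{2j}$ unless $i$ and $j$ have the same odd part. Proposition \ref{prop:BottomKInvt} confirms this at the bottom of the tower -- the invariant $\Sq^3$ links $\pi_2 \ni u$ to $\pi_4 \ni u^2$, both of odd part $1$ -- which is encouraging, but extending this to all levels seems hard by direct computation. The more conceptual route I would pursue is to package the classical multiplicative Witt-vector projection as an $E_\infty$-ring construction: apply multiplicative infinite loop space theory to a compatible system of Witt-type idempotents on the truncations $H[u]/u^{n+1}$ (where the multiplicative group is a finite $2$-group and the Witt decomposition is elementary), and pass to the inverse limit. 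This would produce $E_\infty$-idempotents on $gl_1 H[u]$ whose images supply the summands $g_k$, sidestepping any direct manipulation of higher $k$-invariants.
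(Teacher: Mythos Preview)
The paper does not supply a proof of this theorem; it is quoted as a result of Steiner \cite{Ste79}, with a remark afterward that Hess \cite{Hes19} gives an alternative explanation via operations on the homology of $GL_1 R$.  So there is nothing in the paper to compare your proposal against.

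On the proposal itself: the Witt-vector heuristic is exactly the right motivation (the paper says as much just before stating the theorem), and the Frobenius-like endomorphism $u \mapsto u^2$ is a sensible organizing tool.  But your sketch has a genuine gap at the decisive step.  You propose to realize the Witt projections as $E_\infty$-idempotents on the truncations $gl_1(H[u]/u^{n+1})$ and pass to the limit.  The difficulty is that the classical Witt decomposition of $(1+u\,\mathbb{F}_2[u]/u^{n+1})^\times$ is a statement about abstract abelian groups; the idempotents implementing it are not given by any evident natural operation, and there is no reason they should lift to infinite loop maps.  Invoking ``multiplicative infinite loop space theory'' does not close this gap: that machinery converts grouplike $E_\infty$-spaces into connective spectra, but it does not manufacture infinite loop maps out of bare group homomorphisms on $\pi_*$.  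You would need either to exhibit the projections as arising from $E_\infty$-ring maps on $H[u]$ itself (they do not, in any obvious way), or to supply some other mechanism.  The references the paper points to proceed instead through homology computations of the infinite loop space $GL_1 H[u]$ together with its Dyer--Lashof operations, rather than by lifting algebraic idempotents.
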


\begin{rem}
	Hess \cite{Hes19} has given an alternative explanation of these facts by analyzing the operations on the homology of the infinite loop space $GL_1 R$ for an $E_\infty$-$H$-algebra $R$.  Specifically, he recovered the nontriviality of the $k$-invariant of $gl_1 H[u]$ in \cite{MS16}, and extended this to odd primes.  See section \ref{sec:oddkinvt}.  His results also imply a truncated version of Steiner's splitting.  
\end{rem}

\begin{rem}
	While we won't need to rely on his results, Kraines \cite{Kra70, Kra73b} has also studied the Postnikov towers for the space $GL_1 H[u]$ as well as $BGL_1 H[u]$.  
\end{rem}

	As a consequence of theorem \ref{thm:Steiner}, we are reduced from studying $gl_1 H[u]$ to studying these spectra $g_k$.  The problems we encounter are roughly similar for different $k$, so we'll focus on $g_1$ for concreteness.  
	
	Using proposition \ref{prop:BottomKInvt}, we deduce the following.  
\begin{prop}
	\label{prop:g1Postnikov}
	The Postnikov tower for $g_1$ has the form:  
	\begin{center}
	\begin{tikzcd}
		& g_1 \ar[d] \\
		& \vdots \ar[d] \\
		\Sigma^{2^i} H \ar[r] \ar[rr, bend left, near end, "\Sq^{2^i + 1}"] & g_1/u^{2^i} \ar[r, "k_{2^{i+1} - 1}"'] \ar[d] & \Sigma^{2^{i+1} + 1} H \\
		& \vdots \ar[d] \\
		\Sigma^8 H \ar[r] \ar[rr, bend left, near end, "\Sq^9"] & g_1/u^8 \ar[r, "k_{15}"] \ar[d] & \Sigma^{17} H \\
		\Sigma^4 H \ar[r] \ar[rr, bend left, near end, "\Sq^5"] & g_1/u^4 \ar[r, "k_7"] \ar[d] & \Sigma^9 H \\
		& \Sigma^2 H \ar[r, "\Sq^3"] & \Sigma^5 H
	\end{tikzcd}
	\end{center}
\end{prop}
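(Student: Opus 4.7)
The homotopy groups and Postnikov stages of $g_1$ are determined by Steiner's splitting (Theorem \ref{thm:Steiner}). The content of the proposition is thus to identify, for each $i \geq 1$, the diagonal composite $\Sigma^{2^i} H \to g_1/u^{2^i} \xrightarrow{k_{2^{i+1}-1}} \Sigma^{2^{i+1}+1} H$ as $\Sq^{2^i+1}$. The plan is to reduce each such identification to the base case Proposition \ref{prop:BottomKInvt}, applied to $H[v]/v^3$ with $\deg v = 2^i$.

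The key construction is the $E_\infty$-ring map $\phi: H[v]/v^3 \to H[u]/u^{2^i+1}$ sending $v \mapsto u^{2^{i-1}}$; this is well-defined because $v^3 \mapsto u^{3 \cdot 2^{i-1}}$ vanishes in the target for $i \geq 1$ (as $3 \cdot 2^{i-1} \geq 2^i + 1$), and because $\phi$ arises from a homomorphism of ordinary graded commutative $\mathbb{F}_2$-algebras, which canonically promotes to an $E_\infty$-map. Applying $gl_1(-)$ and observing that $gl_1 H[u]/u^{2^i+1} \simeq \tau_{\leq 2^{i+1}} gl_1 H[u]$ inherits Steiner's splitting componentwise (since Postnikov truncation commutes with the product $\prod_k g_k$), we compose with the projection onto the $g_1$-factor $\tau_{\leq 2^{i+1}} g_1 = g_1/u^{2^{i+1}}$ to obtain a map \[\psi: gl_1 H[v]/v^3 \to g_1/u^{2^{i+1}}.\]

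The crux is to verify that $\psi$ is an isomorphism on $\pi_{2^i}$ and $\pi_{2^{i+1}}$. On the source, these groups are generated by the units $1+v$ and $1+v^2$, which map under $\phi$ to $1+u^{2^{i-1}}$ and $1+u^{2^i}$; these represent generators of $\pi_{2^i}$ and $\pi_{2^{i+1}}$ of $gl_1 H[u]/u^{2^i+1}$. Projecting onto the $g_1$-factor is then an isomorphism in these degrees, since $2^i$ and $2^{i+1}$ are powers of $2$ and hence only $g_1$ contributes in Steiner's splitting. By naturality of Postnikov $k$-invariants applied to $\psi$ (with the right-hand vertical being an isomorphism on $\pi_{2^{i+1}}$), the diagonal composite of the statement equals the bottom $k$-invariant of the two-cell spectrum $gl_1 H[v]/v^3$, which by Proposition \ref{prop:BottomKInvt} with $d = 2^i$ is $\Sq^{2^i+1}$.

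The main obstacle is justifying the componentwise truncated Steiner splitting licensing the projection $gl_1 H[u]/u^{2^i+1} \to g_1/u^{2^{i+1}}$. This should follow formally from Theorem \ref{thm:Steiner} together with the fact that $\tau_{\leq 2^{i+1}}$ preserves the relevant products (only finitely many $g_k$ have nontrivial homotopy in the truncation range), and it is also covered by the truncated version of Steiner's splitting attributed to Hess in the remark following Theorem \ref{thm:Steiner}.
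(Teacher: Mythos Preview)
Your proposal is correct and follows essentially the same approach as the paper: both construct the map $gl_1 H[v]/v^3 \to g_1/u^{2^{i+1}}$ induced by $v \mapsto u^{2^{i-1}}$ (with $\deg v = 2^i$) and invoke naturality of the $k$-invariant together with Proposition~\ref{prop:BottomKInvt}. You are simply more explicit than the paper about the intermediate factorization through $gl_1 H[u]/u^{2^i+1}$, the projection onto the $g_1$-summand via Steiner's splitting, and the verification that the resulting map is an isomorphism on $\pi_{2^i}$ and $\pi_{2^{i+1}}$; the paper packages all of this into the single phrase ``the map \ldots\ induced by $v \mapsto u^{2^{n-1}}$'' and a map of fiber sequences.
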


\begin{rem}
	The curved arrows give the first differential in the Atiyah-Hirzebruch spectral sequence described in the next subsection.  It is instructive to also compare them with the differential obtained in (\ref{thm:UnstableCokoszul}).  
\end{rem}

\begin{proof}
	Apply naturality with the map $gl_1 H[v]/v^3 \to g_1/u^{2^{n+1}}$ induced by $v \mapsto u^{2^{n-1}}$, so $\deg v = 2^n$.  We have a map of fiber sequences
	\begin{center}
	\begin{tikzcd}
		gl_1 H[v]/v^3 \ar[r] \ar[d] & gl_1 H[v]/v^2 \ar[r, "\Sq^{2^n+1}"] \ar[d] & \Sigma^{2^{n+1} + 1} H \ar[d, equal] \\
		g_1/u^{2^{n+1}} \ar[r] & g_1/u^{2^n} \ar[r, "k_{2^{n+1}-1}"'] & \Sigma^{2^{n+1} + 1} H.
	\end{tikzcd}
	\end{center}
	
	We know the $k$-invariant in the top row by proposition (\ref{prop:BottomKInvt}).  Therefore the map \[\Sigma^{2^n} H \to g_1/u^{2^n} \xrightarrow{k_{2^{n+1}-1}} \Sigma^{2^{n+1} + 1} H\] is given by $\Sq^{2^n + 1}$.  
\end{proof}

\subsection{The Postnikov spectral sequence}

	We analyze the spectral sequence computing $\Map(H\mathbb{Z},g_1)$ associated to the tower obtained by mapping $H\mathbb{Z}$ into (\ref{prop:g1Postnikov}).  To get rid of clutter, we restrict to the analogous tower for $g_1/u^8 = \tau_{< 16} g_1$.  

\begin{prop}
	\label{prop:g1AHSS}
\begin{sseqdata}[
	name=g1u8, Adams grading, 
	classes=fill,
	class pattern=linear, class placement transform={rotate=45},
	class labels={left}, label distance=1pt,
	axes type=center, left clip padding=6mm,
	font=\tiny,
	xscale=1.5
	]
	\class["u"](2,0)
	\class["u \Sq^2"](0,0)
	\class["u \Sq^3"](-1,0)

	\class["u^2"](4,1)
	\class["u^2 \Sq^2"](2,1)
	\class["u^2 \Sq^3"](1,1)
	\class["u^2 \Sq^4"](0,1)
	\class["u^2 \Sq^5"](-1,1)

	\class["u^4"](8,2)
	\class["u^4 \Sq^2"](6,2)
	\class["u^4 \Sq^3"](5,2)
	\class["u^4 \Sq^4"](4,2)
	\class["u^4 \Sq^5"](3,2)
	\class["u^4 \Sq^6"](2,2)
	\class["u^4 \Sq^{4,2}"](2,2)
	\class["u^4 \Sq^7"](1,2)
	\class["u^4 \Sq^{5,2}"](1,2)
	\class["u^4 \Sq^8"](0,2)
	\class["u^4 \Sq^{6,2}"](0,2)
	\class["u^4 \Sq^9"](-1,2)
	\class["u^4 \Sq^{7,2}"](-1,2)
	\class["u^4 \Sq^{6,3}"](-1,2)
	
	\d1(2,0)
	\d1(4,1)
	\d1(2,1,,2)
	\d1(0,1,,2)
	
	
\end{sseqdata}

	The $E_1$-page is:
	
\printpage[name=g1u8, page=1]

	The $E_2$-page is:
	
\printpage[name=g1u8, page=2]

	(Note, these charts are accurate in the range $t - s \geq 0$.)
\end{prop}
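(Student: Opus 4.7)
The plan is to obtain the spectral sequence by applying $\Map(H\mathbb{Z},-)$ to the Postnikov tower for $g_1/u^8$ given in Proposition~\ref{prop:g1Postnikov}, reading the $E_1$ term off the successive fibers and the $d_1$ differentials off the $k$-invariants identified there.

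To determine the $E_1$ page, note that the tower has fibers $\Sigma^{2}H$, $\Sigma^{4}H$, $\Sigma^{8}H$, which I place in filtrations $s=0,1,2$. Since $\pi_n\Map(H\mathbb{Z},\Sigma^{k}H)\cong H^{k-n}(H\mathbb{Z};\mathbb{F}_2)$ and $H^*(H\mathbb{Z};\mathbb{F}_2)\cong \mathcal{A}/\mathcal{A}\Sq^1$ has an $\mathbb{F}_2$-basis of admissible monomials $\Sq^I$ not ending in $\Sq^1$, the $E_1$ term at filtration $s$ has basis the formal symbols $u^{2^s}\Sq^I$ with such $\Sq^I$, placed in Adams bidegree $(2^{s+1}-|I|,\,s)$. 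The prefactor $u^{2^s}$ is just bookkeeping for the fiber $\Sigma^{2^{s+1}}H$. Enumerating these classes in the displayed range reproduces the first chart.

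The $d_1$ differentials are the connecting maps of the successive fiber sequences, so they are induced by the $k$-invariants. Proposition~\ref{prop:g1Postnikov} identifies the restriction of the $k$-invariant at filtration $s$ to the bottom cell as $\Sq^{2^{s+1}+1}$, so $d_1$ is given by left multiplication by $\Sq^{2^{s+1}+1}$ on $\mathcal{A}/\mathcal{A}\Sq^1$:
\[d_1(u^{2^s}\Sq^I) = u^{2^{s+1}}\Sq^{2^{s+1}+1}\Sq^I,\]
reduced by the Adem relations modulo the left ideal $\mathcal{A}\Sq^1$. Running through the finitely many generators in range and applying elementary Adem identities such as $\Sq^3\Sq^2=0$, $\Sq^3\Sq^3=\Sq^5\Sq^1\equiv 0$, $\Sq^5\Sq^3=0$, and $\Sq^5\Sq^4=\Sq^7\Sq^2$ picks out the four nonzero differentials drawn in the chart. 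The $E_2$ page is then the kernel modulo the image.

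The only real obstacle is computational bookkeeping; the structural input---the tower, its $k$-invariants, and the cohomology of $H\mathbb{Z}$---is already in hand from Proposition~\ref{prop:g1Postnikov} and classical results. The caveat that the charts are accurate only in the range $t-s\geq 0$ is needed because some potential $d_1$-targets lie just below the axis and are suppressed from the picture.
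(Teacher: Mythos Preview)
Your argument for the $E_1$-page and the $d_1$-differentials is essentially the paper's: read the fibers and $k$-invariants off Proposition~\ref{prop:g1Postnikov}, identify $d_1$ with left multiplication by $\Sq^{2^{s+1}+1}$ on $\mathcal{A}/\mathcal{A}\Sq^1$, and reduce via Adem relations. That part is fine and matches the paper's one-line ``The $d_1$-differentials can be read off from the Postnikov tower.''

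What you are missing is the second half of the paper's proof. After displaying the $E_2$-page, the paper goes on to show that $u\Sq^2$ at $(0,0)$ is a \emph{permanent cycle}: there is a potential $d_2$ from $(0,0)$ to classes at $(-1,2)$ (namely $u^4\Sq^9$ and $u^4\Sq^{6,3}$, which survive to $E_2$), and nothing in your argument rules this out. The paper handles this by naturality: the quotient $q\colon g_1 \to g_1/u^8$ induces a map of spectral sequences, and in the spectral sequence for $g_1$ the class $u\Sq^2$ has $d_2 = 0$, so $d_2(u\Sq^2) = q_*(d_2(u\Sq^2)) = 0$ in the truncated spectral sequence as well. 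Your caveat about the range $t-s \geq 0$ correctly flags that the \emph{chart} doesn't claim anything at $(-1,2)$, but the proposition is implicitly being used to compute homotopy groups in Theorem~\ref{thm:table}, so one needs to know the spectral sequence actually collapses at $E_2$ in the displayed range. Add the naturality argument (or some other reason $d_2(u\Sq^2)=0$) to complete the proof.
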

\begin{proof}
	The $d_1$-differentials can be read off from the Postnikov tower.  It remains to show that the class $u\Sq^2$ survives the $E_2$-page, so that it is a permanent cycle in the spectral sequence.  The quotient map $q: g_1 \to g_1/u^8$ induces a map of spectral sequences $q_*$.  We have \[d_2(u\Sq^2) = d_2(q_* u\Sq^2) = q_*(d_2 u\Sq^2) = q_*(0) = 0.\]
\end{proof}

\begin{rem}
	In general, using naturality to compare the spectral sequences for mapping $H\mathbb{Z}$ into the units of various truncations of polynomials seems to be an effective tool for determining longer differentials. 
\end{rem}

\begin{rems}
\mbox{}
\begin{enumerate}[(a)]
	\item The abundance of classes contributed from $u^4$ in this example is an artifact of restricting to $g_1/u^8$, since the $d_1$-differential on these classes have zero target.  
	\item	As before, if $n$ is small, there will be no more room for differentials, and we get a complete description of $\pi_* \mathbb{G}_m(H[u]/u^{n+1})$.  
	\item However, compared to the spectral sequence of (\ref{prop:E2GHM}), the filtrations of the elements are different; in fact, the length filtration coming from the Koszul resolution and the Postnikov filtration are \emph{not} compatible, i.e., neither refines the other.  One way to think about these filtrations is in terms of the cellular filtration of $H\mathbb{Z}$.  In the symmetric product of spheres filtration, instead of attaching cells individually to build $H\mathbb{Z}$, we instead attach entire groups of cells at once in the form of Steinberg summands.   On the other hand, it is well-known that the Postnikov filtration of the target gives rise to the Atiyah-Hirzebruch spectral sequence for mapping spaces, which can also be constructed using the cellular filtration of the source \cite{Mau63}.  However, here we have used Steiner's splitting of the spectrum of units $gl_1 H[u]$ to ``accelerate'' the Postnikov filtration in a different way.  
\end{enumerate}
\end{rems}

\subsection{Extension problems}

	Now that we have a description of the $E_\infty$-page of the spectral sequence in certain situations, it remains to check whether or not there are extensions.  Each dot in our spectral sequence represents a $\mathbb{F}_2$; the question is whether or not there could be multiplication-by-$2$ extensions amongst these classes.  
	
	Let $R = H[u]$ or $H[u]/u^{n+1}$.  Consider a class $x: \Sigma^i H\mathbb{Z} \to gl_1 R$.  We may check whether or not it is $2$-torsion using the cofiber sequence \[H\mathbb{Z} \xrightarrow{2} H\mathbb{Z} \to H\mathbb{F}_2\] and determining whether $x$ is in the image of $[\Sigma^i H\mathbb{F}_2, gl_1 R]$.  In practice, this means computing the analogous spectral sequences with $H\mathbb{F}_2$ in place of $H\mathbb{Z}$.  We note that there is an injection on from the $E_1$-page of the spectral sequence computing $\Map(H\mathbb{Z}, gl_1 R)$ to the $E_1$-page of the spectral sequence computing $\Map(H\mathbb{F}_2, gl_1 R)$.  The question is whether or not the classes corresponding to the classes in the spectral sequence for $\Map(H\mathbb{Z}, gl_1 R)$ survive the spectral sequence for $\Map(H\mathbb{F}_2, gl_1 R)$. In most cases they do:
	
\begin{prop}
	For $n \leq 8$, $\pi_* \mathbb{G}_m(H[u])$ is $2$-torsion, i.e., there are no additive extensions.  
\end{prop}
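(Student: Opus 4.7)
The plan is to exploit the cofiber sequence $H\mathbb{Z} \xrightarrow{2} H\mathbb{Z} \to H\mathbb{F}_2$ exactly as outlined in the paragraph preceding the statement: mapping into $gl_1 R$ gives a long exact sequence
\[
\cdots \to \pi_* \Map(H\mathbb{F}_2, gl_1 R) \to \pi_* \Map(H\mathbb{Z}, gl_1 R) \xrightarrow{2} \pi_* \Map(H\mathbb{Z}, gl_1 R) \to \cdots,
\]
so proving $2$-torsion reduces to showing that the connecting map $\pi_* \Map(H\mathbb{F}_2, gl_1 R) \to \pi_* \mathbb{G}_m(R)$ is surjective in the range of interest. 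I would do this by running the analogue of the Postnikov/AHSS spectral sequence of proposition \ref{prop:g1AHSS} with $H\mathbb{F}_2$ in place of $H\mathbb{Z}$ and comparing the two term-by-term.

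The comparison is governed by reduction mod $2$, i.e., the map of spectral sequences induced by $H\mathbb{Z} \to H\mathbb{F}_2$. On $E_1$, mapping $H\mathbb{Z}$ into the Eilenberg--Mac Lane layers of the Postnikov tower of $g_k$ produces $\mathcal{A}/\mathcal{A}\Sq^1$ in appropriate degrees, while mapping $H\mathbb{F}_2$ in produces the whole Steenrod algebra $\mathcal{A}$; in particular the $H\mathbb{Z}$ spectral sequence injects into the $H\mathbb{F}_2$ one on $E_1$, and the $d_1$'s on both are induced by the same $k$-invariants $\Sq^{2^i+1}$ from proposition \ref{prop:g1Postnikov}. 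Thus every surviving class $u^a \Sq^I$ in the $H\mathbb{Z}$ spectral sequence has a canonical preimage in the $E_1$ page of the $H\mathbb{F}_2$ spectral sequence, and it suffices to verify that this preimage is not killed by a later differential (nor hit by one) within the truncated range.

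The main task is therefore to rule out differentials in the $H\mathbb{F}_2$ spectral sequence that would destroy these lifts. Since we have restricted to $n \leq 8$, the analogue of remark \ref{rem:GHSS}(b) applies: the $H\mathbb{F}_2$ spectral sequence still sits in a compact range, and for purely tridegree reasons there is no room for $d_r$ with $r \geq 2$ in the window in question, exactly as for the $H\mathbb{Z}$ version. The key input is Steiner's splitting (theorem \ref{thm:Steiner}), which lets us work piece-by-piece on each $g_k$ and confines each potential differential to the narrow slice of classes in a single summand. Using the same naturality trick as in proposition \ref{prop:g1AHSS} (compare different truncations of $H[u]$ via the evident quotient maps) one shows any hypothetical $d_r$ must vanish after mapping to a further truncation where it does vanish by dimension reasons.

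The main obstacle I anticipate is simply bookkeeping: one must verify this vanishing across all Steinberg-like pieces showing up through $n = 8$, and in particular check that the Steenrod-algebra classes introduced by enlarging $\mathcal{A}/\mathcal{A}\Sq^1$ to $\mathcal{A}$ do not create new differentials that hit lifts of classes we care about. Concretely the worry is that multiplication by $\Sq^1$ could produce a nonzero $d_1$ from a new $H\mathbb{F}_2$ generator onto a lift of an $H\mathbb{Z}$ generator; this can be excluded class-by-class using the explicit Adem calculations already used to establish proposition \ref{prop:g1AHSS}, since the $k$-invariants are all of the form $\Sq^{2^i+1}$ and $\Sq^{2^i+1}\Sq^1$ is readily analyzed. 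Once these differentials are checked, every permanent cycle in the $H\mathbb{Z}$ spectral sequence lifts to a permanent cycle in the $H\mathbb{F}_2$ spectral sequence, the connecting map in the long exact sequence is surjective on the relevant $\pi_*$, and the proposition follows.
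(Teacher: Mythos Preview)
Your proposal is correct and follows essentially the same approach as the paper. The paper itself gives no detailed proof: the paragraph preceding the proposition already lays out exactly the strategy you describe (use the cofiber sequence $H\mathbb{Z}\xrightarrow{2}H\mathbb{Z}\to H\mathbb{F}_2$, run the analogous Postnikov spectral sequence with $H\mathbb{F}_2$ in place of $H\mathbb{Z}$, note the injection on $E_1$, and check that the relevant classes survive), and the subsequent remark confirms that the actual verification is ``tedious, but entirely routine''. Your write-up is a faithful expansion of this sketch; the only quibbles are terminological (the map $\pi_*\Map(H\mathbb{F}_2,gl_1 R)\to\pi_*\mathbb{G}_m(R)$ is the induced map rather than a connecting homomorphism, and the ``Steinberg-like pieces'' phrasing conflates Steiner's splitting with the unrelated Steinberg summands of \S\ref{sec:SymProd}).
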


\begin{rem}
	It does not seem difficult at all to relax the restriction that $n \leq 8$ a little further -- it is just a matter of doing the tedious, but entirely routine, computations.  
\end{rem}

\subsection{Table of homotopy groups of $\mathbb{G}_m(H[u]/u^{n+1})$}
	Here is a summary for the homotopy groups of the strict units for the truncated polynomial rings $H[u]/u^{n+1}$ for $n \leq 8$.  

\begin{thm}
	\label{thm:table}
	The groups $\pi_i H[u]/u^{n+1}$ are 2-torsion with the following ranks:
\begin{equation*}	
\begin{array}{c|cccccccc}
	\hline
	n & 1 & 2 & 3 & 4 & 5 & 6 & 7 & 8 \\
	\hline
	\dim \pi_0 & 1 & 1 & 3 & 5 & 8 & 9 & 14 & 17 \\
	\dim \pi_1 & 0 & 0 & 1 & 2 & 5 & 7 & 11 & 14 \\
	\dim \pi_2 & 1 & 1 & 2 & 3 & 5 & 7 & 11 & 14 \\
	\dim \pi_3 & & 0 & 1 & 1 & 3 & 4 & 7 & 10 \\
	\dim \pi_4 & & 1 & 2 & 2 & 4 & 5 & 8 & 10 \\
	\dim \pi_5 & & & 0 & 1 & 2 & 3 & 6 & 7 \\
	\dim \pi_6 & & & 1 & 2 & 3 & 4 & 6 & 8 \\
	\dim \pi_7 & & & & 0 & 1 & 2 & 4 & 6 \\
	\dim \pi_8 & & & & 1 & 2 & 3 & 5 & 6 \\
	\dim \pi_9 & & & & & 0 & 1 & 2 & 4 \\
	\dim \pi_{10} & & & & & 1 & 2 & 3 & 5 \\
	\dim \pi_{11} & & & & & & 0 & 1 & 2 \\
	\dim \pi_{12} & & & & & & 1 & 2 & 3 \\
	\dim \pi_{13} & & & & & & & 0 & 1 \\
	\dim \pi_{14} & & & & & & & 1 & 2 \\
	\dim \pi_{15} & & & & & & & & 0 \\
	\dim \pi_{16} & & & & & & & & 1 \\
	\hline
\end{array}
\end{equation*}
\end{thm}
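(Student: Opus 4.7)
The plan is to compute these homotopy groups by running the Goerss-Hopkins-Miller spectral sequence of Section \ref{sec:GHM} for each ring $H[u]/u^{n+1}$ with $n \leq 8$, and observing that it collapses for degree reasons when $n$ is this small.

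First, I would set up the unstable co-Koszul complex $K^*((\mathbb{F}_2[u]/u^{n+1})^\vee, \mathcal{U}, \mathbb{F}_2)$ of Theorem \ref{thm:UnstableCokoszul}, with input module $M = \pi_*(H[u]/u^{n+1}) = \mathbb{F}_2[u]/u^{n+1}$. The same Cartan-formula computation used for $\mathbb{F}_2[u]$ applies verbatim here: $Q^I u^k = 0$ unless $I = (2^s k, \ldots, 4k, 2k)$, with excess $2k$. The truncation at $u^{n+1}$ cuts off the complex after finitely many steps, giving a complex of bounded cohomological and internal degree, and the differential is again $d(u^k \Sq^{J+1}) = u^{2k} \Sq^{2k+1, J+1}$ (whenever $2k \leq n$; otherwise the term vanishes). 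Concretely, one writes down the $E_1$-page case by case as in Figure \ref{fig:GHMSS} and takes homology with respect to this differential.

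Next, I would verify collapse at $E_2$. For $n \leq 8$, the nonzero classes occupy only filtrations $s \leq 3$, and a direct degree check in the finite range $n \leq 8$ shows that each surviving class is supported in a bidegree from which no later differential has a nonzero target (i.e., no class sits at the right bidegree to receive a $d_r$ for $r \geq 2$). Where useful one can invoke Remark \ref{rem:GHSS}(c), which exhibits the $E_1$-complex as a direct sum indexed by odd divisors $k$, so the analysis naturally splits into the pieces contributed by $\{k, 2k, 4k, \ldots\}$ for each odd $k \leq n$. Given collapse, one obtains $\dim \pi_i \mathbb{G}_m(H[u]/u^{n+1})$ as the $\mathbb{F}_2$-dimension of the bigraded $E_2$ in total degree $i$.

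To turn these $\mathbb{F}_2$-vector space dimensions into the 2-torsion abelian groups claimed, I would use the proposition stated just before the table: comparing with the spectral sequence for $\Map(H\mathbb{F}_2, gl_1 R)$ via the cofiber sequence $H\mathbb{Z} \xrightarrow{2} H\mathbb{Z} \to H\mathbb{F}_2$, one checks that every $E_\infty$-class lifts through multiplication by $2$, so there are no additive extensions. Since in the range $n \leq 8$ the computation is entirely parallel, the same argument applies uniformly.

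The main obstacle is not conceptual but bookkeeping: the co-Koszul complex for $n = 7, 8$ contains quite a few classes (the $u^4$-block already contributes the length-$2$ classes $\Sq^{4,2}, \Sq^{5,2}, \Sq^{6,2}, \Sq^{7,2}, \Sq^{6,3}$ visible in Figure \ref{fig:GHMSS}, and the $u^8$-block adds length-$3$ contributions), and one must patiently enumerate admissible monomials in $L(0)_{<2k}$, apply the differential using Adem relations, and take kernels modulo images. The splitting of Remark \ref{rem:GHSS}(c) keeps this tractable because each odd $k$ contributes an independent, much smaller subcomplex, but carrying out all cases $n \in \{1, \ldots, 8\}$ is essentially an exercise in organized computer-assisted linear algebra over $\mathbb{F}_2$ rather than new mathematics.
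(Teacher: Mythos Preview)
Your proposal is correct and follows essentially the same approach as the paper: run the Goerss--Hopkins--Miller spectral sequence via the unstable co-Koszul complex of Theorem~\ref{thm:UnstableCokoszul}, observe collapse at $E_2$ for $n \leq 8$ by the degree argument noted in Remark~\ref{rem:GHSS}(b), and rule out extensions using the comparison with $\Map(H\mathbb{F}_2, gl_1 R)$ as in the proposition immediately preceding the table. The paper also presents the parallel Postnikov-tower approach of Proposition~\ref{prop:g1AHSS} (using Steiner's splitting and naturality to rule out longer differentials), but this is an alternative route to the same computation rather than a replacement for the one you chose.
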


\begin{rem}
	Again, it is not too hard to compute a bit more than shown here before running into a potential differential that needs resolving by other means.    
\end{rem}

\subsection{A conjecture}
	In the mod 2 Steenrod algebra, we have the fundamental relation \[\Sq^{2n+1} \Sq^{n+1} = 0.\]  It turns out that all the Adem relations can be deduced from these simple relations using the coaction of the dual Steenrod algebra, i.e., by a stripping process \cite{Kri65}.  
	
	Since the composition of $\Sq^{n+1}$ and $\Sq^{2n+1}$ is null, it makes sense to ask for the ``homology'' of this relation, i.e., $\ker \Sq^{2n+1}/\im \Sq^{n+1}$ in any left $\mathcal{A}$-module such as $H^* H\mathbb{Z}$.  This is the basic calculation needed to compute the $E_2$-page of the spectral sequence for the strict units of the polynomial ring $H[u]$.  Given the results in this section, we are led to the following more precise conjecture.  
	
\begin{conj}
\label{conj:AdemHomology}
	In $\mathcal{A}/\mathcal{A}\beta$, we have $\ker \Sq^{8k+1} = \im \Sq^{4k+1}$ in degrees up to $8k+1$ for all $k \geq 0$. 
\end{conj}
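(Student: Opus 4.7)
The conjecture asserts exactness of the three-term complex
\[ (\mathcal{A}/\mathcal{A}\beta)_{d-4k-1} \xrightarrow{\Sq^{4k+1}} (\mathcal{A}/\mathcal{A}\beta)_d \xrightarrow{\Sq^{8k+1}} (\mathcal{A}/\mathcal{A}\beta)_{d+8k+1} \]
at the middle term for $d \leq 8k+1$. The vanishing of the composition is the specialization $n=4k$ of the relation $\Sq^{2n+1}\Sq^{n+1}=0$ recalled in the text, so the real content is the inclusion $\ker\Sq^{8k+1}\subseteq\im\Sq^{4k+1}$. My plan is to establish this degree by degree via a dimension count in the admissible monomial basis of $\mathcal{A}/\mathcal{A}\beta$ (admissible $\Sq^I$ with final entry $i_s\geq 2$, together with $1$), whose Poincar\'{e} series is well known from $H^*(H\mathbb{Z};\mathbb{F}_2)$.

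Concretely, for each $d$ in range I would compute the rank of left multiplication by $\Sq^{8k+1}$ out of degree $d$ by applying Adem relations to each $\Sq^{8k+1}\Sq^I$ and reducing back to admissibles, and similarly for $\Sq^{4k+1}$ into degree $d$. Exactness then reduces to the equality
\[ \dim(\mathcal{A}/\mathcal{A}\beta)_d = \operatorname{rank}\Sq^{8k+1}|_d + \operatorname{rank}\Sq^{4k+1}|_{d-4k-1}. \]
For small $k$ this is a direct computer verification in the spirit of the preceding propositions of the paper. For a uniform argument over all $k$, I would attempt a stripping approach: the family of identities $\Sq^{8k+1}\Sq^{4k+1}=0$ is derivable from $\Sq^1\Sq^1=0$ via the coaction of the dual Steenrod algebra, as in \cite{Kri65}, and one might hope to propagate the exactness the same way. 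Alternatively, dualizing to the sub-Hopf-algebra $\mathbb{F}_2[\bar\xi_1^2,\bar\xi_2,\bar\xi_3,\ldots]\subseteq\mathcal{A}_*$ may convert the two multiplications into coactions along explicit Milnor monomials, enabling a more combinatorial argument.

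The main obstacle is the uniform treatment for general $k$. The paper has already exhibited non-trivial classes $\Sq^{4i,2i,i}\in\ker\Sq^{8i-3}/\im\Sq^{4i-1}$, so analogous homologies for nearby pairs of operators do \emph{not} always vanish; the specific arithmetic of the conjecture together with the upper bound on $d$ must each play an essential role, and pinpointing exactly why -- i.e., why no obstruction of ``$\Sq^{4k,2k,k}$-type'' appears for the pair $(\Sq^{4k+1},\Sq^{8k+1})$ within the range $d \leq 8k+1$ -- is where I expect the subtlety to concentrate. Constructing an explicit preimage under $\Sq^{4k+1}$ for each admissible monomial in the kernel, perhaps guided by an iterative use of the Adem relations or by the length filtration coming from the Steinberg splitting implicit in Steiner's theorem (\ref{thm:Steiner}), will be the main technical challenge.
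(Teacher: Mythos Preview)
This statement is labeled a \emph{conjecture} in the paper, and the paper does not prove it. The only evidence offered is a remark that it has been verified by computer for small values of $k$; the paper then explains what would follow \emph{if} the conjecture holds (collapse of the spectral sequences for $\Map(H\mathbb{Z},g_k)$ to two rows). So there is no proof in the paper against which to compare your proposal.

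Your write-up is also not a proof but a research plan, and you are candid about this: you describe a dimension-count strategy, mention possible approaches via stripping or dualization to the Milnor basis, and then correctly identify the uniform treatment over all $k$ as the main obstacle. None of the suggested approaches is carried to completion. The dimension-count reduction is sound as far as it goes, but it only reformulates the problem; the stripping idea is speculative (you write ``one might hope''); and the final paragraph is a list of difficulties rather than a resolution of them. In short, you have accurately diagnosed why the statement is hard and have not advanced beyond the paper's own position, which is that the conjecture is open and supported only by low-$k$ computation.
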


	If the conjecture is true, then the individual spectral sequences for $\Map(H\mathbb{Z},g_k)$ will be concentrated in the first two rows from the $E_2$-page onwards, and thus collapse.  This implies that we would have \[\pi_* \mathbb{G}_m(H[u]) \cong \gen{u^k \otimes \ker \Sq^{2k+1} \oplus u^{2k} \otimes \ker \Sq^{4k+1}/\im \Sq^{2k+1} \mid k \text{ odd}}\] possibly up to extension problems.

\begin{rem}
	This conjecture has been verified by a computer for small values of $k$.  
\end{rem}

\subsection{Odd-primary $k$-invariants}
\label{sec:oddkinvt}

	We establish an odd primary analogue of proposition \ref{prop:BottomKInvt} about the $k$-invariants of unit spectra, which may be useful for investigations at other primes.  (See problem 1.9.9 in \cite{Law20}.)  If $p$ is odd, then $\deg u = 2i$ has to be even for the polynomial ring $H\mathbb{F}_p[u]$ to make sense.  
	
\begin{thm}
\label{thm:OddKInvt}
	Let $p$ be odd.  The first nontrivial $k$-invariant of $gl_1 H\mathbb{F}_p[u]$, where $\deg u = 2i$, is (up to a unit) \[k_{2pi-1} = \beta P^i: \Sigma^{2i} H\mathbb{F}_p \to \Sigma^{2pi + 1} H\mathbb{F}_p.\]  
\end{thm}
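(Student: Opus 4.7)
The approach mirrors the mod-$2$ proof of proposition \ref{prop:BottomKInvt}, adapting it to odd primes via Hess's analysis in \cite{Hes19}. The essential input is the Dyer-Lashof relation $Q^i u = u^p$ on $\pi_{2i} H\mathbb{F}_p[u]$ (valid because $Q^i$ is the top operation on a degree-$2i$ class), which is Koszul dual to $\beta P^i$ via the pairing $\langle \beta P^i, Q^i \rangle = 1$ from section \ref{sec:DL}.

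My first step would be to reduce to the truncated algebra $R' = H\mathbb{F}_p[u]/u^{p+1}$. An application of the odd-primary Goerss-Hopkins-Miller spectral sequence (theorem \ref{thm:OddPCokoszul}), parallel to the mod-$2$ computation of proposition \ref{prop:E2GHM}, shows that $\pi_j gl_1 R' = 0$ for $2i < j < 2pi$, that $\pi_{2i} gl_1 R' \cong \mathbb{F}_p$ is generated by $\ell := [1+u]$, and that $\pi_{2pi} gl_1 R'$ contains $\ell_p := [1+u^p]$ as a generator. Naturality along the projection $gl_1 H\mathbb{F}_p[u] \to gl_1 R'$ identifies the first nontrivial $k$-invariants, so it suffices to study the fiber sequence
\[
\Sigma^{2pi} H\mathbb{F}_p \longrightarrow \tau_{\leq 2pi}\, gl_1 R' \longrightarrow \Sigma^{2i} H\mathbb{F}_p \xrightarrow{\ k\ } \Sigma^{2pi+1} H\mathbb{F}_p,
\]
in which $k$ is an element of $\mathcal{A}_p^{2i(p-1)+1}$.

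Next I would show $k \neq 0$ using the multiplicative structure of $R'$. If $k$ were zero, the truncation would split as $\Sigma^{2i} H\mathbb{F}_p \vee \Sigma^{2pi} H\mathbb{F}_p$, and this splitting would be forced to be compatible with the $E_\infty$-ring structure on $gl_1 R'$. But the $p$-th power operation on $GL_1 R'$ acts on $\ell$ by $(1+u)^p = 1 + u^p$ (in characteristic $p$), carrying the generator of $\pi_{2i}$ to the generator of $\pi_{2pi}$. In a split Postnikov tower, however, the $p$-th power reduces to $p \cdot \ell = 0$ on $\pi_{2i}$ with no interaction with $\pi_{2pi}$, contradicting $\ell_p \neq 0$.

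Finally, I would pin down $k$ as a unit multiple of $\beta P^i$ using the Kudo-Araki transgression formula applied to the Postnikov fibration: the transgression of the bottom class twisted by $Q^i$ produces exactly the operation $\beta P^i$ on the fundamental class of $\Sigma^{2i} H\mathbb{F}_p$. Since $Q^i$ is the sole Dyer-Lashof operation detecting $u \mapsto u^p$, its Koszul dual $\beta P^i$ appears as the nonzero component of $k$. The main obstacle is precisely this final identification: while nontriviality of $k$ falls easily out of the $p$-th power formula, pinning down the particular Steenrod operation among the basis elements of $\mathcal{A}_p^{2i(p-1)+1}$ requires the detailed odd-primary analysis of Dyer-Lashof operations on $H_* GL_1 R$ carried out in \cite{Hes19}.
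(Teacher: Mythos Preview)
Your reduction step contains a factual error that undermines the rest of the argument. For odd $p$ the ring $R' = H\mathbb{F}_p[u]/u^{p+1}$ has nonzero homotopy in \emph{every} degree $2i, 4i, 6i, \ldots, 2pi$, not just $2i$ and $2pi$; hence $\tau_{\leq 2pi}\, gl_1 R'$ is not a two-stage Postnikov system, and the fiber sequence you display does not exist. (You also invoke the Goerss--Hopkins--Miller spectral sequence to compute $\pi_j gl_1 R'$, but that spectral sequence converges to $\pi_*\mathbb{G}_m(R')$, not to $\pi_* gl_1 R'$; the latter agrees with $\pi_* R'$ in positive degrees by definition.) What is actually needed is a reason why the $k$-invariants linking $\pi_{2i}$ to $\pi_{4i}, \ldots, \pi_{2(p-1)i}$ all vanish, so that the first interesting $k$-invariant lands in degree $2pi+1$. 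The paper supplies exactly this via Steiner's splitting (theorem \ref{thm:Steiner}), valid at odd primes: $gl_1 H\mathbb{F}_p[u]$ decomposes as $H\mathbb{F}_p^\times \times \prod_{(k,p)=1} g_k$, and the summand $g_1$ has homotopy concentrated in degrees $2i, 2pi, 2p^2 i, \ldots$, so \emph{its} first $k$-invariant really is a map $\Sigma^{2i} H\mathbb{F}_p \to \Sigma^{2pi+1} H\mathbb{F}_p$.

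Once this is fixed, the paper's identification of $k_{2pi-1}$ is also more direct than yours. Rather than first arguing nontriviality from the $p$-th power relation and then separately pinning down the operation via Kudo--Araki transgression and Hess's machinery, the paper compares the modified Postnikov spectral sequence (whose first nonzero differential \emph{is} the first nontrivial $k$-invariant) with the Goerss--Hopkins--Miller spectral sequence of theorem \ref{thm:OddPCokoszul}: the co-Koszul differential $d(u \otimes 1) = -u^p \otimes \beta P^i$ reads off the $k$-invariant as $\beta P^i$ in one stroke. Your nontriviality sketch is closer in spirit to the MS16-style argument the paper records in its first remark, but note that $gl_1 R'$ is a spectrum, not an $E_\infty$-ring, so the phrase ``compatible with the $E_\infty$-ring structure on $gl_1 R'$'' needs reformulation.
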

\begin{proof}
	Let us first explain what the statement means.  The result in theorem \ref{thm:Steiner} actually applies in greater generality. In particular, it applies at odd primes too, and $gl_1 H\mathbb{F}_p[u]$ splits as a product: \[gl_1 H\mathbb{F}_p[u] \simeq H\mathbb{F}_p^\times \times \prod_{\substack{k \geq 1 \\ (k,p) = 1}} g_k,\] where \[\pi_i(g_k) \cong \begin{cases} \mathbb{F}_p, & \frac{i}{2p^{\nu_p(i)}} = k \\ 0, & \text{otherwise}. \end{cases}\]  This means that the first few $k$-invariants of $gl_1 H\mathbb{F}_p[u]$ are zero, and the first possibly nontrivial $k$-invariant is in fact $k_{2pi-1}: \Sigma^{2i} H\mathbb{F}_p \to \Sigma^{2pi+1} H\mathbb{F}_p$.  
	
	To determine the map, we compare the Goerss-Hopkins-Miller spectral sequence in theorem \ref{thm:OddPCokoszul} and the modified Postnikov spectral sequence constructed in this section.  On the one hand, the first nontrivial $k$-invariant of a spectrum gives the first nonzero differential in the Postnikov spectral sequence.  On the other hand, the description in theorem \ref{thm:OddPCokoszul} tells us that if $\deg u = 2i$, then the differential is given by \[d(u \otimes 1) = -u^p \otimes \beta P^i.\]  We conclude that the $k$-invariant $k_{2pi-1}$ is given by $\beta P^i$ up to a unit.  
\end{proof}

\begin{rem}
	The proof in \cite{MS16} can essentially be adapted to this case too.  Once we know that the first possibly nontrivial $k$-invariant is $k_{2pi-1}$, the fact that $\iota^p = Q^i \iota = 0$ for the generator $\iota \in H_{2i} K(\mathbb{F}_{2i})$ implies that $k_{2pi-1}$ is not zero.  But $\Omega^\infty k_{2pi-1} = 0$, which means that $k_{2pi-1}$ must have sufficiently large excess in the mod $p$ Steenrod algebra, and the only candidate in the appropriate degree is $\beta P^i$.  
\end{rem}

\begin{rem}
	This also follows from a result recently obtained by Hess \cite{Hes19}, who showed that if $R$ is any $E_\infty$-$H\mathbb{F}_p$-algebra, then \[\tau_{[n, pn-1]} R \simeq \tau_{[n, pn-1]} gl_1 R\] for $n \geq 1$.  Consequently, if $R = H\mathbb{F}_p[u]$, $\deg u = 2i$, then this result shows that all the $k$-invariants for $gl_1 R$ up to but not including $k_{2pi-1}$ vanish since they do so additively.  The above theorem identifies the next $k$-invariant, which is nonzero and witnesses that the truncation range cannot be improved in general.  
\end{rem}

\section{The symmetric product of spheres filtration and transfers}
\label{sec:SymProd}

\subsection{Transfers and the cohomology of $H\mathbb{Z}$}
	The essential upshot of Priddy's and Miller's Koszul resolutions is that they allow one to filter the algebra of power operations that appears in the Goerss-Hopkins-Miller spectral sequence by word length.  This length filtration can be realized topologically too, as we explain in this section.  This gives an alternative way to frame and approach the calculation of strict units even for $E_\infty$-rings whose power operations are not yet completely understood.  In particular, in this section we use this new perspective to say something about the strict units of the sphere spectrum (\ref{thm:LsGL1S0}, \ref{thm:MsGL1S0}).  Along the way we shall also prove other results which may be of independent interest.  

	The starting point is the theorem of Dold-Thom, which asserts that the ``designer'' spectrum $H\mathbb{Z}$ is built out of ``space-like'' spectra in an explicit way.  This is useful, because while maps from $H\mathbb{Z} \to gl_1 R$ can be difficult to compute, maps from spaces to $gl_1 R$ are easier.  Let $\SP^n(S^0)$ be the $n$-th symmetric power of the sphere spectrum, and let $\SP^\infty(S^0) := \colim_n \SP^n(S^0)$.  We have
\begin{thm}[Dold-Thom \cite{DT58}]
	$\SP^\infty(S^0) \simeq H\mathbb{Z}$.
\end{thm}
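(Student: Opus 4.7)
The plan is to identify $\SP^\infty(S^0)$ as an Omega spectrum whose $n$-th space is the infinite symmetric product $\SP^\infty(S^n)$, and then invoke the classical (spacewise) Dold-Thom theorem to recognize each of these spaces as an Eilenberg-Mac~Lane space $K(\mathbb{Z},n)$. Putting these together will give an equivalence $\SP^\infty(S^0) \simeq H\mathbb{Z}$ of spectra.

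First, I would spell out the construction: $\SP^n$ is a functor on pointed spaces defined by $\SP^n(X) = X^n/\Sigma_n$ (with basepoint as the identity), and $\SP^\infty(X) = \colim_n \SP^n(X)$ is its stabilization along the inclusions induced by the basepoint. Because $\SP^\infty$ preserves suspensions up to natural weak equivalence (this is the essential content of the classical Dold-Thom comparison), applying $\SP^\infty$ spacewise to the sphere spectrum $\{S^n\}$ yields an Omega spectrum whose structure maps $\SP^\infty(S^n) \to \Omega \SP^\infty(S^{n+1})$ are equivalences. Thus $\SP^\infty(S^0)$ as a spectrum is determined by the sequence of spaces $\SP^\infty(S^n)$.

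Next, I would quote the classical form of Dold-Thom \cite{DT58}: for a connected based CW complex $X$ with reduced integral homology $\tilde H_*(X;\mathbb{Z})$, one has a natural isomorphism $\pi_k(\SP^\infty(X)) \cong \tilde H_k(X;\mathbb{Z})$. Applied to $X = S^n$ for $n \geq 1$, this gives $\pi_k(\SP^\infty(S^n)) \cong \mathbb{Z}$ for $k = n$ and $0$ otherwise, so $\SP^\infty(S^n) \simeq K(\mathbb{Z},n)$. Combining this with the Omega spectrum observation of the previous step, $\SP^\infty(S^0)$ is a connective spectrum with $\pi_0 = \mathbb{Z}$ and all other homotopy groups vanishing, which characterizes $H\mathbb{Z}$ uniquely up to equivalence.

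The main technical obstacle I expect is not the homotopy calculation itself but the verification that $\SP^\infty$ of the sphere spectrum really is an Omega spectrum; equivalently, that the natural map $\Sigma \SP^\infty(X) \to \SP^\infty(\Sigma X)$ (coming from including $\Sigma X \subset \SP^\infty(\Sigma X)$ and exploiting the $E_\infty$-monoid structure on $\SP^\infty$) is a weak equivalence for connected $X$. This is precisely the group-completion/quasifibration step in Dold-Thom's original argument, and one would either cite this directly or reprove it via the quasifibration $\SP^\infty(A) \to \SP^\infty(X) \to \SP^\infty(X/A)$ for a cofibration $A \hookrightarrow X$, applied to $S^n \hookrightarrow CS^n$ to get the loops identification. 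Once this is in hand, the identification with $H\mathbb{Z}$ is immediate from the homotopy group computation.
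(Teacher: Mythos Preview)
The paper does not give a proof of this statement at all; it is quoted as a classical result with a citation to Dold--Thom \cite{DT58} and used as a black box to set up the symmetric product of spheres filtration. Your proposal is a reasonable outline of the standard argument behind that citation, so there is nothing to compare against: you are supplying a proof where the paper simply invokes the literature.
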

		
	The infinite symmetric power $\SP^\infty(S^0)$ admits a cardinality filtration by the $\SP^n(S^0)$'s.  From now on, we fix a prime $p$ and implicitly $p$-complete everything.  Nakaoka showed that the symmetric power filtration has the property that if $n$ is not a $p$-th power, then the map \[\SP^{n-1}(S^0) \to \SP^n(S^0)\] is $p$-locally a weak equivalence.  Furthermore, we have a sequence of cofibrations
	\begin{center}
	\begin{tikzcd}
		S^0 \simeq \SP^1(S^0) \ar[r] & \SP^p(S^0) \ar[r] \ar[d] & \SP^{p^2}(S^0) \ar[r] \ar[d] & \cdots \ar[r] & \SP^\infty(S^0) \simeq H\mathbb{Z} \\
		& \Sigma L(1) & \Sigma^2 L(2)
	\end{tikzcd}
	\end{center}
	
	\noindent where the indicated cofibers of the horizontal maps are suspensions of Steinberg summands \cite{MP83}.
	
	For any spectrum $E$, applying mapping spaces $\Map(-, E)$ to this sequence yields a tower of fibrations
	\begin{center}
	\begin{tikzcd}
		\Map(S^0, E) \ar[dr, dashed] & \Map(\SP^{p^1}(S^0), E) \ar[l] \ar[dr, dashed] & \Map(\SP^{p^2}(S^0), E) \ar[l] \ar[dr, dashed] & \cdots \ar[l] & \Map(H\mathbb{Z}, E). \ar[l] \\
		& \Map(\Sigma L(1), E) \ar[u] & \Map(\Sigma^2 L(2), E) \ar[u] & \cdots
	\end{tikzcd}
	\end{center}
	
	This gives a spectral sequence.
	
\begin{prop}
	\label{prop:SymFiltSS}
	There is a spectral sequence with signature \[E_1^{s,t} = [\Sigma^t L(s), E] \Rightarrow \pi_{t-s} \Map(H\mathbb{Z}, E).\] Moreover, the $E_1$-differential \[d_1: \pi_t \Map(L(s), E) \to \pi_t \Map(L(s+1), E)\] is induced by the transfer maps $\delta_s: L(s+1) \to L(s)$ in $E$-cohomology.  
\end{prop}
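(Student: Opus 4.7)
The plan is to apply the standard spectral sequence associated to a tower of fibrations. Since $\Map(-, E)$ takes cofiber sequences to fiber sequences, each cofiber sequence $\SP^{p^{s-1}}(S^0) \to \SP^{p^s}(S^0) \to \Sigma^s L(s)$ yields a fiber sequence
$$\Map(\Sigma^s L(s), E) \to \Map(\SP^{p^s}(S^0), E) \to \Map(\SP^{p^{s-1}}(S^0), E),$$
and these assemble into the tower of fibrations pictured above.  The $s$-th fiber is $\Map(\Sigma^s L(s), E)$, so the Bousfield-Kan-type spectral sequence of this tower has
$$E_1^{s,t} = \pi_{t-s}\Map(\Sigma^s L(s), E) \cong [\Sigma^{t-s}\Sigma^s L(s), E] = [\Sigma^t L(s), E],$$
as claimed.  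For convergence, I would observe that $H\mathbb{Z} \simeq \colim_s \SP^{p^s}(S^0)$, so $\Map(H\mathbb{Z}, E) \simeq \lim_s \Map(\SP^{p^s}(S^0), E)$, and the standard Milnor/$\lim^1$ analysis abuts the spectral sequence to $\pi_{t-s}\Map(H\mathbb{Z}, E)$.  Strong convergence in the range of interest follows because the connectivity of $\Sigma^s L(s)$ grows with $s$, which controls the relevant $\lim^1$ terms.

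To identify the $d_1$-differential, I would unfold its general description.  For a tower of fibrations with fibers $F_s$ and stages $X_s$, $d_1 \colon \pi_{t-s} F_s \to \pi_{t-s-1} F_{s+1}$ is the composite of the inclusion $\pi_{t-s} F_s \to \pi_{t-s} X_s$ with the connecting map $\pi_{t-s} X_s \to \pi_{t-s-1} F_{s+1}$ coming from the fiber sequence $F_{s+1} \to X_{s+1} \to X_s$.  Chasing this in our setting, $d_1$ is induced by precomposition with the composite of spectrum maps
$$\Sigma^s L(s+1) \xrightarrow{\partial_{s+1}} \SP^{p^s}(S^0) \xrightarrow{q_s} \Sigma^s L(s),$$
where $\partial_{s+1}$ is the (shifted) connecting map of the $(s{+}1)$-th cofiber sequence and $q_s$ is the projection to the cofiber in the $s$-th cofiber sequence.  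Desuspending, this composite is precisely the transfer $\delta_s \colon L(s+1) \to L(s)$ of Mitchell-Priddy \cite{MP83}, and applying $[-, E]$ therefore recovers the $d_1$ formula stated in the proposition.

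The main obstacle in carrying this out is bookkeeping: tracking the shifts between adjacent cofiber sequences and checking that the composite $q_s \circ \partial_{s+1}$ matches the standard description of the Steinberg transfer $\delta_s$ up to the convention one adopts.  Once this identification is in hand, the rest is routine assembly of the homotopy spectral sequence of a tower of fibrations, and the spectral sequence of the proposition follows.
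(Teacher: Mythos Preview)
Your proposal is correct and matches the paper's approach exactly: the paper does not give a separate proof but simply sets up the tower of fibrations obtained by applying $\Map(-,E)$ to the symmetric product filtration and declares the proposition as its associated spectral sequence. You have supplied the routine details (identification of the $E_1$-terms, the $d_1$ as the composite connecting-map-then-quotient which is the transfer $\delta_s$, and convergence via connectivity of $\Sigma^s L(s)$) that the paper leaves implicit.
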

		
	The sequence \[\cdots \xrightarrow{\delta_2} L(2) \xrightarrow{\delta_1} L(1) \xrightarrow{\delta_0} L(0) \xrightarrow{\delta_{-1}} H\mathbb{Z}\] has been studied in a variety of settings with important implications for homotopy theory.  We recall several instances below.  
	
\begin{enumerate}[(a)]
	\item Nakaoka \cite{Nak58} studied this sequence in mod $p$ cohomology, and showed that the symmetric product filtration realizes the length filtration on $H\mathbb{F}_p^*(H\mathbb{Z}) \cong \mathcal{A}/\mathcal{A} \beta$.  So the maps $\delta_s$ induce the zero map in ordinary cohomology.  
	
	\item Kuhn \cite{Kuh82} and Kuhn-Priddy \cite{KP85} studied this sequence in $p$-local homotopy, and showed that the sequence is exact on $p$-local homotopy groups.  This is the celebrated \emph{Whitehead conjecture} \cite{Mil71}.  
	
	\item Cathcart \cite{Cat88} and Stroilova \cite{Str12} studied this sequence in $p$-local \emph{cohomotopy}, and showed that the induced sequence is also exact on cohomotopy groups.  In fact, this was done via establishing a splitting of the Spanier-Whitehead duals of Steinberg summands \[\mathbb{D}L(s) \simeq L(s) \vee L(s-1).\]  
	
	Plugging this result into (\ref{prop:SymFiltSS}), this gives a rather roundabout proof of the fact that $\Map(H\mathbb{Z}, S^0) \simeq *$.  
\end{enumerate}

	Understanding the $d_1$ differential in the spectral sequence of (\ref{prop:SymFiltSS}) amounts to studying the behavior of the sequence in $E$-cohomology, so in this section we shall prove several ``versions'' of Whitehead conjecture before tackling the case $E = gl_1 S^0$.  

	It is convenient to recall the group-theoretic interpretation of the Steinberg summands $L(s)$ and related spectra. By the Segal conjecture, now a theorem\footnote{The theorem was eventually proved by Carlsson \cite{Car84} building on earlier work, but we only require the version for elementary abelian groups, which was proved earlier by Adams-Gunawardena-Miller \cite{AGM85}.  We shall use the extension to maps between classifying spaces given by \cite{LMM82}.}, we know all the stable maps between the classifying spaces $BV$ where $V$ is a finite-dimensional vector space over the finite field $\mathbb{F}_p$ in terms of Burnside modules.  More explicitly, 

\begin{prop}[Special case of the Segal conjecture \cite{LMM82}]
	Let $V$ and $W$ be finite-dimensional vector spaces over $\mathbb{F}_p$.  Then,	\[[BV_+, BW_+] \cong A(V,W)^{\wedge}_{I(V)} \cong \bigoplus_{(V',f)} \mathbb{Z}_p \cdot u_{(V',f)},\] where $(V',f)$ ranges over all subspaces $V' \leq V$ and homomorphisms $f: V' \to W$.  
		
		The element $u_{(V',f)}$ represents the stable map \[BV_+ \xrightarrow{\tr^V_{V'}} BV'_+ \xrightarrow{Bf} BW_+.\] 	Composition among the $u_{(V',f)}$'s is determined by the double coset formula.  
\end{prop}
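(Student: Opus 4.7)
The proof reduces to combining two inputs: the Segal conjecture for elementary abelian groups, and a direct inspection of the Burnside module.

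The first isomorphism $[BV_+, BW_+] \cong A(V,W)^\wedge_{I(V)}$ is the content of the Segal conjecture. The case $W = 0$ is due to Adams-Gunawardena-Miller \cite{AGM85}; the extension to an arbitrary classifying space $BW_+$ as target is provided by Lewis-May-McClure \cite{LMM82} via an equivariant completion argument. Here $A(V,W)$ denotes the uncompleted Burnside module, defined as the Grothendieck group of finite $(V,W)$-bisets on which the left $V$-action is free, with $A(V)$-module structure given by pre-composition with stable self-maps of $BV_+$.

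To obtain the second isomorphism, the plan is to exhibit an explicit $\mathbb{Z}$-basis of $A(V,W)$. Every finite $V$-free $(V,W)$-biset decomposes uniquely into transitive orbits, and because $V$ and $W$ are abelian, any such transitive biset is isomorphic to $(V \times W)/\Delta_f(V')$ for a unique pair $(V', f)$, where $V' \leq V$ is a subspace, $f: V' \to W$ is a homomorphism, and $\Delta_f(V') = \{(v, f(v)) : v \in V'\}$ is the graph of $f$. This yields a free $\mathbb{Z}$-basis $\{u_{(V',f)}\}$ indexed by the (finite) set of such pairs. Under the standard dictionary between bisets and stable maps of classifying spaces, $u_{(V',f)}$ corresponds to the correspondence $BV_+ \leftarrow BV'_+ \to BW_+$ whose first leg stabilizes to the Becker-Gottlieb transfer $\tr^V_{V'}$ and whose second leg is $Bf_+$, giving the desired geometric description. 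The double coset formula for composition is then a formal consequence of biset multiplication.

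Finally, it remains to compute the completion. Since the indexing set is finite, $A(V,W)$ is free abelian of finite rank, and the $I(V)$-adic topology on it coincides with the $p$-adic topology: $I(V)$ contains the element $[V/0] - p^{\dim V}$ (of augmentation zero), and from this it is standard that $I(V)$-adic completion of $A(V,W)$ reduces to $p$-adic completion of each $\mathbb{Z}$-summand, yielding $\bigoplus_{(V',f)} \mathbb{Z}_p \cdot u_{(V',f)}$. The only substantive input is the Segal conjecture itself; the main pitfall to watch for is keeping the conventions among bisets, correspondences, transfers, and stable maps aligned consistently across the argument.
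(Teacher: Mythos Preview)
The paper does not supply a proof of this proposition: it is stated as a known result, attributed to \cite{LMM82} (with the underlying Segal conjecture for elementary abelian $p$-groups due to \cite{AGM85}), and then used as a black box. So there is no ``paper's own proof'' to compare against.

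Your sketch is a correct outline of the standard argument. The identification $[BV_+,BW_+]\cong A(V,W)^\wedge_{I(V)}$ is exactly the cited theorem; the decomposition of $A(V,W)$ via transitive free $(V,W)$-bisets indexed by graph subgroups $\Delta_f(V')$ is the usual description of the Burnside module between abelian groups; and the reduction of $I(V)$-adic completion to $p$-adic completion is the well-known fact that for a $p$-group the two topologies coincide on finitely generated $A(V)$-modules. The only place I would tighten the exposition is that last step: the single element $[V/0]-p^{\dim V}\in I(V)$ does not by itself establish the equivalence of topologies; one really wants the standard statement that some power of $I(V)$ lies in $p\cdot A(V)$ and conversely, which is where the $p$-group hypothesis enters. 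With that caveat, your argument is sound and matches how the result is proved in the literature.
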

	
	As a consequence of the corollary, there is an inclusion \[\mathbb{Z}_p \widetilde{\Hom}_{\mathbb{F}_p}(V,W) \to [BV_+, BW_+], \quad f \mapsto u_{(V,f)},\] where $\mathbb{Z}_p \widetilde{\Hom}_{\mathbb{F}_p}(V,W)$ is the group ring on $\Hom_{\mathbb{F}_p}(V,W)$ with coefficients in $\mathbb{Z}_p$ after identifying the zero homomorphism with zero in the ring.  
	
	Now, let $V_s \cong \mathbb{F}_p^s$ be an $s$-dimensional vector space over $\mathbb{F}_p$.  Let $B_s$ be the Borel subgroup of upper triangular matrices in $GL(V_s)$, and $\Sigma_s \leq GL(V_s)$ the symmetric group of permutation matrices.  Also, let $U_s$ be the maximal unipotent subgroup of upper-triangular matrices with ones on the diagonal; it is a $p$-Sylow subgroup of $GL(V_s)$.  Define the \emph{Steinberg idempotent} \[e_s = \frac{1}{[GL(V_s): U_s]} \sum_{\substack{b \in B_s \\ \sigma \in \Sigma_s}} (-1)^\sigma b\sigma \in \mathbb{Z}_p[GL(V_s)].\]  One can show that $e_s^2 = e_s$.  
	
	By the remark following the previous corollary, $e_s$ acts on $BV_{s+}$ via stable maps, and determines a summand $M(s) := e_s \cdot BV_{s+}$ of $BV_{s+}$.  In fact, $BV_{s+}$ contains $p^{\binom{s}{2}}$ summands equivalent to $M(s)$ \cite{MP83}.  Here are some more facts about these spectra:
\begin{enumerate}[(i)]
	\item The spectra $M(s)$ are the associated graded pieces of a filtration of $H\mathbb{Z}/p$, analogous to how the $L(s)$ are the associated graded pieces of the symmetric product of spheres filtration of $H\mathbb{Z}$.  
	
	\item The spectrum $M(s)$ splits into $L(s) \vee L(s-1)$.  
\end{enumerate}

	As a result of these facts, especially (ii), we will generally work with the $M(s)$ spectra instead of the $L(s)$ spectra, keeping in mind that if we are in fact interested in only the $L(s)$ summand after all then we need only split off its contribution at the end.   
	
\begin{rem}
	The spectra $L(s)$ also have a more direct group-theoretic construction via Thom spectra.  Consider again the classifying space $BV_s$.  Let $\bar{\rho}_s$ be the reduced regular representation of $V_s$, thought of as a vector bundle on $BV_s$.  The associated Thom spectrum $BV_s^{\bar{\rho}_s}$ still admits an action by $\mathbb{Z}_p[GL(V_s)]$, and we have \[L(s) \simeq e_s \cdot BV_s^{\bar{\rho}_s}.\]  
\end{rem}

	One can also completely calculate the maps between the $M(s)$'s (or the $L(s)$'s) using either group theory \cite{Nis87} or Adams spectral sequences \cite{Cat88}.  We shall need the following in the sequel.  
	
\begin{prop}
	The map $\delta_s: M(s+1) \to M(s)$ (resp.\ $\delta_s: L(s+1) \to L(s)$) are obtained from the restriction of the transfer map $\tr^{V_{s+1}}_{V_s}: BV_{(s+1)+} \to BV_{s+}$.  
\end{prop}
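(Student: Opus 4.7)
The plan is to identify $\delta_s$ within the (known) group of stable maps $[M(s+1), M(s)]$ by using the Segal conjecture together with the double coset formula, and then pin down the specific element using a cohomological obstruction coming from Nakaoka's theorem.

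First, I would lift $\delta_s$ to a stable map $BV_{(s+1)+} \to BV_{s+}$ via the inclusion $M(s+1) \hookrightarrow BV_{(s+1)+}$ and the projection $BV_{s+} \twoheadrightarrow M(s)$. By the Segal conjecture, this lift is a $\mathbb{Z}_p$-linear combination of Burnside basis elements $u_{(V',f)}$, and it must satisfy $e_s \cdot \delta_s \cdot e_{s+1} = \delta_s$. Using the double coset formula and the explicit form of $e_s$ as a signed sum over $B_s \times \Sigma_s$, I would show that the surviving basis elements fall into two classes: (i) \emph{pure homomorphism} pieces with $V' = V_{s+1}$ (so $f$ is a surjection $V_{s+1} \twoheadrightarrow V_s$), and (ii) \emph{pure transfer} pieces with $V' \leq V_{s+1}$ a hyperplane and $f : V' \to V_s$ an isomorphism. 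Under the bilateral action of the Steinberg idempotents, all type-(ii) elements collapse to a single class, represented by the standard transfer $\tr^{V_{s+1}}_{V_s}$.

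Second, to eliminate the possibility that $\delta_s$ has a type-(i) component, I would invoke Nakaoka's theorem that the symmetric product filtration of $H\mathbb{Z}$ realizes the length filtration on $H\mathbb{F}_p^*(H\mathbb{Z}) \cong \mathcal{A}/\mathcal{A}\beta$, so that $\delta_s$ acts as zero on mod-$p$ cohomology. A nonzero homomorphism contribution $u_{(V_{s+1}, f)}$ with $f$ surjective would induce a nontrivial pullback along $Bf$ that survives the Steinberg idempotent (since such an $f$ identifies polynomial generators), contradicting Nakaoka. Hence $\delta_s$ is a $\mathbb{Z}_p$-multiple of the restriction of $\tr^{V_{s+1}}_{V_s}$, and to see that the coefficient is a unit I would appeal to the Mitchell--Priddy realization of the symmetric product filtration via the Thom spectra $BV_s^{\bar\rho_s}$, where the connecting map is manifestly given by the transfer associated to a direct-summand inclusion $V_s \hookrightarrow V_{s+1}$.

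The main obstacle is the double coset calculation in the first step: showing that bilateral conjugation by the Steinberg idempotents annihilates all of the huge Burnside module $A(V_{s+1}, V_s)^{\wedge}_{I(V_{s+1})}$ except for the two classes described. This is fundamentally combinatorial but can be unpleasant; one can sidestep most of it by appealing directly to Nishida's or Cathcart's computations of $[M(s+1), M(s)]$, which already determine the rank of this group and its natural generators, reducing our task to matching $\delta_s$ to the correct generator via the cohomological argument above.
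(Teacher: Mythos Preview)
The paper does not actually supply a proof of this proposition: it is stated immediately after the sentence ``One can also completely calculate the maps between the $M(s)$'s (or the $L(s)$'s) using either group theory \cite{Nis87} or Adams spectral sequences \cite{Cat88},'' and is treated as a known fact drawn from those references. So your sketch is already more than the paper offers, and it is broadly in the spirit of Nishida's Burnside-module approach.

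That said, there is a genuine soft spot in your step~2. The Nakaoka argument that $\delta_s$ vanishes on $H\mathbb{F}_p$-cohomology only forces the coefficient of the homomorphism class to lie in $p\mathbb{Z}_p$, not to be zero: mod~$p$ cohomology sees only the residue of that coefficient. So from steps 1 and 2 alone you do not get that $\delta_s$ is a $\mathbb{Z}_p$-multiple of the transfer; you get that it is a unit multiple of the transfer \emph{plus a $p$-divisible multiple of the homomorphism class}. You then lean on Mitchell--Priddy in step~3 to fix the unit, but in fact the Mitchell--Priddy Thom-spectrum model of the symmetric-product filtration (together with Kuhn's identification of the connecting maps) already gives the full statement directly: in that model the boundary map \emph{is} the restriction of the transfer for the summand inclusion $V_s \hookrightarrow V_{s+1}$. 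So your step~3 is not merely a normalization at the end---it is the whole argument, and steps 1--2 are scaffolding you can discard. If you want to keep the Burnside-module framing, the clean fix is to cite Nishida's computation of $e_s \cdot A(V_{s+1},V_s)^\wedge \cdot e_{s+1}$ outright (as you yourself suggest in your last paragraph) rather than trying to redo the double-coset combinatorics, and then match $\delta_s$ against the transfer generator using the Thom-spectrum model rather than the cohomological obstruction.
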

	
\subsection{Calculations in $K$- and $j$-theory}
	In this section, we demonstrate the framework established in Proposition \ref{prop:SymFiltSS} to reprove well-known results (e.g., (\ref{thm:K0MsLs}) and (\ref{cor:HZj})) on maps from Eilenberg-Mac~Lane spectra to $K$-theory and related theories.  This will be a prototype for our new calculations for strict units in the next section.  

\subsubsection{$K$-theory of the symmetric power filtration}
	Let $K$ denote complex $K$-theory, and let $R(-)^\wedge$ denote the representation ring completed at the augmentation ideal.  The Atiyah-Segal completion theorem computes the $K$-theory of classifying spaces of finite groups: \[K^0(BG_+) \cong R(G)^\wedge.\]  Specializing to the case $G = V_s$, we find that \[K^0(BV_{s+}) \cong \mathbb{Z}_p[\hat{V}_s],\] where $\hat{V}_s \cong V_s$ is the group of $\mathbb{C}$-valued characters of $V_s$.  Concretely, fix a primitive $p$-th root of unity $\zeta$.  Given $\vec{v} \in V_s$, we obtain a character \[\chi_{\vec{v}}: \vec{w} \mapsto \zeta^{\vec{v} \cdot \vec{w}}.\]  The group $GL(V_s)$ acts by precomposition on $R(V_s)$, and this extends to an action of the group ring.  Specifically, we have \[(\chi_{\vec{v}} \cdot e_s)(\vec{w}) = \frac{1}{[GL_s:U_s]} \sum_{b, \sigma} (-1)^\sigma \chi_{\vec{v}}(b\sigma \cdot \vec{w}) = \frac{1}{[GL_s:U_s]} \sum_{b, \sigma} (-1)^\sigma \chi_{\sigma^\intercal b^\intercal \vec{v}}(\vec{w}),\] so that 

\begin{prop}
	$\chi_{\vec{v}} \cdot e_s = \frac{1}{[GL_s:U_s]} \sum_{b, \sigma} (-1)^\sigma \chi_{\sigma^\intercal b^\intercal \vec{v}}$.
\end{prop}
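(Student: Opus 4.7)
The plan is to carry out the evaluation at an arbitrary vector $\vec w \in V_s$ already begun in the display preceding the proposition, and then apply the adjoint relation for the standard dot product on $V_s \cong \mathbb{F}_p^s$ to identify each character appearing inside the sum. The one algebraic fact doing all the work is $\vec v \cdot g \vec w = g^\intercal \vec v \cdot \vec w$, which forces $\chi_{\vec v}(g\vec w) = \zeta^{g^\intercal \vec v \cdot \vec w} = \chi_{g^\intercal \vec v}(\vec w)$; equivalently, the precomposition action of $GL(V_s)$ on characters is $\chi_{\vec v} \cdot g = \chi_{g^\intercal \vec v}$.

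Applying this identity to $g = b\sigma$ and using $(b\sigma)^\intercal = \sigma^\intercal b^\intercal$, each summand in the displayed expression for $(\chi_{\vec v} \cdot e_s)(\vec w)$ becomes $(-1)^\sigma \chi_{\sigma^\intercal b^\intercal \vec v}(\vec w)$. Dividing by $[GL_s:U_s]$ and observing that the resulting equality of functions $V_s \to \mathbb{C}$ holds for every $\vec w$ yields the claim as an identity in $R(V_s)^\wedge \cong \mathbb{Z}_p[\hat V_s]$.

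The \emph{hard part} is essentially nothing beyond bookkeeping: one merely has to be careful that precomposition is a right action, so that the transpose of a product reverses the order of the factors to give $\sigma^\intercal b^\intercal$ and not $b^\intercal \sigma^\intercal$. Once the convention is pinned down the argument is entirely mechanical, and the formula sets up the subsequent analysis of the image of $e_s$ on $R(V_s)^\wedge$ needed for the $K$-theoretic calculation.
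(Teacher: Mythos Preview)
Your proposal is correct and matches the paper's approach exactly: the paper derives the proposition directly from the displayed computation preceding it, using precisely the adjoint identity $\chi_{\vec v}(b\sigma \cdot \vec w) = \chi_{\sigma^\intercal b^\intercal \vec v}(\vec w)$ you spell out. Your additional remarks about the right-action convention and the order of transposes are helpful clarifications but add nothing new to the argument.
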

	
	We want to compute $K^0 M(s) \cong R(V_s)^\wedge \cdot e_s \cong \mathbb{Z}_p[\hat{V}_s] \otimes_{\mathbb{Z}_p[GL(V_s)]} \mathrm{St}'$, where $\mathrm{St}'$ is a Steinberg summand.  
\begin{thm}
	\label{thm:K0Ms}
	The $K$-theory of the Steinberg summands $M(s)$ are:
\begin{equation*}
	K^0 M(s) \cong \begin{cases}
		\mathbb{Z}_p, & s = 0 \\
		\mathbb{Z}_p^{\oplus 2}, & s = 1 \\
		\mathbb{Z}_p, & s = 2 \\
		0, & s \geq 3.
	\end{cases}
\end{equation*}
\end{thm}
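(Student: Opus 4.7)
The plan is to compute $K^0 M(s) \cong \mathbb{Z}_p[\hat V_s] \cdot e_s$ directly from the formula for $\chi_{\vec v} \cdot e_s$ stated just before the theorem, using the fact that the characters $\{\chi_{\vec v}\}_{\vec v \in \hat V_s}$ form a basis of $\mathbb{Z}_p[\hat V_s]$. To organize the computation I factor $e_s = \frac{1}{[GL_s:U_s]} T_B \cdot N_\Sigma$ with $T_B = \sum_{b \in B_s} b$ and $N_\Sigma = \sum_{\sigma}(-1)^\sigma \sigma$, and evaluate the two steps $\chi_{\vec v} \cdot T_B$ and then $(\chi_{\vec v} \cdot T_B) \cdot N_\Sigma$ in turn.

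The first step: using $(b^\intercal \vec v)_i = \sum_{k \leq i} b_{ki} v_k$ and setting $j = \min \mathrm{supp}(\vec v)$ (or $j = \infty$ if $\vec v = 0$), a column-by-column count shows that $b^\intercal \vec v$ ranges uniformly over $Y_j := \{\vec y : y_\ell = 0 \text{ for } \ell < j,\; y_j \neq 0\}$ with each target hit by the same number $C_j$ of matrices $b \in B_s$ (remarkably independent of the precise nonzero entries of $\vec v$, only on $j$). So $\chi_{\vec v} \cdot T_B = C_j \sum_{\vec y \in Y_j} \chi_{\vec y}$. The second step: applying $N_\Sigma$, the coefficient of each $\chi_{\vec z}$ becomes a signed sum $\sum_\sigma (-1)^\sigma$ over $\sigma \in \Sigma_s$ satisfying $\sigma^{-1}(\{1,\ldots,j-1\}) \subseteq \{\ell : z_\ell = 0\}$ and $\sigma^{-1}(j) \in \mathrm{supp}(\vec z)$, with $\sigma^{-1}(\{j+1,\ldots,s\})$ unconstrained. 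Grouping $\sigma$ by the triple $(\sigma^{-1}(\{1,\ldots,j-1\}),\, \sigma^{-1}(j),\, \sigma^{-1}(\{j+1,\ldots,s\}))$ exhibits the fibers as cosets of $S_{j-1} \times S_{s-j}$, whose signed sums factor as $\bigl(\sum_{S_{j-1}}(-1)^\pi\bigr)\bigl(\sum_{S_{s-j}}(-1)^{\pi'}\bigr)$; this vanishes whenever $j-1 \geq 2$ or $s-j \geq 2$. The only leftover is the edge case $s = 3, j = 2$, handled by pairing each valid $\sigma$ with the permutation obtained by swapping $\sigma^{-1}(1)$ and $\sigma^{-1}(3)$, which preserves the validity conditions and flips the sign. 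For $\vec v = 0$ the argument collapses to $\sum_\sigma (-1)^\sigma = 0$ for $s \geq 2$. Thus $\chi_{\vec v} \cdot e_s = 0$ for all $\vec v \in \hat V_s$ when $s \geq 3$, so $K^0 M(s) = 0$.

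The remaining small cases are direct: $M(0) = S^0$ gives $K^0 M(0) = \mathbb{Z}_p$; for $s = 1$ one finds $\chi_0 \cdot e_1 = \chi_0$ and $\chi_{e_1} \cdot e_1 = \frac{1}{p-1}\sum_{c \in \mathbb{F}_p^\times}\chi_{ce_1}$, evidently linearly independent, giving a rank-$2$ free summand; for $s = 2$, $\chi_0 \cdot e_2 = 0$ and the recipe above produces $\chi_{e_1} \cdot e_2 = \frac{1}{p^2-1}\bigl[\sum_c \chi_{ce_1} - \sum_c \chi_{ce_2}\bigr]$, and a parallel calculation shows that $\chi_{\vec v} \cdot e_2$ for every other $\vec v \neq 0$ is a $\mathbb{Z}_p$-multiple of this one element, hence rank $1$. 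The main obstacle is the edge case $s = 3, j = 2$ where the generic sign-cancellation fails and the pairing involution has to be produced by hand; all the remaining bookkeeping (verifying the column counts give a uniform constant, checking that $[GL_s:U_s]$-denominators are coprime to $p$ so that everything makes sense integrally over $\mathbb{Z}_p$) is mechanical.
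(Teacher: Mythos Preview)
Your overall strategy matches the paper's: identify $K^0 M(s)$ with the image of $e_s$ acting on $\mathbb{Z}_p[\hat V_s]$ and show this image has the claimed rank. Your factorization $e_s = \tfrac{1}{[GL_s:U_s]} T_B N_\Sigma$ and the coset argument for the fibers $S_{j-1}\times S_{s-j}$ is a nice systematic reorganization; the paper instead argues directly on the multiset $\{\sigma^\intercal b^\intercal \vec v\}$, observing that for $s\ge 3$ any $\vec w$ with distinct entries has at least two nonzero coordinates, and swapping the first two of these produces a $\vec w'$ in the same $B^\intercal$-stratum with $\bar\Sigma_s\cdot\vec w + \bar\Sigma_s\cdot\vec w' = 0$.

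There is, however, a concrete gap in your edge case $s=3$, $j=2$. You claim that swapping $\sigma^{-1}(1)\leftrightarrow\sigma^{-1}(3)$ preserves validity, but it need not: validity requires $z_{\sigma^{-1}(1)}=0$, whereas after the swap the new $\sigma^{-1}(1)$ is the old $\sigma^{-1}(3)$, which was unconstrained. Concretely, take $z=(0,a,b)$ with $a,b\ne 0$. The valid $\sigma$ are $\mathrm{id}$ and $(23)$; your involution sends $\mathrm{id}$ to $(13)$, which has $\sigma^{-1}(1)=3$ and $z_3=b\ne 0$, so $(13)$ is \emph{not} valid. The coefficient of $\chi_z$ does vanish, but the cancellation comes from pairing $\mathrm{id}$ with $(23)$, i.e.\ swapping $\sigma^{-1}(2)\leftrightarrow\sigma^{-1}(3)$. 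On the other hand, when $z$ has exactly two zero coordinates, it is your swap $\sigma^{-1}(1)\leftrightarrow\sigma^{-1}(3)$ that works and the other does not. So no single swap handles all $z$; you need to split on $|\{\ell:z_\ell=0\}|$, or else imitate the paper's pairing on the $\vec y$-side rather than the $\sigma$-side. This is easily repaired, but the sentence as written is false.
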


\begin{cor}
	The $K$-theory of the Steinberg summands $L(s)$ are:
\begin{equation*}
	K^0 L(s) \cong \begin{cases}
		\mathbb{Z}_p, & s = 0, 1 \\
		0, & s \geq 2.
	\end{cases}
\end{equation*}
\end{cor}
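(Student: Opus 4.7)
The plan is to deduce the corollary directly from Theorem \ref{thm:K0Ms} by invoking the splitting $M(s) \simeq L(s) \vee L(s-1)$ recorded in fact (ii) above, and then solving for $K^0 L(s)$ inductively. Applying $K^0$ to this splitting produces an isomorphism
\begin{equation*}
K^0 M(s) \cong K^0 L(s) \oplus K^0 L(s-1)
\end{equation*}
for every $s \geq 1$, together with $K^0 M(0) \cong K^0 L(0)$ (using the convention that $L(-1)$ is trivial, or equivalently that $M(0) \simeq L(0) \simeq S^0$).

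First, reading off from Theorem \ref{thm:K0Ms}: $K^0 M(0) \cong \mathbb{Z}_p$ immediately gives $K^0 L(0) \cong \mathbb{Z}_p$. Next, $K^0 M(1) \cong \mathbb{Z}_p^{\oplus 2}$ together with the splitting forces $K^0 L(1) \cong \mathbb{Z}_p$. Then $K^0 M(2) \cong \mathbb{Z}_p$ combined with $K^0 L(1) \cong \mathbb{Z}_p$ forces $K^0 L(2) = 0$. Finally, for $s \geq 3$, one proceeds by induction on $s$: assuming $K^0 L(s-1) = 0$, the vanishing $K^0 M(s) = 0$ and the splitting give $K^0 L(s) = 0$.

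Since the work has already been absorbed into Theorem \ref{thm:K0Ms} and the splitting $M(s) \simeq L(s) \vee L(s-1)$, there is no genuine obstacle here. The only subtle point worth verifying is the base case of the induction, namely making sure the convention at $s=0$ is consistent with the splitting so that $K^0 L(0)$ is correctly identified; once this bookkeeping is done the remaining steps are purely formal.
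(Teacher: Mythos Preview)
Your proof is correct and follows exactly the same approach as the paper, which simply says to use $M(s) \simeq L(s) \vee L(s-1)$ together with the previous theorem. You have spelled out the induction in more detail than the paper does, but the content is identical.
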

\begin{proof}
	Just use $M(s) \simeq L(s) \vee L(s-1)$ and the previous theorem.  
\end{proof}

	We now turn to the proof of (\ref{thm:K0Ms}).  We shall be a little verbose about this, because we would like to have names for the generators of $K^0 M(s)$.  
	
	First, we can easily dispatch with the case $s = 0$, since $M(0) \simeq S^0$, so $K^0 M(0) \cong \mathbb{Z}_p$.  

\begin{prop}[$s = 1$]
	$K^0 M(1) \cong \mathbb{Z}_p \gen{\chi_0} \oplus \mathbb{Z}_p \gen{\sum_{b \in \mathbb{F}_p^\times} \chi_b}$.  
\end{prop}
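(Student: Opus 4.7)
The plan is a direct computation using the formula for the action of the Steinberg idempotent on characters that was established in the previous proposition. First I would specialize everything to $s = 1$: here $V_1 = \mathbb{F}_p$, $GL(V_1) = \mathbb{F}_p^\times$, the unipotent subgroup $U_1$ is trivial, the symmetric group $\Sigma_1$ is trivial, and $B_1 = \mathbb{F}_p^\times$. Consequently $[GL(V_1) : U_1] = p - 1$, and the Steinberg idempotent simplifies to
\[e_1 = \frac{1}{p-1} \sum_{b \in \mathbb{F}_p^\times} b \in \mathbb{Z}_p[\mathbb{F}_p^\times].\]
Note that $p - 1$ is a unit in $\mathbb{Z}_p$, so this is well-defined.

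Next I would use the Atiyah-Segal completion theorem together with the identification $K^0 BV_{1+} \cong \mathbb{Z}_p[\hat V_1] = \mathbb{Z}_p\langle \chi_0, \chi_1, \ldots, \chi_{p-1}\rangle$, and apply the preceding proposition. Since $\sigma = 1$ (with sign $+1$) is the only element of $\Sigma_1$, the action of $e_1$ on a character is given by
\[\chi_v \cdot e_1 = \frac{1}{p-1} \sum_{b \in \mathbb{F}_p^\times} \chi_{bv}.\]
I would then split into cases. For $v = 0$, every term equals $\chi_0$, so $\chi_0 \cdot e_1 = \chi_0$. For $v \neq 0$, as $b$ ranges over $\mathbb{F}_p^\times$, the product $bv$ also ranges over all of $\mathbb{F}_p^\times$, so $\chi_v \cdot e_1 = \frac{1}{p-1} \sum_{a \in \mathbb{F}_p^\times} \chi_a$, which is independent of the nonzero choice of $v$.

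Finally, I would conclude that $K^0 M(1) = R(V_1)^\wedge \cdot e_1$ is the $\mathbb{Z}_p$-span of the two elements $\chi_0$ and $\sum_{a \in \mathbb{F}_p^\times} \chi_a$ (absorbing the unit $(p-1)^{-1}$). These two elements are $\mathbb{Z}_p$-linearly independent, because they are supported on disjoint subsets of the $\mathbb{Z}_p$-basis $\{\chi_0, \chi_1, \ldots, \chi_{p-1}\}$ of $R(V_1)^\wedge$. This yields the stated decomposition. There is really no obstacle here: the result drops out once one unpacks the Steinberg idempotent in the degenerate case $s = 1$, and the main point of writing the proof out explicitly is to fix names for the two generators, which will be used in subsequent computations for $s = 2$ and beyond.
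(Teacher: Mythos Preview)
Your proof is correct and follows essentially the same approach as the paper: both compute the action of $e_1 = \frac{1}{p-1}\sum_{b \in \mathbb{F}_p^\times} b$ on each character $\chi_v$, split into the cases $v = 0$ and $v \neq 0$, and read off the two generators. You add the explicit remark on linear independence, which the paper leaves implicit, but otherwise the arguments coincide.
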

\begin{proof}
	We have $K^0 M(1) \cong \mathbb{Z}_p[\hat{V}_1] \cdot e_1$.  The group of characters is \[\hat{V}_1 = \{\chi_{\vec{v}} : \vec{w} \mapsto \zeta^{\vec{v} \cdot \vec{w}} \mid \vec{v} \in \mathbb{F}_p\}.\]  If $b\in \mathbb{F}_p^\times$, then $\chi_{\vec{v}} \cdot b = \chi_{b \vec{v}}$.  Consequently, since $e_1 = \frac{1}{p-1} \sum_{b \in \mathbb{F}_p^\times} b$, we have \[\chi_{\vec{v}} \cdot e_1 = \begin{cases} \chi_{\vec{0}}, & \text{if } \vec{v} = \vec{0} \\ \frac{1}{p-1} \sum_{\vec{v} \neq \vec{0}} \chi_{\vec{v}}, & \text{if } \vec{v} \neq \vec{0}. \end{cases}\]
\end{proof}

\begin{rem}
	This also follows from $M(1) \simeq B\Sigma_{p+}$ and the Atiyah-Segal theorem.
\end{rem}

	We'll need a little more organization to handle the case $s \geq 2$.  Define an equivalence relation on $V_s$ (and hence $\hat{V}_s$) such that $\vec{v} \sim \vec{v}'$ iff we have an equality of multisets $\{(-1)^\sigma \sigma^\intercal b^\intercal \vec{v}\}$ and $\{(-1)^\sigma \sigma^\intercal b^\intercal \vec{v}'\}$ as $b$ and $\sigma$ range over $B_s$ and $\Sigma_s$ respectively.  
	
\begin{lem}
	The equivalence relation $\sim$ partitions $\hat{V}_s$ into exactly $s + 1$ equivalence classes.
\end{lem}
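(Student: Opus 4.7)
The plan is to identify the equivalence classes of $\sim$ with the orbits of the transpose of $B_s$, i.e., of the lower triangular subgroup $L_s := B_s^\intercal \leq GL(V_s)$, acting on $V_s$. Since $L_s$ preserves the ``leading position'' $\mathrm{lead}(\vec{v}) := \min\{i : v_i \neq 0\}$ of a nonzero vector and acts transitively on vectors sharing a prescribed leading position, the $L_s$-orbits on $V_s$ are indexed by $\{0, 1, \ldots, s\}$ (using $0$ for the zero orbit), giving exactly $s+1$ orbits.

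First I would verify that each $L_s$-orbit is contained in a single $\sim$-class: if $\vec{v}' = b_0^\intercal \vec{v}$ for some $b_0 \in B_s$, then the substitution $b' := b_0 b$ (a bijection of $B_s$) yields the identity of multisets
\[\{(-1)^\sigma \sigma^\intercal b^\intercal \vec{v}'\}_{b, \sigma} = \{(-1)^\sigma \sigma^\intercal (b')^\intercal \vec{v}\}_{b', \sigma}.\]

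Next I would show that distinct $L_s$-orbits give distinct multisets. The zero orbit is easily distinguished: since $\sigma^\intercal b^\intercal$ is invertible, the multiset for $\vec{v} = \vec{0}$ is concentrated at $\vec{0}$, while every other multiset avoids $\vec{0}$ entirely. For a nonzero $\vec{v}$ with $\mathrm{lead}(\vec{v}) = k$ and a target $\vec{w} \in V_s$, the crucial observation is that $-O_k = O_k$, since negation preserves leading position. Consequently the sign $(-1)^\sigma$ produces no cancellation, and an orbit-stabilizer count gives the multiplicity of $\vec{w}$ in the multiset as $|\mathrm{Stab}_{L_s}(\vec{v})| \cdot |\{\sigma \in \Sigma_s : \sigma \vec{w} \in O_k\}|$. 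This second factor depends only on $k$ and the support size $m := |\mathrm{supp}(\vec{w})|$, and a direct combinatorial count (choose a $(k-1)$-subset of $\{i : w_i = 0\}$ to be $\sigma^{-1}\{1, \ldots, k-1\}$, etc.) shows that it is nonzero precisely when $1 \leq m \leq s - k + 1$. Since $s - k + 1$ is strictly decreasing in $k$, the largest $m$ admitting a nonzero multiplicity reads off $k = \mathrm{lead}(\vec{v})$.

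The step I expect to require the most care is the reduction of the sign-weighted count to an unsigned count; once the identity $-O_k = O_k$ is in hand, the remainder is an elementary orbit-stabilizer computation, and the resulting multiplicity function cleanly separates the $s+1$ candidates.
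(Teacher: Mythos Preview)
Your proof is correct and follows essentially the same route as the paper's: both identify the $\sim$-classes with the orbits of $L_s = B_s^\intercal$ on $V_s$ (indexed by the leading coordinate, together with $\{\vec{0}\}$), and both distinguish these orbits by the invariant ``maximum support size of an element of the multiset,'' which for the class of $\vec{e}_k$ is exactly $s-k+1$. The paper states this last point tersely (``every element in the multiset $\{\sigma^\intercal b^\intercal \vec{e}_i\}$ has at least $i-1$ zero entries, with some having exactly $i-1$''), while you compute the precise multiplicity via orbit--stabilizer; the underlying invariant is the same.

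One small comment on presentation: your phrase ``the sign $(-1)^\sigma$ produces no cancellation'' is potentially confusing, since a multiset has no cancellation to begin with. What you are actually using is that the condition $(-1)^\sigma \sigma \vec{w} \in O_k$ is equivalent to $\sigma \vec{w} \in O_k$ (because $-O_k = O_k$), which lets you drop the sign from the counting problem. The paper sidesteps this entirely by observing directly that scaling by $\pm 1$ preserves the number of zero entries, so the sign can simply be ignored when reading off the support invariant; you could shorten your argument the same way.
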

\begin{proof}
	Just by considering the action of $b \in B_s$, we see that every vector $\vec{v} = (v_1, v_2, \ldots, v_s)$ with $v_1 \neq 0$ is equivalent to $\vec{e}_1 = (1,0,\ldots,0)$.  Similarly, every vector $\vec{v} = (0, v_2, \ldots, v_n)$ with $v_2 \neq 0$ is equivalent to $\vec{e}_2 = (0,1,\ldots,0)$, and so on.  So every element in $\hat{V}_s$ is equivalent to one of $\vec{e}_1$, $\vec{e}_2$, \ldots, $\vec{e}_s$, or $\vec{0}$.  

	To see that these $s + 1$ equivalence classes are distinct, note that every element in the multiset $\{\sigma^\intercal b^\intercal \vec{e}_i\}$ has at least $i - 1$ zero entries, with some having exactly $i - 1$ zero entries.  

\end{proof}

	Because of the lemma, it suffices to compute $\chi_{\vec{v}} \cdot e_s$ since for $\vec{v}$ running through $\vec{e}_1, \vec{e}_2, \ldots, \vec{e}_s, \vec{0}$.  
	
\begin{rem}
	In less coordinate-dependent terms, the Borel subgroup $B_s$, or rather $B_s^\intercal$, determines a complete flag on $V_s$, and the action of $B_s^\intercal$ on $V_s$ is transitive on each stratum.  So we just need to compute $\chi_{\vec{v}} \cdot e_s$ for a choice of representative $\vec{v}$ in each stratum.  
\end{rem}
		
	Note firstly that if $s \geq 2$, then the signs $(-1)^\sigma$ cancel out in $\chi_{\vec{0}} \cdot e_s$ to give $0$.  
	
\begin{prop}[$s = 2$]
	$K^0 M(2) \cong \mathbb{Z}_p \gen{\sum_{b \in \mathbb{F}_p^\times} \chi_{b \vec{e}_1} - \chi_{b \vec{e}_2}}$.
\end{prop}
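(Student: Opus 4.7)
The plan is to combine the classification of $\sim$-equivalence classes from the preceding lemma with the explicit formula $\chi_{\vec v} \cdot e_s = \frac{1}{[GL_s:U_s]} \sum_{b,\sigma} (-1)^\sigma \chi_{\sigma^\intercal b^\intercal \vec v}$. Since $\chi_{\vec v} \cdot e_2$ depends only on the multiset $\{(-1)^\sigma \sigma^\intercal b^\intercal \vec v\}$, the remark shows it suffices to compute this for the three equivalence class representatives $\vec 0, \vec e_1, \vec e_2$. The case $\vec v = \vec 0$ was already dispatched: the signs cancel and one obtains $0$. So the only real content is the computation at $\vec e_1$ and $\vec e_2$, and then checking that both products lie in the rank one $\mathbb{Z}_p$-submodule spanned by $\alpha := \sum_{b \in \mathbb{F}_p^\times}(\chi_{b\vec e_1} - \chi_{b\vec e_2})$.

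For the two nontrivial cases, I would parameterize $b \in B_2$ as $\begin{pmatrix} a & c \\ 0 & d \end{pmatrix}$ with $a, d \in \mathbb{F}_p^\times$, $c \in \mathbb{F}_p$, so that $b^\intercal \vec e_1 = (a,c)^\intercal$ and $b^\intercal \vec e_2 = (0,d)^\intercal$. Letting $\sigma$ range over the two elements of $\Sigma_2$ (with $\tau$ the swap of sign $-1$), the sums become $\sum_{(a,c,d)} (\chi_{(a,c)} - \chi_{(c,a)})$ and $\sum_{(a,c,d)} (\chi_{(0,d)} - \chi_{(d,0)})$, respectively. In the first sum, the terms with $a,c$ both nonzero cancel in pairs under swapping $a \leftrightarrow c$, leaving only $\sum_{a \in \mathbb{F}_p^\times} (\chi_{a \vec e_1} - \chi_{a \vec e_2})$ with multiplicity coming from summation over $d$. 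In the second sum, no cancellation occurs and everything collapses to $\sum_{d \in \mathbb{F}_p^\times}(\chi_{d \vec e_2} - \chi_{d \vec e_1})$ with multiplicity $p(p-1)$. Tracking the normalizing factor $[GL_2:U_2] = (p-1)^2(p+1)$ yields
\[
\chi_{\vec e_1} \cdot e_2 = \tfrac{1}{(p-1)(p+1)}\, \alpha, \qquad \chi_{\vec e_2} \cdot e_2 = -\tfrac{p}{(p-1)(p+1)}\, \alpha.
\]

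Both coefficients are $p$-adic integers and $(p-1)(p+1)$ is a $p$-adic unit, so each of $\chi_{\vec e_1} \cdot e_2$ and $\chi_{\vec e_2} \cdot e_2$ lies in $\mathbb{Z}_p \cdot \alpha$, and together they generate this whole line since $1/(p-1)(p+1)$ is a unit. Hence the image $K^0 M(2) = \mathbb{Z}_p[\hat V_2]\cdot e_2 \subseteq R(V_2)^\wedge$ is exactly $\mathbb{Z}_p\cdot \alpha$. As a sanity check one verifies $\alpha \cdot e_2 = \alpha$ directly, which is automatic from the preceding computation using $(bI)\cdot e_2 = e_2$ for $b \in \mathbb{F}_p^\times$ (left multiplication by the scalar matrix $bI$ permutes $B_2$).

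The only real obstacle is avoiding arithmetic errors in tracking the multiplicities coming from the stabilizer of $\vec e_1$ versus $\vec e_2$ under the action of $B_2^\intercal$, and in particular noticing the asymmetry that produces the extra factor of $p$ in $\chi_{\vec e_2} \cdot e_2$ — this is what ensures the answer is torsion-free of rank one rather than having $p$-torsion hidden in it. Everything else is bookkeeping with a small Borel subgroup.
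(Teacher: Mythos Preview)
Your proposal is correct and takes essentially the same approach as the paper: reduce to the three $\sim$-class representatives $\vec 0, \vec e_1, \vec e_2$, compute $\chi_{\vec e_1}\cdot e_2$ and $\chi_{\vec e_2}\cdot e_2$ explicitly, and observe that both lie on the line $\mathbb{Z}_p\cdot\alpha$ with the first coefficient a $p$-adic unit. The only cosmetic difference is that you parametrize $B_2$ directly by matrix entries and cancel terms by hand, whereas the paper phrases the same count via the stratification $\mathcal{F}_0 \leq \mathcal{F}_1 \leq \mathcal{F}_2$ and stabilizer sizes under $B_2^\intercal$; the resulting formulas and the final argument are identical.
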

	
\begin{proof}
	We shall show
	\begin{align*}
		\chi_{\vec{e}_1} \cdot e_2 &= \frac{1}{(p+1)(p-1)} \sum_{b \in \mathbb{F}_p^\times} \chi_{b \vec{e}_1} - \chi_{b \vec{e}_2} \\
		\chi_{\vec{e}_2} \cdot e_2 &= \frac{p}{(p+1)(p-1)} \sum_{b \in \mathbb{F}_p^\times} \chi_{b \vec{e}_2} - \chi_{b \vec{e}_1}.
	\end{align*}

	Let $\mathcal{F}_0 \leq \mathcal{F}_1 \leq \mathcal{F}_2$ be the complete flag associated to the transposed Borel $B' = B_2^\intercal$.  To see the first formula, observe that $\{b \vec{e}_1\}_{b \in B'}$ consists of all the vectors in the open stratum $\mathcal{F}_2 \setminus \mathcal{F}_1$, each with multiplicity \[\abs{\Stab_{B'}(\vec{e}_1)} = \frac{\abs{B'}}{\abs{\mathcal{F}_2 \setminus \mathcal{F}_1}} = \frac{p(p-1)^2}{p(p-1)} = p-1.\]  Then, for each vector $\vec{w} \in \mathcal{F}_2 \setminus \mathcal{F}_1$ we consider $\bar{\Sigma}_2 \cdot \vec{w}$.  If $\vec{w}$ contains two repeated entries, then the signed sum $\bar{\Sigma}_2 \cdot \vec{w}$ is zero.  If $\vec{w}$ contains two nonzero coordinates, then swapping them gives another vector $\vec{w}'$ also in the stratum $\mathcal{F}_2 \setminus \mathcal{F}_1$, and $\bar{\Sigma}_2 \cdot \vec{w} + \bar{\Sigma}_2 \cdot \vec{w}' = 0$, and they cancel in pairs.  So we are just left with the vectors $\vec{w} = b \vec{e}_1$, $b \neq 0$, and $\bar{\Sigma}_2 \cdot \chi_{b \vec{e}_1} = \chi_{b \vec{e}_1} - \chi_{b \vec{e}_2}$.  Each of these expressions occur with multiplicity $p - 1$, which leads to the first formula.  

	The argument for $\chi_{\vec{e}_2} \cdot e_2$ is similar.  The multiset $\{b \vec{e}_2\}_{b \in B'}$ consists of just those vectors in the stratum $\mathcal{F}_1 \setminus \mathcal{F}_0$, each with multiplicity \[\abs{\Stab_{B'}(\vec{e}_2)} = \frac{\abs{B'}}{\abs{\mathcal{F}_1 \setminus \mathcal{F}_0}} = \frac{p(p-1)^2}{p-1} = p(p-1).\]  Applying the signed sum operator $\bar{\Sigma}_2$ gives the second formula.  
	
	Finally, note that $\chi_{\vec{e}_2} \cdot e_2 = -p(\chi_{\vec{e}_1} \cdot e_2)$, so the image of the Steinberg idempotent $e_2$ is $1$-dimensional.  
\end{proof}

	Lastly, we have
\begin{prop}
	\label{prop:K1Acyclic}
	If $s \geq 3$, then $\chi_{\vec{v}} \cdot e_s = 0$, so $K^0 M(s) = 0$.  
\end{prop}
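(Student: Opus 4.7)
The plan is to use the decomposition of $\hat{V}_s$ into $s+1$ equivalence classes with representatives $\vec{0}, \vec{e}_1, \ldots, \vec{e}_s$ already established by the preceding lemma. The case $\vec{v} = \vec{0}$ has been handled: since $\chi_{\vec{0}}$ is fixed by the action of $B_s \Sigma_s$, the signs $(-1)^\sigma$ cancel for $s \geq 2$. For each $i \in \{1, \ldots, s\}$, the $B_s^\intercal$-orbit of $\vec{e}_i$ is the stratum $S_i := \{\vec{w} \in V_s : w_1 = \cdots = w_{i-1} = 0,\; w_i \neq 0\}$, with every element appearing with the same multiplicity $|\Stab_{B_s^\intercal}(\vec{e}_i)|$. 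I would introduce the auxiliary sums $G_j := \sum_{\vec{w} : w_1 = \cdots = w_j = 0} \chi_{\vec{w}}$, so that by inclusion--exclusion $\sum_{\vec{w} \in S_i} \chi_{\vec{w}} = G_{i-1} - G_i$. Applying $\bar{\Sigma}_s$ and swapping the order of summation yields
\[
\chi_{\vec{e}_i} \cdot e_s \;\propto\; \bar{\Sigma}_s (G_{i-1} - G_i) \;=\; \sum_{\vec{u} \in \hat{V}_s} \chi_{\vec{u}} \bigl( N_{i-1}(\vec{u}) - N_i(\vec{u}) \bigr),
\]
where for $Z(\vec{u}) := \{k : u_k = 0\}$ we set $N_j(\vec{u}) := \sum_{\tau \in \Sigma_s,\, \tau(\{1,\ldots,j\}) \subseteq Z(\vec{u})} (-1)^\tau$.

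It then suffices to prove the combinatorial lemma that $N_j(\vec{u}) = 0$ for every $\vec{u}$ and every $0 \leq j \leq s$, under the hypothesis $s \geq 3$. Fix $\vec{u}$; if $|Z(\vec{u})| < j$ the sum is empty. Otherwise, I would decompose the admissible permutations by the restriction $\tau|_{\{1,\ldots,j\}}$: for each injection $\iota : \{1, \ldots, j\} \hookrightarrow Z(\vec{u})$, the $\tau$ extending $\iota$ form a left coset of the Young subgroup $\Sigma_{s-j} \leq \Sigma_s$ permuting the positions $\{j+1, \ldots, s\}$, and the signs along each such coset sum to $\pm \sum_{\tau_0 \in \Sigma_{s-j}} (-1)^{\tau_0}$. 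This is zero whenever $s - j \geq 2$, handling $j \leq s - 2$. The edge cases $j \in \{s-1, s\}$ are dealt with separately: $j = s$ forces $\vec{u} = \vec{0}$ and the inner sum becomes $\sum_\tau (-1)^\tau = 0$; for $j = s - 1$, only $\vec{u}$ with $|Z(\vec{u})| \geq s - 1$ contribute, and the admissible $\tau$ then form a coset of $\Sigma_{s-1}$ whose signed sum vanishes for $s - 1 \geq 2$.

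Applying the lemma to $j = i - 1$ and $j = i$ shows $\chi_{\vec{e}_i} \cdot e_s = 0$ for each $i \in \{1, \ldots, s\}$. Together with $\chi_{\vec{0}} \cdot e_s = 0$ and the equivalence-class decomposition, this gives $\chi_{\vec{v}} \cdot e_s = 0$ for every $\vec{v} \in \hat{V}_s$, whence $K^0 M(s) = 0$ for $s \geq 3$. The main obstacle is the careful bookkeeping of signs and cosets inside the combinatorial lemma, but no deeper difficulty is anticipated: the whole argument boils down to the elementary identity $\sum_{\sigma \in \Sigma_n} (-1)^\sigma = 0$ for $n \geq 2$, applied to suitable Young subgroups of $\Sigma_s$.
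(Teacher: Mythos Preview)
Your argument is correct, but it takes a genuinely different route from the paper's. The paper argues by constructing an explicit sign-reversing involution directly on the multiset $\{\sigma^\intercal b^\intercal \vec{v}\}$: if $\vec{w}=b^\intercal\vec{v}$ has a repeated coordinate then some transposition fixes it and $\bar{\Sigma}_s\cdot\vec{w}=0$; otherwise, since $s\ge 3$, the vector $\vec{w}$ has at least two nonzero entries, and swapping the first two of them produces a $\vec{w}'$ in the \emph{same} Borel stratum with $\bar{\Sigma}_s\cdot\vec{w}+\bar{\Sigma}_s\cdot\vec{w}'=0$. This pairs everything off in two lines.

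Your approach instead expands the coefficient of each $\chi_{\vec{u}}$ in $\bar{\Sigma}_s G_j$ and reduces to the vanishing of the signed sums $N_j(\vec{u})$ via a coset decomposition over the Young subgroup $\Sigma_{s-j}$. This is a perfectly valid and more ``bookkeeping'' proof; it has the virtue of making transparent exactly where the hypothesis $s\ge 3$ enters (namely the case $j=s-1$ with $|Z(\vec u)|=s-1$, needing $\Sigma_{s-1}$ nontrivial). The paper's involution is shorter and arguably more conceptual, while yours is more mechanical but leaves less to the imagination. One small imprecision: in your edge case $j=s-1$, when $|Z(\vec u)|=s$ (i.e.\ $\vec u=\vec 0$) the admissible $\tau$ form all of $\Sigma_s$, not a single $\Sigma_{s-1}$-coset; the signed sum is of course still zero, so the conclusion stands.
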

\begin{proof}
	We shall show that all the vectors occuring in $\{\sigma^\intercal b^\intercal \vec{v}\}$ cancel.  First, note that as above if $\vec{w} = b^\intercal \vec{v}$ has a repeated coordinate, then there is an odd transposition that fixes $\vec{w}$, so $\bar{\Sigma}_s \cdot \vec{w} = 0$.  So may assume that $\vec{w}$ has no repeated entries.  Since $s \geq 3$, this means that there are at least two nonzero entries.  Let $\vec{w}'$ be the permutation of $\vec{w}$ obtained by swapping the first two nonzero entries.  Note that $\vec{w}'$ belongs to the same stratum (as determined by the flag associated to the Borel) as $\vec{w}$, i.e., there exists $b$ such that $\vec{w}' = b^\intercal \vec{w}$.  Then we have $\bar{\Sigma}_s \cdot \vec{w} + \bar{\Sigma}_s \cdot \vec{w}' = 0$.  So the vectors $\vec{w} \in \{b^\intercal \vec{v}\}$ with no repeated coordinates cancel in pairs.  
\end{proof}

\begin{rem}
	This also follows from the fact that the later Steinberg summands are $K(n)$-acyclic.  In particular, if we're only interested in $K(1)$-local information, we needn't look very deep into the symmetric product filtration.  See Welcher \cite{Wel81} for more details.  
\end{rem}

	This completes the proof of (\ref{thm:K0Ms}).  Let's abbreviate the generators of $K^0 M(s)$ as follows:
	\begin{itemize}
		\item $K^0 M(0) \cong \mathbb{Z}_p$ is generated by $1$.
		\item $K^0 M(1) \cong \mathbb{Z}_p^{\oplus 2}$ is generated by $\alpha = \chi_0$ and $\beta = \sum_{b \in \mathbb{F}_p^\times} \chi_b$.
		\item $K^0 M(2) \cong \mathbb{Z}_p$ is generated by $\gamma = \sum_{b \in \mathbb{F}_p^\times} \chi_{(b,0)} - \chi_{(0,b)}$.  
	\end{itemize}

	The next step is to study the transfer homomorphisms $\delta_s$ in $K$-theory.  From our description of the transfer maps as a stable map between classifying spaces, it may not be immediately obvious what the transfers do.  However, Kahn-Priddy \cite{KP72} has shown that the stable transfer coincides with Atiyah's definition of the transfer in $K$-theory, which in algebra corresponds to induction of representations.  In fact, our situation simplifies further, and the map induced by the transfer is just given by \[M \mapsto \Ind_{V_s}^{V_{s+1}}(M) \cong \rho \boxtimes M,\] where $\rho$ is the regular representation of the quotient cyclic group $C_p := V_{s+1} / V_s$.  

\begin{thm}
\label{thm:K0MsLs}
	The complex \[0 \to K^0 M(0) \to K^0 M(1) \to K^0 M(2) \to 0\] is exact, and the complex \[0 \to K^0 L(0) \to K^0 L(1) \to 0\] is also exact.
\end{thm}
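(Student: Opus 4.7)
The plan is to compute the first differential of the spectral sequence in $K$-theory explicitly and verify exactness by linear algebra over $\mathbb{Z}_p$. For the $L$-complex, I would deduce the statement from the $M$-complex version via the splitting $M(s) \simeq L(s) \vee L(s-1)$. The key input identifying the differential is the Kahn-Priddy theorem \cite{KP72}: the transfer $\tr^{*}: K^{0} BV_{s+} \to K^{0} BV_{(s+1)+}$ coincides with induction of representations $\Ind_{V_s}^{V_{s+1}}$, which on characters is given by $\chi_v \mapsto \sum_{k \in \mathbb{F}_p} \chi_{(v,k)}$ (once we fix $V_s \hookrightarrow V_{s+1}$ as the first $s$ coordinates).

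Concretely, I would first describe $\delta_s^{*}: K^{0} M(s) \to K^{0} M(s+1)$ as the composite of induction with multiplication by the Steinberg idempotent: $\delta_s^{*}(x) = e_{s+1} \cdot \Ind(x)$. Since left multiplication by any Borel element fixes $e_{s+1}$, the quantity $e_{s+1} \cdot \chi_v$ depends only on the $B_{s+1}^{\intercal}$-orbit of $v$. For $s = 1$ this gives just three values, one for each stratum of the flag in $\hat V_2$, making $\delta_1^{*}(\alpha)$ and $\delta_1^{*}(\beta)$ computable as explicit multiples of $\gamma$ using the exact formulas for $\chi_{\vec e_i} \cdot e_2$ from the preceding propositions.

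I would then verify each exactness condition of the $M$-complex. Injectivity at $K^{0} M(0)$ is immediate since $\delta_0^{*}(1) = \alpha + \beta$ is a nontrivial combination of generators. Exactness at $K^{0} M(1)$ reduces to a two-by-one linear algebra check: with $\delta_1^{*}(\alpha)$ and $\delta_1^{*}(\beta)$ being negatives of each other up to a $\mathbb{Z}_p$-unit, the kernel of $\delta_1^{*}$ is exactly the $\mathbb{Z}_p$-span of $\alpha + \beta$, which is the image of $\delta_0^{*}$. Surjectivity at $K^{0} M(2)$ amounts to checking that the orbit-coefficients produced by $\delta_1^{*}$ generate all of $\mathbb{Z}_p \gamma$, using that the factor $(p-1)(p+1)$ appearing in the idempotent normalization is a unit in $\mathbb{Z}_p$.

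Finally, for the $L$-complex I would use the compatibility of the transfer with the splitting $M(s) \simeq L(s) \vee L(s-1)$ to decompose the $M$-complex into a shifted pair of $L$-complexes; the $L$-version then falls out of the $M$-version. For $p = 2$, the $K(1)$-acyclicity of later Steinberg summands gives $K^{0} L(s) = 0$ for $s \geq 2$ (cf.\ Welcher \cite{Wel81}), collapsing the $L$-complex to the two-term exact sequence in the statement. The hardest step is the careful bookkeeping of the Steinberg-idempotent coefficients in the surjectivity calculation at $K^{0} M(2)$: the $p$-adic valuations of the orbit-sum contributions must be tracked precisely, and this is where the specific form of $\gamma$ (involving the difference $\chi_{b\vec e_1} - \chi_{b\vec e_2}$ summed over $b \in \mathbb{F}_p^{\times}$) is essential.
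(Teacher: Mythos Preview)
Your approach to the $M$-complex is essentially the paper's: compute the induced transfers explicitly via induction of representations, apply the Steinberg idempotent using the formulas already established for $\chi_{\vec e_i}\cdot e_2$, and read off exactness from the resulting $2\times 1$ linear algebra. Two small points are worth flagging. First, the paper inserts $V_s$ as the \emph{last} $s$ coordinates of $V_{s+1}$, not the first; it explicitly warns that the computation is sensitive to compatibility between this inclusion and the Borel used to define $e_s$, so your convention should be checked against the chosen Borel. Second, your remark ``for $p=2$'' is spurious: the vanishing $K^0 L(s)=0$ for $s\ge 2$ was already obtained for all $p$ in the preceding corollary.

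Where you genuinely diverge from the paper is the $L$-complex. You propose to deduce its exactness by decomposing the $M$-complex as a direct sum of the $L$-complex and a shifted copy. This requires that the transfer $\delta_s^M$ be \emph{block diagonal} with respect to the splittings $M(s)\simeq L(s)\vee L(s-1)$, which is not automatic: a priori $\delta_s^M$ could have an off-diagonal component $L(s)\to L(s-1)$, and the existence of a transfer-compatible splitting needs an argument or a citation. The paper sidesteps this entirely by computing the single map $K^0 L(0)\to K^0 L(1)$ directly, identifying $K^0 L(1)$ inside $K^0 M(1)$ and observing that $1\mapsto \alpha+\beta$ is a generator. Given that both groups are already known to be free of rank one, this direct check is shorter than setting up the splitting argument correctly.
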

\begin{proof}
	The first transfer map $K^0 M(0) \to K^0 M(1)$ sends $1$ to \[\rho \cdot e_1 = \sum_{a \in \mathbb{F}_p} \chi_a \cdot e_1 = \chi_0 + (p-1) \cdot \frac{1}{p-1} \sum_{b \in \mathbb{F}_p^\times} \chi_b = \alpha + \beta.\]  The second transfer map $K^0 M(1) \to K^0 M(2)$ sends
	\begin{align*}
		\alpha = \chi_0 &\mapsto \sum_{a \in \mathbb{F}_p} \chi_{(a,0)} \cdot e_2 = (p-1) \chi_{\vec{e}_1} \cdot e_2 = (p-1) \cdot \frac{1}{(p+1)(p-1)} \gamma = \frac{1}{p+1} \gamma \\
		\beta = \sum_{b \in \mathbb{F}_p^\times} \chi_b &\mapsto \sum_{\substack{a \in \mathbb{F}_p \\ b \in \mathbb{F}_p^\times}} \chi_{(a,b)} \cdot e_2 = (p-1)^2 \chi_{\vec{e}_1} \cdot e_2 + (p-1) \chi_{\vec{e}_2} \cdot e_2 \\ &\qquad = (p-1)^2 \cdot \frac{1}{(p+1)(p-1)} \gamma + (p-1) \cdot \frac{-p}{(p+1)(p-1)} \gamma = \frac{-1}{p+1} \gamma.
	\end{align*}
	
	Thus our complex becomes \[0 \to \mathbb{Z}_p \{1\} \xrightarrow{\begin{pmatrix} 1 \\ 1 \end{pmatrix}} \mathbb{Z}_p \{\alpha, \beta\} \xrightarrow{\begin{pmatrix} \frac{1}{p+1} & \frac{-1}{p+1} \end{pmatrix}} \mathbb{Z}_p \{\gamma\} \to 0,\] which is exact as claimed. 
	
	Similarly, the complex $0 \to K^0 L(0) \to K^0 L(1) \to 0$ is just \[0 \to \mathbb{Z}_p \{1\} \xrightarrow{\cong} \mathbb{Z}_p \{\alpha + \beta\} \to 0,\] which is also exact.  
\end{proof}

\begin{rem}
	The calculation here is sensitive to the inclusion $V_1 \hookrightarrow V_2$ in the choice of transfer, which comes down to a compatibility with the choice of Borel used to defined the Steinberg summand.  Explicitly, our choice here inserts $V_s$ as the \emph{last} $s$ coordinates in $V_{s+1}$.  
\end{rem}

	By keeping track of the Bott isomorphism, we can do the same calculation for $K^*$ in place of $K^0$ and obtain the same results.  This means that the $E_2$-page of the spectral sequence in (\ref{prop:SymFiltSS}) for $K$-theory is zero and the spectral sequence collapses.  Of course, this is of no surprise since we already know $\Map(H\mathbb{Z}, KU)$ and $\Map(H\mathbb{Z}/p, KU)$ are contractible.  
	
\begin{rem}
	Warning: \emph{before} $p$-completion, the \emph{spectra} $\underline{\Map}(H\mathbb{Z}, KU)$ and $\underline{\Map}(H\mathbb{Z}/p, KU)$ are a lot more interesting.  For example, $KU^1(H\mathbb{Z}) \cong \Ext^1(\mathbb{Q}, \mathbb{Z})$ which is a $\mathbb{Q}$-vector space of uncountable dimension.  
\end{rem}
	
\subsubsection{$j$-theory of the symmetric power filtration}	
	An elaboration of the argument for $K$-theory also works for $j$-theory.  In some sense, this gets us closer to the goal of computing the strict units of the sphere spectrum.  Let us first explain why.  
	
	Assume for now that $p$ is an \emph{odd} prime, and let $l \in \mathbb{Z}_p^\times$ be a topological generator.  We define the spectrum $j$ by the fiber sequence \[j \to bu \xrightarrow{\psi^l - 1} bu,\] where $\psi^l$ is the $l$-th Adams operation.  The complex $J$-homomorphism furnishes a map $\Sigma^{-1} bu \to sl_1 \mathbb{S}$, and the stable Adams conjecture, valid at odd primes, asserts that the $J$-homomorphism factors through a map $j \to sl_1 \mathbb{S}$.  Moreover, the composite \[j \to sl_1 \mathbb{S} \to L_{K(1)} sl_1 S^0 \simeq L_{K(1)} S^0 \langle 1 \rangle\] is an equivalence, which exhibits a $K(1)$-local splitting of the spectrum $j$ from $sl_1 S^0$.  See \cite[VIII.4]{MQRT}.  

\begin{rem}
	The remaining summand is the cokernel of $J$.  One might wonder whether we could continue using the chromatic filtration together with the symmetric product filtration to analyze maps from $H\mathbb{Z}$ into the part of the spectrum of units not part of $j$, but this seems infeasible at this point since -- as we will see -- embedded in this height 1 calculation is a determination of the homotopy groups of the $K(1)$-local sphere.  Thus we expect higher height analogues of this calculation to be at least as difficult as analyzing the $K(n)$-local sphere.  
\end{rem}

	The calculations below are very similar to the ones for $K$-theory, so we'll just sketch some of the modifications needed.
	
	Let $J = \Omega^\infty j$.  By adjunction, $[\Sigma^\infty BV_{s+}, j] \cong [BV_{s+}, J]$.  
\begin{prop}
\mbox{}
\begin{enumerate}[(a)]
	\item	$[BV_{s+}, J]$ is free of rank $\frac{p^s - 1}{p-1}$.
	\item $[BV_{s+}, \Omega^{2i} J] = 0$ for all $i > 0$.  
	\item $[BV_{s+}, \Omega^{2i-1} J] = 0$ if $i$ is not a multiple of $p-1$.
	\item If $i > 0$ is a multiple of $p-1$, then $[BV_{s+}^, \Omega^{2i-1} J] \cong \mathbb{Z}/p^{\nu_p(i) + 1}$, where $\nu_p(i)$ is the exponent of the largest power of $p$ dividing $i$.  Moreover, we may take the trivial character $\chi_{\vec{0}}$ as a generator.  
\end{enumerate}
\end{prop}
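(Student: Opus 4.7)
The essential tool is the defining fiber sequence $j \to bu \xrightarrow{\psi^l - 1} bu$.  Applying $[\Sigma^\infty BV_{s+}, -]$ yields a long exact sequence
\[ \cdots \to bu^{n-1}(BV_{s+}) \xrightarrow{\psi^l - 1} bu^{n-1}(BV_{s+}) \to j^n(BV_{s+}) \to bu^n(BV_{s+}) \xrightarrow{\psi^l - 1} bu^n(BV_{s+}) \to \cdots, \]
reducing the computation to understanding the kernel and cokernel of $\psi^l - 1$ on $bu^*(BV_{s+})$.  Via the wedge splitting $\Sigma^\infty BV_{s+} \simeq S^0 \vee \Sigma^\infty BV_s$, one decomposes everything as a ``constant'' summand $\pi_* j$ plus a reduced summand $\tilde j^*(BV_s)$.

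The first step is to compute $bu^*(BV_{s+})$.  The Atiyah-Segal completion theorem provides $KU^0(BV_{s+})^\wedge_p \cong R(V_s)^\wedge_{I,p}$, a free $\mathbb{Z}_p$-module with basis given by the characters $\chi_{\vec v}$ for $\vec v \in V_s$, while $KU^{\mathrm{odd}}(BV_{s+}) = 0$.  Using the fiber sequence $bu \to KU \to H\mathbb{Z}$ and the vanishing of negative integer cohomology of $BV_s$, one transfers this to $bu$-cohomology: Bott periodicity gives $bu^{-2i}(BV_{s+}) \cong R(V_s)^\wedge_p$ in negative even degrees, and $\tilde bu^0(BV_s) \cong IR(V_s)^\wedge_p$ is the augmentation ideal.

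The second step is to analyze the action of $\psi^l$.  Since $l$ is a topological generator of $\mathbb{Z}_p^\times$, its reduction $\bar l$ generates $\mathbb{F}_p^\times$, and $\psi^l(\chi_{\vec v}) = \chi_{\vec v}^l = \chi_{\bar l \vec v}$.  This permutes nonzero characters in $(p^s - 1)/(p-1)$ orbits of size $p-1$, fixing $\chi_{\vec 0}$.  On Bott-shifted cohomology the action picks up a scalar factor $l^i$ from $\psi^l(\beta) = l \beta$, so $\psi^l - 1$ corresponds to $l^i T - 1$ where $T$ is the character permutation.  The crucial $p$-adic computation is that $v_p(l^k - 1) = 1 + v_p(k)$ if $(p-1) \mid k$ and $0$ otherwise: writing $l = \zeta u$ with $\zeta \in \mu_{p-1}$ and $u \in 1 + p\mathbb{Z}_p$ each topological generators, this follows from $v_p(u^k - 1) = 1 + v_p(k)$ and the fact that $\zeta^k \neq 1$ forces $l^k \not\equiv 1 \pmod p$.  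Decomposing $\mathbb{Z}_p[\mathbb{Z}/(p-1)] \cong \prod_{\omega \in \mu_{p-1}} \mathbb{Z}_p$ into eigenspaces for $T$, the operator $l^i T - 1$ restricts to $l^i \omega - 1$ on each factor; this is a $p$-adic unit for all but one value of $\omega$ (namely $\omega = \zeta^{-i}$), at which it has valuation $1 + v_p(i)$.

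Part (a) follows immediately: $[BV_{s+}, J]$ is the kernel of $\psi^l - 1$ on $\tilde bu^0(BV_s) \cong IR(V_s)^\wedge_p$, which is the $\psi^l$-invariant subspace spanned by the $(p^s - 1)/(p-1)$ orbit sums.  Parts (b), (c), (d) follow by bookkeeping kernels and cokernels through the LES using the orbit decomposition.  For (b) and (c), the Bott factor $l^i$ ensures that $\psi^l - 1$ is injective on the relevant $\tilde bu^*$ (so no kernel contribution) and surjective in the expected range (so no cokernel contribution) whenever $(p-1) \nmid i$.  For (d), when $(p-1) \mid i$ and $i > 0$, the constant-maps summand contributes the generator: $[\mathrm{pt}_+, \Omega^{2i-1} J] = \pi_{2i-1} j = \operatorname{coker}(l^i - 1: \mathbb{Z}_p \to \mathbb{Z}_p) \cong \mathbb{Z}/p^{1 + v_p(i)}$, pulled back along $BV_{s+} \to \mathrm{pt}_+$ to give the class ``generated by $\chi_{\vec 0}$''.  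The main obstacle is the delicate interplay between the Bott twist and the cyclic orbit structure, and showing that the reduced contribution from $\tilde j^*(BV_s)$ does not spuriously enlarge the answer—this requires the careful diagonalization over $\mu_{p-1} \subset \mathbb{Z}_p$ described above, without which one overcounts by mixing the eigenspaces.
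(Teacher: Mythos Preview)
Your overall approach---the defining fiber sequence for $j$, the long exact sequence, Atiyah--Segal, the Bott twist, and the orbit decomposition of the character permutation---is essentially the paper's; the paper works with $BU$ and the rank-zero constraint rather than $bu$ and the wedge splitting, but these differences are cosmetic.

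There is, however, a genuine gap in your argument for (c) and (d), at exactly the step you flag as ``the main obstacle''. Your diagonalisation is correct: on each of the $(p^s-1)/(p-1)$ nonzero orbits, $l^iT-1$ has eigenvalues $l^i\omega-1$ for $\omega\in\mu_{p-1}$, and precisely one of these (namely $\omega=\zeta^{-i}$) fails to be a unit, with valuation $1+v_p(i)$. But this holds \emph{regardless of whether} $(p-1)\mid i$, since $\zeta^{-i}\in\mu_{p-1}$ always. So each nonzero orbit contributes a cyclic summand $\mathbb{Z}/p^{1+v_p(i)}$ to the cokernel, and your subsequent assertions---that $\psi^l-1$ is surjective on the reduced part when $(p-1)\nmid i$, and that the reduced contribution ``does not spuriously enlarge the answer'' when $(p-1)\mid i$---directly contradict your own eigenvalue computation. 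Concretely, at $p=3$, $s=1$, $i=1$, $l=2$, the orbit block (in the paper's normalisation $\psi^l/l^i-1$) is $\bigl(\begin{smallmatrix}-1&1/2\\1/2&-1\end{smallmatrix}\bigr)$ with determinant $3/4$, so $[B\mathbb{Z}/3_{+},\Omega J]\cong\mathbb{Z}/3\neq 0$, contrary to (c).

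The paper handles this same step by claiming the $(p-1)\times(p-1)$ circulant block has determinant $(-1)^{p-1}+l^{-i(p-1)}$, a $p$-adic unit. The correct determinant is $(-1)^{p-1}\bigl(1-l^{-i(p-1)}\bigr)=1-l^{-i(p-1)}$, which is divisible by $p$ since $l^{p-1}\equiv 1\pmod p$. So the paper has a sign error at precisely the point where your argument has a gap, and parts (c)--(d) as stated appear to fail for $s\ge 1$.
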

\begin{proof}
	The space $J$ sits in a fiber sequence \[J \to BU \xrightarrow{\psi^l - 1} BU,\] where $l \in \mathbb{Z}_p^\times$ is the generator chosen above.  There is a long exact sequence \[\cdots \to [BV_{s+}, \Omega BU] \to [BV_{s+}, J] \to [BV_{s+}, BU] \xrightarrow{\psi^l - 1} [BV_{s+}, BU].\]  Again, the Atiyah-Segal completion theorem tells us that $[BV_{s+}, BU] \cong R_0(V_s)^\wedge$ is the completion of the ring of complex representations of $V_s$ with virtual rank $0$, and $[BV_{s+}, \Omega BU] = 0$.  Therefore,\[[BV_{s+}, J] \cong \ker(\psi^l - 1 \curvearrowright R_0(V_s)^\wedge).\]  
	
	An element of $R_0(V_s)^\wedge$ looks like $\sum_{\vec{v} \in \hat{V}_s} c_{\vec{v}} \chi_{\vec{v}}$ where the coefficients $c_{\vec{v}} \in \mathbb{Z}_p$ sum to zero and $\chi_{v}: \vec{w} \mapsto \zeta^{\vec{v} \cdot \vec{w}}$ is a character of $V_s$.  Given such an element, we have \[(\psi^l - 1)(\sum_{\vec{v}} c_{\vec{v}} \chi_{\vec{v}}) = \sum_{\vec{v}} c_{\vec{v}} \chi_{l\vec{v}} - c_{\vec{v}} \chi_{\vec{v}} = \sum_{\vec{v}} (c_{l^{-1} \vec{v}} - c_{\vec{v}}) \chi_{\vec{v}}.\]  This is zero iff $c_{l^{-1} \vec{v}} = c_{\vec{v}}$ for all $\vec{v}$.  
	
	Recall that $l$ reduces to a generator in $\mathbb{F}_p^\times$, so the equivalence relation $l^{-1} \vec{v} \sim \vec{v}$ partitions $\hat{V}_s$ into $\{0\} \cup \mathbb{P}(\hat{V}_s)$.  If $\vec{v} \neq \vec{0}$, write $c_{\vec{v}} = c_\ell$ where $\ell$ is the line spanned by $\vec{v}$.  The rank zero condition translates to \[c_{\vec{0}} + \sum_{\ell \in \mathbb{P}(\hat{V}_s)} (p-1) c_\ell = 0.\]  Hence, $[BV_{s+}, J]$ consists of the elements \[\left\{ \sum_{\ell \in \mathbb{P}(\hat{V}_s)} c_\ell \sum_{\vec{0} \neq \vec{v} \in \ell} (\chi_{\vec{v}} - \chi_{\vec{0}})\right\}.\]  In particular, it is free of rank $\abs{\mathbb{P}(\hat{V}_s)} = \frac{p^s - 1}{p-1}$.  This proves (a).  
	
	For higher homotopy groups, we have exact sequences \[0 \to [BV_{s+}, \Omega^{2i} J] \to [BV_{s+}, \Omega^{2i} BU] \xrightarrow{\psi^l - 1} [BV_{s+}, \Omega^{2i} BU] \to [BV_{s+}, \Omega^{2i-1} J] \to 0,\] and we want to compute the kernel and cokernel of $\psi^l - 1$ acting on $\Omega^{2i} BU$.  By examining the action of the Adams operation $\psi^l$ on the Bott element, we see that the action of $\psi^l$ is given by \[[BV_{s+}, \Omega^{2i} BU] \cong [BV_{s+}, \mathbb{Z} \times BU] \xrightarrow{\psi^l / l^i} [BV_{s+}, \mathbb{Z} \times BU] \cong [BV_{s+}, \Omega^{2i} BU].\]  
	
	Repeating the calculation from above, we now find \[(\psi^l - 1)(\sum_{\vec{v}} c_{\vec{v}} \chi_{\vec{v}}) = \sum_{\vec{v}} (\frac{c_{l^{-1} \vec{v}}}{l^i} - c_{\vec{v}}) \chi_{\vec{v}}.\]  We can think of this transformation rule on $\{c_{\vec{v}}\}$ as the matrix given by the block sum of the $(1 \times 1)$-block $(\frac{1}{l^i} - 1)$ (corresponding to $\chi_{\vec{0}}$) and $\frac{p^s - 1}{p-1}$ copies of the $((p-1) \times (p-1))$-block \[\begin{pmatrix} -1 & \frac{1}{l^i} \\ & -1 & \frac{1}{l^i} \\ & & \ddots & \ddots  \\ & & & \ddots & \frac{1}{l^i} \\ \frac{1}{l^i} & & & & -1 \end{pmatrix}.\]  The image of this matrix depends on $l$ and $i$.  In particular, the determinant of this matrix is \[(\frac{1}{l^i} - 1)((-1)^{p-1} + \frac{1}{l^{i(p-1)}})^{\frac{p^s - 1}{p-1}}.\]  
	
	We claim that the factor $(-1)^{p-1} + \frac{1}{l^{i(p-1)}}$ is a $p$-adic unit. Since $p$ is odd, it suffices to show that $1 + l^{i(p-1)} \in \mathbb{Z}_p^\times$.  Modulo $p$, this is equivalent to $2$, which is invertible again using $p$ is odd.  
	
	The determination of the $p$-adic valuation of the remaining factor $\frac{1}{l^i} - 1$, or equivalently $1 - l^i$, is exactly the same as in the computation of the homotopy groups of the $K(1)$-local sphere.  Since $1 - l^i$ is not a zero divisor, the map $\psi^l - 1$ is always injective.  This proves (b).  
	
	If $i$ is not a multiple of $p-1$, then $1 - l^i$ is a unit modulo $p$.  Otherwise, write $i = (p-1)p^k i'$ where $i'$ is coprime to $p$.  The element $l^{p-1}$ is a generator for $(1 + p\mathbb{Z}_p)^\times$, and so $l^{(p-1)p^k i'}$ generates $(1 + p^{k+1} \mathbb{Z}_p)^\times$, and $1 - l^{(p-1)p^k i'}$ generates $p^{k+1} \mathbb{Z}_p$.  This proves the remaining parts of the proposition.  
\end{proof}

\begin{prop}
	For $s > 0$,
\begin{enumerate}[(a)]
	\item $[M(s), j] \cong \mathbb{Z}_p$ if $s = 1, 2$, and is zero if $s \geq 3$.  
	\item $[M(s), \Omega^i j] \cong \mathbb{Z}/p^{k+1}$ if $s = 1$ and $i = 2(p-1)p^k i' - 1$ with $i'$ prime to $p$, and is zero otherwise.  
\end{enumerate}
\end{prop}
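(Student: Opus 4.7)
The plan is to apply the Steinberg idempotent $e_s$ to the identifications of the preceding proposition, using that $M(s)$ is the stable summand of $BV_{s+}$ cut out by $e_s$ and hence $[M(s), \Omega^i j] = [BV_{s+}, \Omega^i j] \cdot e_s$.  The first step is to observe that the Adams operation $\psi^l$ commutes with $e_s$, since on $R(V_s)^\wedge$ the action of $\psi^l$ is through the scalar $l \in \mathbb{F}_p^\times$, which is central in $GL(V_s)$.  Applying $e_s$ to the long exact sequence from the preceding proposition is therefore exact, and reduces the computation to understanding the action of $e_s$ on the explicit generators exhibited there.

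For part (a), the kernel $\ker(\psi^l - 1) \subseteq R_0(V_s)^\wedge$ has generators $\sum_{\vec{v} \in \ell \setminus \vec{0}} (\chi_{\vec{v}} - \chi_{\vec{0}})$ indexed by lines $\ell \in \mathbb{P}(\hat{V}_s)$.  When $s = 1$ there is one such generator, and since $\chi_{\vec{0}} \cdot e_1 = \alpha$ and $\chi_b \cdot e_1 = \tfrac{1}{p-1}\beta$ for $b \neq 0$, it maps under $e_1$ to $\beta - (p-1)\alpha$, producing $\mathbb{Z}_p$.  When $s = 2$, I would use $\chi_{\vec{0}} \cdot e_2 = 0$ together with the formulas $\chi_{\vec{e}_1} \cdot e_2 = \tfrac{1}{(p+1)(p-1)}\gamma$ and $\chi_{\vec{e}_2} \cdot e_2 = \tfrac{-p}{(p+1)(p-1)}\gamma$ from Theorem \ref{thm:K0Ms}: the distinguished line (the one lying in $\mathcal{F}_1$) contributes $\tfrac{-p}{p+1}\gamma$, while each of the remaining $p$ lines contributes $\tfrac{1}{p+1}\gamma$.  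Since $\tfrac{1}{p+1}$ is a $p$-adic unit, the image of $e_2$ is all of $\mathbb{Z}_p\gamma \cong \mathbb{Z}_p$.  For $s \geq 3$, Proposition \ref{prop:K1Acyclic} gives $\chi_{\vec{v}} \cdot e_s = 0$ for every $\vec{v}$, so the image vanishes.

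For part (b), the preceding proposition presents $[BV_{s+}, \Omega^{2i-1} J]$ as cyclic of order $p^{k+1}$ generated by the class of $\chi_{\vec{0}}$ when $i = (p-1)p^k i'$ with $(i', p) = 1$, and as zero otherwise.  Applying $e_s$ reduces the computation to $\chi_{\vec{0}} \cdot e_s$: this equals $\chi_{\vec{0}}$ for $s = 1$, so the full cyclic group $\mathbb{Z}/p^{k+1}$ survives, whereas for $s \geq 2$ the signs over $\Sigma_s$ cancel and $\chi_{\vec{0}} \cdot e_s = 0$, giving zero.  The translation of indices $i_{\text{here}} = 2 i_{\text{prev}} - 1$ then matches the stated cohomological degree $2(p-1)p^k i' - 1$.

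The main bookkeeping step is the $s = 2$ case of part (a), where one must check that the image of $e_2$ applied to the $p + 1$ kernel generators spans all of $\mathbb{Z}_p\gamma$ rather than a proper $p\mathbb{Z}_p$-multiple of it; this reduces to verifying that $p + 1 \in \mathbb{Z}_p^\times$, which is immediate.  Every other case follows directly from the preceding proposition together with the formulas for $\chi_{\vec{v}} \cdot e_s$ already recorded in the $K$-theory computation.
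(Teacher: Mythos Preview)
Your proposal is correct and follows essentially the same approach as the paper: both compute $[M(s),\Omega^i j]$ by applying the Steinberg idempotent to the explicit generators of $[BV_{s+},\Omega^i J]$ found in the preceding proposition, invoking the $K$-theory formulas for $\chi_{\vec{v}}\cdot e_s$ in the $s=2$ case and the vanishing of $\chi_{\vec 0}\cdot e_s$ for $s\geq 2$. Your explicit remark that $\psi^l$ commutes with $e_s$ (since $\psi^l$ acts as the central scalar $l$) is a useful justification the paper leaves implicit.
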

\begin{proof}
	For part (a), when $s = 1$, the Steinberg idempotent $e_1$ acts on $[BV_{1+}, J] \cong \mathbb{Z}_p \gen{\sum_{\vec{v} \neq \vec{0}} (\chi_{\vec{v}} - \chi_{\vec{0}})}$ by the identity, so $[M(1), j] \cong \mathbb{Z}_p$.  
	
	When $s = 2$, $[BV_{2+}, J] \cong \mathbb{Z}_p \gen{\sum_{\vec{0} \neq \vec{v} \in \ell} \chi_{\vec{v}} - \chi_{\vec{0}} \mid \ell \in \mathbb{P}(\hat{V}_2)}$, and we have \[\left(\sum_{\vec{0} \neq \vec{v} \in \ell} \chi_{\vec{v}} - \chi_{\vec{0}} \right) \cdot e_2 = \begin{cases} \frac{p}{p+1} \sum_{b \in \mathbb{F}_p^\times} \chi_{b \vec{e}_2} - \chi_{b \vec{e}_1}, & \ell = \mathcal{F}_1 \\ \frac{1}{p+1} \sum_{b \in \mathbb{F}_p^\times} \chi_{b \vec{e}_1} - \chi_{b \vec{e}_2}, & \ell \neq \mathcal{F}_1. \end{cases}\]  (Recall that $\mathcal{F}_1$ is the line preserved by the transposed Borel.)  Thus $[M(2), j]$ has rank 1 with generator $\gamma$.  
	
	For $s \geq 3$, the same proof of (\ref{prop:K1Acyclic}) goes through without change.  
	
	For part (b), the group $[BV_{s+}, \Omega^i J]$ is zero for many values of $i$.  In the remaining cases, it is generated by $\chi_{\vec{0}}$.  If $s = 1$, the Steinberg idempotent $e_1$ acts by the identity on $\chi_{\vec{0}}$.  If $s > 1$, the Steinberg idempotent $e_s$ annihilates $\chi_{\vec{0}}$.  
\end{proof}	

\begin{prop}
\mbox{}
\begin{enumerate}[(a)]
	\item The homology of the complex \[\cdots \to [M(s), j] \to [M(s+1), j] \to \cdots\] has homology $\mathbb{Z}/p$ in $s$-degree $2$.   
	\item For $i > 0$, the complexes \[\cdots \to [M(s), \Omega^i j] \to [M(s+1), \Omega^i j] \to \cdots\] are exact.  
\end{enumerate}
\end{prop}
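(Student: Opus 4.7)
My plan is to compute the transfer differentials on the given complex directly, by reducing to the $K$-theory calculation in Theorem~\ref{thm:K0MsLs}. The key point is that the natural map $j \to bu$ furnishes, for each $s$, a commutative square
\[
\begin{array}{ccc}
[M(s), j] & \xrightarrow{d_s} & [M(s+1), j] \\
\downarrow & & \downarrow \\
K^0 M(s) & \longrightarrow & K^0 M(s+1)
\end{array}
\]
whose bottom row is the $K$-theory transfer calculated in the previous subsection. Under the identifications from the proof of the preceding proposition, $[M(1), j]$ corresponds to the subgroup of $K^0 M(1)$ generated by $\beta - (p-1)\alpha$ (the image of $\sum_{\vec v \neq \vec 0}(\chi_{\vec v} - \chi_{\vec 0})$ under $e_1$, living in the $\psi^l$-invariant, rank-zero part), while $[M(2), j]$ corresponds to the subgroup generated by $\gamma$. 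With these identifications in place, it suffices to apply the formulas from Theorem~\ref{thm:K0MsLs} to compute the differentials.

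For part (a), the nontrivial portion of the complex is $\mathbb{Z}_p \to \mathbb{Z}_p \to \mathbb{Z}_p \to 0$ in $s$-degrees $0, 1, 2$. Using $\alpha \mapsto \tfrac{1}{p+1}\gamma$ and $\beta \mapsto -\tfrac{1}{p+1}\gamma$, I compute
\[
d_1(\beta - (p-1)\alpha) \;=\; -\tfrac{1}{p+1}\gamma - (p-1)\cdot\tfrac{1}{p+1}\gamma \;=\; -\tfrac{p}{p+1}\,\gamma.
\]
Since $p+1$ is a $p$-adic unit, $d_1$ is multiplication by $p$ (up to unit), so its cokernel is $\mathbb{Z}_p/p\mathbb{Z}_p \cong \mathbb{Z}/p$; that is the homology in $s$-degree~$2$. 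The differential $d_0$ is, via the same square, given by $1 \mapsto \alpha + \beta$ in $K^0 M(1)$, which reduces (after subtracting $p$ copies of the trivial character) to the class $\beta - (p-1)\alpha$; hence $d_0$ is an isomorphism onto the generator of $[M(1), j]$. Combined with the fact that $d_1$ is injective in the torsion-free $\mathbb{Z}_p$, this shows the homology vanishes in $s$-degrees $0$ and $1$.

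For part~(b), the previous proposition tells us $[M(s), \Omega^i j] = 0$ for $s \geq 2$ and for $s=1$ unless $i = 2(p-1)p^k i' - 1$ with $i'$ coprime to $p$, in which case it is $\mathbb{Z}/p^{k+1}$. The complex thus collapses to a two-term complex $\pi_i j \to [M(1), \Omega^i j]$, both terms cyclic of the same order $p^{k+1}$ and both generated by the (Bott-shifted) trivial character $\chi_{\vec 0}$. The transfer $d_0$ corresponds, after the Bott identification $ku^{-2k}(-) \cong K^0(-)$, to the induction map sending $\chi_{\vec 0}$ to $\mathrm{Ind}(\chi_{\vec 0}) = \alpha + \beta$, and the reduction of this element against the trivial character again yields the generator $\beta - (p-1)\alpha$ of the $e_1$-summand. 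Thus $d_0$ matches generator to generator up to a $p$-adic unit and is therefore an isomorphism, yielding exactness.

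The main obstacle is the careful identification of generators: one must verify that the proof of the preceding proposition really places the generator of $[M(1), j]$ at $\beta - (p-1)\alpha \in K^0 M(1)$ (as the image of the $\psi^l$-invariant in $R_0(V_1)^\wedge$ under $e_1$), and then track this element through the $K$-theory formulas. The factor of $p$ responsible for the nontrivial $\mathbb{Z}/p$ comes from the arithmetic cancellation $-\tfrac{1}{p+1} - (p-1)\tfrac{1}{p+1} = -\tfrac{p}{p+1}$, which is the crux of the computation and the reason the homology is concentrated precisely in $s$-degree~$2$ rather than vanishing as in the $K$-theoretic Whitehead conjecture.
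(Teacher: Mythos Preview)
Your approach is the same as the paper's: embed $[M(s), j]$ into $K^0 M(s)$ via $j \to bu$ and read off the transfer from the representation-theoretic formulas already computed. Your computation of $d_1(\beta - (p-1)\alpha) = -\tfrac{p}{p+1}\gamma$ is exactly the paper's, and it correctly gives $\mathbb{Z}/p$ in $s$-degree~$2$.

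There is one genuine error, however. You assert that the complex begins $\mathbb{Z}_p \to \mathbb{Z}_p \to \mathbb{Z}_p$ in degrees $0,1,2$, but in fact $[M(0), j] = \pi_0 j = 0$: by the paper's conventions $J = \Omega^\infty j$ sits in a fiber sequence $J \to BU \to BU$, and $\pi_0 BU = 0$ (equivalently, the formula $\operatorname{rk}[BV_{s+}, J] = \tfrac{p^s-1}{p-1}$ from the preceding proposition gives rank $0$ at $s=0$). Your claim that $d_0$ is an isomorphism is therefore vacuous, and worse, it is internally inconsistent with your own computation: you cannot have $d_0$ an isomorphism and $d_1$ injective (multiplication by a nonzero element of $\mathbb{Z}_p$) in a chain complex, since $d_1 d_0 = 0$ forces $d_1 = 0$ on the image of $d_0$. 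The correct statement is simply that the complex is $0 \to \mathbb{Z}_p \xrightarrow{\cdot\, p} \mathbb{Z}_p \to 0$, whence homology vanishes in degrees $0,1$ trivially.

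For part~(b) your idea is right but the bookkeeping is off: the generator of $[M(1), \Omega^i j]$ for $i > 0$ is \emph{not} $\beta - (p-1)\alpha$ but rather (the image of) $\chi_{\vec 0} = \alpha$, because after the Bott shift the $(p-1)\times(p-1)$ block of $\psi^l - 1$ on the nontrivial characters has unit determinant, so $\beta$ dies in the cokernel. The transfer sends $1 \mapsto \alpha + \beta \equiv \alpha \pmod{\beta}$, which is the generator; that is the paper's argument.
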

\begin{proof}
	For $i = 0$, we have \[\underbrace{[M(0), j]}_{0} \to \underbrace{[M(1), j]}_{\mathbb{Z}_p} \to \underbrace{M(2), j]}_{\mathbb{Z}_p} \to 0 \to \cdots.\]  Using the notation introduced in the last section, the map induced by the transfer sends the generator $\beta - (p-1)\alpha$ of $[M(1), j]$ to \[\frac{-1}{p+1} \gamma - (p-1) \cdot \frac{1}{p+1} \gamma = \frac{-p}{p+1} \gamma.\]  So the transfer map $[M(1), j] \to [M(2), j]$ has zero kernel and cokernel $\mathbb{Z}/p$.  
	
	For $i = 2(p-1)p^k i' - 1$, we want to show that \[\tr: \underbrace{[M(0), \Omega^i j]}_{\mathbb{Z}/p^{k+1}} \to \underbrace{[M(1), \Omega^i j]}_{\mathbb{Z}/p^{k+1}}\] is an isomorphism.  We have \[\tr(1) = (\sum_{a \in \mathbb{F}_p} \chi_a) \cdot e_1 \equiv \chi_{\vec{0}} \pmod{\sum_{b \in \mathbb{F}_p^\times} \chi_b},\] as desired.  
\end{proof}

\begin{cor}
\label{cor:MapLsJ}
\mbox{}
\begin{enumerate}[(a)]
	\item $[L(s), j] = 0$ unless $s = 1$, in which case $[L(1), j] \cong \mathbb{Z}_p$.  
	\item For $i > 0$, the complexes \[\cdots \to [L(s), \Omega^i j] \to [L(s+1), \Omega^i j] \to \cdots\] are exact.
\end{enumerate}
\end{cor}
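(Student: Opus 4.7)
The plan is to deduce both statements from the preceding proposition by means of the Mitchell--Priddy wedge decomposition $M(s) \simeq L(s) \vee L(s-1)$ (with the convention $L(-1) := 0$, so that $M(0) = L(0) = S^0$). Each quantity $[L(s), -]$ is peeled off from the corresponding $[M(s), -]$ via the direct-sum decomposition $[M(s), -] \cong [L(s), -] \oplus [L(s-1), -]$.

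For part (a), I begin with the vanishing $[M(s), j] = 0$ for $s \geq 3$ from the preceding proposition. An induction downward in $s$ forces $[L(s), j] = 0$ for all $s \geq 2$. Combining $[M(2), j] \cong \mathbb{Z}_p$ with $[L(2), j] = 0$ then yields $[L(1), j] \cong \mathbb{Z}_p$, and combining $[M(1), j] \cong \mathbb{Z}_p$ with the newly determined $[L(1), j] \cong \mathbb{Z}_p$ yields $[L(0), j] = 0$.

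For part (b), I apply the analogous peel-off to $[M(s), \Omega^i j]$, which vanishes for $s \geq 2$ (for every $i > 0$); this yields $[L(s), \Omega^i j] = 0$ for all $s \geq 1$. The exactness of the resulting $L$-complex follows from the exactness of the $M$-complex established in the preceding proposition by transporting along the splitting. The main obstacle is verifying that the transfer $M(s+1) \to M(s)$ is appropriately compatible with the wedge decomposition---so that it restricts and projects in the expected way onto the $L$-summands and thereby produces the $L$-complex differentials---which is the direct analog of the bookkeeping with generators carried out in the proof of Theorem~\ref{thm:K0MsLs}.
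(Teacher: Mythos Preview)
Your approach via the splitting $M(s)\simeq L(s)\vee L(s-1)$ is exactly the intended one; the paper gives no separate proof of this corollary, and the deduction is parallel to what is done explicitly for $K$-theory in Theorem~\ref{thm:K0MsLs}. Your argument for part~(a) is correct. (The phrase ``induction downward in $s$'' is a slight misnomer: no induction is needed, since each vanishing $[M(s),j]=0$ for $s\ge 3$ directly forces both summands $[L(s),j]$ and $[L(s-1),j]$ to vanish.)

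For part~(b), however, there is a point your argument glosses over. You correctly obtain $[L(s),\Omega^i j]=0$ for all $s\ge 1$ from $[M(s),\Omega^i j]=0$ for $s\ge 2$. But the assertion that exactness of the full $L$-complex then follows from exactness of the $M$-complex ``by transporting along the splitting'' is not automatic at $s=0$. Since $L(0)=S^0$ one has $[L(0),\Omega^i j]=\pi_i j$, which equals $\mathbb{Z}/p^{k+1}$ for $i=2(p-1)p^k i'-1$, whereas $[L(1),\Omega^i j]=0$; the $L$-complex therefore reads $\mathbb{Z}/p^{k+1}\to 0\to\cdots$ and is \emph{not} exact at $s=0$. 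The isomorphism $[M(0),\Omega^i j]\xrightarrow{\cong}[M(1),\Omega^i j]$ in the $M$-complex lands, under the splitting of $M(1)$, entirely in the $[L(0),\Omega^i j]$-summand of the target rather than in $[L(1),\Omega^i j]$, so it does not yield an injective first differential for the $L$-complex. This issue appears to be an imprecision already present in the paper's own statement of~(b); your write-up inherits it rather than resolving it, and the compatibility check you flag as ``the main obstacle'' would in fact reveal it rather than repair it.
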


\begin{cor}
\label{cor:HZj}
	$\Map(H\mathbb{Z}, j)$ is contractible.  
\end{cor}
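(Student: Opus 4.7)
The plan is to apply the symmetric product spectral sequence of Proposition \ref{prop:SymFiltSS} with $E = j$. This takes the form $E_1^{s,t} = [\Sigma^t L(s), j] \Rightarrow \pi_{t-s} \Map(H\mathbb{Z}, j)$, with $d_1$ induced by the transfers $\delta_s: L(s+1) \to L(s)$. To conclude that the space $\Map(H\mathbb{Z}, j)$ is contractible, it suffices to show that $E_\infty^{s,t} = 0$ for all $s, t \geq 0$ with $t \geq s$, i.e., throughout the region that contributes to the non-negative homotopy groups of the mapping space.

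Next I would compute the $E_2$-page row by row using the preceding corollary. For the row $t = 0$, part (a) of Corollary \ref{cor:MapLsJ} says that $[L(s), j]$ vanishes except at $s = 1$, where it is $\mathbb{Z}_p$; hence the homology of the $d_1$-complex is $E_2^{1,0} \cong \mathbb{Z}_p$ and $E_2^{s,0} = 0$ for $s \neq 1$. For each fixed $t > 0$, I would rewrite $[\Sigma^t L(s), j] \cong [L(s), \Omega^t j]$, so the $d_1$-complex along that row is precisely the complex $\cdots \to [L(s), \Omega^t j] \to [L(s+1), \Omega^t j] \to \cdots$ appearing in part (b) of the corollary, which is asserted to be exact. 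Consequently $E_2^{s,t} = 0$ whenever $s \geq 0$ and $t > 0$.

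The sole surviving class on $E_2$ lies at $(s,t) = (1,0)$, in total degree $t - s = -1$, and therefore does not contribute to $\pi_n \Map(H\mathbb{Z}, j)$ for any $n \geq 0$. All $E_2$-entries in the relevant region $t \geq s \geq 0$ vanish, hence so do the corresponding $E_\infty$-entries, and so $\pi_n \Map(H\mathbb{Z}, j) = 0$ for every $n \geq 0$; the mapping space is therefore contractible. No substantial obstacle remains here, since the essential computational input has already been packaged into Corollary \ref{cor:MapLsJ}; the only residual point is a routine check that the spectral sequence for the symmetric power tower $\SP^{p^\bullet}(S^0) \to H\mathbb{Z}$ converges strongly enough that vanishing of $E_\infty$ implies vanishing of the target, which is standard since the relevant $\lim^1$ terms vanish in the connective range of interest.
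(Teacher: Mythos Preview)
Your proof is correct and follows exactly the same approach as the paper's: combine the spectral sequence of Proposition~\ref{prop:SymFiltSS} with Corollary~\ref{cor:MapLsJ}. The paper's proof is the one-line ``Combine the previous corollary with (\ref{prop:SymFiltSS})''; you have simply spelled out the details, including the observation that the lone surviving class at $(s,t)=(1,0)$ lies in total degree $-1$ and so does not affect the mapping space.
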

\begin{proof}
	Combine the previous corollary with (\ref{prop:SymFiltSS}).
\end{proof}

	Even though the mapping space $\Map(H\mathbb{Z}, j)$ is contractible, it seems that the mapping \emph{spectrum} has a nonzero homotopy group in $\pi_{-1} \cong \mathbb{Z}_p$.  
	
\begin{prop}
	$\pi_{-1} \underline{\Map}(H\mathbb{Z}, j)$ is contains the non-null map $\Sigma^{-1} H\mathbb{Z} \to L_{K(1)} S^0 \gen{1} \simeq j$ coming from the fiber sequence \[\Sigma^{-1} H\mathbb{Z} \to L_{K(1)} S^0 \gen{1} \to L_{K(1)} S^0 \xrightarrow{\pi_0} H\mathbb{Z}.\] 
\end{prop}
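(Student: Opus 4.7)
The plan is to exhibit the class in question as the connecting map of the Postnikov truncation fiber sequence for $L_{K(1)} S^0$, and then to show it is non-null by arguing that splitness would force $H\mathbb{Z}_p$ to be $K(1)$-local.

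Since $\pi_0 L_{K(1)} S^0 \cong \mathbb{Z}_p$, the canonical Postnikov truncation yields a fiber sequence
\[
L_{K(1)} S^0 \langle 1 \rangle \to L_{K(1)} S^0 \to \tau_{\leq 0} L_{K(1)} S^0 \simeq H\mathbb{Z}_p.
\]
Invoking the equivalence $j \simeq L_{K(1)} S^0 \langle 1 \rangle$ recalled in the discussion of the stable Adams conjecture, and rotating, one obtains a connecting map $\partial \colon \Sigma^{-1} H\mathbb{Z}_p \to j$. Since $j$ is $p$-complete, this map determines a class in $[\Sigma^{-1} H\mathbb{Z}, j] \cong \pi_{-1} \underline{\Map}(H\mathbb{Z}, j)$, which is the class that appears in the statement.

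To see that $\partial$ is non-null, I would argue by contradiction. If $\partial$ were null, the fiber sequence would split to give $L_{K(1)} S^0 \simeq j \vee H\mathbb{Z}_p$, realizing $H\mathbb{Z}_p$ as a retract of $L_{K(1)} S^0$. The $K(1)$-local subcategory is closed under retracts (being the essential image of the Bousfield localization $L_{K(1)}$), so this would force $H\mathbb{Z}_p$ itself to be $K(1)$-local. However, $H\mathbb{Z}_p$ is $K(1)$-acyclic: $K(n)_* H\mathbb{F}_p = 0$ for $n \geq 1$, and $H\mathbb{Z}_p$ is built from shifts of $H\mathbb{F}_p$ via the Bockstein tower, so $L_{K(1)} H\mathbb{Z}_p = 0$. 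Combined with the supposed $K(1)$-locality, this gives $H\mathbb{Z}_p \simeq L_{K(1)} H\mathbb{Z}_p = 0$, which is absurd.

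The construction of the class is formal, and the only content is the compatibility of $K(1)$-localization with retracts together with the $K(1)$-acyclicity of $H\mathbb{Z}_p$; I do not expect any substantial obstacle. Note that a direct homotopy-group argument will not suffice, since $j$ is $0$-connective and so $\partial$ induces the zero map on $\pi_*$; the non-nullness is a genuinely unstable (extension-type) phenomenon that becomes visible only after recognizing that the Postnikov extension does not split.
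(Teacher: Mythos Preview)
Your proof is correct and follows essentially the same approach as the paper: both argue that nullness of the connecting map would yield a splitting $L_{K(1)} S^0 \simeq j \vee H\mathbb{Z}_p$, which is impossible since $H\mathbb{Z}_p$ is $K(1)$-acyclic (the paper phrases this as the vanishing of maps $H\mathbb{Z} \to L_{K(1)} S^0$, which is equivalent). The paper's proof additionally checks that the class is not divisible by $p$---hence generates a copy of $\mathbb{Z}_p$---but this goes beyond what the stated proposition claims.
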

\begin{proof}
	First, the proposed map is nonzero.  Otherwise, we would have a splitting $L_{K(1)} S^0 \simeq H\mathbb{Z} \vee L_{K(1)} S^0 \gen{1}$, but there are no nontrivial maps $H\mathbb{Z} \to L_{K(1)} S^0$.  Second, if the map were divisible by $p$, then the composite $\Sigma^{-2} H\mathbb{Z}/p \to \Sigma^{-1} H\mathbb{Z} \to L_{K(1)} S^0 \gen{1}$ is null, so there is a nonzero map $\Sigma^{-2} H\mathbb{Z}_p \to L_{K(1)} S^0$, which is impossible.  
\end{proof}


\subsection{Calculations in the units of the Burnside ring}
	Starting in this section, we finally tackle the units of the sphere spectrum.  Recall from (\ref{prop:SymFiltSS}) that there is a spectral sequence computing $\pi_* \mathbb{G}_m(S^0)$ with $E_1^{s,t} = [\Sigma^t L(s), gl_1 S^0]$.  For convenience, we now assume that $p$ is an odd prime, and remind the reader that everything is implicitly $p$-completed.  The main theorem of this section is:
\begin{thm}
\label{thm:LsGL1S0}
	For this spectral sequence, $E_2^{s,0} = 0$ for all $s$.  
\end{thm}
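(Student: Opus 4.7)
The plan is to reduce the statement to the Whitehead conjecture in cohomotopy (Cathcart \cite{Cat88}, Stroilova \cite{Str12}) by using the $p$-adic logarithm to convert the multiplicative problem on units into the additive problem on the Burnside ring. The $E_2^{s,0}$ term is the $s$-th cohomology of the complex of abelian groups
\[
[L(0), gl_1 S^0] \to [L(1), gl_1 S^0] \to [L(2), gl_1 S^0] \to \cdots
\]
with differentials induced by the Steinberg transfers $\delta_s$.

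First I would identify the terms. By the universal property of $gl_1$ applied to the Segal conjecture,
\[
[BV_{s+}, gl_1 S^0] \cong \left(A(V_s)^\wedge_{I(V_s)}\right)^\times,
\]
and applying the Steinberg idempotent $e_s$ (acting on the abelian group of units) gives $[L(s), gl_1 S^0] \cong e_s \cdot (A(V_s)^\wedge)^\times$. Next I would split the units via the augmentation as $(A(V_s)^\wedge)^\times \cong \mathbb{Z}_p^\times \times (1 + I(V_s)^\wedge)$. At odd $p$, the $p$-adic logarithm converges on the topologically nilpotent ideal $I(V_s)^\wedge$ and yields a $\mathbb{Z}_p[GL(V_s)]$-equivariant isomorphism $\log : 1 + I(V_s)^\wedge \xrightarrow{\sim} I(V_s)^\wedge$. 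Assuming the spectrum-level transfer intertwines with this log (see below), the complex decomposes as a direct sum of two subcomplexes: the $\mathbb{Z}_p^\times$-part and the $I(V_s)^\wedge$-part.

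For the $I(V_s)^\wedge$-part: this is a $GL$-equivariant summand of $e_s \cdot A(V_s)^\wedge \cong [L(s), S^0]$, whose complex is exact by the Whitehead conjecture in cohomotopy. Since the augmentation $A(V_s)^\wedge \twoheadrightarrow \mathbb{Z}_p$ provides a $GL$-equivariant splitting respected by the transfers, exactness descends to each summand, giving exactness of $e_s \cdot I(V_s)^\wedge$. For the $\mathbb{Z}_p^\times$-part: since $GL(V_s)$ acts trivially on $\mathbb{Z}_p^\times$, one computes $e_s \cdot \mathbb{Z}_p^\times = \mathbb{Z}_p^\times$ for $s = 0, 1$ and $e_s \cdot \mathbb{Z}_p^\times = 0$ for $s \geq 2$ (the signed sum over $\Sigma_s$ in the defining formula of $e_s$ vanishes). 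A direct calculation in the spirit of Theorem \ref{thm:K0MsLs} shows that the transfer $\mathbb{Z}_p^\times \to \mathbb{Z}_p^\times$ between filtrations $s = 0$ and $s = 1$ is an isomorphism, so this summand contributes nothing either.

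The main obstacle is reconciling the multiplicative transfer on units (which arises as a norm map induced by the Steinberg transfer at the spectrum level) with the additive transfer on Burnside rings, and verifying that the $p$-adic logarithm intertwines the two. This is a naturality statement about the norm construction applied to $gl_1 S^0$ along finite covers; once established, the rest of the argument is a formal reduction to the additive Whitehead conjecture plus the small calculation for the constant summand.
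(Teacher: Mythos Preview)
Your reduction hinges on the claim that the $p$-adic logarithm intertwines the multiplicative transfer (the Burnside norm $N_{V_s}^{V_{s+1}}$, per Proposition~\ref{prop:BurnsideTransfer}) with the additive transfer $\tr_{V_s}^{V_{s+1}}$, and this is false. The identity $\log \circ N = \tr \circ \log$ holds for Galois field extensions because there $N = \prod_\sigma \sigma$ and $\tr = \sum_\sigma \sigma$; but the Burnside norm $X \mapsto \Map_H(G,X)$ is not of this form, and one only has $N(1+x) \equiv 1 + \tr(x)$ modulo higher-order terms. Concretely, take $1 + bx \in 1 + I(V_1)^\wedge$. In ghost coordinates indexed by $\{0\}, \mathcal{F}_1, L_2, \ldots, L_{p+1}, V_2$, the paper's computation of $N_{V_1}^{V_2}(1+bx)$ gives $((1+pb)^p,\,1,\,1+pb,\ldots,1+pb,\,1)$, so $\log N(1+bx) = (p\ell,\,0,\,\ell,\ldots,\ell,\,0)$ with $\ell = \log(1+pb)$. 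On the other hand $\tr_{V_1}^{V_2}$ sends the free $V_1$-orbit to the free $V_2$-orbit, so $\tr(\log(1+bx))$ has ghost vector $(p\ell,0,0,\ldots,0,0)$. The discrepancy at the $p$ lines $L \neq \mathcal{F}_1$ does not vanish under the Steinberg idempotent; indeed, the paper finds $t(N(1+bx)) = (1+pb)^{-p/(p+1)}$ and a resulting cokernel of $\mathbb{Z}/p$ in the $M$-complex at $s=2$, which is not what the additive transfer produces. Section~\ref{sec:SymProd} (the subsection comparing additive and multiplicative structures) makes exactly this point: the two transfers agree only modulo $\tau_{\geq 2t}$, and gives an explicit example where they differ.

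The paper therefore does not reduce to the additive Whitehead conjecture. Instead it computes $e_s \cdot (A(V_s)^\wedge)^\times$ directly via the mark homomorphism: a combinatorial matching argument on subspaces of $V_s$ shows this vanishes for $s \geq 3$, and explicit evaluation of the norms among the surviving terms $s=0,1,2$ gives a complex $(1+p\mathbb{Z}_p)^\times \to ((1+p\mathbb{Z}_p)^\times)^2 \to (1+p\mathbb{Z}_p)^\times$ with homology $\mathbb{Z}/p$ at $s=2$ for the $M$-version. Passing to the $L$-summand via $M(s) \simeq L(s) \vee L(s-1)$ then yields the claimed exactness.
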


	Compare this to (\ref{cor:MapLsJ}). 

	Again, we shall first prove the corresponding result for the complex involving $[M(s), gl_1 S^0]$.  
\begin{thm}
\label{thm:MsGL1S0}
	The spectral sequence converging to $\pi_* \Map(H\mathbb{Z}/p, gl_1 S^0)$ with $E_1^{s,t} = [\Sigma^t M(s), gl_1 S^0]$ has \[E_2^{s,0} \cong \begin{cases} \mathbb{Z}/p, & s = 2 \\ 0, & \text{otherwise}. \end{cases}\]
\end{thm}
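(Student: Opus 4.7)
The strategy mirrors the $j$-theory calculation from the previous subsection, with the Burnside ring replacing the representation ring via the Segal conjecture. Since $[X_+, gl_1 R] \cong (R^0 X)^\times$ for an $E_\infty$-ring $R$ and a space $X$, combining this with the Segal conjecture identifies
\[
	[BV_{s+}, gl_1 S^0] \cong (A(V_s)^\wedge_I)^\times.
\]
Passing to the Steinberg summand, $[M(s), gl_1 S^0] \cong e_s \cdot (A(V_s)^\wedge_I)^\times$, where the Steinberg idempotent acts on the unit group via the $GL(V_s)$-action on the Burnside ring by ring automorphisms.

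I would then reduce to an additive computation as follows. The augmentation induces a short exact sequence $1 \to 1 + I(V_s) \to (A(V_s)^\wedge_I)^\times \to \{\pm 1\} \to 1$; at an odd prime the quotient $\{\pm 1\}$ vanishes after $p$-completion, leaving the group of principal units.  Topological nilpotence of $I(V_s)$ (after $p$-completion) makes the $p$-adic logarithm converge, giving a $GL(V_s)$-equivariant isomorphism $1 + I(V_s) \cong I(V_s)$ of $\mathbb{Z}_p$-modules.  The computation thus reduces to analyzing $e_s \cdot I(V_s)$.

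Next I would explicitly compute $e_s \cdot I(V_s)$ for each $s$, organizing by the $GL(V_s)$-orbits on the subgroup lattice of $V_s$, which indexes the standard basis of $A(V_s)$.  For $s = 0$ the ideal is zero.  For $s = 1$, since $GL_1 = \mathbb{F}_p^\times$ preserves each of the two subgroups of $V_1$ it acts trivially on $A(V_1)$, so $e_1$ acts as the identity and the group is $\mathbb{Z}_p$.  For $s = 2$, a direct analysis using the $GL(V_2)$-action on the subgroup lattice of $V_2$ should yield a rank-one $\mathbb{Z}_p$-module, in close parallel with the computation of $e_2 \cdot R(V_2)$ in theorem \ref{thm:K0Ms}.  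For $s \geq 3$, I expect vanishing by the same sort of cancellation argument used in proposition \ref{prop:K1Acyclic}: the basis elements of $I(V_s)$ are permuted by $GL(V_s)$, and after applying the signed sum $\sum_\sigma (-1)^\sigma$ from the Steinberg idempotent, stabilizers containing a transposition force pairwise cancellations.

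Finally, one computes the transfer maps $[M(s), gl_1 S^0] \to [M(s+1), gl_1 S^0]$ after the logarithm identification.  The transfer on $\pi_0$ is induced by the stable transfer $\tr : BV_{(s+1)+} \to BV_{s+}$, which on Burnside rings is the induction $[V_s/W] \mapsto [V_{s+1}/W]$.  Tracking this through the logarithm should yield a complex $0 \to 0 \to \mathbb{Z}_p \to \mathbb{Z}_p \to 0$ in which the nontrivial map is multiplication by a $p$-adic unit times $p$, giving cokernel $\mathbb{Z}/p$ at $s = 2$.  The main obstacle is verifying that the multiplicative transfer on units, after passing through the logarithm, corresponds to the additive induction on Burnside rings -- a compatibility that can be checked either directly using the $H$-space structure on $GL_1 S^0$, or indirectly by naturality along an $E_\infty$-ring map such as $S^0 \to j$ and the parallel $j$-theory calculation already carried out.
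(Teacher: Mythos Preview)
Your approach has two genuine gaps.

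First, the short exact sequence is wrong once everything is $p$-completed, as the paper insists. After $p$-completion the augmentation lands in $\mathbb{Z}_p$, so the quotient of $(A(V_s)^\wedge)^\times$ is $\mathbb{Z}_p^\times$, not $\{\pm 1\}$. For $s=0$ this matters: $M(0)=S^0$ and $[M(0),gl_1 S^0]\cong(\mathbb{Z}_p^\times)^\wedge_p\cong(1+p\mathbb{Z}_p)^\times\cong\mathbb{Z}_p$, not $0$. Similarly $[M(1),gl_1 S^0]$ has rank $2$, not $1$. Your proposed complex $0\to 0\to\mathbb{Z}_p\to\mathbb{Z}_p\to 0$ is thus missing a copy of $\mathbb{Z}_p$ in degrees $0$ and $1$; the actual complex is $0\to\mathbb{Z}_p\to\mathbb{Z}_p^2\to\mathbb{Z}_p\to 0$.

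Second, and more seriously, the identification of the differential is wrong. The stable transfer $BV_{(s+1)+}\to BV_{s+}$ induces on $(gl_1 S^0)^0$ the Burnside \emph{norm} $N_{V_s}^{V_{s+1}}(X)=\Map_{V_s}(V_{s+1},X)$, not additive induction; this is Proposition~\ref{prop:BurnsideTransfer}. The $p$-adic logarithm does not intertwine the norm with induction. Concretely, for $a\in(1+p\mathbb{Z}_p)^\times$ the norm $N(a)\in A(V_1)^\times$ has mark vector $(a^p,a)$, so $\log N(a)$ has marks $(p\alpha,\alpha)$ where $\alpha=\log a$; by contrast $\mathrm{Ind}(\alpha)=\alpha\cdot[V_1/\{0\}]$ has marks $(p\alpha,0)$. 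Naturality along $S^0\to j$ does not help: it intertwines the norm on $gl_1 S^0$ with the $j$-theory transfer, not with Burnside induction.

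The paper avoids the logarithm entirely. It works multiplicatively throughout: it transports the $GL(V_s)$-action through the mark homomorphism $A(V_s)\hookrightarrow\mathbb{Z}^{\Cl(V_s)}$, computes the (multiplicative) action of the Steinberg idempotent on mark coordinates by a cancellation argument on the subgroup lattice, and then evaluates the norm explicitly in mark coordinates to obtain the differentials. The ``$\times p$'' responsible for the $\mathbb{Z}/p$ cokernel at $s=2$ ultimately comes from a $p$th-power appearing in the norm formula, which your additive-induction picture does not see.
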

	
	Like $K$-theory, the sphere $S^0$ satisfies a kind of ``completion theorem'': the Segal conjecture.  The calculations needed here are essentially the same as in the previous section, except now the representation ring is replaced by the Burnside ring as per the Segal conjecture, and induction of representations is replaced by the multiplicative \emph{norm} in the Burnside ring.  
	
	Recall, the Burnside ring of a finite group $G$ is the Grothendieck group associated to the monoid of finite $G$-sets under disjoint union; it is equipped with a multiplication coming from the Cartesian product of $G$-sets.  More concretely, since every $G$-set is a disjoint union of transitive $G$-sets, an element in the Burnside ring can be written as a $\mathbb{Z}$-linear combination of the orbits $[G/H]$ for $H \leq G$.  
	
	As we vary the group $G$, the Burnside rings form a \emph{Tambara functor}.  In particular, given $H \leq G$, we have \emph{transfers} $\tr_H^G: A(H) \to A(G)$ and \emph{norms} $N_H^G: A(H) \to A(G)$.  They are given by the following formulae: for a finite $H$-set $X$, we have
	\begin{align*}
		\tr_H^G(X) &= X \times_H G, \\
		N_H^G(X) &= \Map_H(G,X).
	\end{align*}
	
	The Segal conjecture relates the Burnside ring $A(G)$ with $\pi_0^G$ of the $G$-equivariant sphere, which is a $G$-$E_\infty$-spectrum.  In particular, $\pi^0(BG_+)$ also comes with transfers and norms.  
\begin{thm}[Segal conjecture]
	There is a ring homomorphism $A(G) \to \pi^0(BG_+)$, which restricts to a group homomorphism $A(G)^\times \to (gl_1 S^0)^0(BG_+)$.  In fact, these homomorphisms are isomorphisms after a suitable completion.  
\end{thm}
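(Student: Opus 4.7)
The plan is to first construct the ring homomorphism $A(G) \to \pi^0(BG_+) = [BG_+, S^0]$ by sending each transitive $G$-set $[G/H]$ to the composite $BG_+ \xrightarrow{\tr_H^G} BH_+ \to S^0$, where the first map is the stable transfer along the covering $BH \to BG$ and the second is the augmentation. Extending $\mathbb{Z}$-linearly gives a map of abelian groups, and multiplicativity follows by matching the decomposition of $(G/H_1) \times (G/H_2)$ into $G$-orbits (the double coset formula for $G$-sets) with the double coset formula for composing stable transfers. The statement about units is then automatic: any ring homomorphism sends units to units, and $(gl_1 S^0)^0(BG_+)$ is by definition the unit group of the ring $\pi^0(BG_+)$.

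The substantive content is the completion assertion, which I would argue in two stages. First, reduce the statement for arbitrary finite $G$ to the case of elementary abelian $p$-groups: after $p$-completion, choose a $p$-Sylow subgroup $P \leq G$ and use a transfer argument to exhibit the Segal statement for $G$ as a retract of the statement for $P$; then filter $P$ by normalizers of its elementary abelian subgroups via Carlsson's induction. Second, for $V_s = \mathbb{F}_p^s$, run the mod-$p$ Adams spectral sequence $\Ext^{*,*}_{\mathcal{A}_p}(H^*BV_{s+}, \mathbb{F}_p) \Rightarrow \pi^*(BV_{s+})^{\wedge}_p$ following Adams-Gunawardena-Miller. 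The key inputs are (i) a precise description of $H^*BV_s$ as an unstable $\mathcal{A}_p$-module, (ii) a vanishing line in the $E_2$-page, for which Lannes' $T$-functor is the cleanest modern organizational tool, and (iii) identification of the resulting $s = 0$ column with $A(V_s)^{\wedge}_{I(V_s)}$.

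The main obstacle is the base case for elementary abelian groups. Controlling $\Ext^{*,*}_{\mathcal{A}_p}(H^*BV_s, \mathbb{F}_p)$ is delicate since $H^*BV_s$ is not free over $\mathcal{A}_p$ and has intricate internal structure; historically this step required the Singer construction together with careful filtration arguments before the $T$-functor gave a cleaner proof. Once this is in hand, comparison with the Burnside side of the map reduces to matching natural bases and augmentation filtrations, which is essentially formal. I should note that for the applications in the present paper only the elementary abelian case is needed, so in principle one could stop with Adams-Gunawardena-Miller and avoid Carlsson's reduction altogether.
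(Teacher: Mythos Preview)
The paper does not prove this theorem at all: it is stated as background and attributed to the literature, with a footnote earlier in the section pointing to Carlsson for the general case and to Adams--Gunawardena--Miller for the elementary abelian case actually used. So there is no ``paper's own proof'' to compare against; your proposal is a sketch of the classical argument, not a reconstruction of anything in the text.

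That said, your outline is broadly faithful to the standard proof strategy and correctly isolates the elementary abelian case as both the hard part and the only part the paper needs. A few small calibrations: the construction of the ring map and the units statement are fine as stated. Your description of Carlsson's reduction is a bit loose --- the actual induction is organized around a careful analysis of $G$-spectra and singular/nonsingular decompositions rather than simply ``filtering $P$ by normalizers of elementary abelian subgroups,'' though that phrase gestures at the right kind of structure. For the base case you correctly name the Adams spectral sequence and the Singer construction as the historical route; invoking the $T$-functor is anachronistic relative to AGM but is indeed a cleaner modern repackaging. Since this is a cited result, a fully honest write-up here would simply point to \cite{Car84} and \cite{AGM85} rather than attempt a self-contained argument.
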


	Moreover, we have the following intuitive proposition; see \cite[Lem.~1.2]{Min91} for a proof.
\begin{prop}
\label{prop:BurnsideTransfer}
\mbox{}
\begin{enumerate}[(a)]
	\item The transfer in $\pi^0$ coincides with the transfer in the Burnside ring.  
	\item The transfer in $(gl_1 S^0)^0$ coincides with the norm in the Burnside ring.  
\end{enumerate}
\end{prop}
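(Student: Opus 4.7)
The plan is to show that both statements are instances of the general principle that the Segal conjecture isomorphism is natural with respect to the operations present on both sides. On one side we have the Burnside ring $A(G)$ together with its additive transfers and multiplicative norms (as part of its Tambara functor structure); on the other side, $\pi^0(BG_+) \cong (\pi^0_G S)^\wedge$ inherits transfers from the stable transfer maps between classifying spaces, and $(gl_1 S^0)^0(BG_+)$ inherits transfers from the multiplicative norms in the equivariant sphere. Once we know both sides are rings/monoids equipped with transfer/norm maps in a natural way, and the Segal conjecture identifies them as rings, it remains to check that transfers on one side map to transfers on the other. Since the Burnside ring is additively generated by the transitive $G$-sets $[G/H]$, the entire verification reduces to checking naturality on these generators.

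For part (a), I would first recall that the stable transfer $\mathrm{tr}_H^G\colon \Sigma^\infty BG_+ \to \Sigma^\infty BH_+$ is the Becker–Gottlieb transfer associated to the covering $BH \to BG$ with fiber $G/H$, and gives rise to transfers in $\pi^0$. Then I would verify on generators: the element $[H/K] \in A(H)$ corresponds under Segal to the composite $BH_+ \to \Sigma^\infty BK_+ \to \Sigma^\infty S^0$ obtained from the transfer $\mathrm{tr}_K^H$ followed by the augmentation. Composing with $\mathrm{tr}_H^G$ and using the compatibility of stable transfers with inclusions $K \leq H \leq G$ gives precisely the map representing $[G/K] = \tr_H^G[H/K] \in A(G)$. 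This can be checked more mechanically via the Mackey double coset formula, which on the Burnside side matches $\tr_H^G([H/K])$ via the obvious product of transfers.

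For part (b), the harder half, the strategy is to use a multiplicative norm map constructed at the level of equivariant spectra. Concretely, I would invoke the Hopkins–Hill–Ravenel norm (or equivalently the equivariant multiplicative transfer) on the $G$-equivariant sphere spectrum, restricted to the units $GL_1$. This norm is designed to refine the additive transfer to the multiplicative structure on units, and fits into a natural square with the ring homomorphism $A(G) \to \pi^0(BG_+)$. The computation then reduces, as in part (a), to evaluating both the topological norm and the Burnside norm on a generator $[H/K] \in A(H)^\times$ (i.e., an element whose class in $A(H)/I(H)$ is a unit) and checking they agree. On the Burnside side, $N_H^G[H/K] = \Map_H(G, H/K)$ decomposes as a disjoint union of transitive $G$-sets indexed by $G$-orbits on $\Map_H(G, H/K)$; on the topological side, the same decomposition arises from the fixed-point description of the norm.

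The main obstacle is part (b): the multiplicative nature of the norm means one cannot simply reduce to additive-transfer statements, and the naturality square for the norm is not formal but requires a genuine construction at the level of equivariant spectra. The cleanest way is to appeal to the Hopkins–Hill–Ravenel multiplicative norm together with the identification of $gl_1$ with the units of the sphere, which is essentially what Minami does in \cite{Min91}. I would adopt that approach rather than attempting to reprove the compatibility of the norm construction with the Segal conjecture from scratch.
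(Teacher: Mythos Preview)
The paper does not actually prove this proposition: it simply states the result and defers entirely to \cite[Lem.~1.2]{Min91} for a proof. Your proposal ultimately lands in the same place, citing Minami for the hard part (b), so in that sense there is nothing to compare---both you and the paper outsource the argument.

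That said, your sketch contains a historical inaccuracy worth flagging. You write that Minami's argument ``is essentially'' the Hill--Hopkins--Ravenel multiplicative norm combined with the identification of $gl_1$ with units of the sphere. But Minami's paper is from 1991, nearly two decades before the HHR norm machinery existed. Minami's actual argument uses the earlier technology of multiplicative transfers in equivariant stable homotopy theory (in the tradition of Segal, Adams, tom Dieck, and Carlsson), not the HHR framework. Your proposed HHR-based argument would give a valid modern proof, but it is not what Minami does, and presenting it as such is misleading. If you want to sketch an independent proof rather than cite Minami, you should either trace through Minami's original construction or be explicit that you are giving a different, post-HHR argument.
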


	We first compute the groups $[M(s), gl_1 S^0]$.  
\begin{prop}
\mbox{}
\begin{enumerate}[(a)]
	\item $[M(0), gl_1 S^0] \cong (1 + p\mathbb{Z}_p)^\times$.
	\item $[M(1), gl_1 S^0] \cong (1 + p\mathbb{Z}_p)^\times \times (1 + p\mathbb{Z}_p)^\times$.
	\item $[M(2), gl_1 S^0] \cong (1 + p\mathbb{Z}_p)^\times$.
	\item $[M(s), gl_1 S^0] = 1$ for $s \geq 3$.  
\end{enumerate}
\end{prop}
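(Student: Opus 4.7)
The plan is to imitate Theorem \ref{thm:K0Ms}, substituting the Segal conjecture for the Atiyah-Segal completion theorem and the Burnside ring $A(V_s)$ for the representation ring $R(V_s)$. By the Segal conjecture, $\pi^0(BV_{s+}) \cong A(V_s)^\wedge_I$, and in the $p$-complete conventions in force, $[BV_{s+}, gl_1 S^0]$ is identified with the group of principal units of this ring---concretely, a product of copies of $(1+p\mathbb{Z}_p)^\times$ subject to the congruences cutting out the image of the ghost embedding $A(V_s) \hookrightarrow \prod_{W \leq V_s} \mathbb{Z}$ via fixed-point counts on the basis $\{[V_s/W]\}_{W \leq V_s}$. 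The stable $p$-local splitting of $BV_{s+}$ realizes $M(s)$ as $e_s \cdot BV_{s+}$, so $[M(s), gl_1 S^0] = e_s \cdot [BV_{s+}, gl_1 S^0]$, with the Steinberg idempotent acting on the multiplicative unit group via exponents.

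The key simplification is that via the $p$-adic logarithm, the principal units $1+p\mathbb{Z}_p$ are isomorphic as a topological $\mathbb{Z}_p$-module to $\mathbb{Z}_p$, converting the problem into an additive one that is formally parallel to the $K$-theory calculation in Theorem \ref{thm:K0Ms}. The action of $GL(V_s)$ by permuting subspaces (equivalently, ghost components) plays exactly the role that its action on characters played there.

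With this reduction, the case analysis mirrors the $K$-theoretic one. For $s=0$, the statement reduces to the tautology $\pi_0 gl_1 S^0 \cong (1+p\mathbb{Z}_p)^\times$. For $s=1$, only the two subspaces $0 \leq V_1$ contribute; applying $e_1 = \frac{1}{p-1}\sum_{b \in \mathbb{F}_p^\times} b$ to each ghost component yields two independent surviving generators, parallel to the computation $K^0 M(1) \cong \mathbb{Z}_p^{\oplus 2}$. For $s=2$, one stratifies subspaces into $B_2$-orbits via the Bruhat decomposition and runs the alternating sum over $\Sigma_2$, finding, as in the proof that $K^0 M(2) \cong \mathbb{Z}_p$, a single surviving generator. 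For $s \geq 3$, the cancellation argument of Proposition \ref{prop:K1Acyclic} carries over verbatim: every element either has a repeated coordinate (and is killed by an odd transposition) or pairs with its swap-image in a cancelling sum.

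The main obstacle is the bookkeeping for the completion and principal-unit conventions: one needs to confirm that the $p$-adic logarithm linearization is compatible both with the $\mathbb{Z}_p[GL(V_s)]$-action implementing the Steinberg splitting, and with the $I$-adic completion of $A(V_s)$. Once these identifications are in place, the remainder of the argument is essentially a transposition of the proof of Theorem \ref{thm:K0Ms}, with ghost coordinates on the subspace lattice replacing characters on $V_s$.
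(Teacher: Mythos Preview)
Your linearization via the $p$-adic logarithm is a genuine simplification over the paper's direct multiplicative computation in the ghost ring, and the overall strategy---pass to ghost coordinates, apply the Steinberg idempotent, exhibit cancellation---is the same as the paper's. However, there is a real gap in your claim that the combinatorics ``carries over verbatim'' from Proposition~\ref{prop:K1Acyclic}. In the $K$-theory argument the ghost coordinates are indexed by characters, i.e.\ by \emph{vectors} in $\hat V_s\cong V_s$, and the cancellation for $s\geq 3$ uses that any vector with distinct entries has at least two nonzero coordinates to swap. In the Burnside ring the ghost coordinates are indexed by \emph{subspaces} $W\leq V_s$, ranging over all Grassmannians $\Gr_k(V_s)$, and the $B_s\times\Sigma_s$-action on this lattice is a different permutation representation. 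The paper's proof of part~(d) does not invoke Proposition~\ref{prop:K1Acyclic}; it constructs a new involution $\tau$ on each $\Sigma_s$-orbit in $\Gr_k(V_s)$ (for every $k$) by choosing the two smallest indices $i_1,i_2$ with $W\cap\vec e_i^{\,\perp}\neq 0$ and applying the transposition $\sigma_{i_1,i_2}$, then checking separately that $\tau$ preserves $B_s$-orbits. This step for subspaces of dimension $>1$ is genuinely new work that your proposal elides.

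A smaller point: for $s=1$ the paper's argument is actually \emph{simpler} than the $K$-theory analogue, not parallel to it. Since $GL(V_1)$ fixes both subspaces $0$ and $V_1$, it acts trivially on $A(V_1)$ and hence $e_1$ acts as the identity on the unit group; there is no averaging to do. By contrast, in $K^0M(1)$ the nontrivial characters form a single $\mathbb{F}_p^\times$-orbit. So while both answers have rank two, the mechanisms differ, and your ``two independent surviving generators'' line should be adjusted accordingly.
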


\begin{proof}
\mbox{}
\begin{enumerate}[(a)]
	\item $M(0) = \mathbb{S}$, so $[M(0), gl_1 S^0] \cong (\mathbb{Z}_p^\times)^\wedge_p \cong (1 + p\mathbb{Z}_p)^\times$.  (Recall that for an odd prime $p$, we have a decomposition $\mathbb{Z}_p^\times \cong \mu_{p-1} \times (1 + p\mathbb{Z}_p)^\times$.)
	
	\item We have \[[M(1), gl_1 S^0] \cong A(V_1)^\times \cdot e_1,\] and $b \in GL(V_1)$ acts identically on $A(V_1)$, so for $u \in A(V_1)^\times$, we have \[u \cdot e_1 = \left( \prod_{b \in GL(V_1)} u \cdot g \right)^{1/(p-1)} = u.\]  Thus the Steinberg idempotent acts by the identity, so \[[M(1), gl_1 S^0] \cong A(V_1)^{\times \wedge} \cong (\mathbb{Z}_p[x]/(x^2 - px))^{\times \wedge} \cong ((1 + p\mathbb{Z}_p)^\times)^2.\]  
	
	\item The multiplication in the Burnside ring is somewhat complicated, and it is convenient to work in its ghost ring.  Recall that for a finite group $G$, there is an injective mark homomorphism \[\phi: A(G) \hookrightarrow \bar{A}(G) := \mathbb{Z}^{\Cl(G)}\] that sends a $G$-set $X$ to the function that takes a subgroup $C$ up to conjugacy to the number $\abs{X^C}$ of fixed points.  
	
	When $G$ is an $\mathbb{F}_p$-vector space $V$, there is a right action of $GL(V)$ on $A(V)$ given by $X \cdot g = g^{-1} X$.  We transport this action through the mark homomorphism.  Define an action of $GL(V)$ on $\Cl(V)$ by $g \cdot W = gWg^{-1}$ where $W \leq V$ is a subspace and the conjugation takes place inside the larger group $\operatorname{Aff} := V \rtimes GL(V)$.  Endow the ghost ring $\bar{A}(V) = \mathbb{Z}^{\Cl(V)}$ with the induced action.  Then,
	
	\begin{claim}
		The mark homomorphism $\phi: A(V) \to \bar{A}(V)$ is $GL(V)$-equivariant, i.e., \[\phi(X \cdot g)(W) = \phi(X)(g \cdot W).\]
	\end{claim}
	
	To see this claim, note that $\phi(X \cdot g)(W) = \abs{(X \cdot g)^W}$ and $\phi(X)(g \cdot W) = \abs{X^{gWg^{-1}}}$, so we want to exhibit a bijection between $(X \cdot g)^W$ and $X^{gWg^{-1}}$ for all $X \in A(V)$, $g \in GL(V)$, and $W \leq V$.  Given $x \in X \cdot g = g^{-1} X$, we have $gx \in X$.  If $x$ is $W$-fixed, then \[gWg^{-1} \cdot gx = gW \cdot x = gx,\] so $gx$ is $gWg^{-1}$-fixed.  This proves the claim.  
	
	Let $X \in A(V)^\times$.  Using this claim, we obtain \[\phi(X \cdot e_s)(W) = \phi(X)(e_s \cdot W) = \left(\prod_{\substack{b \in B_s \\ \sigma \in \Sigma_s}} \phi(X)(b\sigma \cdot W)^\sigma \right)^{\frac{1}{[GL_s:U_s]}}.\]  
	
	Now we specialize to the case $s = 2$.  There are $p + 3$ subgroups of $V_2$: the trivial subspace $\{0\}$, the whole subspace $V_2$, and the $p + 1$ lines in $\mathbb{P}(V_2)$.  The first two cases are easy: the action of $GL(V)$ has to act identically on $\{0\}$ and on $V_2$ by dimensional reasons, and we can see that
	\begin{align*}
		\phi(X \cdot e_2)(V_2) &= \left( \prod_{b, \sigma} \phi(X)(V_2)^\sigma \right)^{1/(p+1)(p-1)^2} = 1 \\
		\phi(X \cdot e_2)(\{0\}) &= \left( \prod_{b, \sigma} \phi(X)(\{0\})^\sigma \right)^{1/(p+1)(p-1)^2} = 1, \\
	\end{align*}
	the sign of $\sigma \in \Sigma_2$ cancelling everything in pairs.  
	
	The remaining case of the lines is harder.	One of these lines is distinguished by the Borel and the associated flag $\mathcal{F}_0 \leq \mathcal{F}_1 \leq \mathcal{F}_2$ on $V_2$.  First consider $\ell \in \mathbb{P}(V_2) \setminus \{\mathcal{F}_1, \sigma \mathcal{F}_1\}$, and set $\mathbb{A}(V_2) := \mathbb{P}(V_2) \setminus \{\mathcal{F}_1\}$.  Then as multisets, we have \[\{b \cdot \ell\}_{b \in B_2} = \{b\sigma \cdot \ell\}_{b \in B_2} = \{(p-1)^2 \times L\}_{L \in \mathbb{A}(V_2)},\] and so we can cancel in pairs again and find \[\phi(X \cdot e_2)(\ell) = 1.\]  
	
	Finally, we need to deal with the two remaining lines $\mathcal{F}_1$ and $\sigma \mathcal{F}_1$.  We have
	\begin{align*}
		\{b \cdot \mathcal{F}_1\}_{b \in B_2} &= \{p(p-1)^2 \times \mathcal{F}_1\} \\
		\{b \cdot \sigma \mathcal{F}_1\}_{b \in B_2} &= \{(p-1)^2 \times L\}_{L \in \mathbb{A}(V_2)}.
	\end{align*}
	
	\noindent Consequently,
	\begin{align*}
		\phi(X \cdot e_2)(\mathcal{F}_1) &= \left(\frac{\phi(X)(\mathcal{F}_1)^{p(p-1)^2}}{\prod_{L \in \mathbb{A}(V_2)} \phi(X)(L)^{(p-1)^2}}\right)^{1/(p+1)(p-1)^2} \\
		&= \left(\frac{\phi(X)(\mathcal{F}_1)^p}{\prod_{L \in \mathbb{A}(V_2)} \phi(X)(L)}\right)^{1/(p+1)} \\
		\phi(X \cdot e_2)(\sigma \mathcal{F}_1) &= \left(\frac{\prod_{L \in \mathbb{A}(V_2)} \phi(X)(L)}{\phi(X)(\mathcal{F}_1)^p}\right)^{1/(p+1)}.
	\end{align*}
	
	If we want, we can invert the mark homomorphism.  An additive basis for $A(V_2)$ is given by $1 = [V_2/V_2]$, $x_\ell = [V_2/\ell]$ for $\ell \in \mathbb{P}(V_2)$, and $y = [V_2/\{0\}]$.  Let $X = a + \sum_\ell b_\ell x_\ell + cy$ for some coefficients $a, b_\ell, c$.  Then we can count fixed points and we have
	\begin{equation*}
	\begin{array}{cc}
		\hline
		W & \phi(X)(W) \\ \hline
		V_2 & a \\
		\ell & a + pb_\ell \\
		\{0\} & a + p\sum_\ell b_\ell + p^2 c \\ \hline
	\end{array}
	\end{equation*}
	
	It follows that \[X = \phi(X)(V_2) + \sum_\ell \frac{\phi(X)(\ell) - \phi(X)(V_2)}{p} x_\ell + \frac{\phi(X)(\{0\}) - \phi(X)(V_2) - \sum_\ell (\phi(X)(\ell) - \phi(X)(V_2))}{p^2} y.\]  
	
	Plugging in the values for $X \cdot e_2$ computed above, we get \[X \cdot e_2 = 1 + \frac{t-1}{p} x_{\mathcal{F}_1} + \frac{t^{-1} - 1}{p} x_{\sigma \mathcal{F}_1} + \frac{(t-1)(t^{-1} - 1)}{p^2} y,\] where \[t = t(X) := \left(\frac{\phi(X)(\mathcal{F}_1)^p}{\prod_{L \in \mathbb{A}(V_2)} \phi(X)(L)}\right)^{1/(p+1)} = \left(\frac{(a + pb_{\mathcal{F}_1})^p}{\prod_{L \in \mathbb{A}(V_2)} a + pb_L}\right)^{1/(p+1)}.\]  
	
	Examining the formula for $t$, we see that $t(X)$ ranges over $1 + p\mathbb{Z}_p$ as $X$ varies, and we deduce that $[M(2), gl_1 S^0]$ is a one-parameter family parametrized by $t \in 1 + p\mathbb{Z}_p$.  
	
	\item Finally, suppose $s \geq 3$.  To have complete cancellation of all the factors, we need to show that for each subspace $W \in \Gr_k(V_s)$, the $\Sigma_s$-orbit of $W$ in $\Gr_k(V_s)$ can be partitioned into pairs such that if $\sigma_1 W$ is matched with $\sigma_2 W$ for $\sigma_1, \sigma_2 \in \Sigma_s$, then
	\begin{enumerate}[(i)]
		\item the permutation $\sigma_1 \sigma_2$ is odd, and
		\item $\sigma_1 W$ and $\sigma_2 W$ belong in the same $B_s$-orbit.  
	\end{enumerate}
	
	We construct an involution $\tau$ on the $\Sigma_s$-orbit of $W$ that realize a matching satisfying the above properties, i.e.,
	\begin{enumerate}[(i)]
		\item $W$ and $\tau(W)$ differs by a transposition for any $W \in \Gr_k(V_s)$, and
		\item $W$ and $\tau(W)$ are in the same $B_s$-orbit.  
	\end{enumerate}
	
	Pick an orthonormal basis $\{\vec{e}_i\}$ adapted to the Borel and the symmetric group.  Let $W' \in \Sigma_s \cdot W$.  Suppose $\abs{\{i \mid W' \cap \vec{e}_i^\perp \neq \{0\}\}} \geq 2$.  (Note that this property is invariant under $\Sigma_s$.)  Pick the two smallest elements $i_1$ and $i_2$ in this set and define $\tau(W') = \sigma_{i_1, i_2} W'$.  We check that this defines an involution.  Observe that $\tau(W) \cap \vec{e}_{i_1}^\perp = \sigma_{i_1, i_2}(W \cap \vec{e}_{i_2}^\perp) \neq \{0\}$, and $\tau(W) \cap \vec{e}_{i_2}^\perp = \sigma_{i_1,i_2} (W \cap \vec{e}_{i_1}^\perp) \neq \{0\}$.  Moreover, if $i \neq i_1, i_2$ is such that $\tau(W) \cap \vec{e}_i^\perp \neq \{0\}$, then $W \cap \vec{e}_i^\perp \neq \{0\}$, so $i > \max(i_1,i_2)$.  This shows that the indices $i_1, i_2$ for $W$ and $\tau(W)$ are the same.  
	
	We next check that $W$ and $\tau(W)$ are in the same $B_s$-orbit.  Let $\{\mathcal{F}_0 \leq \mathcal{F}_1 \leq \cdots \leq \mathcal{F}_s\}$ be the complete flag of $V_s$ that is preserved by $B_s$.  The $B_s$-orbits of $\Gr_k(V_s)$ are of the form $\Gr_k(\mathcal{F}_d) \setminus \Gr_k(\mathcal{F}_{d-1})$.  
		
		Suppose $W \in \Gr_k(\mathcal{F}_d) \setminus \Gr_k(\mathcal{F}_{d-1})$.  Since $W \in \Gr_k(\mathcal{F}_d)$, we have $W \cap \vec{e}_i^\perp = \{0\}$ for $i \geq d + 1$.  This implies $i_1, i_2 \leq d$, and that $\tau(x) \cap \vec{e}_i^\perp = \sigma_{i_1,i_2}(W \cap \vec{e}_{\sigma_{i_1,i_2}(i)}^\perp) = \sigma_{i_1,i_2}(W \cap \vec{e}_i^\perp) = \{0\}$ for $i \geq d + 1$.  Therefore, $\tau(W) \in \Gr_k(\mathcal{F}_d)$.  On the other hand, $\tau(W) \notin \Gr_k(\mathcal{F}_{d-1})$; otherwise, the same argument would show that $W = \tau(\tau(W)) \in \Gr_k(\mathcal{F}_{d-1})$, which is a contradiction.  
		
		It remains to define $\tau$ on the 1-dimensional subspaces.  In this case, we can define $\tau$ by ``swapping the first two zero coordinates''.  This requires $s \geq 3$.  
\end{enumerate}
\end{proof}

	Our complex $\{[M(s), gl_1 S^0]\}$ is: \[1 \to (1 + p\mathbb{Z}_p)^\times \to ((1 + p\mathbb{Z}_p)^\times)^2 \to (1 + p\mathbb{Z}_p)^\times \to 1.\]  Because of proposition \ref{prop:BurnsideTransfer}, we can compute these maps in terms of norms in the Burnside ring.  

\begin{prop}
	The homology of this complex is trivial except when $s = 2$, in which case it is equal to $\mathbb{Z}/p$.  
\end{prop}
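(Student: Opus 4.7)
The plan is to compute the two boundary maps in the complex explicitly and then read off the homology with elementary $\mathbb{Z}_p$-linear algebra. By Proposition \ref{prop:BurnsideTransfer}(b) these boundary maps are the multiplicative norms $N_s := N_{V_s}^{V_{s+1}}: A(V_s)^\times \to A(V_{s+1})^\times$ followed by projection onto the Steinberg summand via the idempotent $e_{s+1}$. To handle the norms I will combine the mark homomorphism $\phi: A(V_s) \to \mathbb{Z}^{\Cl(V_s)}$ with the standard Tambara formula
\[
\phi(N_H^G X)(K) = \prod_{[g] \in H \backslash G / K} \phi(X)(H \cap g K g^{-1}),
\]
which for abelian $G$ simplifies to $\phi(N_H^G X)(K) = \phi(X)(H \cap K)^{[G : H + K]}$.

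For $N_1 = N_{V_0}^{V_1}$ applied to $a \in A(V_0)^\times$, this formula yields marks $\phi(N_1 a)(V_1) = a$ and $\phi(N_1 a)(\{0\}) = a^p$. Since $e_1$ already acts as the identity on $A(V_1)^\times$, passing to logarithms $\log : 1+p\mathbb{Z}_p \xrightarrow{\sim} p\mathbb{Z}_p$ identifies the first map with the injection $x \mapsto (x, p x)$; in particular the homology at $s = 0$ vanishes.

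For $N_2 = N_{V_1}^{V_2}$ applied to $u$ with marks $(u_1, u_2)$, the formula yields $\phi(N_2 u)(V_2) = u_1$, $\phi(N_2 u)(V_1) = u_1^p$, $\phi(N_2 u)(L) = u_2$ for every line $L \neq V_1$, and $\phi(N_2 u)(\{0\}) = u_2^p$. Identifying $V_1$ with the Borel-fixed line $\mathcal{F}_1$ used in the construction of $e_2$ and feeding these marks into the parametrization of $A(V_2)^\times \cdot e_2$ established earlier, I obtain
\[
t(N_2 u) = \left(\frac{u_1^{p^2}}{u_2^p}\right)^{1/(p+1)},
\]
which makes sense because $p+1 \in \mathbb{Z}_p^\times$. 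In logarithmic coordinates this becomes the $\mathbb{Z}_p$-linear map $(x_1, x_2) \mapsto p(p x_1 - x_2)/(p+1)$.

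The rest is a short linear algebra calculation. The kernel of $N_2$ is precisely $\{(x_1, p x_1) : x_1 \in p\mathbb{Z}_p\}$, which coincides with the image of $N_1$, so the middle homology is zero. Since $(p+1)^{-1} \in \mathbb{Z}_p^\times$, the image of $N_2$ equals $p^2 \mathbb{Z}_p$ inside $p\mathbb{Z}_p$, giving the asserted cokernel $p\mathbb{Z}_p / p^2 \mathbb{Z}_p \cong \mathbb{Z}/p$. The main subtlety along the way is ensuring the inclusion $V_1 \hookrightarrow V_2$ is compatible with the Borel used to build the Steinberg idempotent; with the wrong convention the exponent on $u_2$ in $t(N_2 u)$ drops by a factor of $p$ and the cokernel disappears.
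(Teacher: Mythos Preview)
Your proof is correct and follows essentially the same route as the paper: compute the norms via the mark homomorphism, project onto the Steinberg summand using the parameter $t$, and then read off kernel/image. The only cosmetic differences are that you invoke the general Tambara double-coset formula rather than counting fixed points of $\Map(V_1,-)$ by hand, and you pass to $\log$-coordinates to do the final linear algebra additively, whereas the paper keeps the computation multiplicative. Your remark about the compatibility of the inclusion $V_1 \hookrightarrow V_2$ with the Borel is exactly the point the paper flags in its Remark after Theorem~\ref{thm:K0MsLs}.
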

\begin{proof}
	The first transfer map takes $a \in 1 + p\mathbb{Z}_p$, regarded as a set with $a$ elements, to \[N(a) = \Map(V_1, a)\]  The cardinality of $\Map(V_1, a)$ is $a^p$, and the $V_1$-fixed points of $\Map(V_1, a)$ are the constant functions, of which there are $a$ of them.  In other words, 
	\begin{align*}
		\phi(N(a))(\{0\}) &= a^p \\
		\phi(N(a))(V_1) &= a.
	\end{align*}
	
	Inverting the mark homomorphism, we have \[N(a) = a + \frac{a^p - a}{p} x.\]  In particular, the first map is injective with image consisting of $\{a + \frac{a^p - a}{p} x \mid a \in (1 + p\mathbb{Z}_p)^\times\}$.  
	
	The second transfer map takes $a + bx$ to $N(a + bx)$, where
	\begin{align*}
		\phi(N(a+bx))(\{0\}) &= (a + pb)^p \\
		\phi(N(a+bx))(L) &= \begin{cases} a^p, & L = \mathcal{F}_1 \\ a + pb, & L \neq \mathcal{F}_1 \end{cases} \\
		\phi(N(a+bx))(V_2) &= a.
	\end{align*}
	
	Thus, \[N(a+bx) = a + \frac{a^p - a}{p} x_{\mathcal{F}_1} + b \sum_{L \in \mathbb{A}(V_2)} x_L + \frac{(a+pb)^p - a^p - p^2 b}{p^2} y.\]  To apply the Steinberg idempotent, it suffices to compute \[t(N(a+bx)) = \left(\frac{(a^p)^p}{\prod_{L \in \mathbb{A}(V_2)} a + pb}\right)^{1/(p+1)} = \left(\frac{a^p}{a + pb}\right)^{p/(p+1)}.\]  
	
	We check exactness in the middle.  If $t(N(a+bx)) = 1$, since there are no non-trivial $p$-th roots in $\mathbb{Z}_p$, we have $\frac{a^p}{a+pb} = 1$, or $b = \frac{a^p - a}{p}$.  This precisely determines the image of the first map.  
	
	What about the image?  The image of $(a,b) \mapsto (\frac{a^p}{a+pb})^{1/(p+1)}$ hits all of $1 + p\mathbb{Z}_p$.  Raising this to the $p$-th power, we see that the image of $(a,b) \mapsto t(N(a+bx))$ is $(1 + p\mathbb{Z}_p)^p = 1 + p^2 \mathbb{Z}_p$.  Hence, the cokernel of the second transfer map is $(1 + p^2 \mathbb{Z}_p)/(1 + p\mathbb{Z}_p) \cong \mathbb{Z}/p$.  
\end{proof}

	Here are the analogous results for the $L(s)$'s instead of the $M(s)$'s.  

\begin{prop}
\mbox{}
\begin{enumerate}[(a)]
	\item $[L(0), gl_1 S^0] \cong (1 + p\mathbb{Z}_p)^\times$
	\item $[L(1), gl_1 S^0] \cong (1 + p\mathbb{Z}_p)^\times$
	\item $[L(s), gl_1 S^0] = 1$ for $s \geq 2$.
\end{enumerate}
\end{prop}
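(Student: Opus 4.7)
The strategy is to invoke the splitting $M(s) \simeq L(s) \vee L(s-1)$ recalled earlier in the section (with the convention $L(-1) = \ast$, so that $M(0) \simeq L(0) \simeq S^0$) and reduce everything to the preceding proposition computing $[M(s), gl_1 S^0]$. Applying $[-, gl_1 S^0]$ to the splitting yields a natural direct sum decomposition of abelian groups
\[
[M(s), gl_1 S^0] \cong [L(s), gl_1 S^0] \times [L(s-1), gl_1 S^0]
\]
for every $s \geq 0$, and this lets us determine the $L$-factors by induction on $s$.

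First I would dispatch the base case $s = 0$: since $L(0) \simeq S^0$ the identity $[L(0), gl_1 S^0] \cong (1 + p\mathbb{Z}_p)^\times$ is immediate. For $s = 1$ the decomposition combined with the previous proposition gives
\[
[L(1), gl_1 S^0] \times (1 + p\mathbb{Z}_p)^\times \cong \bigl((1 + p\mathbb{Z}_p)^\times\bigr)^2,
\]
from which $[L(1), gl_1 S^0] \cong (1 + p\mathbb{Z}_p)^\times$ follows. For $s = 2$ one has
\[
[L(2), gl_1 S^0] \times (1 + p\mathbb{Z}_p)^\times \cong (1 + p\mathbb{Z}_p)^\times,
\]
and I would conclude $[L(2), gl_1 S^0] = 1$ by observing that $(1+p\mathbb{Z}_p)^\times \cong \mathbb{Z}_p$ (for $p$ odd), so the equation is a splitting of $\mathbb{Z}_p$ as a free $\mathbb{Z}_p$-module of rank one; the only torsion-free $\mathbb{Z}_p$-complement is trivial. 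For $s \geq 3$ the previous proposition gives $[M(s), gl_1 S^0] = 1$, so each factor in the decomposition vanishes, completing the induction.

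No real obstacle appears here since the hard work was done in computing $[M(s), gl_1 S^0]$; the only subtle point to confirm is the cancellation step at $s = 2$, which rests on the elementary fact that any $\mathbb{Z}_p$-module summand complementary to $\mathbb{Z}_p$ inside $\mathbb{Z}_p$ must be trivial (tensor with $\mathbb{Q}_p$ to see the complement is torsion, then use that a $\mathbb{Z}_p$-submodule of $\mathbb{Z}_p$ is torsion-free). One should also note the implicit use of $p$-completion throughout so that $(1+p\mathbb{Z}_p)^\times$ really is the relevant $p$-part of $\pi_0 gl_1 S^0$ being seen by the Steinberg summands.
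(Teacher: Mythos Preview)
Your proposal is correct and is essentially the same approach the paper has in mind: the paper states this proposition without proof as ``the analogous results for the $L(s)$'s,'' relying on the splitting $M(s) \simeq L(s) \vee L(s-1)$ exactly as it did earlier for $K^0 L(s)$. Your care with the cancellation at $s=2$ (using that $(1+p\mathbb{Z}_p)^\times \cong \mathbb{Z}_p$ is torsion-free of rank one) is a nice touch that the paper leaves implicit.
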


\begin{prop}
	The map $[L(0), gl_1 S^0] \to [L(1), gl_1 S^0]$ is an isomorphism.  
\end{prop}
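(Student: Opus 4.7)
The plan is to extract the map $\delta_0^{L\,*}\colon [L(0), gl_1 S^0] \to [L(1), gl_1 S^0]$ from the already-computed $M$-level transfer $\delta_0^{M\,*}\colon [M(0), gl_1 S^0] \to [M(1), gl_1 S^0]$ by identifying the $L(0)$- and $L(1)$-summands inside the Segal-conjecture identification $[M(1), gl_1 S^0] \cong A(V_1)^{\times\wedge} \cong ((1+p\mathbb{Z}_p)^\times)^2$, where I parameterize an element $X \in A(V_1)$ by its pair of marks $(X(V_1), X(1))$.

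First I would identify the splitting on cohomology.  The retraction $M(1) \to L(0) = S^0$ dualizes to the inclusion of the constants $\mathbb{Z}_p^{\times\wedge} \hookrightarrow A(V_1)^{\times\wedge}$, i.e., the diagonal $\{(a, a)\}$.  The basepoint inclusion $i_0 \colon S^0 \to M(1)$ induces, via the Segal conjecture, the ``mark at the trivial subgroup'' (cardinality) map $i_0^*\colon (a, b) \mapsto b$ (one checks this by noting that $\tr^{V_1}_{1} \leftrightarrow [V_1/1]$, which has cardinality $p$, matching $\tr \circ i_0 = [p]$).  Consequently the $L(1)$-summand is $\ker(i_0^*) = \{(c, 1) \mid c \in (1+p\mathbb{Z}_p)^\times\}$, and the $L(1)$-projection of a general $(a, b)$ is $a/b$, using the decomposition $(a,b) = (b,b)\cdot(a/b, 1)$.

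Next I would plug in the previous computation.  From the proof of the preceding proposition, $\delta_0^{M\,*}(a) = N(a) = a + \tfrac{a^p - a}{p}\, x$, whose marks are $(a, a^p)$.  Applying the decomposition above gives $L(0)$-component $a^p$ (consistent with $\delta_0^M \circ i_0 = [p]$) and $L(1)$-component $a/a^p = a^{1-p}$, which by definition is $\delta_0^{L\,*}(a)$.

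Finally, via the $p$-adic logarithm $(1+p\mathbb{Z}_p)^\times \xrightarrow{\sim} \mathbb{Z}_p$ (valid since $p$ is odd), the map $a \mapsto a^{1-p}$ becomes multiplication by $1 - p$, which is a $p$-adic unit since $1-p \equiv 1 \pmod p$; hence it is an isomorphism.  The main subtlety is the correct matching of the topological splitting $M(1) \simeq L(0) \vee L(1)$ with the algebraic splitting of $A(V_1)^{\times\wedge}$; once the basepoint inclusion is identified with cardinality (so that the $L(0)$-component is a pure ``degree'' map), the $L(1)$-part reads off as exponentiation by $1-p$, whose invertibility is automatic.
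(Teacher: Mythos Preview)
Your proof is correct and follows essentially the same approach as the paper: extract the $L(1)$-component of the $M$-level norm $N(a)$ using the splitting $M(1)\simeq L(0)\vee L(1)$, and check that the resulting endomorphism of $(1+p\mathbb{Z}_p)^\times$ is an isomorphism. The only cosmetic difference is that the paper parameterizes $A(V_1)^\times$ via the presentation $a+bx$ and records the $L(1)$-coordinate as $1+pb/a$, obtaining the reduced transfer $a\mapsto a^{p-1}$, whereas you work directly with marks and parameterize the $L(1)$-summand by its mark at $V_1$, obtaining $a\mapsto a^{1-p}$; these two descriptions differ by inversion on $(1+p\mathbb{Z}_p)^\times$ and are both visibly isomorphisms.
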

\begin{proof}
	The isomorphism $M(1) \simeq L(0) \vee L(1)$ induces the following decomposition in $(gl_1 S^0)^0$-cohomology:
	\begin{align*}
		(\mathbb{Z}_p[x]/(x^2 - px))^\times &\cong \mathbb{Z}_p^\times \times (1 + p\mathbb{Z}_p)^\times \\
		a + bx &\mapsto (a + pb, 1 + p\frac{b}{a}) \\
		\frac{a}{1+pb}(1 + bx) &\mapsfrom (a, 1 + pb)
	\end{align*}
	
	Thus, the reduced transfer induces the map $a \mapsto a^{p-1}$, which is an isomorphism on $(1 + p\mathbb{Z}_p)^\times$.  
\end{proof}

	When $p = 2$, the calculations differ slightly, for example due to the behaviour of the unit group in the $2$-adic integers: $\mathbb{Z}_2^\times \cong \mathbb{Z}/2 \times (1 + 4\mathbb{Z}_2)^\times$.  The analogues of theorems \ref{thm:LsGL1S0} and \ref{thm:MsGL1S0} are:
\begin{thm}
	Let $p = 2$.  
\begin{enumerate}[(a)]
	\item The spectral sequence converging to $\pi_* \Map(H\mathbb{Z}/2, gl_1 S^0)$ with $E_1^{s,t} = [\Sigma^t L(s), gl_1 S^0]$ has \[E_2^{s,0} \cong \begin{cases} \mathbb{Z}/2 \times \mathbb{Z}/2, & s = 2 \\ 0, & \text{otherwise}. \end{cases}\]
	\item The spectral sequence converging to $\pi_* \Map(H\mathbb{Z}_2, gl_1 S^0)$ with $E_1^{s,t} = [\Sigma^t M(s), gl_1 S^0]$ has $E_2^{s,0} = 0$ for all $s$.
\end{enumerate}
\end{thm}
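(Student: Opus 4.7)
The plan is to parallel the odd primary arguments of Theorems \ref{thm:LsGL1S0} and \ref{thm:MsGL1S0}, modified for the arithmetic of $\mathbb{Z}_2^\times \cong \mathbb{Z}/2 \times (1 + 4\mathbb{Z}_2)^\times$. As before, the Segal conjecture identifies $[M(s), gl_1 S^0]$ with the Steinberg summand $e_s \cdot A(V_s)^{\times\wedge}$ of the completed unit group of the Burnside ring, and Proposition \ref{prop:BurnsideTransfer} identifies the $d_1$-differential with the multiplicative norm $N^{V_{s+1}}_{V_s}$.

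First I would compute each $[M(s), gl_1 S^0]$. The cases $s \leq 1$ are immediate: $[M(0), gl_1 S^0] \cong \mathbb{Z}_2^\times$, and since $e_1$ acts as the identity on $A(V_1)^{\times\wedge}$, we have $[M(1), gl_1 S^0] \cong (\mathbb{Z}_2^\times)^2$. For $s = 2$ the ghost-ring analysis of $e_2$ carries over verbatim from the odd prime proof, producing a one-parameter family whose parameter lives in $(1 + 2\mathbb{Z}_2)^\times$. For $s \geq 3$, the transposition-based pairing argument is characteristic-free and again forces $e_s$ to annihilate everything, so $[M(s), gl_1 S^0]$ is trivial.

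Next I would compute the norm maps on the complex using the mark homomorphism, exactly as in the odd case: the first norm sends $a \in \mathbb{Z}_2^\times$ to $a + \tfrac{a^2 - a}{2} x$, while the second is governed by the analogous ghost-ring formula, after which composition with $e_2$ reduces the middle cohomology to an arithmetic calculation on $(1 + 2\mathbb{Z}_2)^\times$. For the $L(s)$-version, I would then invoke the splitting $M(1) \simeq L(0) \vee L(1)$ to decompose $A(V_1)^{\times\wedge} \cong (\mathbb{Z}_2^\times)^2$ into its two summands and track which summand each norm factor lands in; this step is what converts the $M$-version of the statement into the $L$-version.

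The principal obstacle is that the squaring map $a \mapsto a^2$ on $\mathbb{Z}_2^\times$ is no longer injective and has image $(1 + 8\mathbb{Z}_2)^\times$ of index $4$, in contrast to the odd prime case where $a \mapsto a^p$ is injective on $(1 + p\mathbb{Z}_p)^\times$ with cokernel of order exactly $p$. This is precisely what promotes the single $\mathbb{Z}/p$ in $E_2^{2,0}$ at odd primes into $(\mathbb{Z}/8)^\times \cong \mathbb{Z}/2 \times \mathbb{Z}/2$ at $p = 2$. Pinning down which of the $\{\pm 1\}$-torsion and which of the $(1 + 2\mathbb{Z}_2)^\times / (1 + 8\mathbb{Z}_2)^\times$-factors survive the Steinberg projection, and checking that no spurious classes appear at $s \neq 2$, are the only delicate points; once these are settled the other vanishings follow from the same formal cancellations as in the odd prime proof.
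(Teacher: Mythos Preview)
Your proposal is correct and takes essentially the same approach as the paper, whose proof consists entirely of the sentence ``Same as above, with minor modifications.'' Your identification of the key arithmetic change---that squaring on $\mathbb{Z}_2^\times$ has kernel $\{\pm 1\}$ and image $(1+8\mathbb{Z}_2)$ of index $4$, so that the odd-prime cokernel $\mathbb{Z}/p$ is replaced by $(\mathbb{Z}/8)^\times \cong \mathbb{Z}/2 \times \mathbb{Z}/2$---is precisely the modification the paper has in mind.
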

\begin{proof}
	Same as above, with minor modifications.  
	
\end{proof}

\subsection{Comparison of additive and multiplicative structures}
	In the section, we briefly sketch one possible approach to understanding the spectral sequence (\ref{prop:SymFiltSS}) in the region $t > 0$.  Let $R$ be an arbitrary $E_\infty$-ring again.  

\begin{prop}
	\label{prop:SameGroups}
	For $t > 0$, $[\Sigma^t L(s), gl_1 R] \cong [\Sigma^t L(s), R]$.  
\end{prop}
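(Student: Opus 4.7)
The strategy is to exploit the fact that $gl_1 R$ and $R$ have canonically isomorphic homotopy groups in positive degrees, combined with the observation that $\Sigma^t L(s)$ for $t > 0$ is $1$-connective, so that only the positive-degree homotopy of the target is ``seen''. Concretely, I would first establish the equivalence of connective spectra
$$\tau_{\geq 1} R \simeq \tau_{\geq 1} gl_1 R,$$
and then deduce the proposition by applying $[\Sigma^t L(s), -]$.

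For the equivalence, the underlying bijection on homotopy groups is given by $x \mapsto 1 + x \colon \pi_n R \to \pi_n gl_1 R$ for $n \geq 1$. This is a group homomorphism because $(1+x)(1+y) = 1 + x + y + xy$, with the correction term $xy$ lying in $\pi_{2n}$ and hence not affecting the $\pi_n$ computation; formally, this uses the fact that the reduced diagonal $S^n \to S^n \wedge S^n$ is nullhomotopic for $n \geq 1$. To upgrade this $\pi_\ast$-isomorphism to an equivalence of spectra, one uses the coherent agreement of the additive and ``translated'' multiplicative $E_\infty$-structures on the $1$-connective part of the $E_\infty$-ring space $\Omega^\infty R$. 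This kind of identification is classical in the theory of $E_\infty$-ring spaces following May-Quinn-Ray-Tornehave; a version of it in our setting appears in Hess's thesis \cite{Hes19}, which the author has already cited.

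Granting this equivalence, the proposition is immediate. Since $L(s)$ is connective, $\Sigma^t L(s)$ is $1$-connective for $t > 0$, and hence any map from it to a spectrum $Y$ factors uniquely through the $1$-connective cover $\tau_{\geq 1} Y$. Therefore,
$$[\Sigma^t L(s), gl_1 R] = [\Sigma^t L(s), \tau_{\geq 1} gl_1 R] \cong [\Sigma^t L(s), \tau_{\geq 1} R] = [\Sigma^t L(s), R],$$
as claimed. The main obstacle is the spectrum-level equivalence $\tau_{\geq 1} R \simeq \tau_{\geq 1} gl_1 R$ itself: the $\pi_\ast$-isomorphism is trivial, but coherently promoting it to a statement about spectra (as opposed to merely Postnikov sections agreeing stage-by-stage) is the technical heart of the argument, and the cleanest route is probably to invoke the relevant classical result directly.
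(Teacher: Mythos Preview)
Your approach has a genuine gap: the claimed spectrum-level equivalence $\tau_{\geq 1} R \simeq \tau_{\geq 1} gl_1 R$ is \emph{false} in general. A counterexample appears earlier in this very paper. Take $R = H\mathbb{F}_2[u]/u^3$ with $\deg u = d$. Additively $R$ is a wedge of Eilenberg--Mac~Lane spectra, so $\tau_{\geq 1} R \simeq \Sigma^d H\mathbb{F}_2 \vee \Sigma^{2d} H\mathbb{F}_2$ with trivial $k$-invariant. By contrast, Proposition~\ref{prop:BottomKInvt} shows that the $k$-invariant of $\tau_{\geq 1} gl_1 R$ is $\Sq^{d+1} \neq 0$, so the two spectra are not equivalent. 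You seem to be misremembering the results you cite: both Mathew--Stojanoska and Hess only establish equivalences in a \emph{truncated} range $\tau_{[n,2n-1]}$ (respectively $\tau_{[n,pn-1]}$), and the paper explicitly notes (Theorem~\ref{thm:OddKInvt} and the remark following) that this range cannot be improved. The heuristic ``$(1+x)(1+y) = 1 + x + y + xy$ with $xy$ in higher degree'' is exactly why the truncated statement holds and the untruncated one fails: the correction terms accumulate into genuinely different $k$-invariants.

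The paper's proof avoids this by never claiming a spectrum-level comparison. Instead it passes to unstable maps: writing $M(s)$ as a summand of $\Sigma^\infty_+ BV_s$, one has $[\Sigma^t M(s), gl_1 R]$ as a retract of the \emph{space-level} mapping set $[BV_{s+}, \Omega^t GL_1 R]$. For $t \geq 1$ the spaces $\Omega^t GL_1 R$ and $\Omega^{t+\infty} R$ are homotopy equivalent (since $GL_1 R$ is a union of components of $\Omega^\infty R$), and an Eckmann--Hilton argument shows the group structures agree. One then checks the $\mathbb{Z}_p[GL(V_s)]$-action matches so that the Steinberg summand is preserved, and deduces the $L(s)$ statement by stable cancellation. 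The point is that only an equivalence of \emph{spaces} (or $H$-spaces) is required, which is available; the spectrum-level equivalence you want is not.
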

\begin{proof}
	For convenience we prove the version for the $M(s)$'s in place of the $L(s)$.  This is enough, by stable cancellation \cite[Thm.~10.1.9]{Mar83} in $p$-complete spectra.  
	
	We have $[\Sigma^t BV_{s+}, gl_1 R] \cong [BV_{s+}, \Omega^t GL_1 R]$ and $[\Sigma^t BV_{s+}, R] \cong [BV_{s+}, \Omega^{t + \infty} R]$.  Since $GL_1 R$ is a union of components of $\Omega^\infty R$, their loop spaces $\Omega^t GL_1 R$ and $\Omega^{t + \infty} R$ are homotopy equivalent when $t \geq 1$.  
	
	It remains to show that action of $\mathbb{Z}_p[GL(V_s)]$ on the groups $[BV_{s+}, \Omega^t GL_1 R]$ and $[BV_{s+}, \Omega^{t+\infty} R]$ coincide.  The action of $GL(V_s)$ is clearly the same since it can be defined unstably.  On the other hand, the addition also acts in the same way since the ``addition'' in $\Omega^{t+\infty} R$ and the ``multiplication'' in $\Omega^t GL_1 R$ are both equivalent to the loop concatenation operation by an Eckmann-Hilton argument.  
\end{proof}

\begin{rem}
	For $t \geq 2$, we can offer a slightly quicker alternative proof, using the fact that there exist spaces $L_1(s)$ such that $\Sigma L(s) \simeq \Sigma^\infty L_1(s)$.  (For $s = 0$ and $s = 1$, $L(s)$ themselves are already suspension spectra.)  
	
	We have \[[\Sigma^t L(s), gl_1 R] \cong [\Sigma^{t-2} L_1(s), \Omega GL_1 R] \cong [\Sigma^{t-2} L_1(s), \Omega^{1+\infty} R] \cong [\Sigma^t L(s), R].\]  
\end{rem}

	It is tempting to compare the additive and multiplicative versions of the spectral sequence computing $\Map(H\mathbb{Z},R)$ and $\Map(H\mathbb{Z}, gl_1 R)$, especially in the case $R = S^0$ where it is known that the spectral sequence for $\Map(H\mathbb{Z}, S^0)$ collapses in a particularly nice way \cite{Cat88}. 	However, we caution that even the $d_1$-differentials of the spectral sequences, given by transfers and norms respectively, can behave quite differently.  In a sense, we already know this from the description of the Koszul differential in the homotopy spectral sequence for $E_\infty$-maps in section \ref{sec:GHM}, but we offer a concrete example here.  
	
\begin{exmp}
	Take $R = H\mathbb{F}_2[u]/u^3$ with $\deg u = d > 0$, and let us study the original Kahn-Priddy transfer $\Sigma^\infty \mathbb{R}P^\infty \to S^0$ in $gl_1 R$-cohomology.  Since $(gl_1 H\mathbb{F}_2[u]/u^3)^* S^0 \cong \mathbb{F}_2[u]/u^3$ and $(gl_1 H\mathbb{F}_2[u]/u^3)^* \mathbb{R}P^\infty \cong \mathbb{F}_2[x,u]/u^3$, where $\deg x = -1$, there is only one degree for which the map is possibly nonzero, namely $u \overset{?}{\mapsto} u^2 x^d$ in degree $d$.
	
	Additively, this map has to be zero: it is essentially a formal property that this transfer always induces the zero map in ordinary mod 2 cohomology.  However, we claim that it is nonzero in $(gl_1 R)^*$.  
	
	To see this, we use the fact (\ref{prop:BottomKInvt}) discussed earlier that \[gl_1 H\mathbb{F}_2[u]/u^3 \simeq \fib(\Sq^{d+1}: \Sigma^d H\mathbb{F}_2 \to \Sigma^{2d+1} H\mathbb{F}_2).\]  Consider the composition \[\Sigma^\infty \mathbb{R}P^\infty \xrightarrow{\tr} S^0 \xrightarrow{1} H\mathbb{F}_2 \xrightarrow{\Sq^{d+1}} \Sigma^{d+1} H\mathbb{F}_2.\]  The compositions of the two pairs of adjacent arrows are nullhomotopic, and so this determines a Toda bracket $\langle \Sq^{d+1}, 1, \tr \rangle \subseteq H^d \mathbb{R}P^\infty$.  Moreover, this bracket has zero indeterminacy and represents the transfer of $u$ in $gl_1 R$-cohomology.  
	
	\begin{center}
	\begin{tikzcd}
		& & \Sigma^d H\mathbb{F}_2 \ar[d] \\
		& & \Omega^d gl_1 H\mathbb{F}_2[u]/u^3 \ar[d] \\
		\mathbb{R}P^\infty \ar[r] \ar[rruu, dashed, "{\langle \Sq^{d+1}, 1, \tr \rangle}"] & S^0 \ar[d] \ar[r] \ar[ru, dashed] & H\mathbb{F}_2 \ar[r] & \Sigma^{d+1} H\mathbb{F}_2 \\
		& \Sigma \mathbb{R}P^\infty_{-1} \ar[d] \ar[ur, dashed] \\
		& \Sigma \mathbb{R}P^\infty \ar[uurr, dashed, "{\langle \Sq^{d+1}, 1, \tr \rangle}"']
	\end{tikzcd}
	\end{center}
	
	We can compute the value of this Toda bracket by interpreting it in another way.  Whether it is zero or not depends on whether the bottom cell of $\mathbb{R}P^\infty_{-1}$ is connected by a $\Sq^{d+1}$ to the cell in dimension $d$, and it is a classical fact that this is indeed the case.  Hence we conclude that the transfer takes $u$ to $u^2 x^d$.  
\end{exmp}

	In general, let $x_1, \ldots, x_s$ be a basis of $H^1(BV_s)$ such that the inclusion $H^1(BV_s) \to H^1(BV_{s+1})$ induced by the projection $BV_{s+1} \to BV_s$ is the natural one.  Then,
\begin{prop}
	The transfer $BV_{s+1+} \to BV_{s+}$ sends $ux_1^{i_1} \cdots x_s^{i_s}$ to $u^2 x_1^{i_1} \cdots x_s^{i_s} x_{s+1}^d$.  
\end{prop}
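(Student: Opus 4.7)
The plan is to bootstrap from the $s = 0, i_j = 0$ case (handled by the Toda bracket argument in the preceding example) using naturality and a product decomposition of the transfer.

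First I would choose coordinates so that $V_s \hookrightarrow V_{s+1}$ as the last $s$ coordinates, with $x_{s+1}$ dual to the new first coordinate.  Writing $V_1' = V_{s+1}/V_s$, the product decomposition $BV_{s+1} = BV_s \times BV_1'$ identifies $BV_{s+1+}$ with $BV_{s+} \wedge BV_{1+}'$, and the subgroup transfer for $V_s \le V_{s+1}$ factors as
\[
\tr = \mathrm{id}_{BV_{s+}} \wedge \tau_0 : BV_{s+} \wedge BV_{1+}' \longrightarrow BV_{s+} \wedge S^0 = BV_{s+},
\]
where $\tau_0: BV_{1+}' \to S^0$ is the stable transfer of the universal cover $EV_1' \to BV_1'$.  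Stably $BV_{1+}' \simeq \mathbb{R}P^\infty \vee S^0$, and $\tau_0$ restricts on these two summands respectively to the Kahn--Priddy transfer $\mathbb{R}P^\infty \to S^0$ of the preceding example and to multiplication by $p = 2$.

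Next I would promote the scalar computation to the general class.  The fibre sequence $gl_1 R \to \Sigma^{d} H\mathbb{F}_2 \xrightarrow{\Sq^{d+1}} \Sigma^{2d+1} H\mathbb{F}_2$ from Proposition~\ref{prop:BottomKInvt} exhibits $gl_1 R$ as the fibre of an $H\mathbb{F}_2$-module map, so $gl_1 R$ inherits a natural $H\mathbb{F}_2$-module structure.  Consequently $(gl_1 R)^*(-)$ is a module over $H^*(-)$ with functorial external products $H^*(X) \otimes (gl_1 R)^*(Y) \to (gl_1 R)^*(X \wedge Y)$, and the class $ux_1^{i_1}\cdots x_s^{i_s} \in (gl_1 R)^*(BV_{s+})$ is the external product $x_1^{i_1}\cdots x_s^{i_s} \times u$ of $x_1^{i_1}\cdots x_s^{i_s} \in H^*(BV_{s+})$ with $u \in (gl_1 R)^{-d}(S^0)$.

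Now naturality of the external product under the map $\mathrm{id}\wedge \tau_0$ gives
\[
\tr^*(x_1^{i_1}\cdots x_s^{i_s} \times u) \;=\; x_1^{i_1}\cdots x_s^{i_s} \times \tau_0^*(u).
\]
The contribution of the $S^0$-summand of $\tau_0$ to $\tau_0^*(u)$ is $2 \cdot u = 0$ since $u$ generates $\pi_d gl_1 R \cong \mathbb{F}_2$, while the preceding example computes the Kahn--Priddy contribution via the Toda bracket $\langle \Sq^{d+1}, 1, \tau_{\mathrm{KP}}\rangle$ to be $u^2 x_{s+1}^d$.  Multiplying externally by $x_1^{i_1}\cdots x_s^{i_s}$ and using $x_1^{i_1}\cdots x_s^{i_s} \cdot x_{s+1}^d \in H^*(BV_{s+1+})$ yields the desired formula $\tr(ux_1^{i_1}\cdots x_s^{i_s}) = u^2 x_1^{i_1}\cdots x_s^{i_s} x_{s+1}^d$.

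The main technical point is justifying the external product framework: one must check that the $H\mathbb{F}_2$-module structure coming from the Postnikov fibre description of $gl_1 R$ is compatible with pullback along smash products of spectrum maps, so that the key identity $\tr^*(\alpha \times \beta) = \alpha \times \tau_0^*(\beta)$ holds.  Once this (essentially formal) naturality is in place, the calculation is a clean reduction to the scalar Toda bracket already worked out in the example.
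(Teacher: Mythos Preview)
Your factorization of the transfer as $\mathrm{id}_{BV_{s+}}\wedge\tau_0$ is correct and is the right first move.  The gap is in the second step: the claim that $gl_1 R$ inherits an $H\mathbb{F}_2$-module structure from the fibre sequence
\[
gl_1 R \longrightarrow \Sigma^d H\mathbb{F}_2 \xrightarrow{\ \Sq^{d+1}\ } \Sigma^{2d+1} H\mathbb{F}_2
\]
is false.  Steenrod operations are \emph{not} $H\mathbb{F}_2$-module maps (the only $H\mathbb{F}_2$-linear self-maps of $H\mathbb{F}_2$ are scalars), so this is a fibre sequence of spectra but not of $H\mathbb{F}_2$-modules.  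Indeed, if $gl_1 R$ were an $H\mathbb{F}_2$-module it would split as $\Sigma^d H\mathbb{F}_2\vee\Sigma^{2d}H\mathbb{F}_2$, contradicting the nonvanishing $k$-invariant established in Proposition~\ref{prop:BottomKInvt}.  Consequently the external product pairing $H^*(X)\otimes (gl_1 R)^*(Y)\to (gl_1 R)^*(X\wedge Y)$ you invoke does not exist, and the identity $\tr^*(\alpha\times\beta)=\alpha\times\tau_0^*(\beta)$ is not available as stated.  This is precisely the ``essentially formal'' point you flagged, and it is the point that fails.

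The strategy can be repaired, but not by module-theoretic naturality.  One option is to stay with the fibre sequence and observe that $\tr^*(u\,x_I)$ is represented by the Toda bracket $\langle \Sq^{d+1},\,x_I,\,\tr\rangle$; using $\tr=\mathrm{id}\wedge\tau_0$ and the factorization $x_I\circ(\mathrm{id}\wedge\tau_0)=(\mathrm{id}\wedge\tau_0)\circ(x_I\wedge\mathrm{id})$ one can shuffle the bracket to reduce to the $s=0$ computation $\langle\Sq^{d+1},1,\tau_0\rangle=x_{s+1}^d$, with the cup product by $x_I$ now taking place in ordinary cohomology where it is legitimate.  The paper itself does not supply a proof of this proposition; it defers to Kozlowski's general transfer formula in the following remark.
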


\begin{rem}
	Kozlowski \cite{Koz83} has an even more general formula for this type of transfer in the units of $H\mathbb{F}_2[u]$ in terms of Steenrod operations.  In the case of ordinary cohomology, Steiner \cite{Ste82} has shown that transfers induce the Evens norm in group cohomology.  
\end{rem}
	
	Nonetheless, there is a relationship between the additive and multiplicative transfers.  Let $R$ be an $E_\infty$-ring.  For each $s \geq 0$, we have maps
	\begin{gather*}
		\tr^*_\oplus: [\Sigma^t L(s), R] \to [\Sigma^t L(s+1), R] \\
		\tr^*_\otimes: [\Sigma^t L(s), gl_1 R] \to [\Sigma^t L(s+1), gl_1 R]
	\end{gather*}
	\noindent induced by the transfer map $L(s+1) \to L(s)$.  By (\ref{prop:SameGroups}), we can identify the source and target of these two maps if $t \geq 1$.  We want to show that these two transfers agree in an expected range; our starting point is the following proposition.  
	
\begin{prop}[{\cite[Cor.~5.2.3]{MS16}}]
	Let $m \geq 1$.  There is a functorial equivalence of spectra \[\tau_{[m,2m-1]} gl_1 R \simeq \tau_{[m,2m-1]} R.\]  
\end{prop}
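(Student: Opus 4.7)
My plan is to build the equivalence directly from the ``$x \mapsto 1+x$'' identification of basepoint components and to show that, after applying $\tau_{[m,2m-1]}$, the multiplicative $E_\infty$-structure on $GL_1 R$ reduces to the additive $E_\infty$-structure on $\Omega^\infty R$.  Since $GL_1 R$ is by construction a union of components of $\Omega^\infty R$, translation by the multiplicative unit produces a natural equivalence of pointed spaces $\phi \colon (\Omega^\infty R)_0 \xrightarrow{\sim} (GL_1 R)_1$, which on elements is informally $x \mapsto 1 + x$ and is canonical in $R \in \CAlg$.

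Next, I would compare the two $E_\infty$-structures under $\phi$.  The binary product on $GL_1 R$ corresponds, under $\phi$, to the operation $(x,y) \mapsto x + y + xy$ on $\Omega^\infty R$; more generally, an arity-$k$ operation coming from multiplication is the sum of the analogous additive operation and a correction given by a sum of monomials in $x_1, \ldots, x_k$ of total degree at least two.  After applying $\tau_{[m, 2m-1]}$, every input lives in homotopy degree $\geq m$, so any monomial of total degree $\geq 2$ lands in degree $\geq 2m$ and is killed by $\tau_{\leq 2m-1}$.  Thus the multiplicative $E_\infty$-structure on $\tau_{[m,2m-1]} GL_1 R$ agrees with the additive $E_\infty$-structure on $\tau_{[m,2m-1]} \Omega^\infty R$, and passing to the associated connective spectra yields the claimed natural equivalence $\tau_{[m,2m-1]} gl_1 R \simeq \tau_{[m,2m-1]} R$.

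The step I expect to be the main obstacle is making rigorous the assertion about ``correction terms of total degree $\geq 2$'' at the level of the full $E_\infty$-operad action, as opposed to just the underlying $H$-space product.  A clean way to handle this is obstruction-theoretically: compare the Postnikov towers of $R$ and $gl_1 R$ stage by stage, where the relevant $k$-invariants measure the action of low-degree power operations, and the gap between $m$ and $2m-1$ guarantees that only the binary part of these operations contributes, so the identification can be built inductively.  Alternatively, one can filter by powers of the augmentation fiber $I = \fib(R \to H\pi_0 R)$ and observe that multiplication and addition coincide on the associated graded modulo $I^2$, which controls everything visible in degrees $<2m$.
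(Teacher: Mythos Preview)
The paper does not actually prove this proposition; it is quoted from \cite{MS16} as a black box and used as input to the subsequent comparison of additive and multiplicative transfers. So there is no ``paper's own proof'' to compare against.

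That said, your heuristic is the right one and is essentially the intuition behind the argument in Mathew--Stojanoska. Their proof is closest to your second suggested approach: they show that $gl_1$ carries square-zero extensions of $E_\infty$-rings to square-zero extensions of spectra (this is where the ``$1+x$'' translation and the vanishing of the cross term $xy$ are made precise), and then observe that in the Postnikov tower of $R$ each step from $\tau_{\leq k}R$ to $\tau_{\leq k+1}R$ is a square-zero extension, so within the window $[m,2m-1]$ the towers for $R$ and $gl_1 R$ are built from identical data. Your augmentation-ideal filtration is a repackaging of the same mechanism.

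Your self-identified obstacle is real but not fatal: the point is not to track ``correction terms'' through arbitrary $E_\infty$-operations directly, but to reduce to a single square-zero step at a time, where the comparison is a clean statement about $gl_1$ of a trivial square-zero extension. What you have written is a correct outline, not yet a proof; to complete it you would need to either reproduce the square-zero argument from \cite{MS16} or carry out the Postnikov induction you describe, checking at each stage that the $k$-invariants agree (which again comes down to the square-zero statement).
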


	Using this proposition, we can show
\begin{prop}
	Let $t \geq 1$ and $x \in [\Sigma^t L(s), R]$.  Then \[\tr^*_\oplus(x) \equiv \tr^*_\otimes(x) \pmod{[\Sigma^t L(s+1), \tau_{\geq 2t} R]}.\]  
\end{prop}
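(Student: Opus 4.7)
The plan is to deduce the result from the Mathew-Stojanoska proposition recalled just above, applied with $m = t$: $\Phi_R: \tau_{[t, 2t-1]} gl_1 R \simeq \tau_{[t, 2t-1]} R$, functorially in $R$. The Proposition \ref{prop:SameGroups} identification $[\Sigma^t L(s), gl_1 R] \cong [\Sigma^t L(s), R]$ is essentially space-level (coming from the homotopy equivalence of spaces $\Omega^t GL_1 R \simeq \Omega^{t+\infty} R$), so it need not be compatible with the stable transfer $\tr$, which is a spectrum-level operation. The Mathew-Stojanoska equivalence, being a genuine equivalence of spectra, will restore this compatibility in the expected truncation range.

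First I would observe that since $L(s+1)$ is a stable summand of the connective spectrum $\Sigma^\infty BV_{s+1+}$, its shift $\Sigma^t L(s+1)$ is $t$-connective. Hence for any spectrum $Y$, the projection $\tau_{<2t} Y \to \tau_{[t, 2t-1]} Y$ induces an isomorphism on $[\Sigma^t L(s+1), -]$, since a $t$-connective spectrum admits no nonzero maps into $\tau_{<t} Y$. The same holds for $\Sigma^t L(s)$. By naturality of $\Phi$ in $R$, and by naturality of composition with the stable map $\tr: L(s+1) \to L(s)$ in the target spectrum, I obtain a commutative square
\begin{center}
\begin{tikzcd}[column sep=large]
{[\Sigma^t L(s), \tau_{<2t} gl_1 R]} \ar[r, "\tr^*"] \ar[d, "\Phi_*", "\cong"'] & {[\Sigma^t L(s+1), \tau_{<2t} gl_1 R]} \ar[d, "\Phi_*", "\cong"'] \\
{[\Sigma^t L(s), \tau_{<2t} R]} \ar[r, "\tr^*"] & {[\Sigma^t L(s+1), \tau_{<2t} R]}.
\end{tikzcd}
\end{center}
Next, I would check that after composing with the truncation $R \to \tau_{<2t} R$ (respectively $gl_1 R \to \tau_{<2t} gl_1 R$), the resulting projections $[\Sigma^t L(n), R] \to [\Sigma^t L(n), \tau_{<2t} R]$ and $[\Sigma^t L(n), gl_1 R] \to [\Sigma^t L(n), \tau_{<2t} gl_1 R]$ are intertwined by $\Phi_*$ and the Proposition \ref{prop:SameGroups} isomorphism. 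Both identifications ultimately articulate the Eckmann-Hilton coincidence between multiplication on $\Omega^t GL_1 R$ and addition on $\Omega^{t+\infty} R$, so their agreement after truncation is essentially tautological but must be traced through carefully.

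Given $x \in [\Sigma^t L(s), R]$ and its counterpart in $[\Sigma^t L(s), gl_1 R]$ under Proposition \ref{prop:SameGroups}, the commutative square then forces $\tr^*_\oplus(x)$ and $\tr^*_\otimes(x)$ to have the same image in $[\Sigma^t L(s+1), \tau_{<2t} R]$. Finally, I would invoke the cofiber sequence $\tau_{\geq 2t} R \to R \to \tau_{<2t} R$: the associated long exact sequence shows that the kernel of the truncation map $[\Sigma^t L(s+1), R] \to [\Sigma^t L(s+1), \tau_{<2t} R]$ is precisely the image of $[\Sigma^t L(s+1), \tau_{\geq 2t} R]$, giving the claimed congruence.

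The main obstacle is the compatibility check in the second paragraph: one must verify that Proposition \ref{prop:SameGroups}'s space-level identification and the spectrum-level Mathew-Stojanoska equivalence $\Phi$ really do agree after truncation. This is not formal — Proposition \ref{prop:SameGroups} is constructed from an equivalence of infinite loop spaces rather than of spectra — and reconciling the two requires unpacking the construction of $\Phi$ in \cite{MS16}, which exhibits the equivalence precisely by identifying the low-degree multiplicative structure with the additive one via Eckmann-Hilton.
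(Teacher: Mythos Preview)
Your approach is essentially the same as the paper's. The paper organizes the argument via a single large commutative diagram that factors through $\tau_{\geq t} R$ and $\tau_{\geq t} gl_1 R$ on the left and $\tau_{[t,2t-1]}$ in the middle, then observes that the maps $[\Sigma^t L(s), \tau_{\geq t} R] \to [\Sigma^t L(s), R]$ (and the $gl_1$ analogue) are surjective by connectivity; but the content is identical to yours.

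One point worth noting: the compatibility issue you flag as ``the main obstacle'' --- that the space-level identification of Proposition~\ref{prop:SameGroups} agrees, after truncation, with the Mathew--Stojanoska spectrum-level equivalence --- is genuinely present in the paper's proof as well. The paper asserts that ``the entire diagram commutes'' once the Mathew--Stojanoska square is known to commute, but the commutativity of the left portion of the diagram (linking the Proposition~\ref{prop:SameGroups} isomorphism to the $\tau_{[t,2t-1]}$ equivalence) is exactly the compatibility you identify, and it is left implicit there too. So you have not introduced a new gap; you have made explicit a point the paper passes over. Your instinct that it comes down to both identifications expressing the same Eckmann--Hilton coincidence is correct, and tracing through \cite{MS16} confirms it.
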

\begin{proof}
	Let $t \geq 1$, and consider the following diagram

\begin{center}
\begin{tikzcd}
	{[\Sigma^t L(s), R]} \ar[r] \ar[ddd, dash, bend right=70, "\cong"'] & {[\Sigma^t L(s), \tau_{< 2t} R]} \ar[r] & {[\Sigma^t L(s+1), \tau_{< 2t} R]} \ar[ddd, dash, bend left=70, xshift=20pt, "\cong"] \\
	{[\Sigma^t L(s), \tau_{\geq t} R]} \ar[r] \ar[u, "\ast"] \ar[d, dash, "\cong"'] & {[\Sigma^t L(s), \tau_{[t,2t-1]} R]} \ar[r] \ar[d, dash, "\cong"] \ar[u] \ar[dr, phantom, "\bullet"] & {[\Sigma^t L(s+1), \tau_{[t,2t-1]} R]} \ar[u] \ar[d, dash, "\cong"] \\
	{[\Sigma^t L(s), \tau_{\geq t} gl_1 R]} \ar[r] \ar[d, "\ast"'] & {[\Sigma^t L(s), \tau_{[t,2t-1]} gl_1 R]} \ar[r] \ar[d] & {[\Sigma^t L(s+1), \tau_{[t,2t-1]} gl_1 R]} \ar[d] \\
	{[\Sigma^t L(s), gl_1 R]} \ar[r] & {[\Sigma^t L(s), \tau_{< 2t} gl_1 R]} \ar[r] & {[\Sigma^t L(s+1), \tau_{< 2t} gl_1 R]}
\end{tikzcd}
\end{center}
	
	The crucial fact is that the proposition above implies that the square marked by $\bullet$ is also commutative, so the entire diagram commutes.  This implies that on an element coming from $[\Sigma^t L(s), \tau_{\geq t} R]$, the two maps $\tr^*_\oplus$ and $\tr^*_\otimes$ up to $[\Sigma^t L(s+1), \tau_{\geq 2t} R]$ as wanted.  
	
	It remains to show that the maps labeled $\ast$ are surjective.  This follows easily from Atiyah-Hirzebruch estimates since $\Sigma^t L(s)$ is $t$-connective and $\tau_{\geq t} R$ (respectively $\tau_{\geq t} gl_1 R$) has no homotopy groups below degree $t$.  
\end{proof}
	

\section{Appendix: Dyer-Lashof structures on \texorpdfstring{$\mathbb{F}_2[u]$}{F2[u]}}

\subsection{Problem introduction and examples}
	In \cite{Tot95}, Totaro posed a version of the following question:
\begin{quest}
	Classify all $E_\infty$-structures on a ring spectrum $R$ with $\pi_* R \cong \mathbb{F}_2[u]$ with $\deg u = 2$.  
\end{quest}

	As a preliminary goal, one might first want to classify all Dyer-Lashof algebra structures on $\mathbb{F}_2[u]$ that are compatible with the ring multiplication.  In this appendix, we prove the following theorem.

\begin{thm}
	\label{thm:DLStrs}
	There are exactly four Dyer-Lashof algebra structures on $\mathbb{F}_2[u]$ compatible with the multiplication.  
\end{thm}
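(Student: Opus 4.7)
The plan is to reduce the classification to a finite combinatorial problem over $\mathbb{F}_2$ and then solve it directly. First I observe that a compatible $\mathcal{R}$-action on $\mathbb{F}_2[u]$ is determined by the values $Q^i u \in \mathbb{F}_2[u]$. Instability forces $Q^0 u = Q^1 u = 0$ and $Q^2 u = u^2$, while the grading forces $Q^i u = 0$ for $i$ odd (since then $i+2$ is odd but $\mathbb{F}_2[u]$ lives in even degrees). So the parameter space is the set of sequences $(c_k)_{k \geq 2}$ in $\mathbb{F}_2$, where $Q^{2k} u = c_k u^{k+1}$ and $c_1 = 1$ is forced. The Cartan formula then propagates the action: a straightforward induction gives
\[
Q^{2m}(u^n) \;=\; u^{m+n} \cdot [t^m]\, C(t)^n, \qquad C(t) := \sum_{k \geq 1} c_k t^k,
\]
so a choice of power series $C(t) \in t + t^2 \mathbb{F}_2 \llbracket t \rrbracket$ determines the full action compatible with the multiplication and instability.

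The remaining content is the Adem relations $Q^i Q^j = \sum_k \binom{k-j-1}{2k-i} Q^{i+j-k} Q^k$ for $i > 2j$. Applied to $u$ and expanded via the Cartan formula, each Adem relation yields an $\mathbb{F}_2$-polynomial identity among the $c_k$, of the generic shape
\[
c_m \cdot [t^n] C(t)^{m+1} \;=\; \sum_{l \geq 1} \binom{2l-2m-1}{4l-2n}\, c_l \cdot [t^{n+m-l}] C(t)^{l+1}
\]
for all $n > 2m$ and $m \geq 1$. I would pursue these identities in increasing order of $(m,n)$, starting with relations from $Q^a Q^4$: the cases $a = 10, 12, 14, 16$ already produce the fundamental constraints $c_2(1+c_3) = c_2(1+c_4) = c_2(c_6+c_3) = 0$ and $c_4 = c_2 c_5$, which pin down that if $c_2 = 1$ then $c_3 = c_4 = c_5 = \cdots = 1$ propagates by induction. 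In the complementary case $c_2 = 0$, a parallel analysis using the relations from $Q^a Q^6$ and $Q^a Q^{10}$ forces every even-indexed $c_{2k}$ to vanish and cuts the odd-indexed coefficients down to two binary degrees of freedom $(c_3, c_5)$; subsequent relations (for instance $Q^{40} Q^{18}$) eliminate one of the resulting combinations, leaving altogether exactly four consistent sequences.

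To finish I would exhibit the four surviving power series $C(t)$ in closed form and verify that each genuinely satisfies the full infinite Adem system by checking the generating-function identity above; among the expected solutions are $C(t) = t$ (recovering $H\mathbb{F}_2[u]$) and $C(t) = t/(1+t)$, while the remaining two are supported on odd indices and are naturally described by algebraic equations over $\mathbb{F}_2$ (such as $C$ satisfying $tC^2 + C = 1$), which makes the verification tractable.

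The main obstacle is the bookkeeping: the Adem system is infinite, and a careful inductive argument is needed to see that the recursion truly terminates at four solutions rather than propagating additional free parameters. This calls for systematic use of Lucas's theorem to evaluate $\binom{k-j-1}{2k-i} \pmod{2}$ and a careful enumeration of which Adem relations give new information versus redundant ones. A conceptual shortcut---for instance identifying the classification with one of certain formal-group-like objects attached to $\mathbb{F}_2[u]$ with a degree-two coordinate, which might make the count of four immediate---would be desirable, but I do not see such a shortcut directly and expect the proof to remain computational.
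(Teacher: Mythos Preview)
Your plan is essentially the same as the paper's: encode the action by structure constants $c_k$ (the paper's $\epsilon_k$), use the Adem relations to constrain them via a case split on $c_2$ and then on $c_3$, and finally verify that each surviving sequence really satisfies all Adem relations. The generating-function packaging $C(t)$ is a cosmetic improvement over the paper's notation $\epsilon_{I(s,d)}$, and your parameters $(c_3,c_5)$ correspond to the paper's $(\epsilon_3,\zeta)$; the one combination you expect to eliminate is $(c_3,c_5)=(0,1)$, which the paper rules out by an inductive argument showing $\epsilon_3=0$ forces everything to vanish. Be aware that the final verification step for the fourth structure (your $C(t)=t/(1+t^2)$, i.e.\ $c_r=1$ iff $r$ is odd) is where the real work lies: the paper spends several pages on a delicate binomial-coefficient identity using Lucas's theorem and a generating-function recursion, and your hoped-for algebraic shortcut via a functional equation for $C$ does not obviously bypass this (the equation $tC^2+C=1$ you wrote is not the right one, incidentally; for $\THH(\mathbb{F}_2)$ one has $tC^2+C=t$).
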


	Here are some examples.  
\begin{exmp}[Segal]
	In \cite{Seg75}, Segal studied the generalized Eilenberg-Mac~Lane spectrum $H\mathbb{F}_2[u]$ with its usual $E_\infty$-structure.  The Dyer-Lashof operations are determined by \[Q^{2r} u = \begin{cases} u^2, & r = 1 \\ 0, & \text{otherwise}. \end{cases}\]  That is, all the operations that could be zero are zero.  
\end{exmp}

\begin{exmp}[BLLMM]
	Boyer, Lawson, Lima-Filho, Mann, and Michelsohn \cite{BLLMM93} constructed a theory that receives an infinite loop map given by the total Chern class.  In \cite{Tot95}, Totaro figured out the Dyer-Lashof operations on this infinite loop space, which is given by \[Q^{2r} u = u^{r+1}\] for all $r \geq 1$.  That is, all the operations that could be nonzero are nonzero.  
\end{exmp}

	Moreover, in the course of proving his result, Totaro showed that
\begin{prop}[{\cite[Lem.~5]{Tot95}}]
	The Dyer-Lashof structure on the BLLMM theory is uniquely determined by $Q^4 u \neq 0$.  
\end{prop}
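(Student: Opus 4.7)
The plan is to translate the classification into a question about sequences in $\mathbb{F}_2$ and then enumerate the solutions using the Adem relations. Because $\mathbb{F}_2[u]$ is concentrated in even degrees, $Q^s u = 0$ whenever $s$ is odd; the unstability conditions force $Q^0 u = Q^1 u = 0$ and $Q^2 u = u^2$; and for each $r \geq 2$, degree counting gives $Q^{2r}(u) = \epsilon_r u^{r+1}$ for some $\epsilon_r \in \mathbb{F}_2$. Setting $\epsilon_1 := 1$, the Cartan formula extends the action to all of $\mathbb{F}_2[u]$ by
\[
Q^{2r}(u^n) \;=\; \Bigl(\sum_{\substack{a_1 + \cdots + a_n = r \\ a_i \geq 1}} \epsilon_{a_1}\cdots\epsilon_{a_n}\Bigr)\, u^{r+n}.
\]
The problem therefore reduces to classifying those sequences $(\epsilon_r)_{r \geq 1}$ in $\mathbb{F}_2$ with $\epsilon_1 = 1$ for which this formula defines an honest action of the Dyer-Lashof algebra $\mathcal{R}$, equivalently for which it is compatible with every Adem relation.

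For each Adem relation $Q^a Q^b = \sum_k \binom{k-b-1}{2k-a} Q^{a+b-k} Q^k$ (with $a > 2b$), evaluating both sides on $u$ via the Cartan formula and using Lucas' theorem to identify the odd binomial coefficients modulo $2$ produces a polynomial identity in the $\epsilon_r$'s. I would generate such identities systematically, splitting the analysis according to the value of $\epsilon_2$.

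When $\epsilon_2 = 1$, an induction on $r$ using the Adem relations $Q^{2n}Q^4$ on $u$ shows that every $\epsilon_r$ must equal $1$: the direct side's Cartan expansion of $Q^{2n}(u^3)$ contains $\epsilon_{n-2}$ as its newest unknown, with all other contributions controlled by the inductive hypothesis, while the Adem side, once lower $\epsilon_t$ are fixed, pins it down. This forces the BLLMM structure as the unique solution in this case. When $\epsilon_2 = 0$, the companion family $Q^{4m-2}Q^4 u$ vanishes directly because $Q^4 u = 0$, while in the Adem expansion only the single term $Q^{2m+2}Q^{2m}$ survives nontrivially (its binomial coefficient $\binom{2m-5}{2}$ being odd precisely when $m$ is even); this forces $\epsilon_r = 0$ for every even $r \geq 2$. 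The remaining freedom lies in the odd-indexed entries $(\epsilon_{2k+1})_{k \geq 1}$, which must satisfy further relations extracted from Adem computations such as $Q^{24}Q^6$ on $u$ (which yields $\epsilon_3 = \epsilon_7$) and $Q^{44}Q^{10}$ on $u$ (which yields $\epsilon_5\epsilon_9 = \epsilon_{13}$). The goal is to show this reduced system has exactly three solutions on the odd-indexed entries, producing three further Dyer-Lashof structures for a grand total of four.

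The main obstacle is the combinatorial analysis in the $\epsilon_2 = 0$ case: extracting a finite complete set of generating relations from the infinite Adem system on the odd-indexed $\epsilon$'s, and then verifying that each surviving candidate sequence in fact satisfies every Adem relation, not just those used to derive it. A practical shortcut for the existence half of the argument is to exhibit, for each of the four sequences, an $E_\infty$-ring spectrum realizing the corresponding Dyer-Lashof action; Segal's $H\mathbb{F}_2[u]$ and the BLLMM theory provide two, and plausible candidates for the remaining two include suitable quotients of connective complex $K$-theory at the prime $2$.
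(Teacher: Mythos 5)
The paper does not actually prove this proposition; it is quoted from Totaro \cite{Tot95}, and the inductive argument you sketch for the case $\epsilon_2 = 1$ is essentially Totaro's own, so for the statement at hand your approach is sound. To make it close, you should record the two small verifications that your wording only gestures at: in the Cartan expansion $Q^{2n}Q^4 u = Q^{2n}(u^3) = \bigl(\sum_{a_1+a_2+a_3=n} \epsilon_{a_1}\epsilon_{a_2}\epsilon_{a_3}\bigr)u^{n+3}$ the top unknown $\epsilon_{n-2}$ occurs only as $\epsilon_{n-2}\epsilon_1\epsilon_1$, with multiplicity $3$, hence with odd coefficient; and on the Adem side of $Q^{2n}Q^4$ (valid for $n \geq 5$) the surviving terms $Q^{2n+4-k}Q^k u$ have $n \leq k \leq 2n-5$, so $k = 2i$ with $i \leq n-3$, and the ensuing Cartan expansion of $Q^{2n+4-2i}(u^{i+1})$ involves only $\epsilon_1, \epsilon_2$. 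Each such relation therefore expresses $\epsilon_{n-2}$ in terms of $\epsilon_t$ with $t \leq n-3$, and induction from $\epsilon_1 = \epsilon_2 = 1$ gives uniqueness; since the BLLMM theory exists and has $Q^4 u \neq 0$, the unique such structure is its structure, which is exactly the assertion.

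Be aware, however, that most of your proposal addresses the case $\epsilon_2 = 0$, which is not part of this statement: that is the content of theorem \ref{thm:DLStrs}, whose proof occupies the remainder of the appendix. Within that extra material, the claim that the reduced system on the odd-indexed $\epsilon$'s has exactly three solutions is precisely the hard combinatorial work (it requires both deriving the constraints and verifying that each candidate sequence satisfies \emph{all} Adem relations, as the paper does with the generating-function argument), and your proposed shortcut of realizing the remaining structures by quotients of connective $K$-theory is unsupported: the paper realizes the third structure by $\THH(\mathbb{F}_2)$ and explicitly leaves the realization of the fourth as an open question. None of this affects the proposition you were asked to prove, but it should not be presented as if it followed from the same induction.
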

\begin{rem}
	This result is stated and proved for all primes in \cite{Tot95}.
\end{rem}

	Theorem \ref{thm:DLStrs} is a generalization of this proposition.  
	
\begin{exmp}[$\THH(\mathbb{F}_2)$]
	As a third example, consider the spectrum $\THH(\mathbb{F}_2)$.  Since $H\mathbb{F}_2$ is an $E_\infty$-ring spectrum, so is $\THH(\mathbb{F}_2)$, and B\"{o}kstedt's calculations \cite{Bok85} show that $\pi_* \THH(\mathbb{F}_2) \cong \mathbb{F}_2[u]$.  

	Moreover, we can compute its Dyer-Lashof operations, following Lawson\footnote{\url{mathoverflow.net/q/207343/}}.  The idea is to write $\THH(\mathbb{F}_2) \simeq H\mathbb{F}_2 \wedge_{H\mathbb{F}_2 \wedge H\mathbb{F}_2} H\mathbb{F}_2$, and consider the $\Tor$-spectral sequence \[\Tor^{*,*}_{\pi_*(H\mathbb{F}_2 \wedge H\mathbb{F}_2)}(\mathbb{F}_2, \mathbb{F}_2) \Rightarrow \pi_* \THH(\mathbb{F}_2).\]  The $\Tor$-groups form an exterior algebra generated by the $\{\bar{\xi}_i \mid i \geq 1\}$, and this spectral sequence is compatible with the Dyer-Lashof operations in the sense that we can read off the Dyer-Lashof operations for $\THH(\mathbb{F}_2)$ from the Dyer-Lashof operations in the dual Steenrod algebra $\pi_*(H\mathbb{F}_2 \wedge H\mathbb{F}_2)$ and taking indecomposables.  
	
	In \cite[Thm.~III.2.4]{BMMS86}, Steinberger computed these operations in the dual Steenrod algebra.  In particular, we have \[Q^r \xi_1 = [t^{r+1}] \left(\sum_{i=0}^\infty \xi_i t^{2^i-1} \right)^{-1}.\]  Therefore, passing to indecomposables, we have \[Q^{2r} \overline{\xi_1} = \begin{cases} \overline{\xi_{s+1}}, & \text{if } r = 2^s - 1 \\ 0, & \text{otherwise}. \end{cases}\]  Consequently, in $\pi_* \THH(\mathbb{F}_2)$ we have \[Q^{2r} u = \begin{cases} u^{r+1}, & r = 2^s - 1 \\ 0, & \text{otherwise}. \end{cases}\]
\end{exmp}

\subsection{The proof}
	We now begin the proof of theorem \ref{thm:DLStrs}.  Let us write $Q^{2r} u = \epsilon_r u^{r+1}$ for some structure constants $\epsilon_r \in \mathbb{F}_2$.  For example, the general properties of modules over the Dyer-Lashof algebra gives $\epsilon_0 = 0$ and $\epsilon_1 = 1$.  Totaro's result classifies all Dyer-Lashof structures on $\mathbb{F}_2[u]$ with $\epsilon_2 = 1$ -- there is only one of them -- and so it remains to study the case $\epsilon_2 = 0$.  In fact, Totaro's proof shows a bit more:
	
\begin{prop}
	If $\epsilon_2 = 0$, then $\epsilon_r = 0$ for all even $r$.  
\end{prop}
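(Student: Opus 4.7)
The plan is to encode the Cartan formula as a generating-function identity and then extract each $\epsilon_{2k}$ individually from a single, carefully chosen Adem relation applied to $u$. Setting
\[ E(x) := \sum_{s \geq 1} \epsilon_s x^s \in \mathbb{F}_2[[x]], \]
iteration of the Cartan formula together with the vanishing of $Q^a u$ for $a$ odd or $a < 2$ yields the compact identity
\[ Q^{2r} u^n = \bigl([x^r] E(x)^n\bigr) \cdot u^{r+n}. \]
Since $E$ has no constant term, $[x^r] E(x)^n = 0$ for $r < n$, which recovers unstability for free.

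Given this identity, I would fix $k \geq 2$ and apply the Adem relation to $Q^{8k-2} Q^4$ (permissible since $8k-2 > 8$), then evaluate on $u$. The left-hand side is $Q^{8k-2}(\epsilon_2 u^3) = 0$ by hypothesis. Every surviving term on the right comes from an Adem summand $Q^{8k+2-t} Q^t$ with $t = 2q$ even, and the binomial $\binom{2q-5}{4q-8k+2}$ vanishes outside $2k \leq q \leq 4k-4$. Hence the equation to be analyzed is
\[ 0 = \sum_{q=2k}^{4k-4} \binom{2q-5}{4q-8k+2} \, \epsilon_q \, [x^{4k+1-q}] E(x)^{q+1} \cdot u^{4k+2}. \]

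The main obstacle --- and the step on which the whole strategy hinges --- is showing that this sum collapses to exactly $\epsilon_{2k} u^{4k+2}$, so that the chosen Adem relation isolates a single unknown structure constant. Two elementary checks do the job: for $q \geq 2k+1$ one has $4k+1-q \leq q$, so $[x^{4k+1-q}] E(x)^{q+1}$ vanishes by degree (since $E(x) = x + O(x^2)$); and for $q = 2k$ the binomial is $\binom{4k-5}{2} = (4k-5)(2k-3)$, a product of two odd integers and hence $\equiv 1 \pmod 2$, while $[x^{2k+1}] E(x)^{2k+1} = 1$ by inspection of the leading term of $E$. This forces $\epsilon_{2k} = 0$ for every $k \geq 2$, and the conclusion follows with no induction on $k$.
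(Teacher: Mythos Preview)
Your argument is correct. The generating-function encoding of the iterated Cartan formula is accurate, the Adem relation for $Q^{8k-2}Q^4$ is applied legitimately for $k\geq 2$, and your two collapse checks are valid: for $q\geq 2k+1$ one has $4k+1-q\leq 2k<q+1$, so the coefficient $[x^{4k+1-q}]E(x)^{q+1}$ vanishes by valuation; and at $q=2k$ the binomial $\binom{4k-5}{2}=(4k-5)(2k-3)$ is odd while $[x^{2k+1}]E(x)^{2k+1}=1$. This cleanly isolates $\epsilon_{2k}$.

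As for comparison: the paper does not actually supply its own proof of this proposition. It states the result as a byproduct of Totaro's argument in \cite{Tot95} and moves on. Your proof is therefore a genuine, self-contained addition rather than a reproduction. Methodologically it sits squarely in the same family as the paper's proof of the \emph{next} proposition (where the Adem relation for $Q^{8r}Q^6 u$ is used to relate $\epsilon_3\epsilon_r$ to $\epsilon_{2r+1}$): pick an Adem relation whose left-hand side dies by hypothesis, and arrange the indices so that unstability kills all but one term on the right. What your version buys is a uniform, non-inductive extraction of every $\epsilon_{2k}$ at once, packaged via the generating series $E(x)$; what deferring to Totaro buys is brevity.
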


	From now on we assume $\epsilon_2 = 0$.  The next step is to study $\epsilon_3$.  
\begin{prop}
	If $r \geq 3$ is odd, then $\epsilon_3 \epsilon_r = \epsilon_{2r+1}$.  
\end{prop}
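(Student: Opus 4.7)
My plan is to compute $Q^{8r}Q^6 u$ in two different ways and equate the results. Let $f(x) := \sum_{n \geq 0} \epsilon_n x^n$ in $\mathbb{F}_2[[x]]$. A straightforward induction using the Cartan formula (and the vanishing $Q^{\mathrm{odd}} u = 0$) yields the closed formula
\[ Q^{2n}(u^s) \;=\; [x^n]\, f(x)^s \cdot u^{n+s} \qquad (n \geq 0,\ s \geq 1). \]
In characteristic $2$ the Frobenius $g(x)^{2^a} = g(x^{2^a})$ gives $f^4 = \sum_n \epsilon_n x^{4n}$, so $[x^{4r}] f^4 = \epsilon_r$, and the direct computation reads
\[ Q^{8r} Q^6 u \;=\; Q^{8r}(\epsilon_3 u^4) \;=\; \epsilon_3 \cdot [x^{4r}] f^4 \cdot u^{4r+4} \;=\; \epsilon_3 \epsilon_r \cdot u^{4r+4}. \]

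For the second computation, $8r > 12$ when $r \geq 3$, so the Adem relation
\[ Q^{8r} Q^6 \;=\; \sum_k \binom{k-7}{2k-8r}\, Q^{8r+6-k} Q^k \]
applies. Applied to $u$, terms with $k$ odd vanish by parity; and for $k = 2c$ with $c$ even, $\epsilon_c = 0$ by the preceding proposition. The crux of the argument is to show that, among odd $c$ in the summation range $[2r,\,4r-4]$, only $c = 2r+1$ gives a nonzero contribution. Writing $c+1 = 2^a m$ with $m$ odd, Frobenius gives $f(x)^{c+1} = f(x^{2^a})^m$, so $[x^{4r+3-c}] f^{c+1}$ either vanishes (when $2^a \nmid 4r+3-c$) or equals $[y^{(4r+3-c)/2^a}] f(y)^m$. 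Because $\epsilon_0 = 0$ and $\epsilon_1 = 1$, every monomial of $f^m$ has $y$-degree at least $m$, with the minimal-degree monomial equal to $y^m$; hence this coefficient vanishes unless $4r+3-c \geq c+1$, i.e.\ $c \leq 2r+1$. Combined with $c \geq 2r+1$ (the smallest odd integer in the range), this pins $c = 2r+1$.

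For $c = 2r+1$ the term is $\binom{4r-5}{4}\,\epsilon_{2r+1}\,Q^{4r+4}(u^{2r+2})$. The Frobenius identity gives $Q^{4r+4}(u^{2r+2}) = u^{4r+4}$, and Lucas's theorem together with $4r - 5 \equiv 7 \pmod 8$ (valid for any odd $r$, since $r-1$ is even) yields $\binom{4r-5}{4} \equiv 1 \pmod 2$. Equating the two expressions for $Q^{8r}Q^6 u$ then gives $\epsilon_3 \epsilon_r = \epsilon_{2r+1}$. The main obstacle is the vanishing analysis for the terms with $c \neq 2r+1$, which must simultaneously exploit the Frobenius decomposition of $f^{c+1}$, the summation range in the Adem relation, and the hypothesis $\epsilon_{\mathrm{even}} = 0$.
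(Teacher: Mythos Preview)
Your proof is correct and follows the same route as the paper: both apply the Adem relation for $Q^{8r}Q^6$ to $u$, compute the left side directly as $\epsilon_3\epsilon_r\,u^{4r+4}$, and show that on the right side only the $k=4r+2$ term survives with coefficient $\epsilon_{2r+1}$. Your generating-function formalism and the Frobenius detour via $c+1=2^a m$ are a slight repackaging of the paper's more direct use of unstability (namely $Q^m(u^s)=0$ for $m<2s$, equivalently $[x^n]f^s=0$ for $n<s$), but the argument is essentially identical.
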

\begin{proof}
	Consider the Adem relation \[Q^{8r} Q^6 u = \sum_{4r \leq 2i < 4r + 3} \binom{2i-7}{4i-8r} Q^{8r+6-2i} Q^{2i} u = \binom{4r-7}{0} Q^{4r+6} Q^{4r} u + \binom{4r-5}{4} Q^{4r+4} Q^{4r+2} u.\]  The left hand side is 
	\begin{equation*}
		Q^{8r} Q^6 u = \epsilon_3 \left(\sum_{i_1 + i_2 + i_3 + i_4 = 4r} \epsilon_{i_1} \epsilon_{i_2} \epsilon_{i_3} \epsilon_{i_4} \right) u^{4r+4} \equiv \epsilon_3 \epsilon_r^4 u^{4r+4}.
	\end{equation*}
	
	For the right hand side, we have 
	\begin{align*}
		Q^{4r+6} Q^{4r} u &= Q^{4r+6} (\epsilon_{2r} u^{2r+1}) = 0 \\
		Q^{4r+4} Q^{4r+2} u &= \epsilon_{2r+1} \left(\sum_{i_1 + \cdots + i_{2r+2} = 2r + 2} \epsilon_{i_1} \cdots \epsilon_{i_{2r+2}} \right) u^{4r+4} \equiv \epsilon_{2r+1} u^{4r+4}.
	\end{align*}

	So $\epsilon_3 \epsilon_r^4 = \epsilon_{2r+1}$.  
\end{proof}

	We consider the two choices for $\epsilon_3$ separately.  
	
\subsection{The subcase $\epsilon_3 = 0$}
	Observe that if $\epsilon_3 = 0$, then $\epsilon_r = 0$ for all $r \equiv 3 \pmod{4}$.  This is the starting point of the following generalization.  

\begin{prop}
	For all $s \geq 1$,
\begin{enumerate}[(i)]
	\item If $\epsilon_r = 0$ for all $r \not\equiv 1 \pmod{2^s}$ and $\epsilon_{2^s + 1} = 0$, then $\epsilon_r = 0$ for all $r \not\equiv 1 \pmod{2^{s+1}}$.  
	\item If $\epsilon_r = 0$ for all $r \not\equiv 1 \pmod{2^{s+1}}$, then $\epsilon_{2^{s+1} + 1} = 0$.  
\end{enumerate}
\end{prop}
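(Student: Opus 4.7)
The plan is to prove each part by comparing the direct Cartan computation and the Adem expansion of a well-chosen composition $Q^A Q^B u$, where $A$ and $B$ depend on $s$ and, in (i), on the specific coefficient we want to annihilate. The main computational tool is the mod-2 Frobenius identity $(\sum_i \epsilon_i x^i)^{2^t} = \sum_i \epsilon_i x^{2^t i}$, which simplifies $Q^N u^K$ (whose coefficient in $\mathbb{F}_2$ equals the $x^{N/2}$-coefficient of $(\sum_i \epsilon_i x^i)^K$) when the binary expansion of $K$ is short. I will also use repeatedly that $Q^N u^K = 0$ whenever $N < 2K$, since then no decomposition $i_1 + \cdots + i_K = N/2$ with $i_j \geq 1$ exists.

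For part (ii), take $B = 2(2^{s+1}+1) = 2^{s+2}+2$ and $A = 3 \cdot 2^{s+2}+4$. On the direct side, $Q^B u = \epsilon_{2^{s+1}+1} u^{2^{s+1}+2}$, and factoring $(\sum_i \epsilon_i x^i)^{2^{s+1}+2} = (\sum_i \epsilon_i x^{2^{s+1} i})(\sum_j \epsilon_j x^{2j})$ shows that under the hypothesis only the pair $(i,j) = (1, 2^{s+1}+1)$ contributes at $x^{A/2}$, giving $\epsilon_{2^{s+1}+1}$; the direct side is therefore $\epsilon_{2^{s+1}+1}^2 u^{\ldots} = \epsilon_{2^{s+1}+1} u^{\ldots}$. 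On the Adem side, the even-index range is $m \in [3 \cdot 2^s + 1, 2^{s+2}]$, and under the hypothesis nonzero $\epsilon_m$'s live at $m \in \{1, 2^{s+1}+1, 2^{s+2}+1, \ldots\}$; since $2^{s+1}+1 < 3 \cdot 2^s + 1$ and $2^{s+2}+1 > 2^{s+2}$, none lies in the range, so the Adem side vanishes and $\epsilon_{2^{s+1}+1} = 0$.

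For part (i), I treat each candidate nonzero $\epsilon_r$ with $r = (2k+1) 2^s + 1$, $k \geq 1$, separately, using $Q^A Q^B u$ with $B = 2(2^s+1) = 2^{s+1}+2$ and $A = 4r - 2^{s+1}$ (generalizing the preceding proposition, which handles $s=1$). The direct side factors as $\epsilon_{2^s+1} \cdot (\ldots)$ and vanishes by hypothesis. On the Adem side, the even-index range is $m \in [r - 2^{s-1}, 2r - 2^{s+1} - 2]$, and the ``isolation'' of the $m = r$ term rests on: (a) for $m > r$, $Q^{A+B-2m} u^{m+1}$ vanishes since $A + B - 2m < 2(m+1)$; (b) for $m \in [r - 2^{s-1}, r - 1]$, the residues mod $2^s$ sweep through $\{0\} \cup \{2^{s-1}+1, \ldots, 2^s - 1\}$, missing $1$, so $\epsilon_m = 0$ by hypothesis; (c) at $m = r$ itself, the binomial $\binom{2k \cdot 2^{s+1} - 1}{2^{s+1}}$ is $\equiv 1 \pmod 2$ by Lucas (bit $s+1$ of the numerator is set for any $k \geq 1$), and $Q^{2(r+1)} u^{r+1} = (u^{r+1})^2$ contributes coefficient $1$. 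Hence the Adem side reduces to $\epsilon_r u^{\ldots}$, and we conclude $\epsilon_r = 0$.

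The main obstacle is the range-isolation argument in (i): the choice of $A$ must be precisely calibrated so that the two distinct vanishing mechanisms---$\epsilon_m = 0$ from the residue hypothesis and $Q^N u^K = 0$ for $N < 2K$---cover the Adem range cleanly at each $m$. A small shift in $A$ would either leave an extraneous $\epsilon_m$ uncancelled or misalign the critical binomial at $m = r$. The Lucas-theoretic verification for $m = r$ is easy once one observes that bit $s+1$ of $k \cdot 2^{s+2} - 1$ is always set for $k \geq 1$.
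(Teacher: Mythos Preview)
Your proof is correct and follows essentially the same route as the paper. You choose the identical Adem relations---$Q^{4r-2^{s+1}}Q^{2^{s+1}+2}u$ for part (i) and $Q^{3\cdot 2^{s+2}+4}Q^{2^{s+2}+2}u$ for part (ii)---and the isolation arguments (residue analysis mod $2^s$, instability cutoff, Lucas for the surviving binomial) match the paper's exactly. The only cosmetic difference is that you write the full binomial range $[r-2^{s-1},\,2r-2^{s+1}-2]$ and then explicitly dispose of $m>r$ via $Q^N u^K=0$ for $N<2K$, whereas the paper silently truncates the summation at $i=r$ for the same reason; and your parenthetical remark that the preceding proposition ``handles $s=1$'' is a bit loose (that proposition uses $A=8r$, not $A=4r-4$), but this does not affect the argument.
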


\begin{cor}
	If $\epsilon_2 = \epsilon_3 = 0$, then $\epsilon_r = 0$ for all $r > 1$. 
\end{cor}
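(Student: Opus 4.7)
The plan is to prove the corollary by a short induction on $s$ that uses both parts of the proposition in alternation, so the bulk of the work is establishing the proposition itself. Starting from the base case $s = 1$, whose hypothesis for (i) reads ``$\epsilon_r = 0$ for $r$ even (known since $\epsilon_2 = 0$) together with $\epsilon_3 = 0$ (given)'', iteratively applying (i) and (ii) yields $\epsilon_r = 0$ for all $r \not\equiv 1 \pmod{2^s}$, for every $s \geq 1$. For any fixed $r > 1$, choosing $s$ with $2^s > r$ then forces $\epsilon_r = 0$.

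For part (ii) --- the main computation --- I would analyze the Adem relation $Q^{12m+4} Q^{4m+2}$ applied to $u$, where $m = 2^s$ and $N = 2m+1 = 2^{s+1}+1$. On the one hand $Q^{4m+2} u = \epsilon_N u^{2m+2}$, so by the Cartan formula
\[ Q^{12m+4} Q^{4m+2} u \;=\; \epsilon_N \sum_{i_1 + \cdots + i_{2m+2} = 6m+2} \epsilon_{i_1} \cdots \epsilon_{i_{2m+2}} \cdot u^{8m+4}. \]
Under the hypothesis each $i_j$ is congruent to $1$ modulo $2m$, so writing $i_j = 1 + 2m k_j$ forces $\sum k_j = 2$; the two composition types contribute multiplicities $2m+2 \equiv 0 \pmod 2$ (one $k_j = 2$) and $\binom{2m+2}{2} = (m+1)(2m+1) \equiv 1 \pmod 2$ (two $k_j = 1$, using that $m$ is even for $s \geq 1$), so $\textnormal{LHS} = \epsilon_N^2 u^{8m+4} = \epsilon_N u^{8m+4}$. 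On the other hand, the Adem expansion is supported in the range $6m+2 \leq k \leq 8m+1$, and under the hypothesis $Q^k u \neq 0$ requires $k \equiv 2 \pmod{4m}$, a congruence failing throughout this range. Hence the RHS is zero, forcing $\epsilon_N = 0$.

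For part (i), I would imitate the template of the previously established relation $\epsilon_3 \epsilon_r = \epsilon_{2r+1}$: select an Adem relation $Q^a Q^{2(2^s+1)} u$ so that the Cartan evaluation of the LHS factors through $\epsilon_{2^s+1} = 0$, while the Adem-expanded RHS isolates a single surviving term $\epsilon_{t \cdot 2^s + 1}$ for each desired odd $t \geq 3$. The main obstacle I anticipate here is the combinatorial bookkeeping: choosing the parameter $a$ so that exactly one $k$ in the Adem range satisfies $Q^k u \neq 0$ with odd binomial coefficient, and extracting a single non-vanishing multinomial from the Cartan side. Both reduce to a Lucas-style analysis of binomials mod $2$, analogous to but somewhat more intricate than the explicit computation outlined above for (ii).
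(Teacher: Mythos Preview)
Your approach is essentially the same as the paper's: the induction deducing the corollary from the proposition is exactly how the paper argues, and your computation for part (ii) is identical to the paper's (same Adem relation $Q^{6\cdot 2^{s+1}+4}Q^{2^{s+2}+2}u$, same Cartan/Lucas bookkeeping, same conclusion).

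For part (i), which you only sketch, the paper carries out precisely the plan you describe. The specific choice is $a = 4r - 2^{s+1}$, applied for each $r \equiv 2^s+1 \pmod{2^{s+1}}$ with $r > 2^s+1$: the left side $Q^{4r-2^{s+1}}Q^{2^{s+1}+2}u$ vanishes since $Q^{2^{s+1}+2}u = \epsilon_{2^s+1}u^{2^s+2} = 0$, and on the right the summation index $i$ runs over $r - 2^{s-1} \le i \le r$, so the only $i \equiv 1 \pmod{2^s}$ in range is $i = r$ itself. The surviving binomial coefficient $\binom{2r-2^{s+1}-3}{2^{s+1}}$ is odd by Lucas (since $2r - 2^{s+1} - 3 \equiv -1 \pmod{2^{s+2}}$), and the Cartan factor $\epsilon_{I(r+1,r+1)} = 1$ because all $i_j$ are forced to equal $1$. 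So the right side reduces to $\epsilon_r$, exactly as you anticipated.
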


	Before proving the proposition, we define and study the quantity \[\epsilon_{I(s,d)} := \sum_{i_1 + \cdots + i_s = d} \epsilon_{i_1} \cdots \epsilon_{i_s}\] more carefully.  The idea is that permuting the $\epsilon_i$'s in the sum leads to lots of repetition which cancels out modulo $2$.  Concretely, a term of the form $\epsilon_{m_1}^{s_1} \cdots \epsilon_{m_n}^{s_n}$, $s = s_1 + \cdots + s_n$, with the $m_l$'s distinct, is repeated $\binom{s}{s_1, \ldots, s_n}$ times.  The other ingredient concerns the parity of multinomial coefficients.  It turns out $\binom{s}{s_1, \ldots, s_n}$ is even unless each $s_l$ is a sum of distinct summands of the binary expansion of $s$.  In summary,
\begin{lem}
		Let $s = \sum_i b_i 2^i$, $b_i \in \{0,1\}$, be the binary expansion of $s$.  Then \[\epsilon_{I(s,d)} = \sum_{\sum_{b_i = 1} a_i 2^i = d} \epsilon_{a_i}^{2^i}.\]
\end{lem}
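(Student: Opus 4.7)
The plan is to expand $\epsilon_{I(s,d)} = \sum_{i_1 + \cdots + i_s = d} \epsilon_{i_1} \cdots \epsilon_{i_s}$ by collecting ordered tuples $(i_1, \ldots, i_s)$ according to their underlying multiset of entries, and then to apply Kummer's theorem to decide which of the resulting multinomial coefficients survive modulo $2$.

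First I would rewrite $\epsilon_{I(s,d)}$ as a sum over unordered multisets $\{m_1^{s_1}, \ldots, m_n^{s_n}\}$ with the $m_l$ distinct, multiplicities $s_l \geq 1$ summing to $s$, and $\sum_l m_l s_l = d$. The number of orderings realizing such a multiset is the multinomial $\binom{s}{s_1, \ldots, s_n}$, so
\[
\epsilon_{I(s,d)} \equiv \sum \binom{s}{s_1, \ldots, s_n}\, \epsilon_{m_1}^{s_1} \cdots \epsilon_{m_n}^{s_n} \pmod{2}.
\]

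Next, by Kummer's theorem applied to the multinomial, the coefficient $\binom{s}{s_1, \ldots, s_n}$ is odd iff the base-$2$ addition $s_1 + \cdots + s_n = s$ has no carries, equivalently, iff the binary supports of the $s_l$ are pairwise disjoint and partition $T := \{i : b_i = 1\}$. Thus modulo $2$ the surviving terms are indexed by (unordered) partitions $T = T_1 \sqcup \cdots \sqcup T_n$, with $s_l := \sum_{i \in T_l} 2^i$, together with a labeling attaching a distinct value $m_l$ to each part $T_l$.

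The final step is to reparametrize such a labeled partition as a single tuple $(a_i)_{i \in T}$ by declaring $a_i := m_l$ whenever $i \in T_l$. This is a bijection: any tuple $(a_i)$ reconstructs the partition as the level sets of $i \mapsto a_i$, and the distinctness of the $m_l$ is automatic. Under this bijection, the constraint $\sum_l m_l s_l = d$ becomes $\sum_{i \in T} a_i 2^i = d$, and
\[
\prod_l \epsilon_{m_l}^{s_l} = \prod_l \prod_{i \in T_l} \epsilon_{m_l}^{2^i} = \prod_{i \in T} \epsilon_{a_i}^{2^i},
\]
yielding the stated formula. There is no substantial obstacle here: the parity input is a standard consequence of Kummer's (equivalently Lucas') theorem, and the remainder is bookkeeping. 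The only mildly delicate point is verifying that the ``distinct $m_l$'' requirement imposes no extra condition once one passes to tuples $(a_i)_{i \in T}$.
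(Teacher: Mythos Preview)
Your proof is correct and follows essentially the same approach as the paper, which only sketches the argument in the two sentences preceding the lemma: collect terms by multiset to get multinomial coefficients, then use the carry-free criterion (Kummer/Lucas) for their parity. Your additional step of explicitly setting up the bijection between labeled partitions of $T$ and tuples $(a_i)_{i\in T}$, and checking that distinctness of the $m_l$ is automatic, fills in exactly the bookkeeping the paper leaves to the reader.
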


\begin{proof}[Proof of proposition]
\mbox{}
\begin{enumerate}[(i)]
	\item Let $r \equiv 2^s + 1 \pmod{2^{s+1}}$, $r > 2^s + 1$, and consider the Adem relation \[Q^{4r - 2^{s+1}} Q^{2^{s+1} + 2} u = \sum_{i=r-2^{s-1}}^r \binom{2i-2^{s+1}-3}{4i-4r+2^{s+1}} Q^{4r+2-2i} Q^{2i} u.\]  
		
		The left hand side is zero since $Q^{2^{s+1}+2} u = \epsilon_{2^s+1} u^{2^s+2} = 0$.  
	
		For the right side, note that $Q^{2i} u = \epsilon_i u^{i+1}$, so the term would be zero unless $i \equiv 1 \pmod{2^s}$.  There is exactly one index $i = r$ satisfying this condition within the range of summation, so the right hand side is \[\binom{2r - 2^{s+1} - 3}{2^{s+1}} \epsilon_r \epsilon_{I(r+1,r+1)} \equiv \epsilon_r.\]  Here, we used the fact that the binomial coefficient is odd which follows from $2r - 2^{s+1} - 3 \equiv -1 \pmod{2^{s+2}}$ and Lucas' theorem. 
		
		This proves $\epsilon_r = 0$.  
		
	\item Consider the Adem relation \[Q^{6 \cdot 2^{s+1} + 4} Q^{2^{s+2} + 2} u = \sum_{i=3 \cdot 2^s + 1}^{2^{s+2}} \binom{2i - 2^{s+2} - 3}{4i - 6 \cdot 2^{s+1} - 4} Q^{2^{s+4} + 6 - 2i} Q^{2i} u.\]    
	
	For the left side, we have
	\begin{equation*}
		Q^{6 \cdot 2^{s+1} + 4} Q^{2^{s+2} + 2} u = \epsilon_{2^{s+1} + 1} \epsilon_{I(2^{s+1} + 2, 3 \cdot 2^{s+1} + 2)} \equiv \epsilon_{2^{s+1} + 1} (\epsilon_1^{2^{s+1}} \epsilon_{2^{s+1}+1}^2) \equiv \epsilon_{2^{s+1} + 1}.
	\end{equation*}
	
	For the right side, note that $Q^{2i} u = \epsilon_i u^{i+1}$, so the term would be zero unless $i \equiv 1 \pmod{2^{s+1}}$.  No such index $i$ lies within the range of summation, so the sum is zero.  
	
	We conclude $\epsilon_{2^{s+1} + 1} = 0$.  
\end{enumerate}
\end{proof}

\subsection{The subcase $\epsilon_3 = 1$}
	Partition the set of odd positive integers by the equivalence relation generated by $x \sim y$ if $y = 2x + 1$.  Each equivalence class is labelled by an even integer obtained by repeatedly subtracting one and dividing by two until we reach an even integer.  In the case $\epsilon_3 = 1$, the calculation of the Adem relation for $Q^{2r} Q^6 u$ show that if $x$ and $y$ belong in the same equivalence class, then $\epsilon_x = \epsilon_y$.  We denote by $\zeta_{\bar{x}}$ for the common value of $\epsilon_x$, where $\bar{x}$ is the even integer associated to the equivalence class containing $x$.  
	
	For example, $\zeta_0 = 1$.  
	
\begin{prop}
	Assume $\epsilon_3 = 1$, and given $r$ even suppose $\zeta_s = \zeta$ for all $2 \leq s < r$.  Then $\zeta_r = \zeta$.  
	
	Therefore, $\zeta_s = \zeta$ for all $s \geq 2$.  
\end{prop}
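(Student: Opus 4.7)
My plan is to argue by strong induction on even $r$, defining $\zeta := \zeta_2$ so that the base case $r = 2$ is tautological. For the inductive step with $r \geq 4$ even and $\zeta_s = \zeta$ known for all even $2 \leq s < r$, I would extract $\zeta_r$ from the Adem relation
\[Q^{8r+4}\, Q^{10}\, u \;=\; \sum_i \binom{2i-11}{4i-8r-4}\, Q^{8r+14-2i}\, Q^{2i}\, u,\]
valid since $8r+4 > 20$.  The left-hand side collapses via $Q^{10}u = \epsilon_5 u^6 = \zeta u^6$ and the Cartan formula to $\zeta \cdot \epsilon_{I(6, 4r+2)} \cdot u^{4r+8}$, and by the preceding lemma describing $\epsilon_{I(s,d)}$ modulo $2$, this further reduces to $\zeta \cdot \sum_{a_1+2a_2 = 2r+1} \epsilon_{a_1}\epsilon_{a_2}$, a sum whose terms are individually computable via the inductive hypothesis together with $\epsilon_{\text{odd}} \in \{1, \zeta\}$ and $\epsilon_{2k} = 0$ for $k \geq 1$.

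Next I would analyze the right-hand side by isolating the term with $i = 2r+1$, which is the only one carrying the unknown $\zeta_r$: its coefficient is $\binom{4r-9}{0} \cdot \epsilon_{I(2r+2,\, 2r+6)}$, and the latter equals $1$ by a short binary argument.  Indeed, letting $J$ denote the set of positions of $1$'s in the binary expansion of $2r+2$, the only configuration in the defining sum for $\epsilon_{I(2r+2, 2r+6)}$ surviving modulo $2$ corresponds to $a_1 = 3$ and $a_j = 1$ for every other $j \in J$, contributing $\epsilon_3 \epsilon_1^{|J|-1} = 1$; all other configurations involve an $\epsilon_0$ or $\epsilon_{2k}$ factor ($k \geq 1$) and vanish.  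For each remaining $i$ in the admissible range, Lucas's theorem detects when the binomial coefficient is odd, and the inductive hypothesis then reduces the factor $\epsilon_i\epsilon_{I(i+1,\, 4r+7-i)}$ to an element of $\{0, \zeta\} \subset \mathbb{F}_2$, computable case by case.

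Equating the two sides in $\mathbb{F}_2$ then produces a relation of the form $\zeta_r = \zeta \cdot \epsilon_{I(6, 4r+2)} + X(\zeta)$, where $X(\zeta)$ is a known polynomial arising from the non-distinguished terms on the right; one expects this to simplify to $\zeta_r = \zeta$.  The main obstacle is the combinatorial verification that, for every even $r \geq 4$, the contributions other than $i = 2r+1$ conspire with the left-hand side in exactly the manner required to produce $\zeta$.  This reduces to a case analysis organized by the positions $J$ in the binary expansion of $2r+2$, in which Lucas's theorem together with the lemma characterizing $\epsilon_{I(s,d)}$ must be applied uniformly to show the balance holds; small cases (e.g.\ $r \in \{4, 6, 8, 10\}$) confirm the identity directly, but obtaining a proof handling all $r$ in one stroke, rather than partitioning by residues modulo small powers of $2$, is the principal technical hurdle.
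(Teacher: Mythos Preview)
Your setup matches the paper's approach exactly: the same Adem relation for $Q^{8r+4}Q^{10}u$, the same identification of the $i=2r+1$ term as carrying $\zeta_r$ with coefficient $\epsilon_{I(2r+2,2r+6)}=1$, and the same plan to match the two sides. What is missing is not a new idea but a bound that collapses your ``principal technical hurdle'' to a two-line case split.

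The gap is in your phrase ``for each remaining $i$ in the admissible range.'' You have not observed that the range is tiny. After writing $c=2i$, the binomial $\binom{2i-11}{4i-8r-4}$ forces $i\ge 2r+1$, while the unstability condition $Q^{8r+14-2i}u^{i+1}=0$ for $8r+14-2i<2(i+1)$ forces $i\le 2r+3$. Since $i=2r+2$ is even and $\epsilon_{2r+2}=0$, the \emph{only} term besides $i=2r+1$ is $i=2r+3$, with coefficient $\binom{4r-5}{8}$ and factor $\epsilon_{2r+3}\,\epsilon_{I(2r+4,2r+4)}=\epsilon_{2r+3}$. So $X(\zeta)$ is a single term, not a sum over many $i$.

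The left-hand side is also simpler than you fear. In $\sum_{a_1+2a_2=2r+1}\epsilon_{a_1}\epsilon_{a_2}$, only odd $a_2$ survive, giving $r/2$ terms each equal to $1$ or $\zeta$; a term equals $1$ precisely when $a_2+1$ and $r-a_2+1$ are both powers of two, i.e.\ when $r+2=2^a+2^b$. Such solutions pair off under $(a,b)\leftrightarrow(b,a)$ unless $r+2$ is itself a power of two, so the sum is $(r/2)\zeta$ or $\zeta$ in the two cases. Combining, the equation is $\zeta_r+\binom{4r-5}{8}\epsilon_{2r+3}=\text{(LHS)}$, and Lucas on $\binom{4r-5}{8}$ gives $1$ for $r\equiv 0\pmod 4$ (whence $\zeta_r=\zeta_{r/2}=\zeta$) and $0$ for $r\equiv 2\pmod 4$ (whence $\zeta_r=\zeta$ directly). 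No deeper case analysis is needed.
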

\begin{proof}
	Consider the Adem relation \[Q^{8r + 4} Q^{10} u = \sum_{i=2r+1}^{2r+3} \binom{2i-11}{4i-8r-4} Q^{8r+14-2i} Q^{2i} u.\]  The left side is \[Q^{8r+4} Q^{10} u = \epsilon_5 \epsilon_{I(6,4r+2)} = \epsilon_5 \sum_{i=1}^r \epsilon_i^4 \epsilon_{2(r-i)+1}^2.\]  
	
	Each factor occurring in each term of the sum is either $\zeta_0 = 1$ or $\zeta$.  The term $\epsilon_i^4 \epsilon_{2(r-i)+1}^2$ is $1$ iff $i + 1$ and $2(r-i) + 2$ are powers of two.  This means $i + 1 = 2^a$ and $r - i + 1 = 2^b$, so that $r + 2 = 2^a + 2^b$.  Provided $a \neq b$, these instances occur in pairs: $(i+1, r-i+1) = (2^a, 2^b)$ and $(i+1, r-i+1) = (2^b, 2^a)$, and by the uniqueness of the binary expansion they occur twice or not at all.  In either of these cases we have $\sum_{i=1}^r \epsilon_i^4 \epsilon_{2(r-i)+1}^2 \equiv \frac{r}{2} \zeta$.  The remaining case is when $r + 2$ is a power of two.  Then the sum evaluates to $1 + (\frac{r}{2} - 1) \zeta \equiv 1$.  To summarize, \[Q^{8r+4} Q^{10} u = \begin{cases} \frac{r}{2} \zeta, & r + 2 \text{ not a power of two}, \\ \zeta, & r + 2 \text{ is a power of two}. \end{cases}\]
	
	For the right hand side, since we require $i$ to be odd for a nonzero term, there are only two terms to consider: \[Q^{8r+4} Q^{10} u = Q^{4r+12} Q^{4r+2} u + \binom{4r-5}{8} Q^{4r+8} Q^{4r+6} u.\]  We have
	\begin{align*}
		Q^{4r+12} Q^{4r+2} u &= \epsilon_{2r+1} \epsilon_{I(2r+2, 2r+6)} \\
		&= \epsilon_{2r+1} ((2r+2) \epsilon_5 + \binom{2r+2}{2} \epsilon_3^2) \\
		&= \epsilon_{2r+1} = \zeta_r, \\
		Q^{4r+8} Q^{4r+6} u &= \epsilon_{2r+3} \epsilon_{I(2r+4,2r+4)} \\
		&= \epsilon_{2r+3}.	
	\end{align*}
		
	We have the equation \[\frac{r}{2} \zeta = \zeta_r + \binom{4r-5}{8} \epsilon_{2r+3}.\]  If $r \equiv 0 \pmod{4}$, then this becomes $0 = \zeta_r + \zeta_{r/2}$, so $\zeta_r = \zeta_{r/2} = \zeta$.  Otherwise, $r \equiv 2 \pmod{4}$, and we have $\zeta = \zeta_r$.  This finishes the proof.  	
\end{proof}

	There are two remaining cases to consider for the common value $\zeta$.  If $\zeta = 0$, then we have \[\epsilon_r = \begin{cases} 1 & r \text{ one less than a power of two} \\ 0 & \text{otherwise}. \end{cases}\]  If $\zeta = 1$, then we have \[\epsilon_r = \begin{cases} 1 & r \text{ odd}, \\ 0 & r \text{ even}. \end{cases}.\]  We need to check which of these are valid solutions to the system of equations determined by the Adem relations.  We know the former is valid because it is the Dyer-Lashof structure on $\pi_* \THH(\mathbb{F}_2)$.  To show that the latter is also valid, we need to do more combinatorial arguments involving binomial coefficients.  First,
	
	\begin{lem}
		Suppose that $A$ is an $\mathbb{F}_2$-algebra and its module of indecomposables $QA$ is equipped with the structure of a module over the Dyer-Lashof algebra.  Then there is a unique structure of an algebra over the Dyer-Lashof algebra on $A$, given by the Cartan formula.  
	\end{lem}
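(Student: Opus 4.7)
The lemma has two parts. For uniqueness, any algebra-over-Dyer-Lashof structure on $A$ must satisfy the Cartan formula $Q^r(xy) = \sum_{i+j=r} Q^i(x) Q^j(y)$ by definition. Since $A$ is generated as an $\mathbb{F}_2$-algebra by lifts of elements of $QA$, this formula, applied inductively, determines $Q^r$ on every product in terms of the specified values on $QA$.

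For existence, the plan is to declare $Q^r$ on chosen algebra generators using the given module structure on $QA$ and extend to all of $A$ by the Cartan formula. In the intended application $A = \mathbb{F}_2[u]$, this gives well-defined values $Q^r(u^n) = \sum_{i_1 + \cdots + i_n = r} Q^{i_1}(u) \cdots Q^{i_n}(u)$ because $\mathbb{F}_2[u]$ is free as a commutative algebra on $u$, so there are no algebra relations to respect. The resulting operations are $\mathbb{F}_2$-linear, and the unstability condition propagates from the generator to products since Cartan only adds non-negative degrees. For a general $A$, one would additionally need to verify that the kernel of the canonical surjection from the symmetric algebra on $QA$ onto $A$ is preserved by these operations.

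The substantive remaining task is to verify the Adem relations hold on all of $A$, not just on $QA$. The cleanest route is to invoke the free allowable algebra $F(QA)$ over the Dyer-Lashof algebra used in section \ref{sec:GHM}: by construction, $F(QA)$ satisfies Cartan, Adem, and unstability. The hypothesized module structure on $QA$ extends via the universal property to an $\mathbb{F}_2$-algebra map $F(QA) \to A$ that intertwines Dyer-Lashof actions (on generators by hypothesis, and on products by Cartan). Surjectivity of this map transfers Adem from $F(QA)$ down to $A$. The main obstacle, and the main content of the lemma, is this recognition that Adem relations in $A$ are shadows of those in the free object $F(QA)$, so it suffices to check them on a set of algebra generators; in the case $A = \mathbb{F}_2[u]$ this reduces each candidate Dyer-Lashof structure to Adem checks on the single element $u$, which is exactly what has been exploited throughout the preceding subsections.
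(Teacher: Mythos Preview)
The paper states this lemma without proof, and as literally phrased it is slightly off: for $A=\mathbb{F}_2[u]$ the indecomposables $QA=\mathbb{F}_2\{u\}$ sit in a single degree and carry a unique (trivial) $\mathcal{R}$-module structure, yet the appendix exhibits four distinct allowable $\mathcal{R}$-algebra structures on $A$. The intended hypothesis is that one is given values $Q^r u\in A$ (not in $QA$) satisfying the Adem relations on $u$; the content is that the Cartan extension then satisfies Adem on all of $A$. Your final paragraph correctly isolates this as the point, but the $F(QA)$ argument you give for it is circular. To check that your map $\phi\colon F(QA)\to A$ intertwines the operations you must verify $\phi(Q^r\cdot Q^I x)=Q^r\cdot\phi(Q^I x)$ on the polynomial generators $Q^I x$ of $F(QA)$; the left side is computed via Adem in $F$, the right via the Cartan-extended action on $A$, and their equality is precisely the assertion that the element $Q^rQ^I-(\text{admissible sum})$ of the Adem ideal kills $\tilde x\in A$. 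For $|I|\geq 1$ this is not among your hypotheses: it would require Adem to already hold on the decomposable elements $Q^{I'}\tilde x\in A$, which is what you are proving.

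The standard non-circular argument avoids $F(QA)$ and uses the Hopf structure directly. The Cartan formula makes $A$ a module-algebra over the bialgebra $T=T(Q^0,Q^1,\ldots)$ with $\Delta Q^r=\sum_{i+j=r}Q^i\otimes Q^j$. The Adem ideal $I\subset T$ is a Hopf ideal (since $\mathcal{R}=T/I$ is a Hopf algebra), and because $\Delta$ preserves word length, for any length-two Adem relation $R$ the coproduct $\Delta R$ lies in $I\otimes T+T\otimes I$ with the $I$-components again of length at most two. Writing a monomial as $x_i\cdot m'$ and expanding $R\cdot(x_i m')=\sum (R'\cdot x_i)(R''\cdot m')$ along $\Delta R$, induction on monomial length shows that if every length-two Adem relation kills each generator $x_i$, then it kills all of $A$. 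This is exactly why the paper can say, immediately after the lemma, that it now suffices to check Adem on $u$.
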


	We now need to check the Adem relations are satisfied.  
	
\begin{prop}
	The Dyer-Lashof structure determined by \[\epsilon_r = \begin{cases} 1, & r \text{ odd} \\ 0, & r \text{ even} \end{cases}\] is valid.
\end{prop}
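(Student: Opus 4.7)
By the lemma preceding the proposition, the assignment $Q^{2r}u = \epsilon_r u^{r+1}$ extends uniquely through the Cartan formula to an action of the operations $\{Q^r\}$ on $\mathbb{F}_2[u]$; upgrading this to a genuine $\mathcal{R}$-algebra structure amounts to checking the Adem relations, and by compatibility of Adem with the Cartan formula it suffices to verify
\[
Q^i Q^j u = \sum_k \binom{k-j-1}{2k-i} Q^{i+j-k} Q^k u \qquad \text{for all } i > 2j.
\]
Because $u$ has even degree, $Q^r u = 0$ for $r$ odd, so only indices with $i, j, k$ all even contribute, and both sides reduce to $\mathbb{F}_2$-multiples of $u^{a+b+1}$ after writing $i = 2a$, $j = 2b$, $k = 2c$.

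A straightforward Cartan expansion gives
\[
Q^{2a}(u^{b+1}) = \epsilon_{I(b+1,a)}\, u^{a+b+1},
\]
where $\epsilon_{I(s,d)}$ sums the products of $\epsilon$'s over all compositions of $d$ into $s$ positive parts. For the candidate structure $\epsilon_r = [r \text{ odd}]$, the generating function $\left(x/(1-x^2)\right)^s$ over $\mathbb{F}_2$ produces the closed form
\[
\epsilon_{I(s,d)} \equiv \binom{(d+s)/2 - 1}{(d-s)/2} \pmod 2
\]
when $d \geq s$ and $d \equiv s \pmod 2$, and $\epsilon_{I(s,d)} = 0$ otherwise. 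Substituting into the Adem relation (and observing that $Q^{2c}u \ne 0$ forces $c$ odd) recasts it as the numerical identity
\[
\epsilon_b \binom{(a+b-1)/2}{(a-b-1)/2} \equiv \sum_{c \text{ odd}} \binom{2c-2b-1}{4c-2a} \binom{(a+b-1)/2}{(a+b-2c-1)/2} \pmod 2,
\]
with binomials having non-integer entries read as $0$. A parity check shows both sides vanish when $a + b$ is even, so we may assume $a + b$ is odd.

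The main task is then to establish the binomial identity above. I would first apply Lucas' theorem in the form $\binom{2s-1}{2t} \equiv \binom{s-1}{t} \pmod 2$ to simplify the leading binomial of each summand. After the change of variables $n = (a+b-1)/2$, $m = (a-b-1)/2$, the identity becomes
\[
[n - m \text{ odd}]\,\binom{n}{m} \equiv \sum_{c \text{ odd}} \binom{c-n+m-1}{2c-n-m-1} \binom{n}{c} \pmod 2,
\]
with $c$ ranging over the odd integers in $\left[\lceil(n+m+1)/2\rceil,\, \min(2m,n)\right]$. The main obstacle is this identity. My plan is to attempt it directly via binary-expansion bookkeeping with Lucas' theorem, tracking when each summand is nonzero. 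Failing a clean direct argument, I would leverage the analogous identity for the BLLMM structure (where $\epsilon_r \equiv 1$), which is known to hold since BLLMM is realized by a genuine $E_\infty$-ring and hence automatically satisfies Adem: subtracting the BLLMM "all-$c$" identity from our "odd-$c$-only" version would isolate an even-$c$ sum that should match a Cartan-type expansion of a suitable monomial, reducing the remaining check to a finite parity computation.
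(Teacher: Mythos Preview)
Your reduction is correct and essentially matches the paper's: both unwind the Cartan formula, identify $\epsilon_{I(s,d)}$ with a binomial coefficient counting compositions into odd parts, split on parity, and arrive at a single binomial congruence that must be verified for all parameters. Up to relabelling, your target identity
\[
[n-m\text{ odd}]\binom{n}{m}\equiv\sum_{c\text{ odd}}\binom{c-n+m-1}{2c-n-m-1}\binom{n}{c}\pmod 2
\]
is the same one the paper isolates.

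The genuine gap is that you stop at the identity and only sketch two possible attacks. Neither of your proposed strategies is worked out, and neither is obviously viable. Direct ``binary-expansion bookkeeping'' with Lucas is exactly what the paper tries to avoid: the summand $\binom{c-n+m-1}{2c-n-m-1}$ does not have a clean bitwise description (the upper and lower entries are coupled through $c$ in an awkward way), and in practice one does not get a finite case analysis. Your fallback---subtracting the known BLLMM identity to isolate an even-$c$ sum---is creative, but the BLLMM structure has $\epsilon_r=1$ for \emph{all} $r$, so its Adem identity has a different shape (the Cartan sums $\epsilon_{I(s,d)}$ become the full composition count $\binom{d-1}{s-1}$, not the odd-part count), and the difference of the two identities is not visibly of the form ``Cartan expansion of a single monomial.'' You would need to explain concretely what cancels, and at that point you are essentially proving a second binomial identity of comparable difficulty.

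For comparison, the paper's method is quite different from either of your suggestions. It packages the sum side as a coefficient $a_{n,q}$ of a bivariate generating function $(1+z)^{n+3q}/(w+w^2+z^2)$, observes that the right-hand side $b_{n,q}=\binom{n+3q}{2q-1}$ satisfies a third-order recurrence $b_{n,q}\equiv b_{n-1,q+1}+b_{n-3,q+1}$ mod $2$ (from two applications of Pascal), and then checks algebraically that $a_{n,q}$ satisfies the \emph{same} recurrence by a rational-function manipulation. This reduces everything to the three base cases $n=0,1,2$, which are then dispatched by a short chain of Lucas-type parity arguments. If you want a complete proof, you should either carry out this recurrence argument or supply a genuinely self-contained alternative for the identity; as written, the proposal ends precisely where the nontrivial work begins.
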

\begin{proof}
	In terms of the structure constants, the Adem relation we need to check is \[\epsilon_s \sum_{i_1 + \cdots + i_{s+1} = r} \epsilon_{i_1} \cdots \epsilon_{i_{s+1}} = \sum_{i=s+1}^{r-s-1} \binom{i-s-1}{2i-r} \epsilon_i \sum_{j_1 + \cdots + j_{i+1} = r + s - i} \epsilon_{j_1} \cdots \epsilon_{j_{i+1}}.\]  

	In the case $\epsilon_r = 1$ for all odd $r$, the expression \[\sum_{i_1 + \cdots + i_{s+1} = r} \epsilon_{i_1} \cdots \epsilon_{i_{s+1}}\] counts the number of ways to write $r$ as a sum of $(s+1)$ positive odd integers.  Equivalently, it is the number of ways to write $r + s + 1$ as a sum of $(s+1)$ positive even integers, or the number of ways to write $\frac{r+s+1}{2}$ as a sum of $(s+1)$ positive integers.  In combinatorics, this is known as the number of compositions.  Moreover, we know that the number of compositions of $\frac{r+s+1}{2}$ into exactly $(s+1)$ parts is given by $\binom{\frac{r+s+1}{2} - 1}{s}$.  
	
	There are two Adem relations to prove, depending on the parity of $s$.  If $s$ is even, we need to show \[0 \equiv \sum_{i \text{ odd}} \binom{i-s-1}{2i-r} \binom{\frac{r+s-i+i+1}{2} - 1}{i}.\]  If $s$ is odd, we need to show \[\binom{\frac{r+s+1}{2} - 1}{s} \equiv \sum_{i \text{ odd}} \binom{i-s-1}{2i-r} \binom{\frac{r+s-i+i+1}{2} - 1}{i}.\]  
	
	Let us first show \[\sum_{i \text{ odd}} \binom{i-s-1}{2i-r} \binom{\frac{r+s+1}{2} - 1}{i} \equiv 0\] if $s$ is even.  Since $s$ is even, the second binomial coefficient is nonzero iff $r$ is odd.  Let us write $r = 2p + 1$, $s = 2q$, and $i = 2l + 1$.  Then we need to show \[\sum_l \binom{2l - 2q}{4l-2p + 1} \binom{p+q}{2l+1} \equiv 0.\]  This is clear, since $\binom{2l - 2q}{4l-2p+1} \equiv 0$ by looking at the one's bit.  
	
	For the second relation, where $s = 2q-1$ is odd and thus $r = 2p$ is even, we need to show \[\binom{p+q-1}{2q-1} \equiv \sum_l \binom{2l-2q+1}{4l-2p+2} \binom{p+q-1}{2l+1}.\]  
	
	It is convenient to make the substitutions $k = l - q$ and $n = p - 2q - 1$ at the outset, so that we need to prove \[\sum_k \binom{2k+1}{4k-2n} \binom{n+3q}{2k+2q+1} \equiv \binom{n+3q}{2q-1}.\]  The constraint $r > 2s$ became $p \geq 2q$, which became $n \geq 0$ and $q \geq 1$ here.  Observing that \[\binom{2k+1}{4k-2n} \equiv \binom{2k}{4k-2n} \equiv \binom{k}{2k-n} = \binom{k}{n-k},\] we end up needing to show \[\sum_k \binom{k}{n-k} \binom{n+3q}{2k+2q+1} \equiv \binom{n+3q}{2q-1}.\]  A generating function for the expression on the left is \[a_{n,q} := [w^n z^{2q-1}] \, \frac{(1+z)^{n+3q}}{w+w^2+z^2}.\]  Let \[b_{n,q} := \binom{n+3q}{2q-1}.\]  Our goal is to show that $a_{n,q} \equiv b_{n,q} \pmod{2}$ for $n, q \geq 0$.  I learnt the following proof from \url{math.stackexchange.com/q/3004244}.  
	
	Observe that
	\begin{align*}
		b_{n,q} &= \binom{n+3q}{2q-1} = \binom{n+3q-1}{2q-1} + \binom{n+3q-1}{2q-2} \\
		&= \left( \binom{n+3q-2}{2q-1} + \binom{n+3q-2}{2q-2} \right) + \left( \binom{n + 3q - 2}{2q-2} + \binom{n + 3q - 2}{2q-3}\right) \\
		&\equiv \binom{n+3q-2}{2q-1} + \binom{n+3q-2}{2q-3} \\
		&= b_{n-2,q} + b_{n+1,q-1}.
	\end{align*}
	
	Rearranging, we have \[b_{n,q} \equiv b_{n-1,q+1} + b_{n-3,q+1}.\]  
	
	We now show that the $a_{n,q}$'s satisfy a similar recurrence relation.  For $n \geq 3$, we have
	\begin{align*}
		a_{n,q} + a_{n-1,q+1} + a_{n-3,q+1} &= [w^n z^{2q-1}] \, \frac{(1+z)^{n+3q}}{w+w^2+z^2} + \frac{w}{z^2} \cdot \frac{(1+z)^{n+3q+2}}{w+w^2+z^2} + \frac{w^3}{z^2} \cdot \frac{(1+z)^{n+3q}}{w+w^2+z^2} \\
		&= [w^n z^{2q-1}] \, \frac{(1+z)^{n+3q}}{w+w^2+z^2} \left(1 + \frac{w(1+z)^2}{z^2} + \frac{w^3}{z^2} \right) \\
		&= [w^n z^{2q-1}] \, \frac{(1+z)^{n+3q}}{w+w^2+z^2} \left( \frac{w + z^2 + w^3 + wz^2}{z^2} \right) \\
		&= [w^n z^{2q-1}] \, \frac{(1+z)^{n+3q}}{w+w^2+z^2} \left( \frac{(1+w)(w+w^2 + z^2) - 2w^2}{z^2} \right) \\
		&\equiv [w^n z^{2q-1}] \, \frac{(1+z)^{n+3q}}{w+w^2 + z^2} \left( \frac{(1+w)(w+w^2+z^2)}{z^2} \right) \\
		&= [w^n z^{2q+1}] \, (1+z)^{n+3q} (1 + w) \\
		&= 0.
	\end{align*}
	
	Since $a_{n,q}$ and $b_{n,q}$ satisfy the same third-order recurrence relation in $n$, it remains to show that for all $q \geq 0$,
	\begin{align*}
		a_{0,q} &\equiv b_{0,q} \\
		a_{1,q} &\equiv b_{1,q} \\
		a_{2,q} &\equiv b_{2,q}.
	\end{align*}
	
	Repeatedly applying the recurrence for binomial coefficients, we have 
	\begin{align*}
		a_{0,q} + b_{0,q} &= \binom{3q}{2q+1} + \binom{3q}{2q-1} \equiv \binom{3q+2}{2q+1}, \\
		a_{1,q} + b_{1,q} &= \binom{3q+1}{2q+3} + \binom{3q+1}{2q-1} \equiv \binom{3q+5}{2q+3}, \\
		a_{2,q} + b_{2,q} &= \left( \binom{3q+2}{2q+5} + \binom{3q+2}{2q+3} \right) + \binom{3q+2}{2q-1}.
	\end{align*}
	
	The following special cases of Lucas' theorem is useful.  
\begin{lem}
	$\binom{2a}{2b} \equiv \binom{a}{b}$, $\binom{2a}{2b+1} \equiv 0$, and $\binom{2a+1}{2b} \equiv \binom{2a+1}{2b+1} \equiv \binom{a}{b}$.  
\end{lem}
	
	We first prove that $\binom{3q+2}{2q+1}$ is even by induction on $q$.  If $q = 0$, then this is just $\binom{2}{1} = 2$.  For the induction step, observe that if $q$ is even, then $\binom{3q+2}{2q+1}$ has the form ``even choose odd'', which is even.  Otherwise, if $q$ is odd, write $q = 2q_1 + 1$ with $q_1 < q$.  Then we have \[\binom{3q+2}{2q+1} = \binom{6q_1 + 5}{4q_1 + 2} \equiv \binom{3q_1 + 2}{2q_1 + 1}\] which is even by the induction hypothesis.  
	
	By the way, this result -- $\binom{3q+2}{2q+1}$ is even -- will be used several times in the sequel.  
	
	Second, we prove that $\binom{3q+5}{2q+3}$ is even.  If $q$ is even, then $\binom{3q+5}{2q+1}$ has the form ``even choose odd'', which is even.  Otherwise, if $q$ is odd, write $q = 2q_1 + 1$. Then we have \[\binom{3q+5}{2q+3} = \binom{6q_1 + 8}{4q_1 + 5} \equiv 0.\]  
	
	It remains to prove that \[a_{2,q} + b_{2,q} \equiv \binom{3q+2}{2q+5} + \binom{3q+2}{2q+3} + \binom{3q+2}{2q-1}\] is even.  If $q = 0$, this sum is $0$, which is even.  In fact, if $q$ is even, then each term in this sum has the form ``even choose odd'', which is even.  Otherwise, if $q$ is odd, write $q = 2q_1 + 1$ with $q_1 < q$.  Then we have \[\binom{3q+2}{2q+5} + \binom{3q+2}{2q+3} + \binom{3q+2}{2q-1} = \binom{6q_1 + 5}{4q_1 + 7} + \binom{6q_1 + 5}{4q_1 + 5} + \binom{6q_1 + 5}{4q_1 + 1} \equiv \binom{3q_1 + 2}{2q_1 + 3} + \binom{3q_1 + 2}{2q_1 + 2} + \binom{3q_1 + 2}{2q_1}.\]  
	
	If $q_1 = 2q_2$ is even, then we consider \[\binom{6q_2 + 2}{4q_2 + 3} + \binom{6q_2 + 2}{4q_2 + 2} + \binom{6q_2 + 2}{4q_2} \equiv 0 + \binom{3q_2 + 1}{2q_2 + 1} + \binom{3q_2 + 1}{2q_2}.\]  If $q_2$ is even, then each term in this expression has the form ``even choose odd'', so the sum is even.  If $q_2 = 2q_3 + 1$ is odd, then we have \[\binom{6q_3 + 4}{2q_3 + 1} + \binom{6q_3 + 4}{4q_3 + 2} \equiv 0 + \binom{3q_3 + 2}{2q_3 + 1},\] which is something we've previously shown was even.  
	
	If $q_1 = 2q_2 + 1$ is odd, then we consider \[\binom{6q_2 + 5}{4q_2 + 5} + \binom{6q_2 + 5}{4q_2 + 4} + \binom{6q_2 + 5}{4q_2 + 2} \equiv \binom{3q_2 + 2}{2q_2 + 2} + \binom{3q_2 + 2}{2q_2 + 2} + \binom{3q_2 + 2}{2q_2 + 1} \equiv \binom{3q_2 + 2}{2q_2 + 1},\] which we've previously shown was even.  
	
	This concludes the proof that $a_{n,q} = b_{n,q}$, which means that the Adem relations in the case $\epsilon_r = 1$ for all $r$ odd is verified.  Thus we obtain another valid Dyer-Lashof structure on $\mathbb{F}_2[u]$.  
\end{proof}

\begin{rem}
	We could give a similar combinatorial proof for the validity of the Dyer-Lashof structure determined by $\epsilon_r = 1$ iff $r$ is one less than a power of two, but since we know $\THH(\mathbb{F}_2)$ exists we are spared from another such tedious argument.  
\end{rem}

	In summary, the four possible Dyer-Lashof structures on $\mathbb{F}_2[u]$ are characterized by:
\begin{enumerate}[(i)]
	\item $Q^{2r} u = 0$ unless $r = 1$.
	\item $Q^{2r} u = 1$ for all $r \geq 1$.
	\item $Q^{2r} u = 0$ unless $r$ is one less than a power of two.  
	\item $Q^{2r} u = 0$ unless $r$ is odd.  
\end{enumerate}

	Moreover, (i) is realized by the Eilenberg-Mac~Lane spectrum $H\mathbb{F}_2[u]$, (ii) is realized by the BLLMM spectrum, and (iii) is realized by $\THH(\mathbb{F}_2)$.  
	
\begin{quest}
	Is there an $E_\infty$-$H$-algebra $A$ with $\pi_* A \cong \mathbb{F}_2[u]$ and Dyer-Lashof structure given by (iv)?
\end{quest}

\subsection{The obstruction spectral sequence}
	The next question to ask to whether there is a unique $E_\infty$-$H$-algebra with these prescribed Dyer-Lashof operations.  The main tool to study this is again the Goerss-Hopkins-Miller obstruction spectral sequence \cite{GH04}, introduced in section \ref{sec:GHM}.
	
\begin{thm}
	Let $A$ and $B$ be $E_\infty$-$H$-algebras, and assume that a basepoint $\phi \in \CAlg_H(A,B)$ is given.  Then,
	\begin{enumerate}[(a)]
		\item There is a Bousfield-Kan spectral sequence \[E_2^{s,t} \cong D_\mathcal{R}^s(\pi_* A, \Omega^t \pi_* B) \Rightarrow \pi_{t-s}(\CAlg_H(A,B),\phi).\]
		\item There is a Grothendieck spectral sequence \[E_2^{p,q} \cong \Ext_{\mathcal{U}(\Gamma)}(D_q(\Gamma), M) \Rightarrow \mathcal{D}_{\mathcal{R}}^{p+q}(\Gamma, M).\]  
	\end{enumerate}
\end{thm}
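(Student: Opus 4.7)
The plan is to follow the same trajectory laid out in the beginning of section \ref{sec:GHM} for the special case $A = H \wedge \Sigma^\infty_+ \mathbb{Z}$, but now for an arbitrary $E_\infty$-$H$-algebra $A$. For part (a), I would first produce a cosimplicial resolution $\mathbb{P}^{\bullet + 1} A$ of $A$ using the comonad $\mathbb{P}$ associated with the free–forgetful adjunction $\Mod_H \rightleftarrows \CAlg_H$. Applying $\CAlg_H(-,B)$ yields a cosimplicial space whose totalization, under mild convergence hypotheses (e.g.\ $B$ is nilpotent in a suitable sense relative to the resolution), computes $\CAlg_H(A,B)$. The Bousfield-Kan spectral sequence of this cosimplicial space gives $E_1^{s,t} = \pi_t \CAlg_H(\mathbb{P}^{s+1} A, B) \Rightarrow \pi_{t-s} \CAlg_H(A,B)$.

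To identify the $E_2$-page, I would reuse the two computations already established. First, the proposition attributed to \cite{CLM76} identifies $\pi_* \mathbb{P} M \cong F(\pi_* M)$, which in turn identifies $\pi_0 \CAlg_H(\mathbb{P} M, B) \cong \Hom_{\CAlg_{\mathbb{F}_p, \mathcal{R}}}(F(\pi_* M), \pi_* B)$. Second, the square-zero extension argument using $\pi_* B^{S^t} \cong \pi_* B[\epsilon_t]/\epsilon_t^2$ at the basepoint $\phi$ identifies $\pi_t(\CAlg_H(\mathbb{P} M, B), \phi) \cong \Der_\mathcal{R}(F(\pi_* M), \Omega^t \pi_* B)$ for $t > 0$. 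Applying this levelwise to the cosimplicial resolution gives $E_1^{s,t} \cong \Der_\mathcal{R}(F^{s+1}(\pi_* A), \Omega^t \pi_* B)$, whose cohomology in the $s$-direction is by definition the right-derived functor $\mathscr{R}_F^s \Der_\mathcal{R}(-, \Omega^t \pi_* B)(\pi_* A) = D_\mathcal{R}^s(\pi_* A, \Omega^t \pi_* B)$.

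For part (b), I would invoke the lemma (quoted from \cite[Lem.~6.3]{GH04}) giving the natural isomorphism $\Der_\mathcal{R}(\Gamma, M) \cong \Hom_\mathcal{U}(\mathbb{F}_p \otimes_\Gamma \Omega_{\Gamma/\mathbb{F}_p}, M)$, which expresses $\Der_\mathcal{R}(-, M)$ as a composite of the abelianization functor $\Gamma \mapsto \mathbb{F}_p \otimes_\Gamma \Omega_{\Gamma/\mathbb{F}_p}$ from augmented allowable $\mathcal{R}$-algebras to $\mathcal{U}$ with $\Hom_\mathcal{U}(-, M)$. To deploy Grothendieck's machinery, I would check that $F$-projectives (free allowable algebras) are sent by the abelianization functor to projectives in $\mathcal{U}$ — indeed, $\mathbb{F}_p \otimes_{F(V)} \Omega_{F(V)/\mathbb{F}_p}$ is the free unstable $\mathcal{R}$-module on $V$ — and that $\Hom_\mathcal{U}(-, M)$ is representable enough for the composite derived functor theorem to apply. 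The left-derived functors of abelianization are by definition Andr\'e--Quillen homology $D_q(\Gamma) = D_q(\Gamma/\mathbb{F}_p, \mathbb{F}_p)$, and assembling these yields the asserted spectral sequence.

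The main obstacle, as always in setting up Goerss-Hopkins-Miller type spectral sequences, is not the algebraic identification but rather the convergence/totalization step: one must verify that the Tot of the cosimplicial space $\CAlg_H(\mathbb{P}^{\bullet+1} A, B)$ agrees with $\CAlg_H(A, B)$ in good generality, and that the spectral sequence itself converges strongly. For part (b), the analogous subtlety is checking the hypotheses of the Grothendieck composite functor spectral sequence in this non-abelian setting (the source category is allowable $\mathcal{R}$-algebras, not abelian), which requires the formalism of simplicial resolutions and a Quillen-type abelianization argument rather than a naive application of the abelian version. Both of these convergence issues are handled in \cite{GH04}, and I would invoke them directly rather than reprove them.
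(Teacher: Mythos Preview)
Your proposal is correct and follows exactly the approach the paper takes: the theorem is stated in the appendix without a separate proof, with the paper simply pointing back to the construction in section~\ref{sec:GHM} and to \cite{GH04}, and your sketch faithfully reproduces that construction in the general case. One small slip: for part (b) you invoke the augmented form $\Der_\mathcal{R}(\Gamma,M) \cong \Hom_\mathcal{U}(\mathbb{F}_p \otimes_\Gamma \Omega_{\Gamma/\mathbb{F}_p}, M)$, but the theorem as stated uses $\Ext_{\mathcal{U}(\Gamma)}$ over the category of unstable $\Gamma$-modules, so you should instead use the first isomorphism in the cited lemma, $\Der_\mathcal{R}(\Gamma,M) \cong \Hom_{a_1\Mod_{\mathcal{R},\Gamma}}(\Omega_{\Gamma/\mathbb{F}_p}, M)$, before specializing.
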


	In our situation, we have $\Gamma = M = \mathbb{F}_2[u]$ in (b). The first step is to compute the ordinary Andr\'{e}-Quillen homology group.  
\begin{lem}
	\[D_q(\mathbb{F}_2[u]) \cong \begin{cases}
		\mathbb{F}_2[u] \, du, & q = 0 \\
		0, & q > 0.
	\end{cases}\]
\end{lem}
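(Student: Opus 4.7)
The plan is to imitate the analogous computation for $\mathbb{F}_p[u^\pm]$ given earlier in the excerpt, which becomes more direct in the present case because $\mathbb{F}_2[u]$ is a \emph{free} commutative $\mathbb{F}_2$-algebra (not merely smooth). Recall that André--Quillen homology $D_*(\Gamma)$ is computed by $\mathbb{L}_{\Gamma/\mathbb{F}_2}$ tensored with the chosen coefficient module, which in the unadorned notation $D_q(\mathbb{F}_2[u])$ means coefficients in $\mathbb{F}_2[u]$ itself.

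First I would observe that since $\mathbb{F}_2[u]$ is itself a free simplicial commutative $\mathbb{F}_2$-algebra on a single generator, the identity map already serves as a cofibrant replacement, and so $\mathbb{L}_{\mathbb{F}_2[u]/\mathbb{F}_2}$ is concentrated in homological degree zero and equivalent to the module of ordinary Kähler differentials $\Omega_{\mathbb{F}_2[u]/\mathbb{F}_2}$. Second, the Leibniz rule shows that any $\mathbb{F}_2$-linear derivation $\delta: \mathbb{F}_2[u] \to M$ is determined by $\delta(u) \in M$, so $\Omega_{\mathbb{F}_2[u]/\mathbb{F}_2}$ is free of rank one on the symbol $du$. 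Combining these gives $D_0(\mathbb{F}_2[u]) \cong \mathbb{F}_2[u]\,du$ and $D_q(\mathbb{F}_2[u]) = 0$ for $q > 0$, as claimed.

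There is no substantive obstacle to the argument; the content is essentially the standard fact that polynomial algebras have trivial higher André--Quillen homology. An equivalent route, more parallel to the proof of the earlier proposition for $\mathbb{F}_p[u^\pm]$, would invoke the transitivity sequence for $\mathbb{F}_2 \to \mathbb{F}_2[u]$ directly; the \'etale step that appeared in the $\mathbb{F}_p[u^\pm]$ calculation is unnecessary here and the proof is accordingly shorter.
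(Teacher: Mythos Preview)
Your proof is correct and takes essentially the same approach as the paper, which simply records that $\mathbb{F}_2[u]$ is smooth of dimension one over $\mathbb{F}_2$; you have merely unpacked this in more detail by noting that $\mathbb{F}_2[u]$ is in fact free, so the cotangent complex is $\Omega_{\mathbb{F}_2[u]/\mathbb{F}_2} \cong \mathbb{F}_2[u]\,du$ concentrated in degree zero.
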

\begin{proof}
	$\mathbb{F}_2[u]$ is smooth of dimension one over $\mathbb{F}_2$.  
\end{proof}

	Consequently, the spectral sequence of (b) degenerates, and the $E_2$-page in (a) is given by \[E_2^{s,t} \cong \Ext^s_{\mathcal{U}(\mathbb{F}_2[u])}(\mathbb{F}_2[u] \, du, \Omega^t \mathbb{F}_2[u]).\]  
	
	We articulate the Dyer-Lashof structure on $\mathbb{F}_2[u] \, du$.  
\begin{lem}
	$Q^{2r}(du) = \epsilon_r (r+1) u^r \, du$.
\end{lem}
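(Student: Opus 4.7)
The plan is to apply the universal K\"ahler derivation $d\colon \mathbb{F}_2[u] \to \mathbb{F}_2[u]\,du$ to the defining relation $Q^{2r}u = \epsilon_r u^{r+1}$, exploiting the compatibility between $d$ and the Dyer-Lashof operations.

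First I would recall from Lemma 6.3 that $\Omega_{\Gamma/\mathbb{F}_2}$ comes equipped with a canonical structure in $a_1\Mod_{\mathcal{R},\Gamma}$, and moreover the universal derivation $d$ is a morphism of $\mathcal{R}$-modules; equivalently, $d(Q^n x) = Q^n(dx)$ for every $x \in \Gamma$ and every $n \geq 0$. (This is just the assertion that $\mathcal{R}$-algebra derivations commute with Dyer-Lashof operations, which is the content of the description of $\Der_{\mathcal{R}}$ already used in the paper.) Specializing to $\Gamma = \mathbb{F}_2[u]$ and $x = u$, one then computes
\[
Q^{2r}(du) \;=\; d(Q^{2r}u) \;=\; d(\epsilon_r u^{r+1}) \;=\; \epsilon_r(r+1)u^r\,du,
\]
where the first equality is $\mathcal{R}$-linearity of $d$, the second is the defining equation of $\epsilon_r$, and the third is the Leibniz rule together with $\mathbb{F}_2$-linearity of $d$.

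If one preferred to avoid invoking Lemma 6.3 as a black box, an equivalent approach is to endow the trivial square-zero extension $\mathbb{F}_2[u] \oplus \Omega_{\mathbb{F}_2[u]/\mathbb{F}_2}$ with its canonical allowable $\mathcal{R}$-algebra structure; the Cartan formula for the product then reads $Q^n(a\,db) = \sum_{i+j=n} Q^i(a)\,d(Q^j b)$, and specializing to $a = 1$, $b = u$ again recovers $Q^{2r}(du) = d(Q^{2r}u)$. Either way, the one-line calculation above finishes the argument; there is no real obstacle here, since the compatibility $d \circ Q^n = Q^n \circ d$ is precisely the defining property of the $\mathcal{R}$-structure on the module of K\"ahler differentials established earlier.
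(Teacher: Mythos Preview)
Your proof is correct and follows exactly the same approach as the paper: commute $d$ past $Q^{2r}$, then differentiate $\epsilon_r u^{r+1}$. The paper's proof is the one-line computation $Q^{2r}(du) = d(Q^{2r}u) = d(\epsilon_r u^{r+1}) = \epsilon_r(r+1)u^r\,du$; your additional remarks justifying the commutation via Lemma~6.3 or the square-zero extension are fine elaborations but not needed beyond what the paper already assumes.
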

\begin{proof}
	$Q^{2r}(du) = d(Q^{2r} u) = d(\epsilon_r u^{r+1}) = \epsilon_r (r+1) u^r \, du$.
\end{proof}

\begin{rem}
	Observe that except in the case of the BLLMM theory, $Q^{2r}(du) = 0$ for all $r \geq 1$ since $\epsilon_r = 0$ if $r$ is odd.  
\end{rem}

	The obstructions to existence and uniqueness of $E_\infty$-structures lie in the $(-1)$ and $(-2)$-stems.  
\begin{quest}
	Are there any obstructions to existence and uniqueness?
\end{quest}

	Once we have pinned down exactly which rings exist, we can also ask
\begin{quest}
	What are the strict units of these rings?  
\end{quest}

	We hope to return to these questions in later work.  
	
\bibliographystyle{amsalpha}
\bibliography{strict-units}

\end{document}